\definecolor{rouge1}{rgb}{0.6,0.2,0}
\definecolor{rouge2}{rgb}{0.8,0,0}
\definecolor{gris1}{rgb}{0.25,0.25,0.25}
\definecolor{wwwwww}{rgb}{0.4,0.4,0.4}
\definecolor{qqqqzz}{rgb}{0,0,0.6}
\definecolor{qqzzqq}{rgb}{0,0.6,0}
\definecolor{zzqqqq}{rgb}{0.6,0,0}
\definecolor{qqzzzz}{rgb}{0,0.6,0.6}
\newtheorem{theorem}{Theorem}[section]
\newtheorem{proposition}[theorem]{Proposition}
\newtheorem{lemma}[theorem]{Lemma}
\newtheorem{corollary}[theorem]{Corollary}
\theoremstyle{definition}
\newtheorem{definition}[theorem]{Definition}
\theoremstyle{remark}
\newtheorem{remark}[theorem]{Remark}
\newtheorem*{exampleemempty}{Example}
\let\c@figure\c@theorem
\def\go#1{{\mathfrak{#1}}}
\def\l#1{{\mathcal{#1}}} 
\def\b#1{{\mathbb{#1}}}
\def\italicegal{\hspace{-1mm}=\hspace{-1mm}}
\def\minusens{\hspace{-0.5mm}\setminus\hspace{-0.5mm}}
\def\#{\ \sharp\ }
\def\ie{\textit{ie}\ }
\def\card(#1){\vert#1\vert}
\def\i={\leqslant} 
\def\s={\geqslant}
\def\card(#1){\vert #1 \vert}
\title{Variation formulas for an extended Gompf invariant}
\author{Jean-Mathieu Magot }
\date{}
\begin{document}

\maketitle

\begin{abstract}
 In 1998, R. Gompf defined a homotopy invariant $\theta_G$ of oriented 2-plane fields in 3-manifolds.  This invariant is defined for oriented 2-plane fields $\xi$ in a closed oriented 3-manifold $M$ when the first Chern class $c_1(\xi)$ is a torsion element of $H^2(M;\b Z)$. In this article, we define an extension of the Gompf invariant for all compact oriented 3-manifolds with boundary and we study its iterated variations under Lagrangian-preserving surgeries. It follows that the extended Gompf invariant is a degree two invariant with respect to a suitable finite type invariant theory. 
\end{abstract}

\section*{Introduction}

\renewcommand{\thetheorem}{\arabic{theorem}}

\subsection*{Context}
In \cite{gompf}, R. Gompf defined a homotopy invariant $\theta_G$ of oriented 2-plane fields in 3-manifolds. This invariant is defined for oriented 2-plane fields $\xi$ in a closed oriented 3-manifold $M$ when the first Chern class $c_1(\xi)$ is a torsion element of $H^2(M;\b Z)$. This invariant appears, for instance, in the construction of an absolute grading for the Heegaard-Floer homology groups, see \cite{GH}.
Since the positive unit normal of an oriented 2-plane field of a Riemannian 3-manifold $M$ is a section of its unit tangent bundle $UM$, homotopy classes of oriented 2-plane fields of $M$ are in one-to-one correspondence with homotopy classes of sections of $UM$. Thus, the invariant $\theta_G$ may be regarded as an invariant of homotopy classes of nowhere zero vector fields, also called \textit{combings}. In that setting, the Gompf invariant is defined for \textit{torsion combings} of closed oriented 3-manifolds $M$, \ie combings $X$ such that the Euler class $e_2(X^\perp)$ of the normal bundle $X^\perp$ is a torsion element of $H^2(M;\b Z)$.  \\

In \cite{lescopcombing}, C. Lescop proposed an alternative definition of $\theta_G$ using a Pontrjagin construction from the combing viewpoint. Here, we use a similar approach to show how to define Pontrjagin numbers for torsion combings by using pseudo-parallelizations, which are a generalization of parallelizations. This enables us to define a relative extension of the Gompf invariant for torsion combings in all compact oriented 3-manifolds with boundary. We also study the iterated variations under Lagrangian-preserving surgeries of this extended invariant and prove that it is a degree two invariant with respect to a suitable finite type invariant theory. In such a study, pseudo-parallelizations reveal decisive since they are, in some sense, compatible with Lagrangian-preserving surgeries while genuine parallelizations are not.

\subsection*{Conventions}
In this article, compact oriented 3-manifolds may have boundary unless otherwise mentioned. All manifolds are implicitly equipped with Riemannian structures. The statements and the proofs are independent of the chosen Riemannian structures. \\

If $M$ is an oriented manifold and if $A$ is a submanifold of $M$, let $TM$, resp. $TA$, denote the tangent bundles to $M$, resp. $A$, and let $NA$ refer to the orthogonal bundle to $A$ in $M$, which is canonically isomorphic to the normal bundle to $A$ in $M$. The fibers of $NA$ are oriented so that $NA \oplus TA = TM$ fiberwise and the boundaries of all compact manifolds are oriented using the outward normal first convention. \\

If $A$ and $B$ are transverse submanifolds of an oriented manifold $M$, their intersection is oriented so that $N(A\cap B) = NA \oplus NB$, fiberwise. Moreover, if $A$ and $B$ have comple\-mentary dimensions, \ie if $\mbox{dim}(A)+\mbox{dim}(B)=\mbox{dim}(M)$, let $\varepsilon_{A\cap B}(x) = 1$ if $x \in A\cap B$ is such that  $T_xA\oplus T_xB = T_xM$ and $\varepsilon_{A\cap B}(x) = -1$ otherwise. If $A$ and $B$ are compact transverse submani\-folds of an oriented manifold $M$ with complementary dimensions, the \textit{algebraic intersection of $A$ and $B$ in $M$} is
$$
\langle A, B \rangle_M = \sum_{x \in A\cap B} \varepsilon_{A\cap B}(x).
$$

Let $L_1$ and $L_2$ be two rational cycles of an oriented $n$-manifold $M$. Assume that $L_1$ and $L_2$ bound two rational chains $\Sigma_1$ and $\Sigma_2$, respectively. If $L_1$ is transverse to $\Sigma_2$, if $L_2$ is transverse to $\Sigma_1$ and if $\mbox{dim}(L_1)+\mbox{dim}(L_2) = n-1$, then the \textit{linking number of $L_1$ and $L_2$ in $M$} is
$$
lk_M(L_1,L_2) = \langle \Sigma_1 , L_2 \rangle_M = (-1)^{\tiny{n-\mbox{dim}(L_2)}}\langle L_1 , \Sigma_2 \rangle_M.
$$

\subsection*{Setting and statements}
A \textit{combing} $(X,\sigma)$ of a compact oriented 3-manifold $M$ is a section $X$ of the unit tangent bundle $UM$ together with a nonvanishing section $\sigma$ of the restriction $X^\perp_{|\partial M}$ of the normal bundle $X^\perp$ to $\partial M$. For simplicity's sake, the section $\sigma$ may be omitted in the notation of a combing. For any combing $(X,\sigma)$, note that $\rho(X)=(X_{|\partial M}, \sigma, X_{|\partial M} \wedge \sigma)$, where $\wedge$ denotes the cross product, is a trivialization of $TM_{|\partial M}$. So, a combing of a compact oriented 3-manifold $M$ may also be seen as a pair $(X,\rho)$ where $X$ is a section of $UM$ that is the first vector of a trivialization $\rho$ of $TM_{|\partial M}$ together with this trivialization. \\

Two combings $(X,\sigma_X)$ and $(Y,\sigma_Y)$ of a compact oriented 3-manifold $M$ are said to be \textit{transverse} when the graph $X(M)$ is transverse to $Y(M)$ and $-Y(M)$ in $UM$. The combings $(X,\sigma_X)$ and $(Y,\sigma_Y)$ are said to be \textit{$\partial$-compatible} when $X_{|\partial M}= Y_{|\partial M}$, $\sigma_X = \sigma_Y$, $X(\mathring M)$ is transverse to $Y(\mathring M)$ and $-Y(\mathring M)$ in $UM$, and 
$$
\overline{X(\mathring M)\cap Y(\mathring M)} \cap UM_{|\partial M} =\emptyset.
$$
\iffalse
If $X(\mathring M)$ is transverse to $Y(\mathring M)$ and $-Y(\mathring M)$ in $UM$ and if 
$$
\overline{X(\mathring M)\cap Y(\mathring M)} \cap UM_{|\partial M} =\emptyset
$$
then $(X,\sigma_X)$ and $(Y,\sigma_Y)$ are \textit{almost transverse}. Two combings $(X,\sigma_X)$ and $(Y,\sigma_Y)$ of $M$ are said to be \textit{coincide on $\partial M$} if $X_{|\partial M}= Y_{|\partial M}$ and $\sigma_X=\sigma_Y$. \fi
When $(X,\sigma_X)$ and $(Y,\sigma_Y)$ are $\partial$-compatible, define two links $L_{X=Y}$ and $L_{X=-Y}$ as follows. First, let $P_M$ denote the projection from $UM$ to $M$ and set
$$
L_{X=-Y} = P_M (X(M) \cap (-Y)(M)).
$$
Second, there exists a link $L_{X=Y}$ in $\mathring{M}$ such that
$$
P_M (X(M) \cap Y(M)) = \partial M \sqcup L_{X=Y}.
$$

If $(X,\sigma)$ is a combing of a compact oriented 3-manifold $M$, its \textit{relative Euler class} $e_2^M(X^{\perp}\hspace{-1mm}, \sigma)$ in $H^2(M,\partial M;\b Z)$ is an obstruction to extending the section $\sigma$ as a nonvanishing section of $X^\perp$. This obstruction is such that its Poincaré dual $P(e_2^M(X^\perp, \sigma))$ is represented by the zero set of a generic section of $X^\perp$ extending $\sigma$. This zero set is oriented by its coorientation induced by the orientation of $X^\perp$. When $M$ is closed, the \textit{Euler class} $e_2(X^\perp)$ of $X$ is just this obstruction to finding a nonvanishing section of $X^\perp$. \\

A combing $(X,\sigma)$ of a compact oriented 3-manifold $M$ is a \textit{torsion combing} if $e_2^M(X^\perp,\sigma)$ is a torsion element of $H^2(M,\partial M ; \b Z)$, \ie $\left[e_2^M(X^\perp,\sigma)\right]=0$ in $H^2(M,\partial M ; \b Q)$. \\

Let $M_1$ and $M_2$ be two compact oriented 3-manifolds. The manifolds $M_1$ and $M_2$ are said to have \textit{identified boundaries} if a collar of $\partial M_1$ in $M_1$ and a collar of $\partial M_2$ in $M_2$ are identified. In this case, ${TM_1}_{|\partial M_1}= \b R n_1 \oplus T \partial M_1$ is naturally identified with ${TM_2}_{|\partial M_2}= \b R n_2 \oplus T \partial M_2$ by an identification that maps the outward normal vector field $n_1$ to $M_1$ to the outward normal vector field $n_2$ to $M_2$. \\

If $\tau_1$ and $\tau_2$ are parallelizations of two compact oriented 3-manifolds $M_1$ and $M_2$ with identified boundaries such that $\tau_1$ and $\tau_2$ coincide on $\partial M_1 \simeq \partial M_2$, then the \textit{first relative Pontrjagin number of $\tau_1$ and $\tau_2$} is an element $p_1(\tau_1,\tau_2)$ of $\b Z$ which corresponds to the Pontrjagin obstruction to extending a specific trivialization $\tau(\tau_1,\tau_2)$ of $TW \otimes \b C$ defined on the boundary of a cobordism $W$ from $M_1$ to $M_2$ with signature zero (see Subsection~\ref{ssec_defpara} or \cite[Subsection 4.1]{lescopcombing}). In the case of a parallelization $\tau$ of a closed oriented 3-manifold $M$, we get an absolute version. The \textit{Pontrjagin number $p_1(\tau)$ of $\tau$} is the relative Pontrjagin number $p_1(\tau_\emptyset,\tau)$ where $\tau_\emptyset$ is the parallelization of the empty set. Hence, for two parallelizations $\tau_1$ and $\tau_2$ of some closed oriented 3-manifolds,
$$
p_1(\tau_1,\tau_2) = p_1(\tau_2) - p_1(\tau_1).
$$

In \cite{lescopcombing}, using an interpretation of the variation of Pontrjagin numbers of parallelizations as an intersection of chains, C. Lescop showed that such a variation can be computed using only the first vectors of the parallelizations. This led her to the following theorem, which contains a definition of the \textit{Pontrjagin numbers} for torsion combings of closed oriented 3-manifolds.
\begin{theorem}[{\cite[Theorem 1.2 \& Subsection 4.3]{lescopcombing}}] \label{thm_defp1X}
Let $M$ be a closed oriented 3-manifold. There exists a unique map 
$$
p_1 \ : \ \lbrace \mbox{homotopy classes of torsion combings of } M \rbrace \longrightarrow \b Q
$$
such that :
\begin{enumerate}[(i)]
\item for any combing $X$ on $M$ such that $X$ extends to a parallelization $\tau$ of $M$ :
$$
p_1([X])=p_1(\tau),
$$
\item if $X$ and $Y$ are two transverse torsion combings of $M$, then 
$$
p_1([Y])-p_1([X])= 4 \cdot lk(L_{X=Y},L_{X=-Y}).
$$
\end{enumerate}
Furthermore, $p_1$ coincides with the Gompf invariant : for any torsion combing $X$,
$$
p_1([X])=\theta_G(X^\perp).
$$
\end{theorem}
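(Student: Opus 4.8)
The plan is to derive existence and uniqueness of $p_1$ from properties (i) and (ii), reducing the statement to one computation comparing parallelizations, and then to recognize $\theta_G$ as a map with those same two properties. \emph{Uniqueness.} Let $X$ be a torsion combing of the closed oriented $3$-manifold $M$; since $M$ is orientable it is parallelizable, so fix a parallelization $\tau$ of $M$ with first vector $X_\tau$, which is itself a torsion combing because $X_\tau^\perp$ is trivial. A generic homotopy makes $X$ transverse to $X_\tau$ (the graphs $X(M)$, $X_\tau(M)$ and $-X_\tau(M)$ are $3$-dimensional in the $5$-manifold $UM$, and $X_\tau(M)\cap(-X_\tau)(M)=\emptyset$), and the torsion hypothesis on $X$ ensures that $L_{X_\tau=X}$ and $L_{X_\tau=-X}$ are rationally null-homologous, so that $lk(L_{X_\tau=X},L_{X_\tau=-X})\in\b Q$ is defined. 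Properties (i) and (ii) then force
$$
p_1([X]) = p_1(\tau) + 4 \cdot lk(L_{X_\tau=X},L_{X_\tau=-X}),
$$
which proves uniqueness.

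\emph{The key computation.} The technical heart is the case of (ii) in which $X$ and $Y$ extend to parallelizations $\tau_1$ and $\tau_2$ of $M$; there both sides are a priori defined, the left-hand side via the relative Pontrjagin number of Subsection~\ref{ssec_defpara} and the identity $p_1(\tau_1,\tau_2)=p_1(\tau_2)-p_1(\tau_1)$. Writing $\tau_2=\tau_1 g$ for a map $g:M\to SO(3)$, one has $p_1(\tau_2)-p_1(\tau_1)=2\deg(g)$, where $\deg(g)$ records $g^{*}$ on $H^3(SO(3);\b Z)\cong\b Z$ (see Subsection~\ref{ssec_defpara} and \cite{lescopcombing}). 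On the other hand, the first vector of $\tau_1 g$ coincides with $X_1$ precisely over $g^{-1}(C_+)$ and with $-X_1$ precisely over $g^{-1}(C_-)$, where $C_+,C_-\subset SO(3)$ are the disjoint circles fixing $e_1$, resp. sending $e_1$ to $-e_1$; after a generic perturbation of $g$ this yields $L_{X_1=X_2}=g^{-1}(C_+)$ and $L_{X_1=-X_2}=g^{-1}(C_-)$ as oriented links. Since $[C_+]$ and $[C_-]$ generate $H_1(SO(3);\b Z)\cong\b Z/2\b Z$, both links are $2$-torsion in $H_1(M;\b Z)$; one computes their linking number by bounding $g^{-1}(C_+)$ with the preimage under $g$ of a rational $2$-chain bounding $C_+$, the latter being half the pushforward $\pi_{*}(\tilde D)$ of an integral $2$-chain $\tilde D$ in $S^3$ with $\partial\tilde D=\pi^{-1}(C_+)$, where $\pi:S^3\to SO(3)$ is the double cover, and then intersecting with $g^{-1}(C_-)$. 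Using that $\pi$ restricts to a degree $2$ map on $\pi^{-1}(C_+)$ and that $\pi^{-1}(C_+)$ and $\pi^{-1}(C_-)$ form a Hopf link in $S^3$, this gives $lk(L_{X_1=X_2},L_{X_1=-X_2})=\frac{1}{2}\deg(g)$, hence $4\cdot lk=2\deg(g)$, as desired. I expect this step --- carrying out the ``variation of Pontrjagin numbers as an intersection of chains'' of \cite{lescopcombing} and pinning down the universal constant (equivalently the coefficient $4$), along with the orientation and sign bookkeeping --- to be the main obstacle.

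\emph{Existence.} For transverse torsion combings $X,Y$ of $M$, set $v(X,Y)=4\cdot lk(L_{X=Y},L_{X=-Y})\in\b Q$. A cobordism argument --- a generic homotopy between two representatives of a homotopy class sweeps out, together with a fixed graph $X_\tau(M)$ and $-X_\tau(M)$, compact rational $2$-chains in $UM\times[0,1]$ interpolating the links at the two ends --- shows that $v(X,Y)$ depends only on $[X]$ and $[Y]$ and is additive along chains of pairwise transverse combings; and the key computation gives $v(X,Y)=p_1(\tau_Y)-p_1(\tau_X)$ whenever $X$ and $Y$ extend to parallelizations $\tau_X,\tau_Y$. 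One then \emph{defines} $p_1([X])=p_1(\tau_M)+v(X_{\tau_M},X)$ for a fixed reference parallelization $\tau_M$ of $M$; additivity makes this independent of $\tau_M$, property (i) follows by taking $X$ itself from a parallelization, and property (ii) is additivity once more.

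\emph{Identification with $\theta_G$.} By uniqueness it suffices to check that $X\mapsto\theta_G(X^\perp)$ satisfies (i) and (ii). Property (i) is the comparison of Gompf's normalization of $\theta_G$ on a plane field arising from a global trivialization with the Pontrjagin number $p_1(\tau)$, obtained through a bounding almost-complex $4$-manifold and the Hirzebruch signature formula; property (ii) is Gompf's own variation formula for $\theta_G$ under a homotopy of plane fields, rephrased in terms of the links $L_{X=Y}$ and $L_{X=-Y}$ --- or, alternatively, it is forced by the chain-level computations above once (i) is known.
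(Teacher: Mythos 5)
This theorem is cited by the paper from \cite[Theorem 1.2 \& Subsection~4.3]{lescopcombing}; the paper gives no proof of its own, so there is no in-paper argument to compare against, only Lescop's. Judged on its merits, your outline correctly captures the global structure of the argument: uniqueness forced by a reference parallelization, existence reduced to a computation in the parallelization case together with a homotopy-invariance-and-additivity statement for $v(X,Y)=4\cdot lk(L_{X=Y},L_{X=-Y})$, and $\theta_G$ recovered from uniqueness. Your key computation via $\pi^{-1}(C_\pm)\subset S^3$ being a Hopf link, with the degree-two covers over $C_\pm$, is a sensible way to fix the constant in the parallelization-versus-parallelization case, and the number $4$ does come out as you indicate (modulo a careful sign and orientation check, which you rightly flag).

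There is, however, a genuine gap hiding in the one sentence that claims $v$ ``depends only on $[X]$ and $[Y]$ and is additive along chains of pairwise transverse combings.'' The homotopy-invariance half is a cobordism-of-links argument as you describe, but the additivity $v(X,Z)=v(X,Y)+v(Y,Z)$ for three arbitrary pairwise-transverse torsion combings is \emph{not} a cobordism within a homotopy class: it compares coincidence links of three distinct sections and is precisely where the work is. The reason Lescop's proof (and this paper's generalizations in Theorem~\ref{prop_varasint}, Lemma~\ref{lem_evaluationII}, Lemma~\ref{lemmaplus1}) reformulates $v$ as a triple algebraic intersection of $4$-chains in $[0,1]\times UM$ is exactly so that additivity becomes stacking and reparameterizing of chains; the torsion hypothesis then enters to guarantee the needed rational $2$-chains exist (cf.\ Lemma~\ref{nulexcep}). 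Starting only from the formula $v(X,Y)=4\cdot lk(L_{X=Y},L_{X=-Y})$, I see no route to the cocycle identity without recreating that chain apparatus. I would also tighten the $\theta_G$ step: Gompf's defect formula is an Euler-class/obstruction count and matching it to the linking-number form of (ii) is itself nontrivial, so the ``alternatively, it is forced by the chain-level computations'' fallback is the one to rely on, but it must then be carried out rather than invoked.
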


In this article, we study the variations of the Pontrjagin numbers of torsion combings of compact oriented 3-manifolds with respect to specific surgeries, called \textit{Lagrangian-preserving surgeries}, which are defined as follows.  \\

A \textit{rational homology handlebody of genus $g\in \b N$}, or $\b Q$HH for short, is a compact oriented 3-manifold with the same homology with coefficients in $\b Q$ as the standard genus $g$ handlebody. Note that the boundary of a genus $g$ rational homology handlebody is homeomorphic to the standard closed connected oriented surface of genus $g$. The \textit{Lagrangian} of a $\b Q$HH $A$ is 
$$
\go L_A := \mbox{ker}\left(i^A_* : H_1(\partial A; \b Q) \longrightarrow H_1(A;\b Q)\right)
$$
where $i^A$ is the inclusion of $\partial A$ into $A$. An \textit{LP$_\b Q$-surgery datum} in a compact oriented 3-manifold $M$ is a triple $(A,B,h)$, or $(\sfrac{B}{A})$ for short, where $A \subset M$, where $B$ and $A$ are rational homology handlebodies and where $h : \partial A \rightarrow \partial B$ is an identification homeomorphism, called \textit{LP$_\b Q$-identification}, such that $h_*(\go L_A)=\go L_B$. Performing the \textit{LP$_\b Q$-surgery} associated with the datum $(A,B,h)$ in $M$ consists in constructing the manifold :
$$
M \left( \sfrac{B}{A} \right) = \left( M \setminus \mathring A \right) \ \bigcup_h \ B.
$$

If $(M,X)$ is a compact oriented 3-manifold equipped with a combing, if $(A,B,h)$ is an LP$_\b Q$-surgery datum in $M$, and if $X_{B}$ is a combing of $B$ that coincides with $X$ on $\partial A \simeq \partial B$, then $(A,B,h,X_{B})$, or $(\sfrac{B}{A},X_B)$ for short, is an \textit{LP$_\b Q$-surgery datum in $(M,X)$}. Performing the \textit{LP$_\b Q$-surgery} associated with the datum $(A,B,h,X_{B})$ in $(M,X)$ consists in constructing the manifold $M \left( \sfrac{B}{A} \right)$ equipped with the combing :
$$
X(\sfrac{B}{A}) = \left\lbrace
\begin{aligned}
& X &  &\mbox{on $M\setminus \mathring A$},  \\
& X_{B} & &\mbox{on $B$.}
\end{aligned}
\right.
$$

The main result of this article is a variation formula for Pontrjagin numbers -- see Theorem~\ref{thm_D2nd} below -- which reads as follows in the special case of compact oriented 3-manifolds without boundary.
\begin{theorem} \label{thm_D2nd0}
Let $(M,X)$ be a closed oriented 3-manifold equipped with a combing and let $\lbrace (\sfrac{B_i}{A_i},X_{B_i}) \rbrace_{i \in \lbrace 1,2 \rbrace}$ be two disjoint LP$_\b Q$-surgeries in $(M,X)$ (\ie $A_1$ and $A_2$ are disjoint). For all $I \subset \lbrace 1,2 \rbrace$, let $(M_I,X^I)$ be the combed manifold obtained by performing the surgeries associated to the data $ \lbrace (\sfrac{B_i}{A_i},X_{B_i})\rbrace_{i \in I}$. If $\lbrace X^I \rbrace_{I \subset \lbrace 1,2 \rbrace}$ is a family of torsion combings of the $\lbrace M_I \rbrace_{I\subset \lbrace 1,2 \rbrace}$, then
$$
\sum_{I\subset\lbrace 1 , 2 \rbrace} (-1)^{|I|} p_1([X^I]) = - 2 \cdot lk_M \left(L_{\lbrace X^I \rbrace}(\sfrac{B_1}{A_1}), L_{\lbrace X^I \rbrace}(\sfrac{B_2}{A_2})\right),
$$
where the right-hand side of the equality is defined as follows. For all $i \in \lbrace 1 , 2 \rbrace$, let
$$
H_1(A_i;\b Q) \stackrel{i^{A_i}_*}{\longleftarrow} \frac{H_1(\partial A_i;\b Q)}{\go L_{A_i}} \stackrel{h_*}{=} \frac{H_1(\partial B_i;\b Q)}{\go L_{B_i}}  \stackrel{i^{B_i}_*}{\longrightarrow} H_1(B_i;\b Q)
$$
be the sequence of isomorphisms induced by the inclusions $i^{A_i}$ and $i^{B_i}$. There exists a unique homology class $L_{\lbrace X^I\rbrace}(\sfrac{B_i}{A_i})$ in $H_1(A_i; \b Q)$ such that for any nonvanishing section $\sigma_i$ of $X^\perp_{|\partial A_i}$~:
$$
L_{\lbrace X^I \rbrace}(\sfrac{B_i}{A_i}) = i^{A_i}_* \circ (i^{B_i}_*)^{-1}\left(P\big(e_2^{B_i}(X_{B_i}^\perp, \sigma_i)\big)\right) - P\big(e_2^{A_i}(X^\perp_{|A_i}, \sigma_i)\big),
$$
where $P$ stands for Poincaré duality isomorphisms from $H^2(A_i,\partial A_i;\b Q)$ to $H_1(A_i;\b Q)$ or from $H^2(B_i,\partial B_i;\b Q)$ to $H_1(B_i;\b Q)$. Furthermore, the homology classes $L_{\lbrace X^I \rbrace}(\sfrac{B_1}{A_1})$ and $L_{\lbrace X^I \rbrace}(\sfrac{B_2}{A_2})$ are mapped to zero in $H_1(M;\b Q)$ and the map 
$$
lk_M : \mbox{\textup{ker}}\big(H_1(A_1;\b Q) \rightarrow H_1(M; \b Q)\big) \times \mbox{\textup{ker}}\big(H_1(A_2;\b Q) \rightarrow H_1(M; \b Q)\big) \longrightarrow \b Q
$$
is well-defined.
\end{theorem}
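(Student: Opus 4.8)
The plan is to dispatch the two homological assertions first — that the classes $L_{\{X^I\}}(\sfrac{B_i}{A_i})$ are well defined and that they vanish in $H_1(M;\b Q)$ — and then to prove the variation formula, which is the substance. For well-definedness of $L_i:=L_{\{X^I\}}(\sfrac{B_i}{A_i})$, I would take two nonvanishing sections $\sigma_i,\sigma_i'$ of $X^\perp_{|\partial A_i}$ and compare. Comparing generic extensions over $A_i$, the difference $e_2^{A_i}(X^\perp_{|A_i},\sigma_i)-e_2^{A_i}(X^\perp_{|A_i},\sigma_i')$ is the image under the connecting homomorphism $H^1(\partial A_i;\b Z)\to H^2(A_i,\partial A_i;\b Z)$ of the degree class $d(\sigma_i,\sigma_i')\in H^1(\partial A_i;\b Z)$ of the rotation from $\sigma_i'$ to $\sigma_i$ in the oriented plane bundle $X^\perp_{|\partial A_i}$; since $X_{B_i}$ agrees with $X$ along $\partial A_i\simeq\partial B_i$, the same class governs the corresponding difference over $B_i$. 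Poincaré--Lefschetz duality turns the connecting homomorphism into the inclusion-induced map $H_1(\partial A_i;\b Q)\to H_1(A_i;\b Q)$, so, setting $\delta:=P(d(\sigma_i,\sigma_i'))\in H_1(\partial A_i;\b Q)$, one gets $P(e_2^{A_i}(X^\perp_{|A_i},\sigma_i))-P(e_2^{A_i}(X^\perp_{|A_i},\sigma_i'))=i^{A_i}_*(\delta)$ and likewise, via $h$, $P(e_2^{B_i}(X_{B_i}^\perp,\sigma_i))-P(e_2^{B_i}(X_{B_i}^\perp,\sigma_i'))=i^{B_i}_*(h_*\delta)$. Since $h_*(\go L_{A_i})=\go L_{B_i}$, the classes $\delta$ and $h_*\delta$ correspond under the identification $H_1(\partial A_i;\b Q)/\go L_{A_i}=H_1(\partial B_i;\b Q)/\go L_{B_i}$ in the displayed sequence, so these two corrections cancel in the formula for $L_i$; the same computation places $L_i$ in the image of $i^{A_i}_*$, hence makes it a genuine class of $H_1(A_i;\b Q)$.

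Next, for the vanishing in $H_1(M;\b Q)$ and the well-definedness of $lk_M$, I would use one global section. Because $X=X^\emptyset$ is a torsion combing, a generic section $s$ of $X^\perp$ over $M$ has zero set a link $Z(s)$ with $[Z(s)]=0$ in $H_1(M;\b Q)$, and after a perturbation $Z(s)$ avoids $\partial A_1$ and $\partial A_2$. Put $\sigma_i:=s_{|\partial A_i}$ and extend it to a generic section $s_{B_i}$ of $X_{B_i}^\perp$ over $B_i$; then $Z(s)\cap A_i$ represents $P(e_2^{A_i}(X^\perp_{|A_i},\sigma_i))$ and $Z(s_{B_i})$ represents $P(e_2^{B_i}(X_{B_i}^\perp,\sigma_i))$. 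On $M_{\{i\}}$ the section equal to $s$ on $M\setminus\mathring A_i$ and to $s_{B_i}$ on $B_i$ has zero set $(Z(s)\setminus\mathring A_i)\sqcup Z(s_{B_i})$, which is rationally null-homologous because $X^{\{i\}}$ is a torsion combing. A Mayer--Vietoris argument using that $h$ is an LP$_\b Q$-identification produces a canonical isomorphism $H_1(M;\b Q)\cong H_1(M_{\{i\}};\b Q)$ commuting with the inclusions of $M\setminus\mathring A_i$; chasing $L_i$ through it, its image in $H_1(M;\b Q)$ is identified with $[Z(s)\setminus\mathring A_i]+[Z(s_{B_i})]\in H_1(M_{\{i\}};\b Q)$, which is $0$. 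Thus $L_1$ and $L_2$ are rationally null-homologous in $M$ with representatives in the disjoint interiors $\mathring A_1$ and $\mathring A_2$, and $lk_M$ is the usual rational linking pairing and is well defined.

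For the variation formula I would compute each $p_1([X^I])$ through a pseudo-parallelization — the point stressed in the introduction being that pseudo-parallelizations, unlike honest parallelizations, survive an LP$_\b Q$-surgery. Fix a pseudo-parallelization $\tilde\tau_0$ of $M\setminus(\mathring A_1\cup\mathring A_2)$ with first vector $X$ and, for each $i$, pseudo-parallelizations $\tilde\tau_{A_i}$ of $A_i$ and $\tilde\tau_{B_i}$ of $B_i$ with first vectors $X_{|A_i}$ and $X_{B_i}$, all agreeing with $\tilde\tau_0$ over the relevant boundaries. For every $I\subset\{1,2\}$ these assemble into a pseudo-parallelization $\tilde\tau_I$ of $M_I$ with first vector $X^I$, so that $p_1([X^I])=p_1(\tilde\tau_I)$ by the pseudo-parallelization description of $p_1$ for torsion combings. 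Since the signs $(-1)^{|I|}$ sum to zero, the left-hand side of the formula equals the second difference $\sum_I(-1)^{|I|}p_1(\tilde\tau_\emptyset,\tilde\tau_I)$ of relative Pontrjagin numbers, each $p_1(\tilde\tau_\emptyset,\tilde\tau_I)$ being computed in a signature-zero cobordism $W_I$ from $M$ to $M_I$ obtained by gluing the disjoint LP$_\b Q$-surgery cobordisms $\{C_i\}_{i\in I}$ (from $A_i$ to $B_i$) into $M\times[0,1]$, and expressed, following Lescop, as an intersection of chains supported near those cobordisms. Since $C_1$ and $C_2$ are disjoint, this relative Pontrjagin number is additive over the two surgeries up to a cross-term $c$, i.e. $p_1(\tilde\tau_\emptyset,\tilde\tau_{\{1,2\}})=p_1(\tilde\tau_\emptyset,\tilde\tau_{\{1\}})+p_1(\tilde\tau_\emptyset,\tilde\tau_{\{2\}})+c$, so the whole alternating sum equals $c$; unwinding the intersection, $c$ is a pairing, localised in $M\times[0,1]$, of the relative cycle produced by surgery $1$ against that produced by surgery $2$, and these cycles are bounded by cycles representing $L_1$ and $L_2$, whence $c=-2\cdot lk_M(L_1,L_2)$ once the universal normalising constant of $p_1$ is tracked.

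The hard part is this last paragraph, with two genuinely nontrivial ingredients. First, a pseudo-parallelization carries an auxiliary datum (an $S^2$-bundle structure over an embedded surface), and one must show that an LP$_\b Q$-identification is compatible with such a datum along the boundary, so that the $\tilde\tau_I$ exist and glue — this is precisely where honest parallelizations fail, and is the reason for introducing pseudo-parallelizations; one must also know that $p_1$ of a torsion combing is independent of the pseudo-parallelization chosen to represent it. Second, the $L_i$ are only \emph{rational} homology classes, since the $X^I$ are torsion but generally not parallelizable combings, so the intersection-of-chains computation must be carried out with rational chains and the torsion part of $e_2^{M_I}$ tracked all along; this is where the hypothesis that \emph{every} $X^I$ is a torsion combing is used in full, and where the output is a well-defined rational linking number rather than an integer. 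The two homological assertions above, and the localisation of the second difference once the pseudo-parallelizations are available, then follow by arguments that are by now routine in this circle of ideas.
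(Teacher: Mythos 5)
The homological preliminaries are handled reasonably: your argument for well-definedness of $L_{\{X^I\}}(\sfrac{B_i}{A_i})$ via the connecting homomorphism and Poincaré--Lefschetz duality is a valid alternative to the paper's Lemma~\ref{ind} (which instead tracks the 3-chains $\l H_{X,\sigma}^{-X}([0,1]\times\partial A_i)$), and your vanishing argument via a global section $s$ plus Mayer--Vietoris reproduces the content of Lemma~\ref{zero}.

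The variation formula itself, however, rests on a step that cannot be made to work, and it is exactly the step the paper's machinery is designed to replace. You propose to choose pseudo-parallelizations $\tilde\tau_I$ ``with first vector $X^I$'' and then assert $p_1([X^I])=p_1(\tilde\tau_I)$. Neither part is available. A pseudo-parallelization $\bar\tau=(N(\gamma);\tau_e,\tau_d,\tau_g)$ does not have a single well-defined first vector over $N(\gamma)$: it has two Siamese sections $E_1^d$ and $E_1^g$ which disagree there, and both are forced by $\tau_e$, $\tau_d$, and $\l T_\gamma$. Requiring $E_1^d=E_1^g=X^I$ everywhere forces $\gamma=\emptyset$, i.e. a genuine parallelization, which is precisely what fails to exist across an LP$_\b Q$-surgery. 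And even granting some pseudo-parallelization ``compatible with $X^I$'' in the paper's sense, Corollary~\ref{cor_p1Xppara} gives $p_1([X^I])=p_1(\tilde\tau_I)+\tilde p_1(\tilde\tau_I,[X^I])$ with a correction term $\tilde p_1(\tilde\tau_I,[X^I])=4\,lk_{M_I}(L_{\tilde\tau_I=X^I},L_{\tilde\tau_I=-X^I})-lk_{\b S^2}(\dots)$ that is generically nonzero. This kills the rest of the plan: Corollary~\ref{cor_FTppara} (the finite-type property for pseudo-parallelizations, which is what your signature-zero cobordism argument reproves) says the alternating sum $\sum_I(-1)^{|I|}p_1(\tilde\tau_I)$ is \emph{exactly} zero, so your ``cross-term $c$'' extracted from the relative Pontrjagin numbers of the $\tilde\tau_I$ vanishes identically; it cannot equal $-2\,lk_M(L_1,L_2)$. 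The actual content of Theorem~\ref{thm_D2nd0} lives entirely in the alternating sum of the correction terms $\tilde p_1(\tilde\tau_I,[X^I])$, and reducing that alternating sum of linking numbers, spread over the four manifolds $M_I$, to a single $lk_M$ is the hard part --- it is exactly what Lemma~\ref{norm} (the splitting $2L_{\bar\tau^I=\pm X^I}=L^\pm_{ext}+\sum_{i\in I}L^\pm_{B_i}+\sum_{i\notin I}L^\pm_{A_i}$) and Lemma~\ref{inter} (identifying $[L^\pm_{B_i}]-[L^\pm_{A_i}]$ with $\pm L_{\{X^I\}}(\sfrac{B_i}{A_i})$) set up, and what the chain-level bookkeeping with the $\Sigma^\pm$'s and $S^I_{\sfrac{B_i}{A_i}}$ in the proof of Theorem~\ref{thm_D2nd} actually carries out. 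Your sketch simply does not engage with this layer, so the argument has a genuine gap rather than a compressed proof.
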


\begin{exampleemempty}
Consider $\b S^3$ equipped with a parallelization $\tau : \b S^3 \times \b R^3 \rightarrow T\b S^3$ which extends the standard parallelization of the unit ball. In this ball, consider a positive Hopf link and let $A_1 \sqcup A_2$ be a tubular neighborhood of this link. Let $X$ be the combing $\tau(e_1)=\tau(.,e_1)$, where $e_1=(1,0,0)\in \b S^2$, and let $B_1 = A_1$ and $B_2=A_2$. Identify $A_1$ and $A_2$ with $\b D^2 \times \b S^1$ and consider a smooth map $g : \b D^2 \rightarrow \b S^2$ such that $g(\partial \b D^2) = e_1$, and such that $-e_1$ is a degree 1 regular value of $g$ with a single preimage $\omega$. Finally, for $i \in \lbrace 1,2 \rbrace$, let $X_{B_i}$ be the combing : 
$$
X_{B_i} : 
\left\lbrace
\begin{aligned}
\b D^2 \times \b S^1 &\longrightarrow UM \\
(z,u) &\longmapsto \tau((z,u),g(u)).
\end{aligned}
\right.
$$ 
In this case, $L_{X(\lbrace \sfrac{B_i}{A_i} \rbrace_{i \in \lbrace 1,2 \rbrace})=-X} = L_{X_{B_1}=-X_{|A_1}} \cup L_{X_{B_2}=-X_{|A_2}}$, and, for $i \in \lbrace 1,2 \rbrace$, using the identification of $A_i$ with $\b D^2 \times \b S^1$, the link $L_{X_{B_i}=-X_{|A_i}}$ reads $\lbrace \omega \rbrace \times \b S^1$. As we will see in Proposition~\ref{prop_linksinhomologyI}, for $i \in \lbrace 1,2 \rbrace$, $L_{\lbrace X^I \rbrace}(\sfrac{B_i}{A_i}) = 2 [L_{X_{B_i}=-X_{|A_i}}]$. Eventually,
$$
\sum_{I\subset\lbrace 1 , 2 \rbrace} (-1)^{|I|} p_1([X^I]) = -8.
$$
\end{exampleemempty}

In general, for an LP$_\b Q$-surgery datum $(\sfrac{B}{A})$ in a compact oriented 3-manifold $M$, a trivialization of $TM_{|(M\setminus \mathring A)}$ cannot be extended as a parallelization of $M(\sfrac{B}{A})$. It follows that LP$_\b Q$-surgeries cannot be expressed as local moves on parallelized compact oriented 3-manifolds. This makes computing the variation of Pontrjagin numbers of torsion combings under LP$_\b Q$-surgeries tricky since Pontrjagin numbers of torsion combings are defined with respect to Pontrjagin numbers of parallelizations.  \\

However, if $M$ is a compact oriented 3-manifold and if $\rho$ is a trivialization of $TM_{|\partial M}$, then the obstruction to finding a parallelization of $M$ which coincides with $\rho$ on $\partial M$ is an element of $H^2(M,\partial M; \sfrac{\b Z}{2\b Z})$ -- hence, its Poincaré dual is an element $[\gamma]$ of $H_1(M;\sfrac{\b Z}{2\b Z})$ -- and it is possible to get around such an obstruction thanks to the notion of \textit{pseudo-parallelization} developed by C.~Lescop. Let us postpone the formal definition to Subsection~\ref{ssec_defppara} (see also \cite{lescopcube}) and, for the time being, let us just mention that a pseudo-parallelization $\bar\tau$ of a compact oriented 3-manifold $M$ is a triple $(N(\gamma); \tau_e, \tau_d)$ where $N(\gamma)$ is a framed tubular neighborhood of a link $\gamma$ in $\mathring M$, $\tau_e$ is a parallelization of $M\setminus N(\gamma)$ and $\tau_d : N(\gamma)\times \b R^3 \rightarrow TN(\gamma)$ is a parallelization of $N(\gamma)$ such that there exists a section $E_1^d$ of $UM$ :
$$
E_1^d :
\left\lbrace
\begin{aligned}
m \in M\setminus \mathring N(\gamma) &\longmapsto  \tau_e(m,e_1) \\
m \in N(\gamma) &\longmapsto  \tau_d(m,e_1).
\end{aligned}
\right.
$$
Let us finally mention that $\bar\tau$ also determines a section $E_1^g$ of $UM$ which coincides with $E_1^d$ on $M\setminus \mathring N(\gamma)$. The sections $E_1^d$ and $E_1^g$ are the \textit{Siamese sections} of $\bar\tau$ and the link $\gamma$ is the \textit{link of the pseudo-parallelization $\bar\tau$}. \\

To a pseudo-parallelization, C. Lescop showed that it is possible to associate a \textit{complex trivialization} up to homotopy, see Definition~\ref{def_complextriv}. This leads to a natural extension of the notion of first relative Pontrjagin numbers of parallelizations to pseudo-parallelizations. Furthermore, as in the case of parallelizations, a pseudo-parallelization $\bar\tau$ of a compact oriented 3-manifold $M$ admits \textit{pseudo-sections} $\bar\tau(M\times\lbrace v\rbrace)$ which are 3-chains of $UM$, for all $v \in \b S^2$. In the special case $v =e_1$ the pseudo-section $\bar\tau(M\times \lbrace e_1 \rbrace)$ of $\bar\tau$ can be written as :
$$
\bar\tau(M\times \lbrace e_1 \rbrace) = \frac{E_1^d(M) + E_1^g(M)}{2}.
$$
A combing $(X,\sigma)$ of $M$ is said to be \textit{compatible with $\bar\tau$} if $(X,\sigma)$ is $\partial$-compatible with $(E_1^d,{E_2^e}_{|\partial M})$ and $(E_1^g,{E_2^e}_{|\partial M})$, where $E_2^e$ is the second vector of $\tau_e$, and if 
$$
L_{E_1^d=X} \cap L_{E_1^g=-X}=\emptyset \mbox{ \ and \ } L_{E_1^g=X} \cap  L_{E_1^d=-X}=\emptyset.
$$
If $(X,\sigma)$ and $\bar\tau$ are compatible, then $\rho(X)=\bar\tau_{|\partial M}$ and we get two disjoint rational combinations of oriented links in $\mathring M$ :
$$
L_{\bar\tau =  X} = \frac{L_{E_1^d= X} + L_{E_1^g= X}}{2} \mbox{ \ and \ } L_{\bar\tau = - X} = \frac{L_{E_1^d=- X} + L_{E_1^g=- X}}{2}.
$$
Pseudo-parallelizations allow us to revisit the definition of Pontrjagin numbers and to generalize it to torsion combings of compact oriented 3-manifolds with non empty boundary as follows. Let $P_{\b S^2}$ denote the standard projection from $W\times \b S^2$ to $\b S^2$, for any manifold $W$. 
\begin{lemma} \label{lem1}
Let $(X,\sigma)$ be a torsion combing of a compact oriented 3-manifold $M$, let $\bar\tau$ be a pseudo-parallelization of $M$, and let $E_1^d$ and $E_1^g$ be the Siamese sections of $\bar\tau$. If $\bar\tau$ and $(X,\sigma)$ are compatible, then the expression
$$
4\cdot lk_M(L_{\bar\tau=X} , L_{\bar\tau=-X}) - lk_{\b S^2} \left( e_1-(-e_1) \ , \ P_{\b S^2} \circ \tau_d^{-1} \circ X(L_{{E_1}^d={-E_1}^g})  \right)
$$
depends only on the homotopy class of $(X,\sigma)$. It will be denoted $p_1(\bar\tau,[X])$ and its opposite will be written $p_1([X],\bar\tau)$.
\end{lemma}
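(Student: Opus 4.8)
To prove the lemma I would identify the bracketed quantity -- write it $\mathcal{P}(\bar\tau,X)$ -- with a relative Pontrjagin number carried by a $4$-dimensional cobordism, and then deduce homotopy invariance from the fact that such a number depends only on the homotopy class of the trivialization prescribed on the boundary. First, a reduction. Since $(X,\sigma)$ is compatible with $\bar\tau$, one has $\sigma = {E_2^e}_{|\partial M}$ and $\rho(X) = \bar\tau_{|\partial M}$; so if $(X,\sigma)$ and $(Y,\sigma)$ are compatible with $\bar\tau$ and represent the same homotopy class, then, because a homotopy of combings keeps the boundary trivialization $\rho(\cdot)$ fixed, such a homotopy is a homotopy rel $\partial M$ between the sections $X$ and $Y$ of $UM$. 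Hence it suffices to show: if $X \simeq Y$ rel $\partial M$ and $X$, $Y$ are both compatible with $\bar\tau$, then $\mathcal{P}(\bar\tau,X) = \mathcal{P}(\bar\tau,Y)$.

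Second -- the core of the argument -- I would reproduce, in the pseudo-parallelization setting, Lescop's interpretation of the variation of Pontrjagin numbers as an intersection of $3$-chains in a unit tangent bundle, so as to show that $\mathcal{P}(\bar\tau,X)$ equals the first relative Pontrjagin number attached to the following data. Take the identity cobordism $W = M \times [0,1]$, which has signature $0$; on $M \times \{0\}$ put the complex trivialization of $TM\otimes\b C$ determined up to homotopy by $\bar\tau$ (Definition~\ref{def_complextriv}); on $M\times\{1\}$ put the data of the combing $X$, used -- together with $\rho(X)$ and the torsion hypothesis, which is what makes the construction land in $\b Q$ -- exactly as a first vector is used in \cite{lescopcombing}; and glue the two ends over $\partial M\times[0,1]$ via $\rho(X) = \bar\tau_{|\partial M}$ and the product structure. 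The content of the lemma's formula is that the resulting obstruction is computed by the stated right-hand side: the term $4\cdot lk_M(L_{\bar\tau=X},L_{\bar\tau=-X})$ arises exactly as in \cite{lescopcombing}, with the pseudo-section $\bar\tau(M\times\{v\}) = \tfrac12(E_1^d(M)+E_1^g(M))$ rotated over $v\in\b S^2$ playing the role of the graphs of the columns of a genuine parallelization, while the correction $lk_{\b S^2}\big(e_1-(-e_1),\,P_{\b S^2}\circ\tau_d^{-1}\circ X(L_{E_1^d=-E_1^g})\big)$ captures the contribution localized in $N(\gamma)$ and coming from the discrepancy between the Siamese sections $E_1^d$ and $E_1^g$ there -- the only place where the pseudo-section is genuinely a rational chain rather than a graph. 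This computation, with all the orientation bookkeeping and the isolation of the contributions supported in $N(\gamma)$ (which is where I expect the main difficulty to lie), would follow the templates of \cite{lescopcombing} for the linking term and of \cite{lescopcube} for the pseudo-parallelization part.

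Granting this identification, the conclusion is immediate: a homotopy $X\simeq Y$ rel $\partial M$ induces a homotopy rel $\partial W$ between the two boundary trivializations on $\partial W = (M\times\{0\})\sqcup(M\times\{1\})$ -- it leaves the $\bar\tau$-end untouched and deforms only the combing-end -- and the first relative Pontrjagin number of a fixed signature-$0$ cobordism depends only on the homotopy class rel boundary of the trivialization prescribed on the boundary (this is already built into the definition of $p_1$ for parallelizations recalled in the excerpt). Hence $\mathcal{P}(\bar\tau,X) = \mathcal{P}(\bar\tau,Y)$, which is the claim. As an alternative route, avoiding the cobordism but requiring the closed-manifold case of the statement -- which is essentially available from \cite{lescopcombing} and \cite{lescopcube} -- one can embed $M$ into a closed oriented $3$-manifold $\hat M$, extend $\bar\tau$ to a pseudo-parallelization of $\hat M$ and extend $X$ and $Y$ so that they agree outside $M$, and then combine additivity of the bracket with homotopy invariance of the Pontrjagin number of torsion combings of $\hat M$ given by Theorem~\ref{thm_defp1X}; there the delicate point is extending $\bar\tau$ across $\partial M$, which one handles by enlarging the added piece until its $\sfrac{\b Z}{2\b Z}$-obstruction to a genuine parallelization vanishes, or by letting that piece carry its own pseudo-parallelization.
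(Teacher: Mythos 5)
Your plan points in a sensible direction, but it defers precisely the content of the lemma, and the cobordism description as written does not quite parse. To define a relative Pontrjagin number on $W=M\times[0,1]$ you need a trivialization of $TW\otimes\b C$ over all of $\partial W$; on the $M\times\{0\}$ side $\bar\tau_{\b C}$ provides one, but a torsion combing $X$ on its own does \emph{not} give a trivialization over $M\times\{1\}$ --- that is exactly the issue the paper has to work around, and your phrase ``used exactly as a first vector is used in \cite{lescopcombing}'' acknowledges the gap without filling it. Lescop's closed-case Theorem~\ref{thm_defp1X} does not produce such a boundary trivialization either; it defines the number by comparison with genuine parallelizations. So the statement ``the bracketed quantity equals the relative Pontrjagin number of this cobordism data'' is a claim that requires essentially the whole argument, and you label it as something that ``would follow the templates.'' That is the heart of the lemma, not a routine verification. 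Your alternative route (embed $M$ in a closed $\hat M$, extend everything, invoke the closed case) has the additional problem that the closed-case formula with a pseudo-parallelization (Corollary~\ref{cor_p1Xppara}) is itself derived in the paper from Theorem~\ref{thm_defp1Xb}, hence from this very lemma, and is not simply available from \cite{lescopcombing} and \cite{lescopcube}; you would also need to arrange the extension of $X$ to $\hat M$ to remain a torsion combing, which is not automatic.

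What the paper actually does is avoid producing a trivialization from $X$ altogether. It introduces the $4$-chains $C_4^{\pm}(\bar\tau,X)$ of $[0,1]\times UM$ (Lemma~\ref{lem_transfert}), built by gluing the pseudo-section $\bar\tau(M\times\{\pm e_1\})$ to the graph of $\pm X$ through explicit homotopies from the Siamese sections and capping with a $2$-chain bounding $L_{\bar\tau=\mp X}$ (this is where the torsion hypothesis enters, via Lemma~\ref{nulexcep}). It then forms the $2$-cycle $\go P(\bar\tau,X)=\{0\}\times\Omega(\bar\tau)+4\bigl(C_4^+(\bar\tau,X)\cap C_4^-(\bar\tau,X)\bigr)$ (Definition~\ref{def_pgo}), proves that its class in $H_2([0,1]\times UM;\b Q)/H_T^\rho(M)$ depends only on $\bar\tau$ and $[X]$ by comparison with an auxiliary compatible pseudo-parallelization $\tau$ and by Proposition~\ref{prop_ppara} (Lemma~\ref{cor_reformcombingsb}), and computes that class term by term, with the $\Omega(\bar\tau)$ term producing exactly the $lk_{\b S^2}$ correction (Lemma~\ref{fond}). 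That concrete bookkeeping --- locating the triple-intersection contributions, seeing which of them vanish because of compatibility, and identifying the surviving piece with the two linking numbers --- is the actual proof, and it is the piece missing from your proposal.
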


\begin{theorem} \label{thm_defp1Xb} 
Let $(X_1,\sigma_{X_1})$ and $(X_2,\sigma_{X_2})$ be torsion combings of two compact oriented 3-manifolds $M_1$ and $M_2$ with identified boundaries such that $(X_1,\sigma_{X_1})$ and $(X_2,\sigma_{X_2})$ coincide on the boundary. For $i\in\lbrace 1,2\rbrace$, let $\bar\tau_i$ be a pseudo-parallelization of $M_i$ such that $\bar\tau_i$ and $(X_i,\sigma_{X_i})$ are compatible. The expression
$$
p_1([X_1],[X_2])=  p_1( [X_1],\bar\tau_1) + p_1(\bar\tau_1, \bar\tau_2) + p_1(\bar\tau_2, [X_2]) 
$$
depends only on the homotopy classes of $(X_1,\sigma_{X_1})$ and $(X_2,\sigma_{X_2})$, and it defines \textup{the first relative Pontrjagin number of $(X_1,\sigma_{X_1})$ and $(X_2,\sigma_{X_2})$}. Moreover, if $M_1$ and $M_2$ are closed, then
$$
p_1([X_1],[X_2])=p_1([X_2])-p_1([X_1]).
$$
\end{theorem}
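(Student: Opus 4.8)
The proof has two parts: the well\-definedness of the right\-hand side — first its independence of the pseudo\-parallelizations $\bar\tau_1,\bar\tau_2$, then its invariance under homotopies of the combings — and the comparison with $p_1([X_2])-p_1([X_1])$ when $M_1,M_2$ are closed. Both rest on three additivity properties of (pseudo\-)Pontrjagin numbers that I would establish first, building on the material of Subsection~\ref{ssec_defpara} and the treatment of pseudo\-parallelizations. \textbf{(1)} The cocycle identity $p_1(\bar\tau_1,\bar\tau_3)=p_1(\bar\tau_1,\bar\tau_2)+p_1(\bar\tau_2,\bar\tau_3)$ for pseudo\-parallelizations of three compact oriented $3$\-manifolds glued along identified boundaries, together with the antisymmetry $p_1(\bar\tau_2,\bar\tau_1)=-p_1(\bar\tau_1,\bar\tau_2)$ and, in the closed case, $p_1(\bar\tau',\bar\tau)=p_1(\bar\tau)-p_1(\bar\tau')$ with $p_1(\bar\tau):=p_1(\tau_\emptyset,\bar\tau)$: this is the pseudo\-parallelization version of the classical behaviour recalled in the excerpt, obtained by gluing signature\-zero cobordisms (Novikov additivity), patching the associated complex trivializations (Definition~\ref{def_complextriv}) on the middle manifold, and using additivity of the relative first Pontrjagin obstruction under gluing. \textbf{(2)} The additivity $p_1(\bar\tau,[X])=p_1(\bar\tau,\bar\tau')+p_1(\bar\tau',[X])$ whenever $\bar\tau,\bar\tau'$ are pseudo\-parallelizations of the same $M$, both compatible with the torsion combing $X$ (together with $p_1([X],\bar\tau)=-p_1(\bar\tau,[X])$, which is part of Lemma~\ref{lem1}). \textbf{(3)} The variation formula $p_1(\bar\tau,[Y])-p_1(\bar\tau,[X])=4\cdot lk_M(L_{X=Y},L_{X=-Y})$ for transverse torsion combings $X,Y$ of $M$ that are both compatible with $\bar\tau$. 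Properties (2) and (3) are obtained by rewriting the defining expression of Lemma~\ref{lem1} as an algebraic intersection of $3$\-chains in $UM$ and tracking how that intersection changes when one of the sections involved is replaced by another; this is the Lescop\-style computation behind Theorem~\ref{thm_defp1X}(ii), now carried out relative to a fixed pseudo\-parallelization.

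Granting (1)--(3), well\-definedness is bookkeeping. If $\bar\tau_1'$ is another pseudo\-parallelization of $M_1$ compatible with $X_1$ (note that compatibility forces $\bar\tau_1'$ and $\bar\tau_1$ to restrict to $\rho(X_1)$ on $\partial M_1$, so $p_1(\bar\tau_1',\bar\tau_2)$ is defined), the difference between the two candidate values for $p_1([X_1],[X_2])$ equals
$$
\big(p_1([X_1],\bar\tau_1')-p_1([X_1],\bar\tau_1)\big)+\big(p_1(\bar\tau_1',\bar\tau_2)-p_1(\bar\tau_1,\bar\tau_2)\big).
$$
By (2) the first summand is $p_1(\bar\tau_1,\bar\tau_1')$ and by (1) the second is $p_1(\bar\tau_1',\bar\tau_1)=-p_1(\bar\tau_1,\bar\tau_1')$, so the difference vanishes; the same applies to $\bar\tau_2$. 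Thus the sum depends only on $X_1,X_2$. Since $p_1(\bar\tau_i,[X_i])$ depends on $X_i$ only through its homotopy class by Lemma~\ref{lem1}, and since after a generic perturbation one may use a single pseudo\-parallelization compatible with both endpoints of a homotopy of combings, the sum is a homotopy invariant. This justifies the notation $p_1([X_1],[X_2])$ and, being consistent with the parallelization case by construction, the name \textup{first relative Pontrjagin number}.

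For the closed case, fix a closed oriented $3$\-manifold $M$, a torsion combing $X$, and a compatible pseudo\-parallelization $\bar\tau$, and set $g(X):=p_1(\bar\tau,[X])+p_1(\bar\tau)$. This is independent of $\bar\tau$: for a second compatible $\bar\tau'$, property (2) and $p_1(\bar\tau',\bar\tau)=p_1(\bar\tau)-p_1(\bar\tau')$ give $p_1(\bar\tau',[X])+p_1(\bar\tau')=p_1(\bar\tau',\bar\tau)+p_1(\bar\tau,[X])+p_1(\bar\tau')=p_1(\bar\tau,[X])+p_1(\bar\tau)$. By Lemma~\ref{lem1}, $g$ descends to homotopy classes of torsion combings, and it suffices to check that $g$ satisfies the two properties characterising the map $p_1$ of Theorem~\ref{thm_defp1X}. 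For (i): if $X$ extends to a parallelization $\tau$ of $M$, perturb $X$ slightly to a combing $X'$ that is compatible with the genuine parallelization $\bar\tau=\tau$; since $X'$ is $C^0$\-close to $\tau(e_1)$ the link $L_{\tau=-X'}=P_M\big(\tau(e_1)(M)\cap(-X')(M)\big)$ is empty, and since $\tau$ has empty link the correction term $lk_{\b S^2}\big(e_1-(-e_1),P_{\b S^2}\circ\tau_d^{-1}\circ X'(L_{E_1^d=-E_1^g})\big)$ of Lemma~\ref{lem1} vanishes too, whence $p_1(\tau,[X'])=0$ and $g(X)=g(X')=p_1(\tau)=p_1([X])$. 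For (ii): given transverse torsion combings $X,Y$ of $M$, pick generically a pseudo\-parallelization $\bar\tau$ compatible with both; then $g(Y)-g(X)=p_1(\bar\tau,[Y])-p_1(\bar\tau,[X])=4\cdot lk_M(L_{X=Y},L_{X=-Y})$ by (3). By the uniqueness part of Theorem~\ref{thm_defp1X}, $g=p_1$. Finally, for closed $M_1,M_2$, using $p_1([X_1],\bar\tau_1)=-p_1(\bar\tau_1,[X_1])$ and $p_1(\bar\tau_1,\bar\tau_2)=p_1(\bar\tau_2)-p_1(\bar\tau_1)$,
$$
p_1([X_1],[X_2])=\big(p_1(\bar\tau_2,[X_2])+p_1(\bar\tau_2)\big)-\big(p_1(\bar\tau_1,[X_1])+p_1(\bar\tau_1)\big)=g(X_2)-g(X_1)=p_1([X_2])-p_1([X_1]).
$$

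The main obstacle is properties (2) and (3): transporting the intersection\-of\-chains argument from genuine parallelizations to pseudo\-parallelizations. The difficulty is that the Siamese sections $E_1^d$ and $E_1^g$ disagree over the framed tubular neighbourhood $N(\gamma)$, so the pseudo\-section $\bar\tau(M\times\{e_1\})=\tfrac12\big(E_1^d(M)+E_1^g(M)\big)$ is not the graph of a section and its intersections with $X(M)$ and $-X(M)$ must be desingularized along $\gamma$; the term $lk_{\b S^2}\big(e_1-(-e_1),P_{\b S^2}\circ\tau_d^{-1}\circ X(L_{E_1^d=-E_1^g})\big)$ is exactly what compensates for this discrepancy, and verifying that the combination in Lemma~\ref{lem1} varies additively when $X$ or $\bar\tau$ is changed — and in particular produces the clean linking number in (3) — is the heart of the computation and the place where pseudo\-parallelizations (as opposed to genuine ones) are indispensable. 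By contrast, property (1) is the familiar additivity of relative Pontrjagin numbers under composition of signature\-zero cobordisms, and is routine once the complex trivialization attached to a pseudo\-parallelization is available.
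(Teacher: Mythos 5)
Your proposal is correct in outline and uses the same underlying ingredients as the paper (cocycle relations for pseudo\-parallelizations, homotopy invariance of $p_1(\bar\tau,[X])$ from Lemma~\ref{lem1}, and a reduction to Theorem~\ref{thm_defp1X} for the closed case), but it reorganizes the argument in a way worth flagging. The paper's proof is built on the $\go P$\-chain formalism: Lemma~\ref{cor_reformcombingsb} establishes $[\go P(\bar\tau,X)]=[\go P(\bar\tau,\tau)]-[\go P(X,\tau)]$, which is precisely your property~(2) stated homologically, and the well\-definedness of $p_1([X_1],[X_2])$ then drops out by evaluating $\go P$\-cycles against $[0,1]\times X_1(M)$ and applying Proposition~\ref{prop_ppara}. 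For the closed case the paper does a direct chain computation in Proposition~\ref{cor_reformcombings}: it factors $\tilde p_1(\bar\tau,[X])$ through a genuine parallelization $\tau$, computes the triple intersection $4\langle C_4^+(\tau,X),C_4^-(\tau,X),[0,1]\times E_1^\tau(M)\rangle=4\,lk(L_{E_1^\tau=X},L_{E_1^\tau=-X})$, and only then invokes Theorem~\ref{thm_defp1X}. Your alternative — defining $g(X)=p_1(\bar\tau,[X])+p_1(\bar\tau)$ and verifying that $g$ satisfies the two axioms characterising Lescop's $p_1$ — is cleaner to state but moves the burden onto your property~(3), which is not a lemma in the paper: it is essentially Theorem~\ref{formuleplus} read relative to a fixed pseudo\-parallelization, and in the paper Theorem~\ref{formuleplus} is \emph{deduced from} Theorem~\ref{thm_defp1Xb} (via Lemma~\ref{lemmaplus1}). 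To avoid circularity you would have to establish (3) independently by the same kind of $C_4\cap C_4$ computation that appears in Proposition~\ref{cor_reformcombings} and in the proof of Theorem~\ref{formuleplus}; once that is accepted, the rest of your bookkeeping — including the small\-perturbation check of axiom~(i), which correctly uses that for a genuine parallelization $L_{E_1^d=-E_1^g}=\emptyset$ so the correction term of Lemma~\ref{lem1} vanishes — is sound. In short: same hard content, shifted from a single direct evaluation to a characterisation argument plus one additional variation lemma; each version is about the same amount of work, and yours makes the algebraic structure of the invariant more visible at the cost of needing Theorem~\ref{formuleplus} earlier in the logical order.
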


Under the assumptions of Theorem \ref{thm_D2nd0}, we see that it would be impossible to naively define $p_1([X_{|A_1}],[{X_{\lbrace 1 \rbrace}}_{|B_1}])$ as $p_1([X])-p_1([{X_{\lbrace 1 \rbrace}}])$, where $X$ extends $X_{|A_1}$ to the closed manifold $M$, and ${X_{\lbrace 1 \rbrace}}$ extends ${X_{\lbrace 1 \rbrace}}_{|B_1}$ in the same way to $M(\sfrac{B_1}{A_1})$. Indeed Theorem \ref{thm_D2nd0} and the example that follows it show that the expression $\left(p_1([X])-p_1([{X_{\lbrace 1 \rbrace}}])\right)$ depends on the combed manifold $(M,X)$ into which $(A_1,X_{|A_1})$ has been embedded. It even depends on the combing $X$ that extends the combing $X_{|A_1}$ of $A_1$ to $M$ for the fixed manifold $M$ of this example, since 
$$
\left( p_1([X]) - p_1([{X_{\lbrace 1 \rbrace}}]) \right) - \left( p_1([{X_{\lbrace 2 \rbrace}}]) - p_1([{X_{\lbrace 1,2 \rbrace}}]) \right) =-8
$$ 
there. \\

Theorem \ref{thm_defp1Xb} translates as follows in the closed case and it bridges a gap between the two dissimilar generalizations of the Pontrjagin numbers of parallelizations for pseudo-parallelizations and for torsion combings in closed oriented 3-manifolds.
\begin{corollary} \label{cor_p1Xppara}
Let $X$ be a torsion combing of a closed oriented 3-manifold $M$ and let \linebreak $\bar\tau (N(\gamma);\tau_e,\tau_d)$ be a pseudo-parallelization of $M$. Let $E_1^d$ and $E_1^g$ denote the Siamese sections of $\bar\tau$. If $X$ and $\bar \tau$ are compatible, then
$$
\begin{aligned}
p_1([X])
= p_1(\bar \tau) &+ 4\cdot lk_M(L_{\bar\tau=X} , L_{\bar\tau=-X}) \\
&- lk_{\b S^2} \left( e_1-(-e_1) \ , \ P_{\b S^2} \circ \tau_d^{-1} \circ X(L_{{E_1}^d={-E_1}^g})  \right).
\end{aligned}
$$
\end{corollary}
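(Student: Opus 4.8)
The plan is to deduce the formula from Theorem~\ref{thm_defp1Xb} by comparing $X$ with the first vector of a genuine parallelization of $M$. Since every closed oriented $3$-manifold is parallelizable, fix a parallelization $\tau_0$ of $M$; a genuine parallelization is a pseudo-parallelization with empty link, so, setting $\bar\tau_1 = \tau_0$, we have $p_1(\bar\tau_1) = p_1(\tau_\emptyset, \tau_0) = p_1(\tau_0)$. Let $X_0$ be a combing $C^0$-close to the first vector $\tau_0(e_1)$, chosen generically so that $X_0(M)$ is transverse to $\tau_0(e_1)(M)$ and to $-\tau_0(e_1)(M)$ in $UM$; then $X_0$ is compatible with $\tau_0$ in the sense preceding Lemma~\ref{lem1}, it is homotopic to $\tau_0(e_1)$, hence a torsion combing (its normal bundle is trivial) with $p_1([X_0]) = p_1([\tau_0(e_1)]) = p_1(\tau_0)$ by Theorem~\ref{thm_defp1X}(i).

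Now apply Theorem~\ref{thm_defp1Xb} to $M_1 = M_2 = M$ (the boundary conditions being vacuous) with $(X_1,\sigma_{X_1}) = X_0$, $\bar\tau_1 = \tau_0$, $(X_2,\sigma_{X_2}) = X$, $\bar\tau_2 = \bar\tau$. In the closed case it gives
$$
p_1([X]) - p_1([X_0]) = p_1([X_0],\tau_0) + p_1(\tau_0,\bar\tau) + p_1(\bar\tau,[X]).
$$
The crucial observation is that $p_1([X_0],\tau_0) = -p_1(\tau_0,[X_0]) = 0$: since the link $\gamma$ of $\tau_0$ is empty, the $\b S^2$-linking term in the expression of Lemma~\ref{lem1} is vacuous, so $p_1(\tau_0,[X_0]) = 4\cdot lk_M(L_{\tau_0 = X_0}, L_{\tau_0 = -X_0})$; but $X_0$ is $C^0$-close to $\tau_0(e_1)$, hence never antipodal to it, so $L_{\tau_0 = -X_0} = \emptyset$ and this linking number vanishes.

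Finally, feeding in $p_1([X_0]) = p_1(\tau_0) = p_1(\tau_\emptyset,\tau_0)$ together with the additivity of relative Pontrjagin numbers of pseudo-parallelizations under juxtaposition of cobordisms, $p_1(\tau_\emptyset,\tau_0) + p_1(\tau_0,\bar\tau) = p_1(\tau_\emptyset,\bar\tau) = p_1(\bar\tau)$, we obtain $p_1([X]) = p_1(\bar\tau) + p_1(\bar\tau,[X])$, and expanding $p_1(\bar\tau,[X])$ by its definition in Lemma~\ref{lem1} is exactly the asserted identity. The only points requiring care are the simultaneous genericity of the perturbation $X_0$ (an open dense condition, hence harmless) and the additivity of the pseudo-parallelization Pontrjagin number, which is built into the construction recalled in Subsection~\ref{ssec_defppara}; I expect keeping track of which invariant is relative to which object, rather than any hard estimate, to be the main thing to watch.
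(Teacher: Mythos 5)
Your proof is correct. The paper does not give an explicit argument for this corollary — it is presented as the closed-case ``translation'' of Theorem~\ref{thm_defp1Xb}, and the heavy lifting is actually done by Proposition~\ref{cor_reformcombings} (which proves $\tilde p_1(\bar\tau,[X]) = p_1([X]) - p_1(\bar\tau)$ by a direct $4$-chain computation) together with Lemma~\ref{fond}, which identifies $\tilde p_1(\bar\tau,[X])$ with the linking-number expression. Your derivation reaches the same conclusion by a formal route through the \emph{first} part of Theorem~\ref{thm_defp1Xb}: you introduce an auxiliary genuine parallelization $\tau_0$ and a nearby combing $X_0$, observe that $p_1([X_0],\tau_0) = 0$ because the link of $\tau_0$ is empty (killing the $lk_{\b S^2}$ term in Lemma~\ref{lem1}) and $L_{\tau_0 = -X_0} = \emptyset$, and then appeal to the additivity of relative Pontrjagin numbers to absorb $p_1(\tau_0,\bar\tau)$ into $p_1(\bar\tau)$. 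Two points you handled that are worth flagging as genuinely necessary: (i) the perturbation $X_0$ cannot be dispensed with, since $\tau_0(e_1)$ itself is \emph{not} compatible with $\tau_0$ (its graph coincides with, rather than being transverse to, that of $E_1^d = E_1^g$); and (ii) the additivity $p_1(\tau_\emptyset,\tau_0) + p_1(\tau_0,\bar\tau) = p_1(\tau_\emptyset,\bar\tau)$ is only stated in the paper for genuine parallelizations of closed manifolds, but it does follow for pseudo-parallelizations from the cobordism-juxtaposition definition in Subsection~\ref{ssec_defpara}, so your appeal to it is legitimate, though it would be safer to cite the definition explicitly rather than call it ``built into the construction.'' Compared with the paper's route via Proposition~\ref{cor_reformcombings}, yours is a cleaner top-down deduction from the already-established Theorem~\ref{thm_defp1Xb}, at the cost of invoking the auxiliary pair $(\tau_0, X_0)$.
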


Another special case is when genuine parallelizations can be used. The closed case with genuine parallelizations is nothing but C. Lescop's definition of the Pontrjagin number of torsion combings in closed oriented 3-manifolds stated above.
\begin{corollary} \label{cor_compactpara}
Let $(X_1,\sigma_1)$ and $(X_2,\sigma_2)$ be torsion combings of two compact oriented 3-manifolds $M_1$ and $M_2$ with identified boundaries such that $(X_1,\sigma_1)$ and $(X_2,\sigma_2)$ coincide on the boundary. If, for $i \in \lbrace 1 , 2 \rbrace$, $\tau_i \italicegal (E_1^i,E_2^i,E_3^i)$ is a parallelization of $M_i$ such that $(X_i,\sigma_i)$ and $(E_1^i,{E_2^i}_{|\partial M_i})$ are $\partial$-compatible, then
$$
p_1([X_1],[X_2]) = p_1(\tau_1 , \tau_2) + 4 \cdot lk_{M_2}(L_{E_1^{2}=X_2} \ , \ L_{E_1^{2}=-X_2}) - 4 \cdot lk_{M_1}(L_{E_1^{1}=X_1} \ , \ L_{E_1^{1}=-X_1}).
$$
\end{corollary}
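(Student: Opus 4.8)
\section*{Proof proposal for Corollary~\ref{cor_compactpara}}

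The plan is to deduce Corollary~\ref{cor_compactpara} from Theorem~\ref{thm_defp1Xb} by regarding each genuine parallelization $\tau_i$ as a degenerate pseudo-parallelization whose link is empty, and by checking that in this situation every pseudo-parallelization correction term collapses to its classical counterpart.

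First I would note that a parallelization $\tau_i=(E_1^i,E_2^i,E_3^i)$ of $M_i$ \emph{is} the pseudo-parallelization $\bar\tau_i=(N(\gamma_i);\tau_e^i,\tau_d^i)$ with empty link $\gamma_i=\emptyset$: then $N(\gamma_i)=\emptyset$, $\tau_e^i=\tau_i$, the part $\tau_d^i$ is the vacuous parallelization of the empty set, the two Siamese sections of $\bar\tau_i$ coincide with $E_1^i$, the chain $L_{E_1^d=-E_1^g}$ attached to $\bar\tau_i$ is empty, and the pseudo-section $\bar\tau_i(M_i\times\lbrace e_1\rbrace)$ is $E_1^i(M_i)$. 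By construction of the complex trivialization associated with a pseudo-parallelization (Definition~\ref{def_complextriv}), this complex trivialization specializes, when the link is empty, to the complexification $\tau_i\otimes\b C$ of $\tau_i$; hence $p_1(\bar\tau_1,\bar\tau_2)=p_1(\tau_1,\tau_2)$.

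Next I would unwind the compatibility condition. Since both Siamese sections of $\bar\tau_i$ equal $E_1^i$ and the second vector of $\tau_e^i$ is $E_2^i$, saying that $(X_i,\sigma_i)$ is compatible with $\bar\tau_i$ means precisely that $(X_i,\sigma_i)$ and $(E_1^i,{E_2^i}_{|\partial M_i})$ are $\partial$-compatible, the extra conditions $L_{E_1^i=X_i}\cap L_{E_1^i=-X_i}=\emptyset$ being automatic because $X_i(m)$ cannot equal both $E_1^i(m)$ and $-E_1^i(m)$. This is exactly the hypothesis of the corollary. It moreover yields $L_{\bar\tau_i=X_i}=L_{E_1^i=X_i}$, $L_{\bar\tau_i=-X_i}=L_{E_1^i=-X_i}$, and $\rho(X_i)=\bar\tau_i|_{\partial M_i}=\tau_i|_{\partial M_i}$, so that $p_1(\tau_1,\tau_2)$ is well-defined (the two boundary trivializations agree because $X_1$ and $X_2$ coincide on the boundary).

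Finally, Lemma~\ref{lem1} gives $p_1(\bar\tau_i,[X_i])=4\cdot lk_{M_i}(L_{E_1^i=X_i},L_{E_1^i=-X_i})$ for $i\in\lbrace 1,2\rbrace$, since the term $lk_{\b S^2}\big(e_1-(-e_1)\,,\,P_{\b S^2}\circ(\tau_d^i)^{-1}\circ X_i(L_{E_1^d=-E_1^g})\big)$ is the linking number of a cycle with the empty chain and so vanishes; hence $p_1([X_i],\bar\tau_i)=-4\cdot lk_{M_i}(L_{E_1^i=X_i},L_{E_1^i=-X_i})$. Substituting these values, together with $p_1(\bar\tau_1,\bar\tau_2)=p_1(\tau_1,\tau_2)$, into the identity $p_1([X_1],[X_2])=p_1([X_1],\bar\tau_1)+p_1(\bar\tau_1,\bar\tau_2)+p_1(\bar\tau_2,[X_2])$ of Theorem~\ref{thm_defp1Xb} produces the announced formula. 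The one step that requires more than unwinding definitions is the equality $p_1(\bar\tau_1,\bar\tau_2)=p_1(\tau_1,\tau_2)$: it relies on the complex trivialization attached to a pseudo-parallelization being built precisely so as to reduce to $\tau\otimes\b C$ for a genuine parallelization $\tau$, which I expect to be immediate from the material of the subsection defining pseudo-parallelizations rather than a genuine obstacle.
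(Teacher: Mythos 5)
Your proof is correct and takes exactly the route the paper intends: the corollary is stated as an immediate specialization of Theorem~\ref{thm_defp1Xb} and Lemma~\ref{lem1} obtained by taking each $\bar\tau_i$ to be the pseudo-parallelization with empty link, and the paper gives no separate argument. You have filled in all the verifications this specialization requires — the Siamese sections collapsing to $E_1^i$, the compatibility condition reducing to $\partial$-compatibility, the $lk_{\b S^2}$ correction term vanishing since $L_{E_1^d=-E_1^g}=\emptyset$, and the complex trivialization of Definition~\ref{def_complextriv} reducing to $\tau_i\otimes 1_\b C$ when $N(\gamma)=\emptyset$, so that $p_1(\bar\tau_1,\bar\tau_2)=p_1(\tau_1,\tau_2)$ — and each of these checks is accurate.
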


Finally, for torsion combings defined on a fixed compact oriented 3-manifold (which may have boundary), we have the following simple variation formula, as in the closed case.
\begin{theorem} \label{formuleplus}
If $(X, \sigma)$ and $(Y,\sigma)$ are $\partial$-compatible torsion combings of a compact oriented 3-manifold $M$, then
$$
p_1([X],[Y]) = 4 \cdot lk_M(L_{X=Y}, L_{X=-Y}).
$$
\end{theorem}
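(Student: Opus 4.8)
The plan is to run everything through a single pseudo-parallelization of $M$: use Theorem~\ref{thm_defp1Xb} to write $p_1([X],[Y])$ as $p_1(\bar\tau,[Y])-p_1(\bar\tau,[X])$ for a well-chosen $\bar\tau$, expand the two terms by Lemma~\ref{lem1}, and then recognize the resulting combination of linking numbers on $M$ and on $\b S^2$ as $4\cdot lk_M(L_{X=Y},L_{X=-Y})$ by a chain‑intersection computation in the unit tangent bundle $UM$.

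First I would fix one pseudo-parallelization $\bar\tau$ of $M$ that is compatible with \emph{both} $X$ and $Y$. Pseudo-parallelizations of $M$ realizing a prescribed trivialization of $TM_{|\partial M}$ exist (Subsection~\ref{ssec_defppara}); since $\rho(X)=\rho(Y)$, we may start from one whose boundary restriction is this common trivialization, and the compatibility conditions with $X$ and with $Y$ (transversality of the Siamese sections $E_1^d,E_1^g$ against $X,-X,Y,-Y$ in $UM$, plus the emptiness of the two auxiliary intersection links) cut out a generic condition among pseudo-parallelizations with that fixed boundary behaviour, so a transversality argument produces such a $\bar\tau$. Applying Theorem~\ref{thm_defp1Xb} with $M_1=M_2=M$, the identity identification of boundaries, and $\bar\tau_1=\bar\tau_2=\bar\tau$ gives
$$p_1([X],[Y]) = p_1([X],\bar\tau)+p_1(\bar\tau,\bar\tau)+p_1(\bar\tau,[Y]).$$
Here $p_1(\bar\tau,\bar\tau)=0$: take the cobordism $W=M\times[0,1]$, whose signature is zero, and note that the complex trivialization of $\bar\tau$ pulled back along $M\times[0,1]\to M$ (together with the canonical frame of the $[0,1]$-direction) restricts on $\partial W$ to $\tau(\bar\tau,\bar\tau)$, so the defining Pontrjagin obstruction vanishes. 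Combined with $p_1([X],\bar\tau)=-p_1(\bar\tau,[X])$ from Lemma~\ref{lem1}, this yields $p_1([X],[Y])=p_1(\bar\tau,[Y])-p_1(\bar\tau,[X])$.

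It then remains to prove, after substituting the Lemma~\ref{lem1} formula for $p_1(\bar\tau,[Y])$ and $p_1(\bar\tau,[X])$, the identity
$$4\,lk_M(L_{\bar\tau=Y},L_{\bar\tau=-Y})-4\,lk_M(L_{\bar\tau=X},L_{\bar\tau=-X})-\delta_Y+\delta_X \;=\; 4\,lk_M(L_{X=Y},L_{X=-Y}),$$
where $\delta_Z=lk_{\b S^2}\big(e_1-(-e_1),\,P_{\b S^2}\circ\tau_d^{-1}\circ Z(L_{{E_1}^d={-E_1}^g})\big)$. I would prove this by lifting everything to $UM$: the graphs $X(M),Y(M),E_1^d(M),E_1^g(M)$ and their antipodal graphs are $3$-chains in the $6$-manifold $UM$, with $\bar\tau(M\times\{e_1\})=\tfrac12\big(E_1^d(M)+E_1^g(M)\big)$ and similarly for $\{-e_1\}$; since $X,Y$ are torsion combings agreeing with $E_1^d=E_1^g$ on $\partial M$, the relevant differences of these $3$-chains are rationally null-homologous rel boundary, bound explicit rational $4$-chains, and all the links occurring above are images under $P_M$ of transverse chain intersections. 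Expanding the $L_{\bar\tau=\cdot}$ links as half-sums of Siamese-section links and computing the corresponding $4$-chain/$3$-chain intersection numbers in $UM$, the ``cross'' contributions pairing an $X$-graph with a $Y$-graph reassemble, using bilinearity and the symmetry of $lk_M$, into $4\,lk_M(L_{X=Y},L_{X=-Y})$, while the remaining ``diagonal'' contributions cancel against $-\delta_Y+\delta_X$ — the role of the $\b S^2$-linking correction in Lemma~\ref{lem1} being precisely to absorb the defect created by $\bar\tau(M\times\{e_1\})$ being a half-sum rather than the graph of a genuine section, a defect localized near the link $\gamma$ of $\bar\tau$ and measured fibrewise on $\b S^2$.

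This last step is where I expect the real difficulty to lie: the sign- and chain-level bookkeeping of the pseudo-parallelization correction terms in the $UM$ intersection computation. In the sub-case of a closed, genuinely parallelized $M$ (so $E_1^d=E_1^g=E_1=\tau(e_1)$, no link $\gamma$, $\delta_X=\delta_Y=0$) the identity degenerates to the triangle relation $lk_M(L_{X=Y},L_{X=-Y})=lk_M(L_{E_1=Y},L_{E_1=-Y})-lk_M(L_{E_1=X},L_{E_1=-X})$ for transverse torsion combings, which is exactly what comparing Theorem~\ref{thm_defp1X}(ii) for the pairs $(X,Y)$, $(\tau(e_1),X)$ and $(\tau(e_1),Y)$ predicts and which is itself proved by this chain-intersection method; the general statement is its relative, pseudo-parallelized refinement, so I would model the argument on the computation behind \cite{lescopcombing}. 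An alternative, if one prefers to invoke Theorem~\ref{thm_defp1X}(ii) directly, would be to glue $M$ along $\partial M$ to an auxiliary combed $3$-manifold so as to close it up while keeping $X_{|\partial M}=Y_{|\partial M}$ fixed, deduce the formula from the closed case by additivity of relative Pontrjagin numbers under gluing, and check that the auxiliary pieces cancel; but arranging the two extensions to be transverse without creating intersection points along the gluing surface is itself delicate, and the gluing additivity of $p_1([\cdot],[\cdot])$ would first have to be established, so I would expect the direct route above to be the cleaner one.
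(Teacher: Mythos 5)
Your first three steps are fine, and they are indeed the same reduction the paper makes: apply Theorem~\ref{thm_defp1Xb} with $\bar\tau_1=\bar\tau_2=\bar\tau$ to reduce to $p_1(\bar\tau,[Y])-p_1(\bar\tau,[X])$, then feed in Lemma~\ref{lem1}. (You do not even need to argue $p_1(\bar\tau,\bar\tau)=0$ separately, since the proof of Theorem~\ref{thm_defp1Xb} already shows the formula is independent of the chosen pseudo-parallelization — though your $M\times[0,1]$ argument is also correct.) The gap is at the end: the displayed identity
$$
4\,lk_M(L_{\bar\tau=Y},L_{\bar\tau=-Y})-4\,lk_M(L_{\bar\tau=X},L_{\bar\tau=-X})-\delta_Y+\delta_X \;=\; 4\,lk_M(L_{X=Y},L_{X=-Y})
$$
is, after your earlier reductions, \emph{equivalent} to the theorem, and you do not prove it — you describe a plan (``the cross contributions reassemble, the diagonal contributions cancel against $-\delta_Y+\delta_X$'') and explicitly flag that the sign and chain-level bookkeeping is where the difficulty lies. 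Expanding each $L_{\bar\tau=\pm Z}$ as a half-sum over the Siamese sections produces, on the left, eight linking numbers of the form $lk(L_{E_1^\bullet=Z},L_{E_1^\bullet=-Z})$ plus the two $\b S^2$-linking corrections, and it is not at all routine to re-pair these into a single $lk(L_{X=Y},L_{X=-Y})$; you would in effect need to re-prove a cocycle identity at the level of linking numbers.

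The paper sidesteps exactly this. Its Lemma~\ref{lemmaplus1} performs the cancellation at the level of \emph{chains}, not of linking numbers: $p_1([X],\bar\tau)$ and $p_1(\bar\tau,[Y])$ are realized as homology classes of the cycles $\go P(X,\bar\tau)$ and $\go P(\bar\tau,Y)$ in $[0,1]\times UM$, and when the underlying $4$-chains $C_4^\pm(X,\bar\tau)$ and $C_4^\pm(\bar\tau,Y)$ are reparameterized and stacked, the two boundary correction pieces $\mp\{1\}\times\Omega(\bar\tau)$ and $\pm\{0\}\times\Omega(\bar\tau)$ (which are exactly what produce the $\delta$-terms of Lemma~\ref{lem1}) become interior faces and cancel on the nose. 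One is left with $p_1([X],[Y])[S]=4\cdot[C_4(X,Y)\cap C_4(-X,-Y)]$ for \emph{any} $4$-chains with boundary $\{1\}\times Y(M)-\{0\}\times X(M)-[0,1]\times X(\partial M)$ and $\{1\}\times(-Y)(M)-\{0\}\times(-X)(M)-[0,1]\times(-X)(\partial M)$, with no reference to $\bar\tau$. The proof of Theorem~\ref{formuleplus} then just chooses the explicit chains $\bar F_{t_1}(X,Y)-\{t_1\}\times UM_{|\Sigma_{X=-Y}}$ and $\bar F_{t_2}(-X,-Y)-\{t_2\}\times UM_{|\Sigma_{-X=Y}}$, computes the intersection (a two-line calculation since the homotopies are geodesic and disjoint except along $L_{X=\pm Y}$), and evaluates against $[0,1]\times X(M)$ via Lemma~\ref{lem_evaluationII}. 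To close your gap you would either need to reproduce this abstract cancellation at the chain level (i.e.\ prove something like Lemma~\ref{lemmaplus1}), or carry out the full bookkeeping you postpone; in the second route the cancellation of $-\delta_Y+\delta_X$ against the ``diagonal'' terms is not a simple observation but is precisely the content of the $\Omega(\bar\tau)$ face-cancellation, and stating that you ``expect'' it to work is not a proof.
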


\def\Spinc{\mbox{spin$^c$}}

Let $M$ be a compact connected oriented 3-manifold. For all section $\sigma$ of $TM_{|\partial M}$, let $\Spinc(M,\sigma)$ denote the \textit{set of $\Spinc$-structures on $M$ relative to $\sigma$}, \ie the set of homotopy classes on $M \setminus \lbrace \omega \rbrace$ of combings $(X,\sigma)$ of $M$, where $\omega$ is any point in $\mathring M$  (see \cite{gmdeloup}, for a detailed presentation of $\Spinc$-structures). Thanks to Theorem~\ref{formuleplus}, it is possible to classify the torsion combings of a fixed $\Spinc$-structure up to homotopy, thus generalizing a property of the Gompf invariant in the closed case. I thank Gw\'{e}na\"{e}l Massuyeau for suggesting this statement.
\begin{theorem} \label{GM}
Let $(X,\sigma)$ and $(Y,\sigma)$ be $\partial$-compatible torsion combings of a compact connected oriented 3-manifold $M$ which represent the same $\Spinc$-structure. The combings $(X,\sigma)$ and $(Y,\sigma)$ are homotopic relatively to the boundary if and only if $p_1([X],[Y]) =0$.
\end{theorem}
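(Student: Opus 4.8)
The two implications are treated separately: the ``only if'' part is formal and the ``if'' part carries all the content. The starting point for the latter is the classical obstruction-theoretic classification of torsion $2$-plane fields with a fixed relative $\Spinc$-structure, together with Theorem~\ref{formuleplus}.

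Suppose first that $(X,\sigma)$ and $(Y,\sigma)$ are homotopic relatively to $\partial M$. Then $[X]=[Y]$, and since $p_1([\cdot\,],[\cdot\,])$ depends only on the homotopy classes rel $\partial M$ of its arguments (Theorem~\ref{thm_defp1Xb}), we have $p_1([X],[Y])=p_1([X],[X'])$ for any $\partial$-compatible torsion combing $(X',\sigma)$ homotopic to $(X,\sigma)$ rel $\partial M$. Choosing $X'$ close enough to $X$ that $X'\ne -X$ everywhere gives $L_{X=-X'}=\emptyset$, so Theorem~\ref{formuleplus} yields $p_1([X],[Y])=4\cdot lk_M(L_{X=X'},\emptyset)=0$.

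For the converse, I will use the fact (see \cite{gompf}) that, \emph{because} $e_2^M(X^\perp,\sigma)$ is torsion, the set of homotopy classes rel $\partial M$ of combings $(Z,\sigma)$ of the connected manifold $M$ inducing a fixed relative $\Spinc$-structure is a \emph{free} transitive torsor under $H^3(M,\partial M;\b Z)\cong\b Z$; equivalently, there is an additive complete invariant $d(X,Y)\in\b Z$ such that $(X,\sigma)$ and $(Y,\sigma)$ are homotopic rel $\partial M$ if and only if $d(X,Y)=0$. Both $p_1([X],[Y])$ and $d(X,Y)$ depend only on the homotopy classes rel $\partial M$, so I may modify $X$ and $Y$ freely within these classes. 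Since $(X,\sigma)$ and $(Y,\sigma)$ induce the same relative $\Spinc$-structure, $X_{|M\setminus\{\omega\}}$ and $Y_{|M\setminus\{\omega\}}$ are homotopic rel $\partial M$; using the homotopy extension property I arrange that $X=Y$ outside a small ball $B\subset\mathring M$ around $\omega$, that $X_{|B}$ is the constant combing $e_1$ with respect to a trivialization of $TM_{|B}$, and that $Y_{|B}$ coincides with $e_1$ near $\partial B$, so that $Y_{|B}$ is a map $g\colon(B,\partial B)\to(\b S^2,e_1)$ with $[g]=d(X,Y)\in\pi_3(\b S^2)\cong\b Z$. It then remains to evaluate $p_1([X],[Y])$ in this local model: after a standard general-position perturbation making $(X,\sigma)$ and $(Y,\sigma)$ $\partial$-compatible without affecting the relevant linking numbers, Theorem~\ref{formuleplus} gives $p_1([X],[Y])=4\cdot lk_M(L_{X=Y},L_{X=-Y})$, and since away from $B$ the combings are $C^0$-close one has $X\ne -Y$ there, so $L_{X=-Y}$ lies in the ball $B$ while $L_{X=Y}$ meets $B$ in a link isotopic to $g^{-1}(v)$ for a generic $v$ close to $e_1$; bounding $L_{X=-Y}\simeq g^{-1}(-e_1)$ by a rational chain inside $B$ shows that $lk_M(L_{X=Y},L_{X=-Y})=lk_{\b R^3}(g^{-1}(v),g^{-1}(-e_1))$, which is the Hopf invariant of $g$, namely $[g]=d(X,Y)$. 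Hence $p_1([X],[Y])=4\cdot d(X,Y)$, so $p_1([X],[Y])=0$ forces $d(X,Y)=0$ and $(X,\sigma)$, $(Y,\sigma)$ are homotopic rel $\partial M$.

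The main obstacle is the freeness of the $H^3(M,\partial M;\b Z)$-torsor, which is exactly where the torsion hypothesis is indispensable: without it, $p_1([X],[Y])$ would pin down the homotopy class of $(Y,\sigma)$ only modulo a nonzero indeterminacy and the statement would fail. The second, more technical, point is the general-position bookkeeping needed to localize $L_{X=Y}$ and $L_{X=-Y}$ inside $B$ (handling in particular the collar where $g\equiv e_1$) and to recognize their linking number as the Hopf invariant of $g$.
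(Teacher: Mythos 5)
Your proof is correct and takes essentially the same route as the paper: both arguments reduce to a ball $B\subset\mathring M$ outside of which the combings coincide, identify $lk_M(L_{X=Y},L_{X=-Y})$ with the Hopf invariant of the resulting map $B/\partial B\simeq\b S^3\to\b S^2$, and conclude via the isomorphism $\pi_3(\b S^2)\cong\b Z$. The only cosmetic difference is that you package this through the $H^3(M,\partial M;\b Z)$-torsor structure on homotopy classes within a fixed $\Spinc$-structure and its freeness under the torsion hypothesis (citing \cite{gompf}), whereas the paper argues directly that the Hopf invariant vanishing forces the map to be null-homotopic; your ``only if'' paragraph is also more elaborate than needed, since $p_1$ depending only on homotopy classes rel $\partial M$ gives that direction immediately.
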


The key tool in the proof of Theorem~\ref{thm_defp1Xb} is the following generalization of the interpretation of the variation of the Pontrjagin numbers of parallelizations as an algebraic intersection of three chains.
\begin{theorem} \label{prop_varasint}
Let $\tau$ and $\bar \tau$ be two pseudo-parallelizations of a compact oriented 3-manifold $M$ that coincide on $\partial M$ and whose links are disjoint. For any $v \in \b S^2$, there exists a 4-chain $C_4(\tau,\bar\tau ;v)$ of $[0,1]\times UM$ transverse to the boundary of $[0,1] \times UM$ such that
$$
\partial C_4(\tau,\bar\tau ;v) = \lbrace 1 \rbrace \times \bar\tau(M\times\lbrace v \rbrace) - \lbrace 0 \rbrace \times \tau(M\times \lbrace v \rbrace) - [0,1]\times \tau(\partial M \times \lbrace v \rbrace)
$$
and for any $x,y$ and $z$ in $\b S^2$ with pairwise different distances to $e_1$ :
$$
p_1(\tau,\bar\tau)= 4 \cdot \langle C_4(\tau,\bar\tau ;x),C_4(\tau,\bar\tau ;y),C_4(\tau,\bar\tau ;z) \rangle_{[0,1]\times UM}
$$
for any triple of pairwise transverse $C_4(\tau,\bar\tau,x)$, $C_4(\tau,\bar\tau,y)$ and $C_4(\tau,\bar\tau,z)$ that satisfy the hypotheses above.
\end{theorem}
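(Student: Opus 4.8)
The plan is to prove the formula first on the genuine parallelizations that $\tau$ and $\bar\tau$ carry away from their links, and then to assemble the pieces along the link neighbourhoods. The two inputs I would start from are: the definition of the first relative Pontrjagin number $p_1(\tau,\bar\tau)$ of two pseudo-parallelizations, as the Pontrjagin obstruction to extending over a signature-zero cobordism $W$ the complex trivialization of $TW\otimes\b C$ that $\tau$ and $\bar\tau$ determine over $\partial W$ (Definition~\ref{def_complextriv} and the surrounding discussion, after \cite{lescopcube,lescopcombing}); and the known interpretation of this obstruction in the genuine case (\cite{lescopcombing}), namely that for two genuine parallelizations $\tau_1,\tau_2$ of a compact oriented $3$-manifold coinciding on the boundary, $p_1(\tau_1,\tau_2)$ is $4$ times the algebraic triple intersection in $[0,1]\times UM$ of three $4$-chains associated with three admissible directions, each bounded by $\{1\}\times\tau_2(M\times v)-\{0\}\times\tau_1(M\times v)-[0,1]\times\tau_1(\partial M\times v)$. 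Writing $\gamma,\bar\gamma$ for the links of $\tau,\bar\tau$, which are disjoint, I would pick disjoint framed tubular neighbourhoods $N$ of $\gamma$ and $\bar N$ of $\bar\gamma$ and set $M_0=M\setminus(\mathring N\cup\mathring{\bar N})$. Over $M_0$ both $\tau$ and $\bar\tau$ restrict to genuine parallelizations; over $N$, $\bar\tau$ is genuine while $\tau$ contributes its rational direction-$v$ pseudo-section $\tfrac{E_v^d(N)+E_v^g(N)}{2}$; over $\bar N$ the roles are exchanged.

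Over each of the three pieces I would construct a $4$-chain of $[0,1]\times U(\cdot)$ interpolating from the $\tau$-side $3$-chain at level $0$ to the $\bar\tau$-side $3$-chain at level $1$, with lateral part over $[0,1]\times\partial(\cdot)$ a chosen homotopy trace. The decisive point is that $E_v^d=E_v^g$ along $\partial N$ and along $\partial\bar N$: on those gluing tori all relevant sections are honest, and the $\tau$-side and $\bar\tau$-side sections supplied by two adjacent pieces agree in pairs, so choosing the interpolating homotopies to coincide on the gluing tori makes the three pieces glue to a single $4$-chain $C_4(\tau,\bar\tau;v)$ of $[0,1]\times UM$, transverse to $\partial([0,1]\times UM)$ after a perturbation supported in the interiors of the pieces. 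The boundary identity $\partial C_4(\tau,\bar\tau;v)=\{1\}\times\bar\tau(M\times v)-\{0\}\times\tau(M\times v)-[0,1]\times\tau(\partial M\times v)$ is then bookkeeping: the lateral contributions along $[0,1]\times\partial N$ and $[0,1]\times\partial\bar N$ cancel between neighbouring pieces, leaving the single lateral term $[0,1]\times\tau(\partial M\times v)$, which coincides with $[0,1]\times\bar\tau(\partial M\times v)$ since $\tau$ and $\bar\tau$ agree on $\partial M$.

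To evaluate the triple intersection I would use that both $p_1(\tau,\bar\tau)$ and $\langle C_4(\tau,\bar\tau;x),C_4(\tau,\bar\tau;y),C_4(\tau,\bar\tau;z)\rangle$ are additive for the decomposition $M=M_0\cup N\cup\bar N$: for the intersection number because the three summands of each $C_4(\tau,\bar\tau;v)$ lie over disjoint regions while the $2$-dimensional gluing tori miss a generic $0$-dimensional triple intersection, and for the Pontrjagin number by the locality of the obstruction class together with additivity of the signature under the induced splitting of $W$. Over $M_0$ the identity is the genuine case of \cite{lescopcombing} (relative to the lateral homotopy fixed above). Over $N$ and over $\bar N$ it becomes an explicit computation in the standard local model of a pseudo-parallelization near its link, where the discrepancy between $E_v^d$ and $E_v^g$ is controlled by an $SO(3)$-valued twist supported in the neighbourhood, and where one checks that the triple intersection of the local $4$-chains reproduces exactly the share of $p_1(\tau,\bar\tau)$ carried by that link. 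Independence of the chosen $4$-chains and of the three directions then follows from the standard interpolation argument, as in \cite{lescopcombing}: two admissible $4$-chains with the same boundary differ by a $4$-cycle, two admissible triples of directions are joined by paths in $\b S^2$ sweeping out $5$-chains, and in both cases the variation of the triple intersection is governed by pairings that vanish for homological reasons.

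The step I expect to be the real obstacle is the evaluation over the link neighbourhoods. The pseudo-section $\tfrac{E_v^d(N)+E_v^g(N)}{2}$ is a genuinely rational $3$-chain with no canonical bounding chain, so producing the local $4$-chain with the prescribed boundary and then matching its triple-intersection contribution to the link's part of $p_1(\tau,\bar\tau)$ demands an explicit computation in the $SO(3)$-model with every orientation convention tracked; this is where the combinatorial identity behind the normalising factor $4$ has to be re-established in the pseudo-parallelization setting. A lesser nuisance is securing the genericity hypotheses — pairwise transversality of the three $4$-chains and transversality to the boundary of $[0,1]\times UM$ — simultaneously with the gluing, which I would arrange by perturbing only in the interiors of the three pieces.
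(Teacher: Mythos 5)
Your plan diverges from the paper's proof, and as written it has a gap that begins already at the gluing step. You assert that ``the decisive point is that $E_v^d = E_v^g$ along $\partial N$ and along $\partial\bar N$,'' but this is only true for $v = \pm e_1$. On the face $\lbrace b\rbrace\times\gamma\times[-1,1]$ of $\partial N(\gamma)$, the Siamese sections of $\tau$ are $\tau_e(\,\cdot\,, R_{e_1,\pi+\theta(u)}(v))$ and $\tau_e(\,\cdot\,, R_{e_1,-\pi-\theta(u)}(v))$, which agree only if $v$ is fixed by $R_{e_1,2(\pi+\theta(u))}$ for all $u$, i.e.\ only if $v=\pm e_1$. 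For the other two directions $x,y,z$ of the triple intersection the sections do not agree there, and the honest pseudo-section over $N$ is not $\tfrac{E_v^d(N)+E_v^g(N)}{2}$ but includes the correction term $\tfrac12\tau_e(\lbrace b\rbrace\times\gamma\times C_2(v))$. Your gluing of the three local $4$-chains, which you base on the boundary cancellation along the tori, therefore does not go through as stated.

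The second and larger gap is the claimed additivity of $p_1(\tau,\bar\tau)$ over the decomposition $M = M_0\cup N\cup\bar N$. You invoke ``locality of the obstruction class together with additivity of the signature,'' but on the wall $[0,1]\times\partial N(\gamma)$ the two relevant complex trivializations of $TW\otimes\b C$ are $\tau_e\otimes 1_\b C$ at time $0$ and $\bar\tau_e\otimes 1_\b C$ at time $1$, and these are in general different trivializations (the exterior parallelizations of $\tau$ and $\bar\tau$ need not agree away from their links). So the boundary trivialization does not canonically extend over the walls, and the obstruction in $H^4(W,\partial W;\pi_3(SU(4)))$ does not split into contributions over $W_0$, $W_N$, $W_{\bar N}$ without a choice of interpolation whose effect would then have to be tracked through the rest of the argument. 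Finally, you candidly postpone the local computation over $N$ and $\bar N$ as ``the real obstacle''; but that is precisely the input the paper draws from \cite[Lemma~10.9]{lescopcube}, so the heart of the matter is not addressed.

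For contrast, the paper's proof avoids both difficulties by a different reduction. It first quotes the base case from \cite{lescopcube} for a genus-$1$ rational homology handlebody, then shows that the formula propagates to any $M$ embedded in such a handlebody (and to compact $M$ from a closed $M'\supset M$) by extending $\tau$ and $\bar\tau$ over the complement by a \emph{common} pseudo-parallelization $\check\tau$ — a choice for which the extra triple-intersection contribution visibly vanishes. For closed $M$, it takes a Heegaard splitting $M=H_1\cup_\Sigma H_2$ with both links $\gamma,\bar\gamma$ placed inside $H_1$ (so the gluing surface avoids them entirely), introduces an intermediate pseudo-parallelization $\check\tau$ equal to $\tau$ near $H_1$ and to $\bar\tau$ near $H'_2$, and uses $p_1(\tau,\bar\tau)=p_1(\tau,\check\tau)+p_1(\check\tau,\bar\tau)$ together with the embedding argument applied to the handlebody pieces. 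This way, all additivity is reduced to the two tractable cases ``common pseudo-parallelization on a complement'' and ``stacking,'' rather than to a splitting along the link neighbourhoods with its trivialization mismatches.
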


Our general variation formula for Pontrjagin numbers of torsion combings reads as follows for all compact oriented 3-manifolds.

\begin{theorem}\label{thm_D2nd}
Let $(M,X)$ be a compact oriented 3-manifold equipped with a combing, \linebreak let $\lbrace (\sfrac{B_i}{A_i},X_{B_i}) \rbrace_{i \in \lbrace 1,2 \rbrace}$ be two disjoint LP$_\b Q$-surgeries in $(M,X)$, and, for all $I \subset \lbrace 1 , 2 \rbrace$, \linebreak let $X^I = X(\lbrace \sfrac{B_i}{A_i} \rbrace_{i \in I})$.  If $\lbrace X^I \rbrace_{I \subset \lbrace 1,2 \rbrace}$ is a family of torsion combings of the manifolds \linebreak $M_I=M(\lbrace \sfrac{B_i}{A_i} \rbrace_{i \in I})$, then
$$
p_1([X^{\lbrace 2 \rbrace}],[X^{\lbrace 1, 2 \rbrace}])-p_1([X],[X^{\lbrace 1 \rbrace}]) = - 2 \cdot lk_M \left(L_{\lbrace X^I \rbrace}(\sfrac{B_1}{A_1}), L_{\lbrace X^I \rbrace}(\sfrac{B_2}{A_2})\right),
$$
where the right-hand side is defined as in Theorem~\ref{thm_D2nd0}.
\end{theorem}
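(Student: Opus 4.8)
The plan is to reduce the second difference $D:=p_1([X^{\lbrace 2\rbrace}],[X^{\lbrace 1,2\rbrace}])-p_1([X],[X^{\lbrace 1\rbrace}])$ to an algebraic intersection of chains and to recognize that intersection as the linking number of the statement, exploiting throughout the fact that pseudo-parallelizations -- unlike genuine parallelizations -- are compatible with LP$_\b Q$-surgeries. First I would fix a pseudo-parallelization $\bar\tau$ of $M$ compatible with $X$ whose link avoids $A_1\cup A_2$, together with pseudo-parallelizations $\bar\tau_{B_i}$ of the $B_i$ compatible with $X_{B_i}$ and agreeing with $\bar\tau$ on the identified collars of $\partial A_i\simeq\partial B_i$. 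Such $\bar\tau_{B_i}$ exist even when no genuine parallelization of $B_i$ restricts to the prescribed boundary trivialization -- the obstruction lying in $H_1(B_i;\sfrac{\b Z}{2\b Z})$ -- and this is precisely why the argument must be run with pseudo-parallelizations. Gluing then yields, for every $I\subset\lbrace 1,2\rbrace$, a pseudo-parallelization $\bar\tau_I$ of $M_I$ compatible with $X^I$, with $\bar\tau_\emptyset=\bar\tau$ and with $\bar\tau_I$, $\bar\tau_{I'}$ coinciding outside $\bigcup_{i\in I\triangle I'}A_i$.

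By Theorem~\ref{thm_defp1Xb}, for $I'=I\cup\lbrace i\rbrace$ one has $p_1([X^I],[X^{I'}])=p_1([X^I],\bar\tau_I)+p_1(\bar\tau_I,\bar\tau_{I'})+p_1(\bar\tau_{I'},[X^{I'}])$, and each of the three summands carries a four-chain description. The transition term $p_1(\bar\tau_I,\bar\tau_{I'})$ is given by the cobordism form of Theorem~\ref{prop_varasint} -- the relative Pontrjagin number of pseudo-parallelizations being computed by the same triple-intersection formula over a signature-zero cobordism from $M_I$ to $M_{I'}$ realizing the surgery -- while the two endpoint terms are handled by Lemma~\ref{lem1}, which lets me rewrite $4\cdot lk-lk_{\b S^2}(\dots)$ as a triple self-intersection of a 4-chain in $[0,1]\times UM_I$, resp. $[0,1]\times UM_{I'}$. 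Gluing these three 4-chains along their matching boundary faces produces, for each pair $(I,I')$ differing by one surgery, a single 4-chain $\Gamma^{I,I'}(v)$ (for $v\in\b S^2$) over a single 4-manifold assembled from products and the surgery cobordism, with $p_1([X^I],[X^{I'}])=4\langle\Gamma^{I,I'}\rangle_{xyz}$ for $x,y,z$ at pairwise distinct distances from $e_1$, where $\langle\Gamma\rangle_{xyz}$ abbreviates the triple intersection $\langle\Gamma(x),\Gamma(y),\Gamma(z)\rangle$.

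Then $D=4\langle\Gamma^{\lbrace 2\rbrace,\lbrace 1,2\rbrace}\rangle_{xyz}-4\langle\Gamma^{\emptyset,\lbrace 1\rbrace}\rangle_{xyz}$, and both of these 4-chains realize the $\sfrac{B_1}{A_1}$-surgery, the first of them inside the ambient already modified by the disjoint $\sfrac{B_2}{A_2}$-surgery; by the compatibility of the chosen pseudo-parallelizations they can be taken to agree away from the $A_2$-region and to be product-like away from the $A_1$- and $A_2$-regions. Expanding the triple intersections multilinearly and subtracting, the contributions not meeting the $A_2$-region coincide and cancel, the contributions with two or more product-like factors vanish, and what survives is an intersection one of whose factors is carried by the $A_1$-region and another by the $A_2$-region. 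Projecting these factors to $M$ and applying Poincaré duality identifies them -- via Proposition~\ref{prop_linksinhomologyI}, which computes the relevant relative Euler classes -- with rational $1$-cycles of $M$ representing $L_{\lbrace X^I\rbrace}(\sfrac{B_1}{A_1})$ and $L_{\lbrace X^I\rbrace}(\sfrac{B_2}{A_2})$, so that the surviving intersection equals $lk_M\big(L_{\lbrace X^I\rbrace}(\sfrac{B_1}{A_1}),L_{\lbrace X^I\rbrace}(\sfrac{B_2}{A_2})\big)$ up to a universal constant; chasing the signs $(-1)^{|I|}$ and the orientation conventions turns the surviving factor $4$ into $-2$. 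Finally, since every $X^I$ is a torsion combing, each $L_{\lbrace X^I\rbrace}(\sfrac{B_i}{A_i})$ is nullhomologous in $M$, hence bounds a rational chain there, so the pairing $lk_M$ on $\ker(H_1(A_1;\b Q)\to H_1(M;\b Q))\times\ker(H_1(A_2;\b Q)\to H_1(M;\b Q))$ is well-defined and bilinear, as asserted.

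The main obstacle will be the chain-level bookkeeping just sketched, carried out rigorously: arranging all four 4-chains -- each depending on the three auxiliary points $x,y,z\in\b S^2$ -- to be simultaneously transverse and compatibly glued along the telescope of endpoint corrections and surgery cobordisms, while keeping the pseudo-parallelization data under control (the link $\gamma$, the Siamese sections $E_1^d$ and $E_1^g$, and the $\tau_d/\tau_e$ splitting, all of which make the four-chains for pseudo-parallelizations more delicate than for genuine parallelizations), and then justifying the cancellations and the precise identification of the surviving intersection with $lk_M$, signs and constants included, together with checking that every homology class produced along the way is independent of the auxiliary choices.
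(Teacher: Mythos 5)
Your high-level plan — choose pseudo-parallelizations compatible with the surgeries, decompose $p_1([X^I],[X^{I'}])$ via Theorem~\ref{thm_defp1Xb}, realize pieces as chain intersections, expand and cancel, and finally identify the survivors with $L_{\{X^I\}}(\sfrac{B_i}{A_i})$ — is in the right spirit and overlaps in several places with the paper's proof (in particular, the choice of a coherent family of pseudo-parallelizations is exactly Lemma~\ref{norm}, and the identification of the surviving homology classes is Lemma~\ref{inter}). But there is a genuine gap at the central step, and it is not just chain-level bookkeeping.

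You assert that Lemma~\ref{lem1} ``lets me rewrite $4\cdot lk-lk_{\b S^2}(\dots)$ as a triple self-intersection of a 4-chain in $[0,1]\times UM_I$'', and that gluing then yields a single 4-chain $\Gamma^{I,I'}(v)$ with $p_1([X^I],[X^{I'}])=4\langle\Gamma^{I,I'}(x),\Gamma^{I,I'}(y),\Gamma^{I,I'}(z)\rangle$. This is not available. The triple self-intersection formula over a family $v\mapsto C_4(\cdot;v)$ holds only for pseudo-parallelizations (Theorem~\ref{prop_varasint}), because a pseudo-parallelization has a pseudo-section for every $v\in\b S^2$. A combing only furnishes the two sections $X$ and $-X$, i.e.\ 4-chains at $v=\pm e_1$; there is no $v$-family to take a triple self-intersection of. What the paper actually does for the combing endpoints is to form the 2-cycle $\go P(\bar\tau,X)=\{0\}\times\Omega(\bar\tau)+4(C_4^+(\bar\tau,X)\cap C_4^-(\bar\tau,X))$ (a double intersection plus the correction term $\Omega$) and evaluate its class in $H_2([0,1]\times UM;\b Q)/H_T^\rho(M)$ against an auxiliary 4-chain (Lemmas~\ref{lem_evaluationII}, \ref{fond}). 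That evaluation is not of the form $\langle\Gamma(x),\Gamma(y),\Gamma(z)\rangle$, so your gluing into a single $\Gamma^{I,I'}$ does not go through as stated.

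Because of this, the mechanism you propose for the cancellation is also off. The paper does not cancel by expanding a second difference of triple self-intersections; it first invokes Corollary~\ref{cor_FTppara} to eliminate the pseudo-parallelization middle terms, $p_1(\bar\tau^{\{2\}},\bar\tau^{\{1,2\}})-p_1(\bar\tau,\bar\tau^{\{1\}})=0$, then applies the explicit formula of Lemma~\ref{lem1} and reduces everything to linking numbers. Two further points you do not address but the paper needs: (i) the $lk_{\b S^2}$ correction terms from Lemma~\ref{lem1} must cancel, which the paper arranges via the support constraints in Lemma~\ref{norm} (links of the pseudo-parallelizations disjoint from $\cup\,\partial A_i$, coherence of $\tau_d$ and the Siamese sections across surgeries); and (ii) the identification of the surviving classes with the $L_{\{X^I\}}(\sfrac{B_i}{A_i})$ goes through Lemma~\ref{inter}, which in turn uses Proposition~\ref{prop_linksinhomologyI} together with a modified pseudo-parallelization on $A_i$, not Proposition~\ref{prop_linksinhomologyI} alone. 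As written, the proposal would stall at the gluing step and the sign/coefficient bookkeeping ``turning $4$ into $-2$'' has no concrete chain-level support behind it.
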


A direct consequence of this variation formula is that the extended Gompf invariant for torsion combings of compact oriented 3-manifolds is a degree two finite type invariant with respect to LP$_\b Q$-surgeries.
\begin{corollary} \label{cor_FTcombings}
Let $(M,X)$ be a compact oriented 3-manifold equipped with a combing, let $\lbrace ( \sfrac{B_i}{A_i} , X_{B_i} ) \rbrace_{i\in \lbrace 1, \ldots, k\rbrace}$ be a family of disjoint LP$_\b Q$-surgeries in $(M,X)$, and, for all $I \subset \lbrace 1 , \ldots, k \rbrace$, let $(M_I,X^I)$ be the combed manifold obtained by performing the surgeries associated to the data $\lbrace (\sfrac{B_i}{A_i}, X_{B_i}) \rbrace_{i \in I}$. If $k\geqslant 3$, and if $\lbrace X^I \rbrace_{I \subset \lbrace 1, \ldots, k \rbrace}$ is a family of torsion combings of the $\lbrace M_I \rbrace_{I \subset \lbrace 1, \ldots, k \rbrace}$, then
$$
\sum_{I \subset \lbrace 2 , \ldots , k \rbrace} (-1)^{\card(I)} \ p_1 \left( [X^I] , [X^{I\cup\lbrace 1 \rbrace}] \right)=0.
$$
If $\partial M=\emptyset$, this reads
$$
\sum_{I \subset \lbrace 1 , \ldots , k \rbrace} (-1)^{\card(I)} \ p_1 \left( [X^I] \right)=0.
$$
\end{corollary}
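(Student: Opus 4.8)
The plan is to derive the identity from the two–surgery variation formula of Theorem~\ref{thm_D2nd} by the standard telescoping argument for finite type invariants, the only step requiring a genuine geometric argument being that the linking numbers appearing on the right-hand side of Theorem~\ref{thm_D2nd} are unaffected by the LP$_\b Q$-surgeries performed away from the classes being linked. It suffices to prove the first displayed identity for arbitrary $\partial M$: indeed, when $\partial M=\emptyset$ all the $M_I$ are closed, so Theorem~\ref{thm_defp1Xb} gives $p_1([X^I],[X^{I\cup\{1\}}])=p_1([X^{I\cup\{1\}}])-p_1([X^I])$, and splitting $\sum_{I\subset\{1,\ldots,k\}}(-1)^{|I|}p_1([X^I])$ according to whether $1\in I$ and reindexing the $1\in I$ terms over $\{2,\ldots,k\}$ shows this sum equals $-\sum_{I\subset\{2,\ldots,k\}}(-1)^{|I|}p_1([X^I],[X^{I\cup\{1\}}])$.

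Set $S=\sum_{I\subset\{2,\ldots,k\}}(-1)^{|I|}p_1([X^I],[X^{I\cup\{1\}}])$. I would pair the $2^{k-1}$ terms by grouping $I=J$ with $I=J\cup\{2\}$ for $J\subset\{3,\ldots,k\}$, so that
$$
S=\sum_{J\subset\{3,\ldots,k\}}(-1)^{|J|}\left(p_1([X^J],[X^{J\cup\{1\}}])-p_1([X^{J\cup\{2\}}],[X^{J\cup\{1,2\}}])\right).
$$
For each fixed $J$, apply Theorem~\ref{thm_D2nd} with $(M_J,X^J)$ playing the role of $(M,X)$ and with the two disjoint LP$_\b Q$-surgeries $(\sfrac{B_1}{A_1},X_{B_1})$ and $(\sfrac{B_2}{A_2},X_{B_2})$, which are disjoint from the surgery regions $A_j$, $j\in J$. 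The bracket then equals $2\cdot lk_{M_J}(\ell_1,\ell_2)$, where $\ell_i:=L_{\{X^I\}}(\sfrac{B_i}{A_i})\in H_1(A_i;\b Q)$. Since $A_i\subset M\setminus\bigcup_{j\in J}\mathring A_j$ for $i\in\{1,2\}$, we have $(X^J)_{|A_i}=X_{|A_i}$, so the formula for $L_{\{X^I\}}(\sfrac{B_i}{A_i})$ in Theorem~\ref{thm_D2nd0} — built only from $X_{|A_i}$, $X_{B_i}$ and a choice of $\sigma_i$ that drops out — produces the same class $\ell_i$ for every $J$, and Theorem~\ref{thm_D2nd} guarantees $\ell_i\in\ker(H_1(A_i;\b Q)\to H_1(M_J;\b Q))$.

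It remains to show that $lk_{M_J}(\ell_1,\ell_2)$ does not depend on $J\subset\{3,\ldots,k\}$: granting this, $S=2\,lk_M(\ell_1,\ell_2)\sum_{J\subset\{3,\ldots,k\}}(-1)^{|J|}=0$ because $\{3,\ldots,k\}\ne\emptyset$ when $k\ge 3$. By induction it is enough to handle adding one surgery $(\sfrac{B_j}{A_j},X_{B_j})$ with $j\notin J\cup\{1,2\}$. I would argue at the chain level: pick rational $1$-cycles $c_i$ in $A_i$ representing $\ell_i$ and a rational $2$-chain $\Sigma_1$ in $M_J$ with $\partial\Sigma_1=c_1$, all in generic position, so that $lk_{M_J}(\ell_1,\ell_2)=\langle\Sigma_1,c_2\rangle_{M_J}$. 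The $1$-cycle $\gamma=\Sigma_1\cap\partial A_j$ bounds $\Sigma_1\cap A_j$ in $A_j$ (as $c_1\cap A_j=\emptyset$), so $[\gamma]\in\go L_{A_j}$; since $h_*(\go L_{A_j})=\go L_{B_j}$, the image of $\gamma$ in $\partial B_j$ bounds a rational $2$-chain $\Sigma_\gamma$ in $B_j$, and $\Sigma_1':=(\Sigma_1\cap(M_J\setminus\mathring A_j))\cup_h\Sigma_\gamma$ is a $2$-chain of $M_{J\cup\{j\}}$ with $\partial\Sigma_1'=c_1$. Because $c_2\subset A_2$ is disjoint from both $A_j$ and $B_j$, no intersection point with $c_2$ is created or destroyed, so $lk_{M_{J\cup\{j\}}}(\ell_1,\ell_2)=\langle\Sigma_1',c_2\rangle_{M_{J\cup\{j\}}}=\langle\Sigma_1,c_2\rangle_{M_J}=lk_{M_J}(\ell_1,\ell_2)$.

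The formal manipulation of the alternating sum is routine; the only real work is this last step, that LP$_\b Q$-surgeries preserve the relevant linking numbers. The delicate points there are arranging transversality of the rational chains while keeping the perturbations away from $c_1$ and $c_2$, and invoking the Lagrangian-preserving condition $h_*(\go L_{A_j})=\go L_{B_j}$ (equivalently the isomorphism $h_*:H_1(\partial A_j;\b Q)/\go L_{A_j}\to H_1(\partial B_j;\b Q)/\go L_{B_j}$) at exactly the point where the filling $\Sigma_\gamma$ is produced.
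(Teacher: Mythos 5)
Your proof is correct, and it is the argument the paper has in mind when it calls the corollary a ``direct consequence'' of Theorem~\ref{thm_D2nd}: the paper gives no separate proof, so you were necessarily reconstructing the telescoping argument from scratch. The bookkeeping (split on $1\in I$ for the closed case, pair $J$ with $J\cup\{2\}$ in the relative case, feed each pair into Theorem~\ref{thm_D2nd} applied with $(M_J,X^J)$ as base, observe the classes $\ell_i$ are $J$-independent because $(X^J)_{|A_i}=X_{|A_i}$, and kill the alternating sum over $J\subset\{3,\dots,k\}\neq\emptyset$) is exactly right, as is the sign you extract from the variation formula. The one place you reprove something the paper already supplies is the invariance $lk_{M_{J\cup\{j\}}}(\ell_1,\ell_2)=lk_{M_J}(\ell_1,\ell_2)$: your chain-level argument (fill the $1$-cycle $\Sigma_1\cap\partial A_j\in\go L_{A_j}$ inside $B_j$ using $h_*(\go L_{A_j})=\go L_{B_j}$) is precisely the content of Lemma~\ref{prop_redefLPs}, which you could have cited directly once you note that $\ell_1$, $\ell_2$ are represented by rational $1$-cycles supported in $A_1$, $A_2$ and are rationally null-homologous in $M_J$ by the ``Furthermore'' clause of Theorem~\ref{thm_D2nd0}.
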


In the first section of this article, we give details on Lagrangian-preserving surgeries, combings and pseudo-parallelizations. Then, we review the definitions of Pontrjagin numbers of parallelizations and pseudo-parallelizations. The second section ends with a proof of Theorem~\ref{prop_varasint}. The third section is devoted to the proof of Theorem~\ref{thm_defp1Xb} and Theorem~\ref{GM}. Finally, we study the variations of Pontrjagin numbers with respect to Lagrangian-preserving surgeries, and finish the last section by proving Theorem~\ref{thm_D2nd}.\\

\begin{large}\textbf{Acknowledgments.}\end{large} First, let me thank C. Lescop and J.-B. Meilhan for their thorough guidance and support. I also thank M. Eisermann and G. Massuyeau for their careful reading and their useful remarks. 

\renewcommand{\thetheorem}{\arabic{section}.\arabic{theorem}}

\section{More about ...}
\subsection{Lagrangian-preserving surgeries} \label{ssec_LPsurgeries}

Let us first note three easy lemmas, the proofs of which are left to the reader.

\begin{lemma} \label{prop_redefLPs}
Let $(\sfrac{B}{A})$ be an LP$_\b Q$-surgery datum in a compact oriented 3-manifold $M$ and let $L_1$ and $L_2$ be links in $M \minusens \mathring A$. If $L_1$ and $L_2$ are rationally null-homologous in $M$, then they are null-homologous in $M(\sfrac{B}{A})$ and
$$
lk_{M(\sfrac{B}{A})}(L_1,L_2) = lk_M(L_1,L_2).
$$
\end{lemma}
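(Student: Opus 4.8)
The plan is to work at the level of rational chains and exploit the fact that an LP$_\b Q$-surgery changes homology only "inside $B$" in a controlled way. First I would recall the Mayer--Vietoris setup: writing $M' = M(\sfrac{B}{A}) = (M\setminus\mathring A)\cup_h B$ and $M = (M\setminus\mathring A)\cup_h A$, both manifolds share the piece $M_0 := M\setminus\mathring A$, glued to $A$ (resp.\ $B$) along $\Sigma := \partial A \simeq \partial B$. The condition $h_*(\go L_A) = \go L_B$, together with the fact that $A$ and $B$ are rational homology handlebodies of the same genus, is exactly what makes the inclusion $M_0 \hookrightarrow M$ and $M_0 \hookrightarrow M'$ induce "the same" map on $H_1(\,\cdot\,;\b Q)$ in the following precise sense: the kernel of $H_1(\Sigma;\b Q)\to H_1(A;\b Q)$ equals the kernel of $H_1(\Sigma;\b Q)\to H_1(B;\b Q)$ under $h_*$. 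From the Mayer--Vietoris sequences one deduces that $H_1(M;\b Q) \cong H_1(M';\b Q)$ compatibly with the inclusions of $M_0$; in particular a link $L$ contained in $M_0$ is rationally null-homologous in $M$ if and only if it is rationally null-homologous in $M'$. This gives the first assertion (and even the stronger "iff"), but for the linking-number statement I will need a chain-level version.

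Next, assuming $L_1, L_2 \subset M_0$ are rationally null-homologous in $M$, I would produce rational $2$-chains bounding them that are convenient for computing both linking numbers. Pick a rational $2$-chain $\Sigma_1$ in $M$ with $\partial\Sigma_1 = L_1$. The intersection $\Sigma_1 \cap A$ is a rational $2$-chain in $A$ whose boundary is a $1$-cycle $c$ in $\partial A$ (since $L_1 \subset M_0$ is disjoint from $\mathring A$), and $c$ is null-homologous in $A$; because $h_*(\go L_A)=\go L_B$ and $[c]=0$ in $H_1(A;\b Q)$, one gets $h_*[c]=0$ in $H_1(B;\b Q)$, so $h(c)$ bounds a rational $2$-chain $\Sigma_1^B$ in $B$. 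Replacing the part of $\Sigma_1$ inside $A$ by $\Sigma_1^B$ yields a rational $2$-chain $\Sigma_1'$ in $M'$ with $\partial\Sigma_1' = L_1$ whose trace in $M_0$ is unchanged, i.e.\ $\Sigma_1' \cap M_0 = \Sigma_1 \cap M_0$ as chains (after a small perturbation to keep things transverse and supported away from $\Sigma$). Do the same to get $\Sigma_2, \Sigma_2'$ for $L_2$.

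Finally I would compute. After a generic perturbation of $L_1, \Sigma_1$ (resp.\ $L_2, \Sigma_2$) rel nothing — keeping $L_1, L_2$ fixed inside $M_0$ — I can assume $L_2$ is transverse to $\Sigma_1$ and meets it only in $\mathring{M_0}$, and symmetrically; this is possible because $L_1,L_2$ lie in the open set $\mathring{M_0}$, which is common to $M$ and $M'$. Then
$$
lk_M(L_1,L_2) = \langle \Sigma_1, L_2\rangle_M = \langle \Sigma_1 \cap M_0,\, L_2\rangle_{M_0} = \langle \Sigma_1' \cap M_0,\, L_2\rangle_{M_0} = \langle \Sigma_1', L_2\rangle_{M'} = lk_{M'}(L_1,L_2),
$$
where the middle equalities use that all intersection points lie in $\mathring{M_0}$ and that $\Sigma_1$ and $\Sigma_1'$ agree there with matching orientations (the orientations of $M$ and $M'$ restrict to the same orientation of $M_0$). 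The main obstacle is the bookkeeping that makes the replacement step legitimate: one must check that the new chains can be chosen transverse to everything in sight and supported so that no intersection is created or destroyed near the gluing surface $\Sigma$, and that the algebraic count is genuinely orientation-compatible across the two manifolds. All of this is routine transversality and Mayer--Vietoris, which is presumably why the authors leave it to the reader; I would simply remark that the linking number depends only on the germ of the ambient manifold and the bounding chains near $L_1 \cup L_2 \subset \mathring{M_0}$, and the two situations agree there.
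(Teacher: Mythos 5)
Your proof is correct and takes essentially the same route as the paper's: build $\Sigma_1' $ by cutting out $\Sigma_1\cap A$ (whose boundary lies in $\go L_A$, hence maps into $\go L_B$) and replacing it with a bounding chain in $B$, then observe that the intersection with $L_2$ happens entirely in the common piece $M\setminus\mathring A$. The opening Mayer--Vietoris discussion is a harmless detour you don't actually use, and constructing $\Sigma_2'$ is unnecessary since only one bounding chain is needed to compute the linking number, but the core argument matches the paper's.
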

\iffalse
\begin{proof}
For $i \in \lbrace 1,2 \rbrace$, let $\Sigma_i$ be a 2-chain in $M$ such that $\partial \Sigma_i=L_i$ and $\Sigma_i$ is transverse to $\partial A$. Since $(\sfrac{B}{A})$ is an LP$_\b Q$-surgery, $[\Sigma_i\cap \partial A]= 0$ in $H_1(B; \b Q)$. Therefore, for all $i \in \lbrace 1,2 \rbrace$, there exists a 2-chain $\Sigma'_i$ in $M(\sfrac{B}{A})$ such that
$$
\partial \Sigma'_i = L_i \mbox{ \ and \ } \Sigma'_i \cap (M(\sfrac{B}{A})\minusens \mathring B) = \Sigma_i \cap (M \minusens \mathring A).
$$
As a consequence $L_1$ and $L_2$ are null-homologous in $M(\sfrac{B}{A})$ and, since $L_2 \subset M \minusens \mathring A$, it follows that
$$
lk_{M(\sfrac{B}{A})}(L_1,L_2) = \langle \Sigma'_1 , L_2 \rangle_{M(\sfrac{B}{A})} = \langle \Sigma_1 , L_2 \rangle_M = lk_M(L_1,L_2).
$$
\end{proof}
\fi

A \textit{rational homology 3-sphere}, or a $\b Q$HS for short, is a closed oriented 3-manifold with the same homology with rational coefficients as $\b S^3$.

\begin{lemma} \label{prop-LP2}
Let $(\sfrac{B}{A})$ be an LP$_\b Q$-surgery in a compact oriented 3-manifold $M$. If $M$ is a $\b Q$HS, then $M(\sfrac{B}{A})$ is a $\b Q$HS.
\end{lemma}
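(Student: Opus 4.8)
The natural approach is to apply Mayer--Vietoris twice, always decomposing along the common piece $M' := M \setminus \mathring A$, so that $M = M' \cup_{\partial A} A$ and $M(\sfrac{B}{A}) = M' \cup_h B$. Since a genus $g$ rational homology handlebody has the rational homology of a genuine genus $g$ handlebody and $\partial A$, $\partial B$ are closed orientable surfaces of genus $g$, the groups $H_*(A;\b Q)$, $H_*(B;\b Q)$ and $H_*(\partial A;\b Q)$ are known. First I would pin down $H_*(M';\b Q)$ from the decomposition of $M$, using the hypothesis $H_1(M;\b Q) = H_2(M;\b Q) = 0$; then I would feed this into the decomposition of $M(\sfrac{B}{A})$, where the LP$_\b Q$-condition $h_*(\go L_A) = \go L_B$ enters. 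The main obstacle is the bridge between the two computations: one has to check that the inclusion-induced map $i^{M'}_*$ restricts to an \emph{isomorphism} from $\go L_A$ onto $H_1(M';\b Q)$, and this is exactly where the assumption that $M$ is a $\b Q$HS is used.

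For the first computation, note that $M'$ is connected (as $M$ and $A$ are, and $\partial A$ is connected). In the Mayer--Vietoris sequence of $M = M' \cup_{\partial A} A$, the vanishing of $H_2(M;\b Q)$ makes the map
$$
\alpha \colon H_1(\partial A;\b Q) \longrightarrow H_1(M';\b Q) \oplus H_1(A;\b Q), \qquad v \longmapsto \big(i^{M'}_*(v), -i^{A}_*(v)\big)
$$
injective, the vanishing of $H_1(M;\b Q)$ makes it surjective, and a glance at the degree-two and -three terms (using $H_3(M;\b Q) = \b Q$ and $H_2(A;\b Q) = 0$) gives $H_2(M';\b Q) = 0$. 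As $i^A_*$ is onto with $g$-dimensional kernel $\go L_A$, comparing dimensions in the isomorphism $\alpha$ yields $\dim_{\b Q} H_1(M';\b Q) = g$; and since $\alpha$ is injective while $\alpha|_{\go L_A} = (i^{M'}_*|_{\go L_A}, 0)$, the restriction $i^{M'}_*|_{\go L_A} \colon \go L_A \to H_1(M';\b Q)$ is an injection between $g$-dimensional spaces, hence an isomorphism.

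For the second computation, let $\Sigma$ denote the gluing surface $\partial A$, identified with $\partial B$ via $h$, so that the two inclusions of $\Sigma$ into the pieces of $M(\sfrac{B}{A}) = M' \cup_h B$ induce $i^{M'}_*$ and $i^{B}_* \circ h_*$ on first homology. The relevant Mayer--Vietoris map is
$$
\hat\alpha \colon H_1(\Sigma;\b Q) \longrightarrow H_1(M';\b Q) \oplus H_1(B;\b Q), \qquad v \longmapsto \big(i^{M'}_*(v), -i^{B}_*(h_*(v))\big).
$$
Its second component $i^B_* \circ h_*$ has kernel $h_*^{-1}(\go L_B) = \go L_A$ by the LP$_\b Q$-condition, so $\hat\alpha$ restricted to $\ker(i^B_* \circ h_*) = \go L_A$ is just $i^{M'}_*|_{\go L_A}$, which is injective by the first computation. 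Hence $\hat\alpha$ is injective, and since it is a linear map between $2g$-dimensional $\b Q$-vector spaces (using $H_1(M';\b Q) \cong H_1(B;\b Q) \cong \b Q^g$), it is an isomorphism. Running through the rest of the Mayer--Vietoris sequence, with $H_2(M';\b Q) = H_2(B;\b Q) = 0$ and $\Sigma$, $M'$, $B$ connected, then forces $H_2(M(\sfrac{B}{A});\b Q) = 0$ and $H_1(M(\sfrac{B}{A});\b Q) = 0$, while $H_0$ and $H_3$ equal $\b Q$ since $M(\sfrac{B}{A})$ is closed, connected and oriented (equivalently, Poincaré duality turns $H_1 = 0$ into $H_2 = 0$). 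Thus $M(\sfrac{B}{A})$ is a $\b Q$HS, and all steps beyond the isomorphism $i^{M'}_*|_{\go L_A} \colon \go L_A \xrightarrow{\ \cong\ } H_1(M';\b Q)$ are routine bookkeeping with long exact sequences.
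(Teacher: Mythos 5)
Your proof is correct and follows essentially the same route as the paper's: the paper also applies Mayer--Vietoris to $M = A \cup M'$ to see that $M'$ is a genus $g$ rational homology handlebody with $H_1(\partial A;\b Q) \cong H_1(A;\b Q)\oplus H_1(M';\b Q)$, uses the condition $h_*(\go L_A)=\go L_B$ to deduce $H_1(\partial B;\b Q)\cong H_1(B;\b Q)\oplus H_1(M';\b Q)$, and then runs Mayer--Vietoris for $M(\sfrac{B}{A}) = B\cup M'$. The only difference is that you spell out the injectivity argument for $\hat\alpha$ via $i^{M'}_*|_{\go L_A}$ explicitly, whereas the paper compresses this and points to Moussard's work for details.
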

\iffalse
\begin{proof}
Let $M$ be a $\b Q$HS and let $A$ be a $\b Q$HH of genus $g\in \b N$. Using the Mayer-Vietoris sequence associated to $M=A\cup (M\minusens\mathring A)$ shows that $M\minusens \mathring A$ is a $\b Q$HH of genus $g$ and that the inclusions of $\partial A$ into $A$ and into $M\setminus \mathring A$ induce an isomorphism
$$
H_1(\partial A ; \b Q) \simeq H_1(A; \b Q)\oplus H_1(M\minusens\mathring A ; \b Q).
$$
For details, see \cite[Sublemma 4.6]{moussardFTIQHS}. It follows that $M(\sfrac{B}{A})\minusens \mathring B$ is also a genus $g$ $\b Q$HH and, since $(\sfrac{B}{A})$ is an LP$_\b Q$-surgery, the inclusions of $\partial B$ into $B$ and into $M(\sfrac{B}{A})\setminus \mathring B$ induce an isomorphism
$$
H_1(\partial B ; \b Q) \simeq H_1(B; \b Q)\oplus H_1(M(\sfrac{B}{A})\minusens\mathring B ; \b Q).
$$
Using this isomorphism in the Mayer-Vietoris sequence associated to the splitting \linebreak $M(\sfrac{B}{A})=B\cup (M(\sfrac{B}{A})\minusens\mathring B)$ shows that $M(\sfrac{B}{A})$ is a $\b Q$HS.
\end{proof}
\fi

\begin{lemma} \label{phom}
If $A$ is a compact connected orientable 3-manifold with connected boundary and if the map $i^A_* : H_1(\partial A ; \b Q) \rightarrow H_1(A; \b Q)$ induced by the inclusion of $\partial A$ into $A$ is surjective, then $A$ is a rational homology handlebody.
\end{lemma}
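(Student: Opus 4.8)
The plan is to compute $H_*(A;\b Q)$ directly from the long exact sequence of the pair $(A,\partial A)$ together with Poincaré--Lefschetz duality, and to read off that it agrees with the rational homology of the standard handlebody of the appropriate genus. Since $\partial A$ is a closed connected orientable surface, write $g$ for its genus; then $H_0(\partial A;\b Q)\cong\b Q$, $H_1(\partial A;\b Q)\cong\b Q^{2g}$ and $H_2(\partial A;\b Q)\cong\b Q$. Set $b_k=\dim_\b Q H_k(A;\b Q)$. Because $A$ is connected, $b_0=1$; because $A$ is a compact $3$-manifold with non-empty boundary, $b_3=0$; and because $A$ is orientable, an orientation being chosen once and for all, Poincaré--Lefschetz duality gives $H_k(A,\partial A;\b Q)\cong H^{3-k}(A;\b Q)$, a $\b Q$-vector space of dimension $b_{3-k}$. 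All (co)homology below is taken with $\b Q$-coefficients.

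First I would show $b_2=0$. In the exact sequence
\[
H_1(\partial A)\xrightarrow{\ i^A_*\ }H_1(A)\xrightarrow{\ p\ }H_1(A,\partial A)\xrightarrow{\ \partial\ }H_0(\partial A)\longrightarrow H_0(A),
\]
the last arrow is an isomorphism since $\partial A$ and $A$ are connected, so $\partial=0$ and $p$ is onto; but $i^A_*$ is onto by hypothesis, so $p=0$. Hence $H_1(A,\partial A)=0$, and duality gives $b_2=\dim H_1(A,\partial A)=0$, i.e. $H_2(A)=0$.

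Next I would show $b_1=g$. Knowing now that $H_2(A)=0$, the sequence
\[
0\longrightarrow H_2(A,\partial A)\xrightarrow{\ \partial\ }H_1(\partial A)\xrightarrow{\ i^A_*\ }H_1(A)\longrightarrow 0
\]
is exact (injectivity of $\partial$ from $H_2(A)=0$, surjectivity of $i^A_*$ by hypothesis). Counting dimensions and using $\dim H_2(A,\partial A)=b_1$ yields $b_1=2g-b_1$, that is $b_1=g$. Therefore $(b_0,b_1,b_2,b_3)=(1,g,0,0)$, which is precisely the rational homology of the standard genus $g$ handlebody, so $A$ is a rational homology handlebody of genus $g$.

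I do not anticipate a genuine obstacle: the argument is entirely formal, and the only points requiring a little care are the identification of the relevant terms via Poincaré--Lefschetz duality over $\b Q$ (which is where orientability of $A$ is used) and the vanishing $H_3(A;\b Q)=0$ coming from $\partial A\neq\emptyset$. If preferred, one may instead invoke the ``half lives, half dies'' principle, which gives $\dim i^A_*\big(H_1(\partial A;\b Q)\big)=g$ and hence $b_1=g$ from the surjectivity hypothesis, and then deduce $b_2=0$ from the Euler characteristic identity $\chi(A)=\tfrac12\chi(\partial A)=1-g$.
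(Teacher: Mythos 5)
Your proof is correct and takes essentially the same route as the paper: show $H_1(A,\partial A;\b Q)=0$ from the long exact sequence of the pair together with the surjectivity hypothesis, deduce $H_2(A;\b Q)=0$ by Poincar\'e--Lefschetz duality, and then count dimensions in the short exact sequence $0\to H_2(A,\partial A;\b Q)\to H_1(\partial A;\b Q)\to H_1(A;\b Q)\to 0$ using duality once more to conclude $\dim H_1(A;\b Q)=g$. The only cosmetic difference is that the paper phrases the duality step via $H^*$ and the universal coefficient theorem, whereas you record it directly as a rank equality; over $\b Q$ these are the same.
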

\iffalse
\begin{proof}
First, for such a manifold $A$, we have $H_0(A;\b Q)\simeq \b Q$ and $H_3(A;\b Q)\simeq 0$. Second, using the hypothesis on $i^A_*$ in the exact sequence associated to $(A, \partial A)$, we get that $H_1(A,\partial A;\b Q)= 0$. Using Poincaré duality and the universal coefficient theorem, it follows that
$$
H_2(A;\b Q) \simeq H^1(A,\partial A;\b Q) \simeq \mbox{Hom}(H_1(A,\partial A;\b Q),\b Q) = 0.
$$
Moreover, we get the following exact sequence from the exact sequence associated to $(A,\partial A)$~:
$$
0\rightarrow H_2(A,\partial A; \b Q) \rightarrow H_1(\partial A;\b Q) \rightarrow H_1(A;\b Q) \rightarrow 0.
$$
It follows that $\mbox{dim}(H_2(A,\partial A;\b Q))+\mbox{dim}(H_1(A;\b Q))=\mbox{dim}(H_1(\partial A;\b Q)) = 2g$, where $g$ denotes the genus of $\partial A$. However,  
$$
H_2(A,\partial A;\b Q) \simeq H^1(A;\b Q) \simeq \mbox{Hom}(H_1(A;\b Q),\b Q),
$$
hence $\mbox{dim}(H_1(A;\b Q))=g$.
\end{proof}
\fi

\begin{proposition}
Let $A$ be a compact submanifold with connected boundary of a $\b Q$HS $M$, let $B$ be a compact oriented 3-manifold and let $h:\partial A \rightarrow \partial B$ be a homeomorphism. If the surgered manifold $M(\sfrac{B}{A})$ is a $\b Q$HS and if 
$$ lk_{M(\sfrac{B}{A})}(L_1,L_2) = lk_M(L_1,L_2)
$$
for all disjoint links $L_1$ and $L_2$ in $M \minusens \mathring A$, then $(\sfrac{B}{A})$ is an LP$_\b Q$-surgery.
\end{proposition}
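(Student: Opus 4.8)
The plan is to check the two defining properties of an LP$_\b Q$-surgery datum in turn: that $A$ and $B$ are rational homology handlebodies, and that $h_*(\go L_A)=\go L_B$. Write $\Sigma:=\partial A$ and $A':=M\setminus\mathring A$, so that $M=A\cup_\Sigma A'$ and $M(\sfrac{B}{A})=B\cup_\Sigma A'$, the surface $\partial B$ being identified with $\Sigma$ via $h^{-1}$. Since a $\b Q$HS is connected and a closed $3$-submanifold of a connected $3$-manifold is open and closed, hence the whole manifold, no closed component can occur in $A$, $A'$ or $B$ (the ambient $\b Q$HS being $M$, resp. $M$, resp. $M(\sfrac{B}{A})$, which is a $\b Q$HS by hypothesis); as $\Sigma\neq\emptyset$ is connected, $A$, $A'$ and $B$ are therefore connected $3$-manifolds with connected boundary $\Sigma$.

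First I would run the Mayer--Vietoris sequence of $M=A\cup_\Sigma A'$ with $\b Q$-coefficients: because $H_2(M;\b Q)=H_1(M;\b Q)=0$, the map $(i^A_*,i^{A'}_*)$ is an isomorphism $H_1(\Sigma;\b Q)\xrightarrow{\ \sim\ }H_1(A;\b Q)\oplus H_1(A';\b Q)$ (see \cite[Sublemma~4.6]{moussardFTIQHS}). In particular $i^A_*$ and $i^{A'}_*$ are onto, so Lemma~\ref{phom} shows that $A$ and $A'$ are rational homology handlebodies of genus $g:=\mathrm{genus}(\Sigma)$; furthermore the isomorphism above restricts to an isomorphism $i^{A'}_*\colon\go L_A\xrightarrow{\ \sim\ }H_1(A';\b Q)$, and $\dim_{\b Q}\go L_A=g$. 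Repeating this for $M(\sfrac{B}{A})=B\cup_\Sigma A'$, which is a $\b Q$HS by hypothesis, yields that $B$ is a rational homology handlebody of genus $g$, that $i^{A'}_*$ restricts to an isomorphism $h_*^{-1}(\go L_B)\xrightarrow{\ \sim\ }H_1(A';\b Q)$, and that $\dim_{\b Q} h_*^{-1}(\go L_B)=g$. This proves the first property and reduces the rest to showing the inclusion $\go L_A\subseteq h_*^{-1}(\go L_B)$, which by the equality of dimensions will upgrade to $\go L_A=h_*^{-1}(\go L_B)$, i.e. $h_*(\go L_A)=\go L_B$.

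To obtain the inclusion I would fix $a\in\go L_A$, put $w:=i^{A'}_*(a)\in H_1(A';\b Q)$, realise $w$ by a rational $1$-cycle $K$ in $\mathring{A'}$, and let $\tilde a:=(i^{A'}_*)^{-1}(w)\in h_*^{-1}(\go L_B)$ be the unique preimage of $w$ in that Lagrangian. The key computation is that of the two linking numbers of $K$ with an arbitrary rational $1$-cycle $c$ in $\mathring{A'}$ disjoint from $K$. Choosing $1$-cycles $c_a$ and $c_{\tilde a}$ in $\Sigma$ representing $a$ and $\tilde a$: since $[K-c_a]=w-w=0$ in $H_1(A';\b Q)$, the cycle $K-c_a$ bounds a rational $2$-chain $S$ in $A'$; since $a\in\go L_A$, the cycle $c_a$ bounds a rational $2$-chain in $A$; hence $K$ bounds $S$ plus a chain in $A$, and as $c\subseteq\mathring{A'}=M\setminus A$ is disjoint from that chain, $lk_M(K,c)=\langle S,c\rangle_{A'}$. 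Symmetrically, choosing a rational $2$-chain $\tilde S$ in $A'$ with $\partial\tilde S=K-c_{\tilde a}$ and using that $h_*(\tilde a)\in\go L_B$ (so $c_{\tilde a}$ bounds a chain in $B$), one gets $lk_{M(\sfrac{B}{A})}(K,c)=\langle\tilde S,c\rangle_{A'}$.

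The hypothesis then forces $\langle S-\tilde S,c\rangle_{A'}=0$ for every rational $1$-cycle $c$ in $\mathring{A'}$. Now $\partial(S-\tilde S)=c_{\tilde a}-c_a\subseteq\Sigma$, so $S-\tilde S$ is a relative $2$-cycle of $(A',\Sigma)$; since $A'$ is compact and oriented, the intersection pairing $H_2(A',\partial A';\b Q)\times H_1(A';\b Q)\to\b Q$ is nondegenerate by Poincar\'e--Lefschetz duality over a field, so $[S-\tilde S]=0$ in $H_2(A',\partial A';\b Q)$. Applying the connecting homomorphism $\partial\colon H_2(A',\partial A';\b Q)\to H_1(\partial A';\b Q)$ gives $0=\partial[S-\tilde S]=\tilde a-a$, hence $a=\tilde a\in h_*^{-1}(\go L_B)$; letting $a$ range over $\go L_A$ yields $\go L_A\subseteq h_*^{-1}(\go L_B)$, whence $h_*(\go L_A)=\go L_B$ and $(\sfrac{B}{A})$ is an LP$_\b Q$-surgery datum. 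The step I expect to be the main obstacle is the linking-number computation of the third paragraph: one has to fix orientation conventions and, above all, arrange the auxiliary $2$-chains on the correct side (the bounding chain of $c_a$ inside $A$, the chain $S$ inside $A'$, and likewise after the surgery), so that the difference $lk_M(K,c)-lk_{M(\sfrac{B}{A})}(K,c)$ collapses to the single homological pairing $\langle S-\tilde S,c\rangle_{A'}$ with the fixed relative class $[S-\tilde S]$, whose boundary reads off $\tilde a-a$. Everything else is Mayer--Vietoris, Poincar\'e--Lefschetz duality, and Lemma~\ref{phom}.
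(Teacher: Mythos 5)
Your proof is correct, and its first half coincides with the paper's: the Mayer--Vietoris sequences for $M=A\cup_\Sigma (M\setminus\mathring A)$ and $M(\sfrac{B}{A})=B\cup_\Sigma(M\setminus\mathring A)$ show that $i^A_*$ and $i^B_*$ are onto, so $A$ and $B$ are rational homology handlebodies by Lemma~\ref{phom}, and a dimension count pins down $\dim_{\b Q}\go L_A=\dim_{\b Q}h_*^{-1}(\go L_B)=g$. Where the two arguments diverge is in showing $h_*(\go L_A)=\go L_B$. The paper works entirely on $\partial A$: writing $P_{\go L_A}$ and $P_{\go L_B}$ for the projections of $H_1(\partial A;\b Q)$ onto $\go L_A$ and $h_*^{-1}(\go L_B)$ along the common complement $\go L_{M\setminus\mathring A}$, it invokes the formula $\langle P_{\go L_A}(y),x\rangle_{\partial A}=lk_M(\{1\}\times y,\{0\}\times x)$ for pushoffs of curves into a collar of $\partial A$ inside $M\setminus\mathring A$; the linking hypothesis then yields $P_{\go L_A}=P_{\go L_B}$ in one line. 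You instead fix $a\in\go L_A$, realize $w:=i^{M\setminus\mathring A}_*(a)$ by a $1$-cycle $K$ in the interior of $M\setminus\mathring A$, build two relative $2$-cycles $S,\tilde S$ of $(M\setminus\mathring A,\Sigma)$ with $\partial S=K-c_a$ and $\partial\tilde S=K-c_{\tilde a}$, and use the linking hypothesis together with the nondegeneracy of the Poincar\'e--Lefschetz pairing $H_2(M\setminus\mathring A,\Sigma;\b Q)\times H_1(M\setminus\mathring A;\b Q)\to\b Q$ to conclude $[S-\tilde S]=0$, whence $a=\tilde a$. Both routes rest on the same mechanism (rational linking numbers of curves in $M\setminus\mathring A$ detect which boundary classes bound on the $A$-side); the paper's phrasing is more compact because it packages the bookkeeping into the projection formula, which it treats as known, while your version effectively derives the content of that formula from scratch via Poincar\'e--Lefschetz duality, at the modest cost of a few extra steps.
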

\begin{proof}
Using the Mayer-Vietoris exact sequences associated to $M=A\cup (M\minusens\mathring A)$ and \linebreak $M(\sfrac{B}{A})=B\cup (M(\sfrac{B}{A})\minusens\mathring B)$, we get that the maps $i_*^A : H_1(\partial A ; \b Q) \longrightarrow H_1(A; \b Q)$ and \linebreak $i_*^B : H_1(\partial B ; \b Q) \longrightarrow H_1(B; \b Q)$ induced by the inclusions of $\partial A$ and $\partial B$ into $A$ and $B$ are surjective. Using Lemma~\ref{phom}, it follows that $A$ and $B$ are rational homology handlebodies. Moreover, $A$ and $B$ have the same genus since $h : \partial A \rightarrow \partial B$ is a homeomorphism. \\

Let $P_{\go L_A}$ and $P_{\go L_B}$ denote the projections from $H_1(\partial A;\b Q)$ onto $\go L_A$ and $\go L_B$, respectively, with kernel $\go L_{M\setminus \mathring A}$. Consider a collar $[0,1]\times \partial A$ of $\partial A$ such that $\lbrace 0 \rbrace \times \partial A \simeq \partial A$ and note that for all 1-cycles $x$ and $y$ of $\partial A$ :
$$
\langle P_{\go L_A}(y), x \rangle_{\partial A} 
= lk_M(\lbrace 1 \rbrace \times y, \lbrace 0 \rbrace \times x )
= lk_{M(\sfrac{B}{A})}(\lbrace 1 \rbrace \times y, \lbrace 0 \rbrace \times x)
=\langle P_{\go L_B}(y), x \rangle_{\partial B},
$$
so that $P_{\go L_B}=P_{\go L_A}$ and $h_*(\go L_A)=\go L_B$.
\end{proof}

\subsection{Combings}
\begin{proposition} \label{prop_linksandsigns}
If $X$ and $Y$ are $\partial$-compatible combings of a compact oriented 3-manifold $M$, then
$$
L_{X=Y}=L_{Y=X} \mbox{ \ and \ } L_{X=Y}= - L_{-X=-Y}.
$$
\end{proposition}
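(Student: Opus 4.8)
The plan is to reduce both equalities to sign computations on the oriented $1$-manifold $X(M)\cap Y(M)\subset UM$, using that $L_{X=Y}$ is by definition the image under $P_M$ of the part of $X(M)\cap Y(M)$ lying over $\mathring M$, oriented as an intersection of submanifolds of $UM$ with the conventions recalled at the beginning of the article (the rank-$2$ normal bundle of the graph $X(M)$ in $UM$ being identified with the vertical subbundle, that is, with $X^\perp$, and oriented accordingly). For $L_{X=Y}=L_{Y=X}$ the two links coincide as sets, so only the orientation is at stake; and since the submanifolds $X(M)$ and $Y(M)$ of the $5$-manifold $UM$ have dimension $3$, their normal bundles $N(X(M))$ and $N(Y(M))$ in $UM$ both have rank $2$. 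Now $X(M)\cap Y(M)$ is oriented by $\mathrm{or}(UM)=\mathrm{or}(N(X(M)))\cdot\mathrm{or}(N(Y(M)))\cdot\mathrm{or}(X(M)\cap Y(M))$, while $Y(M)\cap X(M)$ is oriented by the same formula with the two normal factors exchanged; these factorizations differ by $(-1)^{2\cdot 2}=+1$, so the two intersection orientations agree, and pushing forward by $P_M$ gives $L_{X=Y}=L_{Y=X}$.

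For $L_{X=Y}=-L_{-X=-Y}$, I would bring in the fibrewise antipodal map $a\colon UM\to UM$, $(m,v)\mapsto(m,-v)$. It is a diffeomorphism with $P_M\circ a=P_M$ and $a(UM_{|\partial M})=UM_{|\partial M}$ which carries $X(M)$ to $(-X)(M)$, $Y(M)$ to $(-Y)(M)$, and hence $X(M)\cap Y(M)$ onto $(-X)(M)\cap(-Y)(M)$; from these facts one first checks routinely that $(-X,\sigma_X)$ and $(-Y,\sigma_Y)$ are again $\partial$-compatible, so that $L_{-X=-Y}$ is defined. The orientation input is that $a$ covers $\mathrm{id}_M$ and restricts on each fibre to the antipodal involution of $S^2$, which has degree $-1$: this makes $a$ reverse $\mathrm{or}(UM)$, and it also makes $da$ reverse the orientation of $N(X(M))$ and of $N(Y(M))$, each being identified with a vertical subbundle on which $da$ acts as the differential of the antipodal map of $S^2$. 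Feeding these three sign changes into the relation $\mathrm{or}(UM)=\mathrm{or}(N((-X)(M)))\cdot\mathrm{or}(N((-Y)(M)))\cdot\mathrm{or}((-X)(M)\cap(-Y)(M))$ and comparing it with the image under $a$ of the corresponding relation for $X(M)\cap Y(M)$, the two sign changes from the normal bundles cancel while the one from $\mathrm{or}(UM)$ survives; hence $a$ reverses the intersection orientation, and, since $P_M\circ a=P_M$, applying $P_M$ yields $L_{-X=-Y}=-L_{X=Y}$.

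The only genuinely delicate point is pinning down the orientation conventions: one must be sure that $L_{X=Y}$ really carries the push-forward of the intersection orientation of $X(M)\cap Y(M)$ --- equivalently, that the vertical subbundle of $UM$ along $X(M)$, identified with $X^\perp$, carries exactly the orientation of $X^\perp$ used throughout to coorient zero sets of sections of $X^\perp$. Once that is granted, the whole argument is parity bookkeeping: the two codimensions in $UM$ are both $2$, and the antipodal map of $S^2$ has degree $-1$. The remaining verifications --- the $\partial$-compatibility of $(-X,-Y)$, the identities $P_M\circ a=P_M$ and $a(UM_{|\partial M})=UM_{|\partial M}$, and $\overline{a(S)}=a(\overline{S})$ for subsets $S$ --- are immediate.
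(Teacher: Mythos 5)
Your proof is correct and takes essentially the same approach as the paper: the first equality is the observation that swapping the two rank-$2$ normal factors in the intersection orientation convention is an even permutation, and the second uses the fiberwise antipodal involution $\iota$ of $UM$ (your $a$), noting that it reverses the orientation of $UM$ and the coorientations of $X(M)$ and $Y(M)$, so that exactly one sign survives in the bookkeeping.
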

\begin{proof}
First, by definition, the link $L_{X=Y}$ is the projection of the intersection of the sections $X(\mathring M)$ and $Y(\mathring M)$. This intersection is oriented so that 
$$
NX(\mathring M) \oplus NY(\mathring M) \oplus T(X(\mathring M)\cap Y(\mathring M))
$$
orients $UM$, fiberwise. Since the normal bundles $NX(\mathring M)$ and $NY(\mathring M)$ have dimension 2, the isomorphism permuting them is orientation-preserving so that $L_{X=Y}=L_{Y=X}$. Second, $(-X)(\mathring M)\cap(-Y)(\mathring M)$ is the image of $X(\mathring M)\cap Y(\mathring M)$ under the map $\iota$ from $UM$ to itself which acts on each fiber as the antipodal map. This map reverses the orientation of $UM$ as well as the coorientations of $X(\mathring M)$ and $Y(\mathring M)$, \ie
$$
\begin{aligned}
&N(-X)(M)=-\iota(NX(M)), \ N(-Y)(M)=-\iota(NY(M)). \\ 
\end{aligned}
$$
Since $N(-X)(M) \oplus N(-Y)(M) \oplus T((-X)(M)\cap (-Y)(M))$ has the orientation of $UM$
$$
T((-X)(M)\cap (-Y)(M)) = -\iota(T(X(M)\cap Y(M))).
$$
Hence, $L_{X=Y}= - L_{-X=-Y}$.
\end{proof}

\begin{definition}
Let $M$ be a compact oriented 3-manifold and let $L$ be a link in $\mathring M$. Define the \textit{blow up of $M$ along $L$} as the 3-manifold $Bl(M,L)$ constructed from $M$ in which $L$ is replaced by its unit normal bundle in $M$. The 3-manifold $Bl(M,L)$ inherits a canonical differential structure. See \cite[Definition 3.5]{lescopcombing} for a detailed description.
\end{definition}

\begin{lemma} \label{lem_phomotopy}
Let $X$ and $Y$ be $\partial$-compatible combings of a compact oriented 3-manifold $M$. There exists a 4-chain $\bar F(X,Y)$ of $UM$ with boundary~:
$$
\partial \bar F(X,Y) = Y(M) - X(M) + UM_{| L_{X=-Y}}.
$$
\end{lemma}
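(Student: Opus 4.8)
The plan is to realize $\bar F(X,Y)$ as the trace of a fibrewise geodesic homotopy from $X$ to $Y$, desingularized along $L:=L_{X=-Y}$ by the blow-up $Bl(M,L)$ of the preceding definition. First I would carry out the homotopy in the fibres of $UM$. Since $X$ and $Y$ coincide on $\partial M$, one has $X\neq -Y$ near $\partial M$, so $L$ is a closed link contained in $\mathring M$; and since $X(\mathring M)$ is transverse to $(-Y)(\mathring M)$ in $UM$, it is a genuine $1$-submanifold. For $m\in M\setminus L$ we have $X(m)\neq -Y(m)$, so there is a unique constant-speed shortest great-circle arc $\gamma_m:[0,1]\to U_mM$ from $X(m)$ to $Y(m)$ (the constant path when $X(m)=Y(m)$), depending continuously on $m$. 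Setting $F(t,m)=\gamma_m(t)$ defines a continuous map $F:[0,1]\times(M\setminus L)\to UM$ that covers $P_M$, satisfies $F(0,\cdot)=X$ and $F(1,\cdot)=Y$, and is independent of $t$ over $\partial M$ (where $X=Y$).

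Next I would resolve the indeterminacy of $F$ along $L$ on the blow-up $N:=Bl(M,L)$, with its canonical projection $p:N\to M$. Recall that $p$ is an orientation-preserving diffeomorphism over $M\setminus L$, that over $L$ it is the bundle projection of the unit normal bundle $\partial_0:=UNL$, which is a boundary component of $N$ (the other being $\partial M$), and that $p$ has degree $1$. The key point — essentially the local computation of \cite{lescopcombing} — is that $F$ extends to a continuous map $\bar F:[0,1]\times N\to UM$ covering $p$. To see this, fix $\ell\in L$, set $v=X(\ell)=-Y(\ell)$, and choose local coordinates near $\ell$ in which $L$ is a coordinate axis together with a local trivialization of $UM$ identifying nearby fibres with $\b S^2$. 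Transversality of $X(\mathring M)$ and $(-Y)(\mathring M)$ in $UM$ is then equivalent to the statement that the first-order deviation
$$
N_\ell L\ni u\longmapsto D(u):=\lim_{\varepsilon\to 0^+}\tfrac{1}{\varepsilon}\big(X(\ell+\varepsilon u)-(-Y)(\ell+\varepsilon u)\big)\in T_v\b S^2
$$
is a linear isomorphism from the normal plane $N_\ell L$ onto $T_v\b S^2$. A direct computation then shows that, as $m\to\ell$ along a unit normal direction $u$, the arc $\gamma_m$ converges uniformly to the half great-circle from $v$ to $-v$ that leaves $v$ in the direction of $D(u)$; since $D$ is a linear isomorphism, this limit depends continuously and bijectively on $u\in UN_\ell L\cong\b S^1$. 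This furnishes the continuous extension $\bar F$, and identifies the restriction of $\bar F$ to $[0,1]\times(\partial_0)_\ell\cong[0,1]\times\b S^1$ — which takes values in $U_\ell M\cong\b S^2$ — with a degree $\pm 1$ map collapsing $\{0\}\times\b S^1$ to $v$ and $\{1\}\times\b S^1$ to $-v$. Fibred over $L$, the map $\bar F:[0,1]\times\partial_0\to UM_{|L}$ therefore has degree $\pm 1$.

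I would then set $\bar F(X,Y):=\bar F_*\big([0,1]\times N\big)$, the pushforward of the fundamental chain of the product-oriented $4$-manifold $[0,1]\times N$ (after a smoothing localized near $[0,1]\times L_{X=Y}$ that leaves all boundary strata unchanged). By Stokes,
$$
\partial\big([0,1]\times N\big)=\{1\}\times N-\{0\}\times N-[0,1]\times\partial M-[0,1]\times\partial_0,
$$
so $\partial\bar F(X,Y)$ is the pushforward of the right-hand side. Since $p$ has degree $1$ and $\bar F(0,\cdot)=X\circ p$, $\bar F(1,\cdot)=Y\circ p$, the first two terms push forward to $Y(M)$ and $-X(M)$. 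The term $[0,1]\times\partial M$ has image of dimension at most $2$, because $\bar F$ is $t$-independent there and $X=Y$, so it contributes nothing. The last term pushes forward, by the degree computation above, to $\pm UM_{|L}$; fixing the orientation of $N$ (equivalently of $UNL$) compatibly with the chosen orientation conventions on $UM$ and $UM_{|L}$ makes this sign $+1$. Altogether $\partial\bar F(X,Y)=Y(M)-X(M)+UM_{|L_{X=-Y}}$.

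The step I expect to be the main obstacle is the middle one: showing that the fibrewise shortest-geodesic homotopy extends continuously across the exceptional divisor with the asserted local model. This is precisely where the transversality hypothesis enters, through the normal form of $X$ versus $-Y$ along $L_{X=-Y}$, and it is the computation carried out in \cite{lescopcombing} in the closed case; the only additional observations needed here are that $\partial$-compatibility forces $L_{X=-Y}$ into the interior of $M$ and that the extra boundary stratum $[0,1]\times\partial M$ is negligible because $X$ and $Y$ agree on $\partial M$. The remaining sign for the $UM_{|L_{X=-Y}}$ term is routine orientation bookkeeping.
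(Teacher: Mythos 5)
Your proposal is correct and follows essentially the same route as the paper: both construct the fibrewise shortest-geodesic homotopy from $X$ to $Y$ over $M\setminus L_{X=-Y}$, extend it to the blow-up $Bl(M,L_{X=-Y})$ using the transversality of $X(\mathring M)$ and $(-Y)(\mathring M)$ to produce the exceptional boundary stratum $UM_{|L_{X=-Y}}$, and take $\bar F(X,Y)$ to be the image of $[0,1]\times Bl(M,L_{X=-Y})$. You spell out the local normal form and the boundary accounting (including why $[0,1]\times\partial M$ contributes nothing since $X=Y$ there) a bit more explicitly than the paper, but the construction and the use of the $\partial$-compatibility and transversality hypotheses are the same.
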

\begin{proof}
To construct the desired 4-chain, start with the partial homotopy from $X$ to $Y$
$$
\tilde F(X,Y) :
\left\lbrace
\begin{aligned}
\ [0,1] \times (M\setminus L_{X=-Y}) &\longrightarrow UM\\
(s,m) & \longmapsto \left( m , \l H_X^Y(s,m) \right)
\end{aligned}
\right.
$$
where $\l H_X^Y(s,m)$ is the unique point of the shortest geodesic arc from $X(m)$ to $Y(m)$ such that
$$
d_{\b S^2}(X(m),\l H_X^Y(s,m)) = s \cdot d_{\b S^2}(X(m),Y(m))
$$
where $d_{\b S^2}$ denotes the usual distance on $\b S^2$. Next, extend the map
$$
(s,m) \longmapsto \l H_X^Y(s,m)
$$
on the blow up of $M$ along $L_{X=-Y}$. The section $X$ induces a map
$$
X : NL_{X=-Y} \longrightarrow -Y^\perp(L_{X=-Y})
$$
which is a diffeomorphism on a neighborhood of $\lbrace 0 \rbrace \times L_{X=-Y}$ since $X$ and $Y$ are $\partial$-compatible combings. Furthermore, this diffeomorphism is orientation-preserving by definition of the orientation on $L_{X=-Y}$. So, for $n \in UN_mL_{X=-Y}$, $\l H_X^Y(s,n)$ can be defined as the unique point at distance $s\pi$ from $X(m)$ on the unique half great circle from $X(m)$ to $Y(m)$ through $T_m X(n)$. Thanks to transversality again, the set  $\lbrace \l H_X^Y(s,n) \ | \ s \in [0,1], \ n \in UN_mL_{X=-Y} \rbrace$ is a whole sphere $\b S^2$ for any fixed $m \in L_{X=-Y}$, so that
$$
\partial \tilde F(X,Y)([0,1]\times Bl(M,L_{X=-Y})) =  Y(M) - X(M) + \partial_{int}
$$
where $\partial_{int} \simeq L_{X=-Y} \times \b S^2 $ (see \cite[Proof of Proposition 3.6]{lescopcombing} for the orientation of $\partial_{int}$). Finally, let $\bar F(X,Y)=\tilde F(X,Y)([0,1]\times Bl(M,L_{X=-Y}))$.
\end{proof}

If $X$ and $Y$ are $\partial$-compatible combings of a compact oriented 3-manifold $M$ and if $\sigma$ is a nonvanishing section of $X^\perp_{|\partial M}$, let $\l H^{-Y}_{X,\sigma}$ denote the map from $[0,1] \times (M\minusens L_{X=Y})$ to $UM$ such that, for all $(s,m)$ in $[0,1] \times \partial M$, $\l H^{-Y}_{X,\sigma}(s,m)$ is the unique point at distance $s\pi$ from $X(m)$ on the unique geodesic arc starting from $X(m)$ in the direction of $\sigma(m)$ to $-X(m)=-Y(m)$ and, for all $(s,m)$ in $[0,1] \times (\mathring M \setminus L_{X=Y})$, $\l H^{-Y}_{X,\sigma}(s,m)$ is the unique point on the shortest geodesic arc from $X(m)$ to $-Y(m)$ such that
$$
d_{\b S^2}(X(m),\l H^{-Y}_{X,\sigma}(s,m)) = s \cdot d_{\b S^2}(X(m),-Y(m)).
$$
As in the previous proof, $\l H^{-Y}_{X,\sigma}$ may be extended as a map from $[0,1]\times Bl(M,L_{X=Y})$ to $UM$. In the case of $X=Y$, for all section $\sigma$ of $X^\perp$, nonvanishing on $\partial M$, let $L_{\sigma=0}$ denote the oriented link $\lbrace m \in M \ | \ \sigma(m)= 0 \rbrace$ and define a map $\l H_{X,\sigma}^{-X}$ as the map from $[0,1] \times (M\minusens L_{\sigma=0})$ to $UM$ such that, for all $(s,m)$ in $[0,1] \times (M\minusens L _{\sigma=0})$, $\l H_{X,\sigma}^{-X}(s,m)$ is the unique point at distance $s\pi$ from $X(m)$ on the unique geodesic arc starting from $X(m)$ in the direction of $\sigma(m)$ to $-X(m)$. Note that $L_{\sigma=0}\cap\partial M = \emptyset$, and $[L_{\sigma=0}]=P(e^M_2(X,\sigma_{|\partial M}))$. Here again, $\l H_{X,\sigma}^{-X}$ may be extended as a map from $[0,1]\times Bl(M,L_{\sigma=0})$ to $UM$. \\

In order to simplify notations, if $A$ is a submanifold of a compact oriented 3-manifold $M$, we may implicitly use a parallelization of $M$ to write $UM_{|A}$ as $A\times \b S^2$.

\begin{proposition} \label{prop_links}
If $(X,\sigma)$ and $(Y,\sigma)$ are $\partial$-compatible combings of a compact oriented 3-manifold $M$, then, in $H_3(UM;\b Z)$,
$$
\begin{aligned}
\ [L_{X=-Y} \times \b S^2 ] &= [X(M) - Y(M)] \\
\ [L_{X=Y} \times \b S^2 ] &= [X(M) - (-Y)(M) + \l H^{-Y}_{X,\sigma} ([0,1]\times \partial M)].
\end{aligned}
$$
\end{proposition}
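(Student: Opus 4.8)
The plan is to exhibit, for each of the two identities, an explicit $4$-chain of $UM$ whose boundary is the corresponding cycle; the identity then holds in $H_3(UM;\b Z)$ because a boundary is null-homologous.

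The first identity is immediate from Lemma~\ref{lem_phomotopy}: that lemma provides a $4$-chain $\bar F(X,Y)$ of $UM$ with $\partial \bar F(X,Y) = Y(M) - X(M) + UM_{|L_{X=-Y}}$. Using the implicit parallelization of $M$ to write $UM_{|L_{X=-Y}}$ as $L_{X=-Y}\times\b S^2$, the cycle $Y(M) - X(M) + L_{X=-Y}\times\b S^2$ bounds in $UM$, so $[L_{X=-Y}\times\b S^2] = [X(M)-Y(M)]$ in $H_3(UM;\b Z)$.

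For the second identity, I would rerun the argument of Lemma~\ref{lem_phomotopy} with the straight-line homotopy from $X$ to $Y$ replaced by the homotopy $\l H^{-Y}_{X,\sigma}$ from $X$ to $-Y$ introduced just before the statement. Recall that on $[0,1]\times\partial M$ this homotopy follows the $\sigma$-directed half great circle from $X(m)$ to $-X(m)=-Y(m)$, while on $[0,1]\times(\mathring M\setminus L_{X=Y})$ it follows the shortest geodesic from $X(m)$ to $-Y(m)$ — well defined precisely off $L_{X=Y}$, where $X=Y$ makes $X(m)$ and $-Y(m)$ antipodal. The $\partial$-compatibility of $X$ and $Y$ together with the condition $\overline{X(\mathring M)\cap Y(\mathring M)}\cap UM_{|\partial M}=\emptyset$ keeps $L_{X=Y}$ inside a compact part of $\mathring M$ disjoint from a collar of $\partial M$, so the two prescriptions glue smoothly; and, as recalled there, $\l H^{-Y}_{X,\sigma}$ extends to $[0,1]\times Bl(M,L_{X=Y})$ in such a way that over each $m\in L_{X=Y}$ the arcs $s\mapsto \l H^{-Y}_{X,\sigma}(s,n)$, for $n$ ranging over $UN_mL_{X=Y}$, sweep out the whole fiber sphere of $UM$ over $m$. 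Set $G := \l H^{-Y}_{X,\sigma}([0,1]\times Bl(M,L_{X=Y}))$. The boundary of the $4$-chain $G$ has four faces: the slice $s=0$, equal to $X(M)$; the slice $s=1$, equal to $(-Y)(M)$; the face over the exceptional divisor, equal to $L_{X=Y}\times\b S^2$; and the face over $[0,1]\times\partial M$, equal to $\l H^{-Y}_{X,\sigma}([0,1]\times\partial M)$. Reading off the signs from $\partial([0,1]\times Bl(M,L_{X=Y}))$ yields
$$
\partial G = (-Y)(M) - X(M) + L_{X=Y}\times\b S^2 - \l H^{-Y}_{X,\sigma}([0,1]\times\partial M),
$$
whence $[L_{X=Y}\times\b S^2] = [X(M) - (-Y)(M) + \l H^{-Y}_{X,\sigma}([0,1]\times\partial M)]$ in $H_3(UM;\b Z)$.

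The construction of $G$ is routine; the step that genuinely needs care, exactly as in Lemma~\ref{lem_phomotopy}, is the orientation bookkeeping on the exceptional-divisor face — checking that it carries the orientation of $L_{X=Y}\times\b S^2$ and not its opposite, which requires reconciling the chosen orientations of $L_{X=Y}$, of its normal bundle in $M$, and of the fibers of $UM$. This is precisely the sign computation carried out in \cite[Proof of Proposition 3.6]{lescopcombing}, where the coorientation of $L_{X=Y}$ is set up so that the sign comes out as stated. Checking the smoothness of the blown-up $\l H^{-Y}_{X,\sigma}$ near $\partial M$, i.e. matching the $\sigma$-arc prescription to the geodesic prescription across the collar, is the only other delicate point, and it is handled as in that reference.
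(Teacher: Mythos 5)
Your proposal is correct and follows essentially the same route as the paper: the first identity is read off from Lemma~\ref{lem_phomotopy}, and the second is obtained by building the analogous $4$-chain $\bar F(X,-Y)=\l H^{-Y}_{X,\sigma}([0,1]\times Bl(M,L_{X=Y}))$ and reading off its boundary, exactly as the paper does. The extra remarks about orientation bookkeeping and smoothness near $\partial M$ are sound and simply expand on what the paper delegates to \cite[Proof of Proposition 3.6]{lescopcombing}.
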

\begin{proof}
The first identity is a direct consequence of Lemma~\ref{lem_phomotopy}. The second one can be obtained using a similar construction. Namely, construct a 4-chain $\bar F(X,-Y)$ using the partial homotopy from $X$ to $-Y$ :
$$
\tilde F(X,-Y) : \left\lbrace
\begin{aligned}
 \ [0,1] \times (M\setminus L_{X=Y}) &\longrightarrow UM\\
(s,m) & \longmapsto \left( m , \l H^{-Y}_{X,\sigma}(s,m) \right).
\end{aligned}
\right.
$$
As in the proof of Lemma~\ref{lem_phomotopy}, $\tilde F(X,-Y)$ can be extended to $[0,1] \times Bl(M,L_{X=Y})$. Finally, we get a 4-chain $\bar F(X,-Y)$ of $UM$ with boundary :
$$
\partial \bar F(X,-Y) = (-Y)(M) - X(M) - \l H^{-Y}_{X,\sigma} ([0,1]\times \partial M) + UM_{|L_{X=Y}}.
$$
\end{proof}

\begin{proposition} \label{prop_euler}
Let $X$ be a combing of a compact oriented 3-manifold $M$ and let \linebreak $P : H^2(M, \partial M;\b Z) \rightarrow H_1(M;\b Z)$ be the Poincaré duality isomorphism. If $M$ is closed, then, in $H_3(UM;\b Z)$,
$$
[P(e_2(X^\perp)) \times \b S^2 ] = [X(M) - (-X)(M)],
$$
where $[P(.)\times S^2]$ abusively denotes the homology class of the preimage of a representative of $P(.)$ under the bundle projection $UM \rightarrow M$. In general, if $\sigma$ is a section of $X^\perp$ such that $L_{\sigma=0}\cap\partial M = \emptyset$ then, in $H_3(UM;\b Z)$,
$$
[ P(e_2^M(X^\perp,\sigma_{|\partial M})) \times \b S^2 ] = [X(M) - (-X)(M) +\l H_{X,\sigma}^{-X}([0,1]\times \partial M)].
$$
\end{proposition}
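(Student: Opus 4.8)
The plan is to imitate the construction of Lemma~\ref{lem_phomotopy} and the second identity of Proposition~\ref{prop_links}, with the combing $Y$ replaced by $X$ and with the role of the coincidence link $L_{X=Y}$ now played by the zero locus $L_{\sigma=0}$ of $\sigma$. Indeed, away from $L_{\sigma=0}$ the section $\sigma$ supplies a preferred direction in which to rotate $X$ onto $-X$, exactly as it was used to define the map $\l H^{-Y}_{X,\sigma}$ just before Proposition~\ref{prop_links}. (If $\sigma$ is not already generic, I would first perturb it relatively to $\partial M$ so that $L_{\sigma=0}$ is a genuine link in $\mathring M$; this changes none of the homology classes at play.)

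First I would form the partial homotopy $\tilde F(X,-X)$ from $X$ to $-X$ on $[0,1]\times(M\setminus L_{\sigma=0})$, sending $(s,m)$ to $\big(m,\l H_{X,\sigma}^{-X}(s,m)\big)$; this is well defined off $L_{\sigma=0}$ because there $\sigma(m)$ determines the half great circle from $X(m)$ to $-X(m)$. Next I would extend it over $[0,1]\times Bl(M,L_{\sigma=0})$ exactly as in the proof of Lemma~\ref{lem_phomotopy}: along $L_{\sigma=0}$ the section $\sigma$ is transverse to the zero section and hence induces, near it, a diffeomorphism $NL_{\sigma=0}\to X^\perp$ which is orientation-preserving for the coorientation of $L_{\sigma=0}$ coming from the orientation of $X^\perp$; for a unit normal vector $n$ at $m\in L_{\sigma=0}$ the arcs $s\mapsto\l H_{X,\sigma}^{-X}(s,n)$ are the half great circles from $X(m)$ to $-X(m)$ through $T_mX(n)$, and as $s$ and $n$ vary they fill an entire sphere over $m$. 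This produces a 4-chain $\bar F(X,-X)$ of $UM$ with
$$
\partial \bar F(X,-X) = (-X)(M) - X(M) - \l H_{X,\sigma}^{-X}([0,1]\times\partial M) + UM_{|L_{\sigma=0}}.
$$
Then, using that $[\partial\bar F(X,-X)]=0$ in $H_3(UM;\b Z)$, that $UM_{|L_{\sigma=0}}\simeq L_{\sigma=0}\times\b S^2$, and that $[L_{\sigma=0}]=P\big(e_2^M(X^\perp,\sigma_{|\partial M})\big)$ as recalled just before the statement, I would rearrange to obtain the second identity. The first identity is the special case $\partial M=\emptyset$: the term $\l H_{X,\sigma}^{-X}([0,1]\times\partial M)$ disappears and $e_2^M(X^\perp,\sigma_{|\partial M})$ becomes the absolute Euler class $e_2(X^\perp)$, which no longer depends on $\sigma$.

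The step I expect to be the main obstacle is the orientation bookkeeping at the blow-up locus, namely checking that the spheres swept out near $L_{\sigma=0}$ appear with multiplicity $+1$ and that $L_{\sigma=0}$ enters $\partial\bar F(X,-X)$ with precisely the orientation for which $[L_{\sigma=0}\times\b S^2]=[P(e_2^M(X^\perp,\sigma_{|\partial M}))\times\b S^2]$, i.e. the coorientation induced by the orientation of $X^\perp$. This is the same orientation computation as in \cite[Proof of Proposition~3.6]{lescopcombing} and in Lemma~\ref{lem_phomotopy}, with $X^\perp$ playing the part of $-Y^\perp$; once it is settled the remainder is a direct transcription of arguments already present in the paper.
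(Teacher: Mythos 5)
Your proposal is correct and rests on the same blow-up construction the paper uses; the only real difference in route is that the paper first perturbs $X$ to a nearby combing $Y$ with $L_{X=Y}=L_{\sigma=0}$ and then cites the second identity of Proposition~\ref{prop_links}, while you carry out that same construction directly at $Y=X$, building $\bar F(X,-X)$ over $Bl(M,L_{\sigma=0})$ and reading off the boundary. Both are valid and equally short; the direct version avoids introducing $Y$ at the price of re-running the blow-up argument rather than quoting it. One small slip of the pen: at a point $m\in L_{\sigma=0}$ with unit normal $n$, the half great circle used to extend $\l H_{X,\sigma}^{-X}$ over the blow-up is the one starting from $X(m)$ in the direction of the image of $n$ under the identification $NL_{\sigma=0}\to X^\perp$ induced by $D\sigma$ (the limiting direction of $\sigma(m')/\lVert\sigma(m')\rVert$ as $m'\to m$ along $n$), not ``through $T_m X(n)$'' — that was the relevant direction in Lemma~\ref{lem_phomotopy}, where the identification near $L_{X=-Y}$ came from $X$ rather than from $\sigma$. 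This does not affect the sphere-filling, the boundary formula, or the orientation count, so the rest of your argument goes through as written.
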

\begin{proof}
Recall that $P(e_2^M(X^\perp,\sigma_{|\partial M}))=[L_{\sigma=0}]$. Perturbing $X$ by using $\sigma$, construct a section $Y$ homotopic to $X$ that coincides with $X$ on $\partial M$ and such that $[L_{X=Y}] = P(e_2^M(X^\perp,\sigma_{|\partial M}))$. Using Proposition~\ref{prop_links},
$$
[L_{X=Y} \times \b S^2 ] = [X(M) - (-Y)(M) + \l H^{-Y}_{X,\sigma_{|\partial M}} ([0,1]\times \partial M)],
$$
so that
$$
[ P(e_2^M(X^\perp,\sigma_{|\partial M})) \times \b S^2 ] = [X(M) - (-X)(M) +\l H_{X,\sigma}^{-X}([0,1]\times \partial M)].
$$
\end{proof}

\begin{proposition} \label{prop_linksinhomologyI}
If $(X,\sigma)$ and $(Y,\sigma)$ are $\partial$-compatible combings of a compact oriented 3-manifold $M$, then, in $H_1(M;\b Z)$, 
$$
\begin{aligned}
2 \cdot [L_{X=-Y}] &= P(e_2^M(X^\perp,\sigma)) -  P(e_2^M(Y^\perp,\sigma)),  \\
2 \cdot [L_{X=Y}] &= P(e_2^M(X^\perp,\sigma)) + P(e_2^M(Y^\perp,\sigma)).
\end{aligned}
$$
\end{proposition}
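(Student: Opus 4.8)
The plan is to transfer both identities from $H_1(M;\b Z)$ to $H_3(UM;\b Z)$ along the homomorphism $\phi\colon H_1(M;\b Z)\to H_3(UM;\b Z)$ that sends $[L]$ to $[P_M^{-1}(L)]$ (this is the operation appearing in Propositions~\ref{prop_links} and~\ref{prop_euler} as $[L]\mapsto[L\times\b S^2]$), and then to verify them there, using Propositions~\ref{prop_links} and~\ref{prop_euler}, which already express all the relevant classes through images of sections. The first point is that $\phi$ is injective. Fixing a parallelization $\tau$ of $M$ (which exists since $M$ is an oriented $3$-manifold) identifies $UM$ with $M\times\b S^2$, and, $H_\ast(\b S^2;\b Z)$ being free, the Künneth theorem yields the isomorphism
$$
H_3(UM;\b Z)\ \cong\ \big(H_3(M;\b Z)\otimes H_0(\b S^2;\b Z)\big)\ \oplus\ \big(H_1(M;\b Z)\otimes H_2(\b S^2;\b Z)\big)
$$
in which $\phi$ is exactly the inclusion of the right-hand summand. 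So it suffices to prove the two identities after applying $\phi$.

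I then isolate two facts. \emph{(a)} For any two $\partial$-compatible combings $X$ and $Y$ of $M$, the cycle $X(M)+(-X)(M)-Y(M)-(-Y)(M)$ is null-homologous in $UM$; it is indeed a cycle, because $X_{|\partial M}=Y_{|\partial M}$, hence $(-X)_{|\partial M}=(-Y)_{|\partial M}$, so the four boundary contributions cancel. To prove it, observe that $X(M)-Y(M)$ is a cycle that the bundle projection $(P_M)_\ast\colon H_3(UM;\b Z)\to H_3(M;\b Z)$ sends to $[M]-[M]=0$; under the splitting above $(P_M)_\ast$ is the projection onto $H_3(M)\otimes H_0(\b S^2)$, so $[X(M)-Y(M)]$ lies in the summand $H_1(M)\otimes H_2(\b S^2)$. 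Now $(-X)(M)$, resp.\ $(-Y)(M)$, is the image of $X(M)$, resp.\ $Y(M)$, under the fibrewise antipodal involution, which $\tau$ turns into $\mathrm{id}_M\times a$ where $a$ is the antipodal map of $\b S^2$; since $a$ has degree $-1$, the induced automorphism of $H_1(M)\otimes H_2(\b S^2)$ is $-\mathrm{id}$, so $[(-X)(M)-(-Y)(M)]=-[X(M)-Y(M)]$, which is \emph{(a)}. \emph{(b)} Extending $\sigma$ to a generic section of $X^\perp$, resp.\ $Y^\perp$, over all of $M$ with zero locus a link disjoint from $\partial M$, that zero locus has class $P(e_2^M(X^\perp,\sigma))$, resp.\ $P(e_2^M(Y^\perp,\sigma))$, so Proposition~\ref{prop_euler} applies; moreover the boundary $3$-chain it produces for $X$, the one it produces for $Y$, and the $3$-chain $\l H^{-Y}_{X,\sigma}([0,1]\times\partial M)$ occurring in the second identity of Proposition~\ref{prop_links}, all coincide, because over $[0,1]\times\partial M$ each of them is given by the very same formula — the point at distance $s\pi$ from $X(m)$ on the geodesic issued from $X(m)$ in the direction $\sigma(m)$, which ends at $-X(m)=-Y(m)$ — and no blow-up occurs along $\partial M$. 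Call this common $3$-chain $G$.

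It remains to combine. Proposition~\ref{prop_links} expresses $\phi([L_{X=-Y}])$ as $[X(M)-Y(M)]$ and $\phi([L_{X=Y}])$ as $[X(M)-(-Y)(M)+G]$, while Proposition~\ref{prop_euler} together with \emph{(b)} expresses $\phi(P(e_2^M(X^\perp,\sigma)))$ as $[X(M)-(-X)(M)+G]$ and $\phi(P(e_2^M(Y^\perp,\sigma)))$ as $[Y(M)-(-Y)(M)+G]$. Forming $\phi(P(e_2^M(X^\perp,\sigma)))\mp\phi(P(e_2^M(Y^\perp,\sigma)))$, the terms $G$ cancel in the difference and add up to $2G$ in the sum; subtracting from these the respective classes $2\,\phi([L_{X=-Y}])=[2X(M)-2Y(M)]$ and $2\,\phi([L_{X=Y}])=[2X(M)-2(-Y)(M)+2G]$, one finds in both cases the same discrepancy, namely the class of $X(M)+(-X)(M)-Y(M)-(-Y)(M)$, which vanishes by \emph{(a)}. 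Since $\phi$ is injective, the two identities of the statement follow in $H_1(M;\b Z)$.

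I expect the only real care to be organisational. The quantities $X(M)$, $(-X)(M)$, $G$, and so on are genuine chains and not cycles on their own, so they must be paired into cycles before being passed to homology — which is why \emph{(a)} and the combination step are phrased the way they are. One must also reconcile the fact that in Proposition~\ref{prop_euler} the section $\sigma$ is defined over all of $M$, whereas here it is only a boundary datum that must first be extended (the extension is irrelevant, only the zero locus and the boundary behaviour entering). The single place where a sign is genuinely at stake, and the reason it is the symmetric combination $X(M)+(-X)(M)$ rather than a difference that is homologically insensitive to a change of combing, is fact \emph{(a)}: it hinges on the antipodal map of $\b S^2$ having degree $-1$.
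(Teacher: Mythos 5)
Your argument is correct and takes essentially the same route as the paper's proof: transfer to $H_3(UM;\b Z)$ via $[L]\mapsto[L\times\b S^2]$, express the relevant classes through Propositions~\ref{prop_links} and~\ref{prop_euler}, and exploit the orientation-reversal of the fibrewise antipodal map to produce the factor of $2$. The only cosmetic differences are that the paper obtains your fact~\emph{(a)} by combining Proposition~\ref{prop_linksandsigns} with Proposition~\ref{prop_links} at the link level rather than by a direct degree-$(-1)$ argument, and that you spell out the injectivity of $[L]\mapsto[L\times\b S^2]$ via Künneth, which the paper leaves implicit.
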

\begin{proof}
Extend $\sigma$ as a section $\bar\sigma$ of $X^\perp$. Using Propositions~\ref{prop_linksandsigns},~\ref{prop_links}~and~\ref{prop_euler}, we get, in $H_3(UM;\b Z)$,
 $$
 \begin{aligned}
 2 \ \cdot \ [L_{X=-Y}\times \b S^2] &= [L_{X=-Y} \times \b S^2] - [L_{-X=Y} \times \b S^2] \\
 &= [X(M)-Y(M)]-[(-X)(M)-(-Y)(M)] \\
 &= [X(M)-Y(M)-(-X)(M)+(-Y)(M)  \\
 & \hspace{5mm}+\l H_{X,\bar\sigma}^{-X}([0,1]\times \partial M)  -\l H_{X,\bar\sigma}^{-X} ([0,1]\times \partial M) ] \\
 &= [X(M)-(-X)(M)+\l H_{X,\bar\sigma}^{-X}([0,1]\times \partial M)] \\
 &- [Y(M)-(-Y)(M)+\l H_{X,\bar\sigma}^{-X}([0,1]\times \partial M) ] \\
 &= [P(e_2^M(X^\perp,\sigma)) \times \b S^2]  - [P(e_2^M(Y^\perp,\sigma)) \times \b S^2 ],
 \end{aligned}
 $$
 $$
 \begin{aligned}
 2 \cdot [L_{X=Y}\times \b S^2] &= [L_{X=Y} \times \b S^2] - [L_{-X=-Y} \times \b S^2]  \\
 &= [X(M) - (-Y)(M)+\l H_{X,\bar\sigma_{|\partial M}}^{-Y} ([0,1]\times \partial M)]\\
 &- [(-X)(M) - Y(M) +\l H_{-X,\bar\sigma_{|\partial M}}^{Y} ([0,1]\times \partial M)] \\
 &= [ P(e_2^M(X^\perp,\sigma)) \times \b S^2 ] - [ P(e_2^M((-Y)^\perp,\sigma)) \times \b S^2 ] \\
 &= [ P(e_2^M(X^\perp,\sigma)) \times \b S^2 ] + [ P(e_2^M(Y^\perp,\sigma)) \times \b S^2 ].
 \end{aligned}
 $$
  \iffalse &= [X(M) - (-Y)(M)+\l H_{X,\sigma}^{-X} ([0,1]\times \partial M)]\\
 &- [(-X)(M) - Y(M) + \l H_{-Y,\sigma}^{Y} ([0,1]\times \partial M)] \\
 &= [X(M)-(-X)(M) + \l H_{X,\sigma}^{-X} ([0,1]\times \partial M)] \\
 &- [(-Y)(M)-Y(M)+ \l H_{-Y,\sigma}^{Y} ([0,1]\times \partial M)] \\ \fi
\end{proof}

\begin{remark} 
If $M$ is a compact oriented 3-manifold and if $\sigma$ is a trivialization of $TM_{|\partial M}$, then the set $\Spinc(M,\sigma)$ is a $H^2(M,\partial M; \b Z)$-affine space and the map
$$
c : \left\lbrace
\begin{aligned}
\Spinc(M,\sigma) &\longrightarrow H^2(M,\partial M; \b Z) \\
 [X]^c &\longmapsto e_2^M(X^\perp, \sigma)
\end{aligned}
\right.
$$
is affine over the multiplication by 2. Moreover, $[X]^c-[Y]^c \in H^2(M,\partial M; \b Z) \simeq H_1(M; \b Z)$ is represented by $L_{X=-Y}$, hence $2 \cdot [L_{X=-Y}] = P(e_2^M(X^\perp,\sigma)) -  P(e_2^M(Y^\perp,\sigma))$. See \cite[Section 1.3.4]{gmdeloup} for a detailed presentation using this point of view. Both Proposition \ref{prop_linksinhomologyI} and Corollary \ref{corrplus} below are already-known results. For instance, Corollary \ref{corrplus} is also present in \cite{lescopcombing} (Lemma 2.16).
\end{remark}

\begin{corollary} \label{corrplus}
If $X$ and $Y$ are transverse combings of a closed oriented 3-manifold $M$, then, in $H_1(M;\b Z)$,
$$
\begin{aligned}
2 \cdot [L_{X=-Y}] &= P(e_2(X^\perp)) - P(e_2(Y^\perp)), \\
2 \cdot [L_{X=Y}] &= P(e_2(X^\perp)) + P(e_2(Y^\perp)).
\end{aligned}
$$
\end{corollary}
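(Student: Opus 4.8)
The plan is to obtain Corollary~\ref{corrplus} as the specialization of Proposition~\ref{prop_linksinhomologyI} to the closed case. First I would observe that when $\partial M=\emptyset$, two transverse combings $X$ and $Y$ are automatically $\partial$-compatible: the requirements $X_{|\partial M}=Y_{|\partial M}$ and $\sigma_X=\sigma_Y$ are vacuous, the closure condition $\overline{X(\mathring M)\cap Y(\mathring M)}\cap UM_{|\partial M}=\emptyset$ is vacuous, and $X(\mathring M)=X(M)$ is transverse to $Y(M)$ and to $-Y(M)$ by hypothesis. Hence Proposition~\ref{prop_linksinhomologyI} applies with the empty boundary section.

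Next I would note that on a closed manifold the relative Euler class reduces to the absolute one: taking $\sigma$ to be the empty section gives $e_2^M(X^\perp,\sigma)=e_2(X^\perp)$ and $e_2^M(Y^\perp,\sigma)=e_2(Y^\perp)$, while every boundary contribution of the form $\l H^{-X}_{X,\sigma}([0,1]\times\partial M)$ or $\l H^{-Y}_{X,\sigma}([0,1]\times\partial M)$ occurring in Propositions~\ref{prop_links} and~\ref{prop_euler} is empty. Substituting these into the two identities of Proposition~\ref{prop_linksinhomologyI} yields exactly $2\cdot[L_{X=-Y}]=P(e_2(X^\perp))-P(e_2(Y^\perp))$ and $2\cdot[L_{X=Y}]=P(e_2(X^\perp))+P(e_2(Y^\perp))$ in $H_1(M;\b Z)$.

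I expect no real obstacle here — the only point requiring a moment's care is verifying that transversality in the closed case genuinely entails the $\partial$-compatibility hypothesis used throughout this subsection, which is the vacuity check above. Alternatively, the statement can be reproved from scratch by feeding the closed-case formula of Proposition~\ref{prop_euler}, namely $[P(e_2(X^\perp))\times\b S^2]=[X(M)-(-X)(M)]$, together with Proposition~\ref{prop_linksandsigns} and the first identity of Proposition~\ref{prop_links}, into the same telescoping computation as in the proof of Proposition~\ref{prop_linksinhomologyI}; with no boundary terms to cancel this is strictly simpler than the bounded case, so either route closes the argument quickly.
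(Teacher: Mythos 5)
Your proposal is correct and follows essentially the same route as the paper: the paper gives no explicit proof of Corollary~\ref{corrplus}, stating it directly after the remark that both it and Proposition~\ref{prop_linksinhomologyI} are already-known results, so the intended argument is exactly the specialization to $\partial M=\emptyset$ that you carry out. Your vacuity check that transversality in the closed case implies $\partial$-compatibility, and that the relative Euler class reduces to the absolute one, is precisely the verification needed to make that specialization rigorous.
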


\subsection{Pseudo-parallelizations} \label{ssec_defppara}
\hspace{-3mm} A \textit{pseudo-parallelization} $\bar\tau \hspace{-1mm} = \hspace{-1mm} (N(\gamma); \tau_e, \tau_d)$ of a compact oriented 3-manifold $M$ is a triple~where
\begin{enumerate}[\textbullet]
\setlength{\itemsep}{0pt}
\setlength{\parskip}{5pt}
\item $\gamma$ is a link in $\mathring{M}$,
\item $N(\gamma)$ is a tubular neighborhood of $\gamma$ with a given product struture : 
$$
N(\gamma)\simeq [a,b] \times \gamma \times [-1,1],
$$
\item $\tau_e$ is a genuine parallelization of $\smash{M\setminus\mathring{N(\gamma)}}$,
\item $\tau_d$ is a genuine parallelization of $N(\gamma)$ such that
$$
\tau_d = \left\lbrace
\begin{aligned}
& \tau_e &\mbox{ on } \partial (\left[ a , b \right] \times \gamma \times \left[ -1 , 1 \right]) \setminus \lbrace b \rbrace \times \gamma \times \left[ -1 , 1 \right] \\
& \tau_e \circ \l T_\gamma &\mbox{ on }  \lbrace b \rbrace \times \gamma \times \left[ -1 , 1 \right]
\end{aligned}
\right.
$$
where $\l T_\gamma$ is 
$$
\l T_\gamma :
\left\lbrace
\begin{aligned}
( [a,b] \times \gamma \times \left[ -1 , 1 \right] )\times \b R^3 & \longrightarrow ([a,b] \times \gamma \times \left[ -1 , 1 \right]) \times \b R^3 \\
((t,c,u),v)  &\longmapsto ((t,c,u),R_{e_1, \pi+\theta(u)}(v)).
\end{aligned}
\right.
$$
where $R_{e_1, \pi+\theta(u)}$ is the rotation of axis $e_1$ and angle $\pi+\theta(u)$, and where $\theta : [-1,1] \rightarrow [-\pi,\pi]$ is a smooth increasing map constant equal to $\pi$ on the interval $[-1,-1+\varepsilon]$ ($\varepsilon \in ]0,\sfrac{1}{2}[$), and such that $\theta(-x)=-\theta(x)$. 
\end{enumerate}
 
Note that a pseudo-parallelization whose link is empty is a parallelization.

\begin{lemma} \label{lem_extendpparallelization}
If $M$ is a compact oriented 3-manifold with boundary and if $\rho$ is a trivialization of $TM_{|\partial M}$, there exists a pseudo-parallelization $\bar\tau$ of $M$ that coincides with $\rho$ on $\partial M$.
\end{lemma}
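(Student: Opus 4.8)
The plan is to reduce the problem to the well-known obstruction-theoretic fact that the obstruction to parallelizing a trivialized compact oriented 3-manifold lives in $H^2(M,\partial M;\sfrac{\b Z}{2\b Z})$, and then to realize a suitable Poincaré-dual 1-cycle $\gamma$ as the link of a pseudo-parallelization. First I would recall that since $M$ is oriented and 3-dimensional, $TM$ is trivial over the 2-skeleton, so $\rho$ extends to a parallelization of $M$ over the 2-skeleton (relative to $\partial M$), and the only obstruction to extending over the 3-cells is a class $o(\rho) \in H^3(M,\partial M; \pi_2(SO(3))) = 0$ — wait, more precisely the obstruction to extending a section of the $SO(3)$-frame bundle over the trivialization on $\partial M$ is encoded, after a standard computation, by an element whose Poincaré dual $[\gamma] \in H_1(M;\sfrac{\b Z}{2\b Z})$ measures the failure. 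I would represent $[\gamma]$ by an honest embedded link $\gamma$ in $\mathring M$ (every $\sfrac{\b Z}{2\b Z}$-homology class in $H_1$ of a 3-manifold is represented by an embedded link), and fix a framed tubular neighborhood $N(\gamma) \simeq [a,b]\times\gamma\times[-1,1]$.

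Next I would construct the two pieces. On $M\setminus \mathring{N(\gamma)}$ the obstruction to extending $\rho$ to a genuine parallelization $\tau_e$ now vanishes, because removing a tubular neighborhood of a Poincaré-dual representative of the obstruction class kills it; this is the standard "clasp/spin" trick and the point is that $M \setminus \mathring{N(\gamma)}$ deformation retracts onto a complex on which the obstruction cocycle is a coboundary, so $\rho$ (together with the prescribed boundary behaviour $\tau_e$ on $\partial N(\gamma)\setminus(\{b\}\times\gamma\times[-1,1])$) extends. The remaining face $\{b\}\times\gamma\times[-1,1]$ is where the monodromy $\l T_\gamma$ is forced: one must check that the parallelization $\tau_e$ already obtained on $N(\gamma)$'s boundary, when compared across $N(\gamma)$, differs precisely by the twist map $\l T_\gamma$ up to homotopy, and then set $\tau_d$ on $N(\gamma)$ to be any genuine parallelization interpolating — $N(\gamma)$ is a disjoint union of solid tori (or thickened annuli $[a,b]\times\gamma\times[-1,1]$, which are again contractible onto $\gamma$), so there is no obstruction to defining $\tau_d$ there once the boundary values are pinned down, because $[a,b]\times\gamma\times[-1,1]$ retracts onto $\gamma \simeq \b S^1$ and $\pi_1(SO(3)) = \sfrac{\b Z}{2\b Z}$ is exactly accounted for by the $\pi+\theta(u)$ rotation. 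Finally I would verify the compatibility conditions in the definition of pseudo-parallelization: that $\tau_d$ equals $\tau_e$ on the appropriate faces, equals $\tau_e\circ\l T_\gamma$ on $\{b\}\times\gamma\times[-1,1]$, and that the Siamese section $E_1^d$ is well-defined. Since $R_{e_1,\pi+\theta(u)}$ fixes $e_1$, the first vector matches up across the twist, so $E_1^d$ is continuous; this is automatic from the form of $\l T_\gamma$.

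The main obstacle I anticipate is the careful bookkeeping of exactly which $\sfrac{\b Z}{2\b Z}$-class the link $\gamma$ must carry, and verifying that \emph{after} removing $N(\gamma)$ the obstruction genuinely dies — i.e., that the twist $\l T_\gamma$ built into the definition of a pseudo-parallelization absorbs precisely one unit of $\pi_1(SO(3)) = \sfrac{\b Z}{2\b Z}$ around each component of $\gamma$. Concretely, one cuts $M$ along a surface (or along the meridian disks / co-cores), trivializes on the complement, and then the gluing data on $\{b\}\times\gamma\times[-1,1]$ records the discrepancy; the content is that this discrepancy, which a priori lies in $\mathrm{Map}(\gamma, SO(3))$, is homotopic to $u \mapsto R_{e_1,\pi+\theta(u)}$ precisely when $[\gamma]$ is Poincaré dual to the original obstruction class, and that the normalization $\theta(-1)=\pi$, $\theta(1)=-\pi$ (so total twist $2\pi$ along the $[-1,1]$ direction, hence a nullhomotopic loop on the face but a half-twist realized correctly on each slice) makes the two halves $\{b\}\times\gamma\times[-1,1]$ and the rest of $\partial N(\gamma)$ fit together. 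Once that identification is in hand, everything else is a routine application of obstruction theory over contractible (or circle-shaped) pieces, and the lemma follows. One should also note that the statement as phrased does not require uniqueness, so no further normalization is needed.
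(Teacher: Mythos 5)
Your proposal follows essentially the same route as the paper: represent the Poincar\'e dual of the obstruction class in $H^2(M,\partial M;\sfrac{\b Z}{2\b Z})$ by a link $\gamma$, extend $\rho$ to a genuine parallelization $\tau_e$ on $M\setminus \mathring{N(\gamma)}$, and absorb the remaining $\pi_1(SO(3))$ discrepancy across $N(\gamma)$ by building $\tau_d$ with the twist $\l T_\gamma$ — the only difference being that the paper delegates this last extension step to Lescop's Lemma 10.2 of \cite{lescopcube}, while you sketch it directly. One small slip to correct: the path $u\mapsto R_{e_1,\pi+\theta(u)}$ is a full $2\pi$-rotation about a fixed axis and therefore represents the \emph{nontrivial} element of $\pi_1(SO(3))\cong\sfrac{\b Z}{2\b Z}$, not a nullhomotopic loop as your parenthetical remark says; it is precisely this nontriviality (which you correctly invoke elsewhere) that lets the twist cancel the obstruction on each component of $\gamma$.
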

\begin{proof}
The obstruction to extending the trivialization $\rho$ as a parallelization of $M$ can be represented by an element $[\gamma] \in H_1(M;\pi_1(SO(3)))$ where $\gamma$ is a link in $M$. It follows that $\rho$ can be extended on $M\setminus \mathring {N(\gamma)}$ where $N(\gamma)$ is a tubular neighborhood of $\gamma$. Finally, according to \cite[Lemma 10.2]{lescopcube}, it is possible to extend $\rho$ as a pseudo-parallelization on each torus of~$N(\gamma)$.
\end{proof}

Thanks to Lemma \ref{lem_extendpparallelization}, an LP$_\b Q$-surgery in a rational homology 3-sphere equipped with a pseudo-parallelization can be seen as a local move. This is not the case for an LP$_\b Q$-surgery in a rational homology 3-sphere equipped with a genuine parallelization. \\

\iffalse
Unlike the case of a parallelized rational homology 3-sphere, in the case of a rational homology 3-sphere equipped with a pseudo-parallelization, an LP$_\b Q$-surgery can be seen as a local move thanks to Lemma \ref{lem_extendpparallelization} above.
\fi

Before we move on to the definition of pseudo-sections \ie the counterpart of sections of parallelizations for pseudo-parallelizations, we need the following.
\begin{definition} \label{def_addinner}
Let $\bar \tau = (N(\gamma); \tau_e, \tau_d)$ be a pseudo-parallelization of a compact oriented 3-manifold. An \textit{additional inner parallelization} is a map $\tau_g$ such that
$$
\tau_g :
\left\lbrace
\begin{aligned}
\ [a,b]\times \gamma \times [-1,1] \times \b R^3  &\longrightarrow TN(\gamma)  \\
((t,c,u),v) & \longmapsto \tau_d \left( \l T_\gamma^{-1} ((t,c,u),\l F(t,u)(v))\right)
\end{aligned}
\right.
$$
where, choosing $\varepsilon \in \ ]0,\sfrac{1}{2}[$, $\l F$ is a map such that
$$
\l F :
\left\lbrace
\begin{aligned}
\ [a,b]\times[-1,1] & \longrightarrow SO(3) \\
(t,u) & \longmapsto  \left\lbrace
  \begin{aligned}
  & \mbox{Id}_{SO(3)} & \mbox{for $|u|>1-\varepsilon$} \\
  & R_{e_1,\pi + \theta(u)}  & \mbox{for $t < a + \varepsilon$} \\
  & R_{e_1,-\pi - \theta(u)} & \mbox{for $t > b - \varepsilon$}
  \end{aligned} \right.
\end{aligned}
\right.
$$
which exists since $\pi_1(SO(3))\hspace{-1mm}=\sfrac{\b Z}{2 \b Z}$ and which is well-defined up to homotopy since $\pi_2(SO(3))\hspace{-1mm}=~\hspace{-2mm}0$.
\end{definition}

From now on, we will always consider pseudo-parallelizations together with an additional inner parallelization. Finally, note that if $\bar \tau=(N(\gamma); \tau_e, \tau_d, \tau_g)$ is a pseudo-parallelization of a compact oriented 3-manifold together with an additional inner parallelization, then :
\begin{enumerate}[\textbullet]
\setlength{\itemsep}{0pt}
\setlength{\parskip}{5pt}
\item the parallelizations $\tau_e$, $\tau_d$ and $\tau_g$ agree on $\partial N(\gamma) \setminus \lbrace b \rbrace \times \gamma \times [-1,1]$,
\item $\tau_g = \tau_e \circ \l T_\gamma^{-1}$ on $\lbrace b \rbrace \times \gamma \times [-1,1]$.
\end{enumerate}

\begin{definition}
A \textit{pseudo-section} of a pseudo-parallelization of a compact oriented 3-manifold $M$ together with an additional inner parallelization, $\bar \tau=(N(\gamma); \tau_e, \tau_d, \tau_g)$, is a 3-cycle of \linebreak $(UM,UM_{|\partial M})$ of the following form :
$$
\begin{aligned}
\bar \tau (M\times \lbrace v \rbrace ) &= \tau_e((M\setminus \mathring N(\gamma))\times \lbrace v \rbrace ) \\
&+ \frac{ \tau_d(N(\gamma)\times \lbrace v \rbrace) + \tau_g(N(\gamma)\times \lbrace v \rbrace) + \tau_e( \lbrace b \rbrace \times \gamma \times C_2(v)) }{2}
\end{aligned}
$$
where $v\in \b S^2$ and $C_2(v)$ is the 2-chain of $\left[ -1 , 1 \right] \times \b S^1(v)$ of Figure~\ref{fig_C2v}, where $\b S^1(v)$ stands for the circle of $\b S^2$ that lies on the plane orthogonal to $e_1$ and passes through $v$. Note that :
$$
\begin{aligned}
\partial C_2(v) &= \lbrace (u,  R_{e_1,\pi+\theta(u)} (v)) \ |  u \in \left[ -1 , 1 \right]  \rbrace \\
&+ \lbrace (u,  R_{e_1,-\pi-\theta(u)} (v)) \ |  u \in \left[ -1 , 1 \right] \rbrace - 2 \cdot \left[ -1, 1 \right] \times \lbrace v \rbrace.
\end{aligned}
$$
\end{definition}

\begin{center}
\definecolor{zzttqq}{rgb}{0.6,0.2,0}
\begin{tikzpicture}[line cap=round,line join=round,>=triangle 45,x=1.0cm,y=1.0cm]
\clip(-2.25,-1) rectangle (13,2.25);
\fill[color=zzttqq,fill=zzttqq,fill opacity=0.1] (0,0) -- (4,2) -- (0,2) -- cycle;
\fill[color=zzttqq,fill=zzttqq,fill opacity=0.1] (7.5,2) -- (7.5,0) -- (11.5,0) -- cycle;
\draw (0,2)-- (4,2);
\draw (4,0)-- (4,2);
\draw (4,0)-- (0,0);
\draw (0,0)-- (0,2);
\draw (7.5,0)-- (7.5,2);
\draw (7.5,0)-- (11.5,0);
\draw (11.5,0)-- (11.5,2);
\draw (7.5,2)-- (11.5,2);
\draw (0,0)-- (4,2);
\draw (0,2)-- (4,0);
\draw (7.5,2)-- (11.5,0);
\draw (11.5,2)-- (7.5,0);
\draw (-2,1.25) node[anchor=north west] {$C_2(v)=$};
\draw (-0.5,-0.25) node[anchor=north west] {$-1$};
\draw (3.75,-0.25) node[anchor=north west] {$1$};
\draw (7,-0.25) node[anchor=north west] {$-1$};
\draw (11.25,-0.25) node[anchor=north west] {$1$};
\draw [->] (0,0) -- (4,0);
\draw [->] (7.5,0) -- (11.5,0);
\draw [->] (11.5,0) -- (11.5,2);
\draw [->] (4,0) -- (4,2);
\draw (4,2) node[anchor=north west] {$\b S^1(v)$};
\draw (11.5,2) node[anchor=north west] {$\b S^1(v)$};
\draw [color=zzttqq] (0,0)-- (4,2);
\draw [color=zzttqq] (4,2)-- (0,2);
\draw [color=zzttqq] (0,2)-- (0,0);
\draw [color=zzttqq] (7.5,2)-- (7.5,0);
\draw [color=zzttqq] (7.5,0)-- (11.5,0);
\draw [color=zzttqq] (11.5,0)-- (7.5,2);
\draw (5.58,1.13) node[anchor=north west] {$-$};
\end{tikzpicture}
\captionof{figure}{The 2-chain $C_2(v)$ where we cut the annulus $\left[ -1,1 \right] \times \b S^1(v)$ along $\left[ -1, 1 \right] \times \lbrace v \rbrace$.} \label{fig_C2v}
\end{center}

\begin{definition}
If $\bar\tau = (N(\gamma);\tau_e,\tau_d, \tau_g)$ is a pseudo-parallelization of a compact oriented 3-manifold $M$, let the \textit{Siamese sections} of $\bar\tau$ denote the following sections of $UM$:
$$
E_1^{d}:
\left\lbrace
\begin{aligned}
 m \in M\setminus \mathring N(\gamma) & \longmapsto \tau_e(m,e_1) \\
 m \in N(\gamma) & \longmapsto \tau_d(m,e_1)
\end{aligned}
\right.
\hspace{3mm}\mbox{ and }\hspace{3mm}
E_1^{g} :
\left\lbrace
\begin{aligned}
 m \in M\setminus \mathring N(\gamma) & \longmapsto \tau_e(m,e_1) \\
 m \in N(\gamma) & \longmapsto \tau_g(m,e_1).
\end{aligned}
\right.
$$
\end{definition}

As already mentioned in the introduction, note that when $\bar\tau = (N(\gamma);\tau_e,\tau_d, \tau_g)$ is a pseudo-parallelization of a compact oriented 3-manifold $M$, its pseudo-section at $e_1$ reads
$$
\bar\tau(M\times \lbrace e_1 \rbrace) = \frac{E_1^d(M)+E_1^g(M)}{2}
$$
where $E_1^d$ and $E_1^g$ are the Siamese sections of $\bar\tau$.

\section{From parallelizations to pseudo-parallelizations}
\subsection{Pontrjagin numbers of parallelizations} \label{ssec_defpara}
In this subsection we review the definition of first relative Pontrjagin numbers for parallelizations of compact connected oriented 3-manifolds. For a detailed presentation of these objects we refer to \cite[Section 5]{lescopEFTI} and \cite[Subsection~4.1]{lescopcombing}. \\
  
Let $C_1$ and $C_2$ be compact connected oriented 3-manifolds with identified boundaries. Recall that a \textit{cobordism from $C_1$ to $C_2$} is a compact oriented 4-manifold $W$ whose boundary reads
$$
\partial W = - C_1 \bigcup_{\partial C_1 \simeq \lbrace 0 \rbrace \times C_1 } -[0,1] \times \partial C_1 \bigcup_{\partial C_2 \simeq \lbrace 1 \rbrace \times C_1} C_2.
$$
Moreover, we require $W$ to be identified with $[0,1[\times C_1$ or $]0,1]\times C_2$ on collars of $\partial W$. \\

Recall that any compact oriented 3-manifold bounds a compact oriented 4-manifold, so that a cobordism from $C_1$ to $C_2$ always exists. Also recall that the signature of a 4-manifold is the signature of the intersection form on its second homology group with real coefficients and that any 4-manifold can be turned into a 4-manifold with signature zero by performing connected sums with copies of $\pm \b C P^2$. So let us fix a connected cobordism $W$ from $C_1$ to $C_2$ with signature zero. \\

Now consider a parallelization $\tau_1$, resp. $\tau_2$, of $C_1$, resp. $C_2$. Define the vector field $\vec n$ on a collar of $\partial W$ as follows. Let $\vec n$ be the unit tangent vector to $[0,1]\times \lbrace x \rbrace$ where $x \in C_1$ or $C_2$. Define $\tau(\tau_1, \tau_2)$ as the trivialization of $TW \otimes \b C$ over $\partial W$ obtained by stabilizing $\tau_1$ or $\tau_2$ into $\vec n \oplus \tau_1$ or $\vec n \oplus \tau_2$ and tensoring with $\b C$. In general, this trivialization does not extend as a parallelization of $W$. This leads to a Pontrjagin obstruction class $p_1(W;\tau(\tau_1, \tau_2))$ in $H^4(W, \partial W, \pi_3(SU(4)))$. Since $\pi_3(SU(4)) \simeq \b Z$, there exists $p_1(\tau_1, \tau_2)\in \b Z$ such that $p_1(W;\tau(\tau_1, \tau_2))=p_1(\tau_1, \tau_2)[W,\partial W]$. Let us call $p_1(\tau_1, \tau_2)$ the \textit{first relative Pontrjagin number of $\tau_1$ and $\tau_2$}. \\

Similarly, define the \textit{Pontrjagin number $p_1(\tau)$ of a parallelization $\tau$} of a closed connected oriented 3-manifold $M$, by taking a connected oriented 4-manifold $W$ with boundary $M$, a collar of $\partial W$ identified with $]0,1] \times M$ and $\vec n$ as the outward normal vector field over $\partial W$. \\

We have not actually defined the sign of the Pontrjagin numbers. We will not give details here on how to define it, instead we refer to \cite[\S 15]{MS} or \cite[p.44]{lescopEFTI}. Let us only mention that $p_1$ is the opposite of the second Chern class $c_2$ of the complexified tangent bundle.

\subsection{Pontrjagin numbers for pseudo-parallelizations}
\begin{definition} \label{def_complextriv}
Let $\bar \tau=(N(\gamma); \tau_e, \tau_d, \tau_g)$ be a pseudo-parallelization of a compact oriented 3-manifold $M$, a \textit{complex trivialization} $\bar \tau _{\b C}$ associated to $\bar\tau$ is a trivialization of $TM \otimes \b C$ such that~:
\begin{enumerate}[\textbullet]
 \setlength{\itemsep}{0pt}
 \setlength{\parskip}{5pt}
 \item $\bar \tau_\b C$ is \textit{special} (\ie its determinant is one everywhere) with respect to the trivialization of the determinant bundle induced by the orientation of $M$,
 \item on $M\setminus \mathring{N(\gamma)}$, $\bar \tau_\b C = \tau_e \otimes 1_\b C$,
 \item for $m=(t,c,u)\in [a,b] \times \gamma \times [-1,1]$, $\bar \tau_\b C (m,.)= (m,\tau_d (t,c,u)(\l G(t,u)(.)))$,
\end{enumerate}
where $\l G$ is a map so that :
$$
\l G : \left\lbrace
\begin{aligned}
\ [a,b] \times [-1,1] &\longrightarrow \ SU(3) \\
(t,u) &\longmapsto \left\lbrace
			    \begin{aligned}
			    & \mbox{Id}_{SU(3)}      &\mbox{for $|u| > 1 - \varepsilon$} \\
			    & \mbox{Id}_{SU(3)}   &\mbox{for $t < a+\varepsilon$} \\
			    &  R_{e_1,-\pi-\theta(u)}    &\mbox{for $t>b -\varepsilon$}.
			    \end{aligned}
		    \right. 
\end{aligned}
\right.
$$
Note that such a smooth map $\l G$ on $[a,b]\times[-1,1]$ exists since $\pi_1(SU(3))=\lbrace 1 \rbrace$. Moreover, $\l G$ is well-defined up to homotopy since $\pi_2(SU(3))=\lbrace 0 \rbrace$.
\end{definition}

Pseudo-parallelizations, or pseudo-trivializations, have been first used in \cite[Section 4.3]{lescopKT}, but they have been first defined in \cite[Section 10]{lescopcube}. Note that our conventions are slightly different.

\begin{definition}
Let $\bar\tau_1$ and $\bar\tau_2$ be pseudo-parallelizations of two compact oriented 3-manifolds $M_1$ and $M_2$ with identified boundaries and let $W$ be a cobordism from $M_1$ to $M_2$ with signature zero. As in the case of genuine parallelizations, define a trivialization $\tau(\bar\tau_1, \bar\tau_2)$ of $TW \otimes \b C$ over $\partial W$ using the special complex trivializations $\bar\tau_{1,\b C}$ and $\bar\tau_{2,\b C}$ associated to $\bar\tau_1$ and $\bar\tau_2$, respectively. The \textit{first relative Pontrjagin number of $\bar\tau_1$ and $\bar\tau_2$} is the Pontrjagin obstruction $p_1(\bar\tau_1,\bar\tau_2)$ to extending the trivialization $\tau(\bar\tau_1, \bar\tau_2)$ as a trivialization of $TW\otimes \b C$ over $W$.
\end{definition}

Finally, if $\bar\tau$ is a pseudo-parallelization of a closed oriented 3-manifold $M$, then define the \textit{Pontrjagin number $p_1(\bar\tau)$ of the pseudo-parallelization $\bar\tau$} as $p_1(\tau_\emptyset, \bar \tau)$ as before. 

\subsection[Variation of $p_1$ as an intersection of three 4-chains]{Variation of $p_1$ as an intersection of three 4-chains}
In this subsection, we give a proof of Theorem~\ref{prop_varasint}, which expresses the relative Pon\-trjagin numbers (resp. the variation of Pontrjagin numbers) of pseudo-parallelizations in compact (resp. closed) oriented 3-manifolds as an algebraic intersection of three 4-chains.

\begin{lemma} \label{simppara}
If $\bar \tau=(N(\gamma);\tau_e,\tau_d,\tau_g)$ is a pseudo-parallelization of a compact oriented 3-manifold $M$, if $E_1^d$ and $E_1^g$ are its Siamese sections and if $E_2^e$ denotes the second vector of $\tau_e$, then $P(e_2^M({E_1^d}^\perp,{E_2^e}_{|\partial M})) = -P(e_2^M({E_1^g}^\perp,{E_2^e}_{|\partial M})) = [\gamma]$ in $H_1(M;\b Z)$.
\end{lemma}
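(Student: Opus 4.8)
The plan is to read off the relative Euler class directly from its description as the homology class of the zero locus of a generic extension. Recall that $P\bigl(e_2^M({E_1^d}^\perp,{E_2^e}_{|\partial M})\bigr)\in H_1(M;\b Z)$ is represented by the zero set of a generic section of ${E_1^d}^\perp$ extending ${E_2^e}_{|\partial M}$, cooriented by the orientation of ${E_1^d}^\perp$. Over $M\setminus\mathring{N(\gamma)}$ one has $E_1^d=\tau_e(\cdot,e_1)$, so ${E_1^d}^\perp=\bigl(\tau_e(\cdot,e_1)\bigr)^\perp$ there and $E_2^e=\tau_e(\cdot,e_2)$ is a nowhere-vanishing section of ${E_1^d}^\perp$ over $M\setminus\mathring{N(\gamma)}$ restricting to ${E_2^e}_{|\partial M}$ on $\partial M$. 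Extending ${E_2^e}_{|\partial M}$ by $E_2^e$ over $M\setminus\mathring{N(\gamma)}$ and then generically over $N(\gamma)$, the resulting zero set $Z$ lies in $\mathring{N(\gamma)}$ and $P\bigl(e_2^M({E_1^d}^\perp,{E_2^e}_{|\partial M})\bigr)=[Z]$.

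The computation is then local in $N(\gamma)\simeq[a,b]\times\gamma\times[-1,1]$, where ${E_1^d}^\perp=\bigl(\tau_d(\cdot,e_1)\bigr)^\perp$ is trivialized by $\bigl(\tau_d(\cdot,e_2),\tau_d(\cdot,e_3)\bigr)$; a section of ${E_1^d}^\perp$ over $N(\gamma)$ is thus a map $f\colon N(\gamma)\to\b R^2$ and $Z=f^{-1}(0)$. Using the gluing defining $\tau_d$ — equal to $\tau_e$ on $\partial N(\gamma)\setminus\bigl(\{b\}\times\gamma\times[-1,1]\bigr)$ and to $\tau_e\circ\l T_\gamma$ on $\{b\}\times\gamma\times[-1,1]$ — the boundary value of $f$ imposed by $\tau_e(\cdot,e_2)$ is the constant $e_2$ on the first three faces and is $R_{e_1,-\pi-\theta(u)}(e_2)$ on $\{b\}\times\gamma\times[-1,1]$ (a unit vector of the $(e_2,e_3)$-plane making angle $-\pi-\theta(u)$ with $e_2$). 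Since $\theta$ sweeps $[-\pi,\pi]$ monotonically, this angle moves through an interval of length $2\pi$ as $u$ runs over $[-1,1]$, so for every $c\in\gamma$ the restriction of $f$ to $\partial\bigl([a,b]\times[-1,1]\bigr)\times\{c\}$ has winding number $\pm1$, and this datum does not depend on the $\gamma$-coordinate. Choosing $f$ independent of the $\gamma$-coordinate then gives $Z=Z_0\times\gamma$, with $Z_0$ a single transverse zero per component of $\gamma$ whose local degree equals that winding number; keeping track of the orientation of $N(\gamma)$ inherited from $M$ through the product structure, of the outward-normal-first boundary convention, and of the coorientation convention for zero sets shows $[Z]=[\gamma]$, which is the first equality.

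For the Siamese section $E_1^g$ the argument runs identically with $\tau_g$ in place of $\tau_d$: over $N(\gamma)$, ${E_1^g}^\perp$ is trivialized by $\bigl(\tau_g(\cdot,e_2),\tau_g(\cdot,e_3)\bigr)$, $\tau_g$ agrees with $\tau_e$ off the face $\{b\}\times\gamma\times[-1,1]$, and $\tau_g=\tau_e\circ\l T_\gamma^{-1}$ on that face. The only change is that the boundary datum there is now at angle $+\pi+\theta(u)$, which reverses the winding number and hence the local degree of the unique zero, so the corresponding zero set represents $-[\gamma]$ and $P\bigl(e_2^M({E_1^g}^\perp,{E_2^e}_{|\partial M})\bigr)=-[\gamma]$. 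I expect the only genuinely delicate point to be the overall sign: the equality of the $E_1^g$-value with the opposite of the $E_1^d$-value is forced by the substitution $\l T_\gamma\leftrightarrow\l T_\gamma^{-1}$, but checking that the $E_1^d$ computation produces $+[\gamma]$ rather than $-[\gamma]$ requires careful bookkeeping of the orientation conventions on $N(\gamma)$ and in the definition of $\l T_\gamma$; the remainder is routine obstruction theory concentrated in $N(\gamma)$.
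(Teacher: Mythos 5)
Your proof follows essentially the same route as the paper's: localize the obstruction to $N(\gamma)$ because $E_2^e$ already trivializes ${E_1^d}^\perp$ over $M\setminus\mathring{N(\gamma)}$, express the boundary data in the $\tau_d$ (resp.\ $\tau_g$) trivialization, and read the class off from the winding of $R_{e_1,\mp\pi\mp\theta(u)}(e_2)$ coming from $\l T_\gamma^{\pm1}$ on $\lbrace b\rbrace\times\gamma\times[-1,1]$ — the paper phrases this as the degree of $E_2^e$ on a meridian of $N(\gamma)$, which is the same computation. The sign caveat you flag for the $E_1^d$ case is real but is at the same level of implicitness as the published proof, which simply asserts the meridian degree is $+1$ without spelling out the orientation bookkeeping.
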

\begin{proof}
Since $E_1^d$ and $E_1^e$ coincide on $M\setminus \mathring N(\gamma)$, the obstruction to extending $E_2^e$ as a section of ${E_1^d}^\perp$ is the obstruction to extending $E_2^e$ as a section of ${E_1^d}^\perp_{|N(\gamma)}$. However, parallelizing $N(\gamma)$ with $\tau_d$ and using that 
$$
\tau_d = \left\lbrace
\begin{aligned}
 & \tau_e &\mbox{ on } \partial (\left[ a , b \right] \times \gamma \times \left[ -1 , 1 \right]) \setminus \lbrace b \rbrace \times \gamma \times \left[ -1 , 1 \right] \\
 & \tau_e \circ \l T_\gamma &\mbox{ on }  \lbrace b \rbrace \times \gamma \times \left[ -1 , 1 \right]
\end{aligned}
\right.
$$
we get that $E_2^e$ induces a degree +1 map ${E_2^e}_{|\alpha} : \alpha \rightarrow \b S^1$ on any meridian $\alpha$ of $N(\gamma)$. It follows that
$$
P(e_2^M({E_1^d}^\perp,{E_2^e}_{|\partial M})) = + [\gamma].
$$
Similarly, parallelizing $N(\gamma)$ with $\tau_g$, $E_2^e$ induces a degree -1 map ${E_2^e}_{|\alpha} : \alpha \rightarrow \b S^1$ on any meridian $\alpha$ of $N(\gamma)$, so that
$$
P(e_2^M({E_1^g}^\perp,{E_2^e}_{|\partial M})) = - [\gamma].
$$
\end{proof}

Recall that for a combing $X$ of a compact oriented 3-manifold $M$ and a pseudo-parallelization $\bar\tau$ of $M$, if $X$ and $\bar\tau$ are compatible, then
$$
L_{\bar\tau=X}=\frac{L_{E_1^d=X}+L_{E_1^g=X}}{2} \mbox{ \ \ and \ \ } L_{\bar\tau=-X}=\frac{L_{E_1^d=-X}+L_{E_1^g=-X}}{2}
$$
where $E_1^d$ and $E_1^g$ denote the Siamese sections of $\bar\tau$.

\begin{lemma} \label{nulexcep}
Let $\bar \tau=(N(\gamma);\tau_e,\tau_d,\tau_g)$ be a pseudo-parallelization of a compact oriented 3-manifold $M$. If $X$ is a torsion combing of $M$ compatible with $\bar \tau$, then $L_{\bar\tau=X}$ and $L_{\bar\tau=-X}$ are rationally null-homologous in $M$.
\end{lemma}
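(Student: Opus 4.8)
Lemma~\ref{nulexcep} asserts that if $X$ is a torsion combing compatible with a pseudo-parallelization $\bar\tau = (N(\gamma);\tau_e,\tau_d,\tau_g)$, then the rational links $L_{\bar\tau=X}$ and $L_{\bar\tau=-X}$ are rationally null-homologous in $M$. Let me sketch how I'd prove it.

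The plan is to reduce the statement to the homological identities already established for $\partial$-compatible combings, namely Proposition~\ref{prop_linksinhomologyI} and Lemma~\ref{simppara}. Write $\bar\tau=(N(\gamma);\tau_e,\tau_d,\tau_g)$, let $E_1^d$ and $E_1^g$ be its Siamese sections, and set $\sigma={E_2^e}_{|\partial M}$, where $E_2^e$ is the second vector of $\tau_e$. Since $X$ is compatible with $\bar\tau$, by definition $(X,\sigma_X)$ is $\partial$-compatible both with $(E_1^d,\sigma)$ and with $(E_1^g,\sigma)$; in particular $\sigma_X=\sigma$, and the intersection links $L_{E_1^d=\pm X}$ and $L_{E_1^g=\pm X}$ are well defined because $\partial$-compatibility includes the required transversality of $X(\mathring M)$ with $E_1^d(\mathring M)$, $-E_1^d(\mathring M)$, $E_1^g(\mathring M)$, $-E_1^g(\mathring M)$ in $UM$. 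Recall also that $\gamma\subset\mathring M$, so $E_1^d$, $E_1^g$ and $\tau_e$ all agree near $\partial M$.

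First I would treat $L_{\bar\tau=X}$. Applying Proposition~\ref{prop_linksinhomologyI} to the pair $(E_1^d,\sigma)$, $(X,\sigma)$ and then to $(E_1^g,\sigma)$, $(X,\sigma)$ gives, in $H_1(M;\b Z)$,
$$
2[L_{E_1^d=X}]=P(e_2^M({E_1^d}^\perp,\sigma))+P(e_2^M(X^\perp,\sigma)),\qquad 2[L_{E_1^g=X}]=P(e_2^M({E_1^g}^\perp,\sigma))+P(e_2^M(X^\perp,\sigma)).
$$
Summing and invoking Lemma~\ref{simppara}, which asserts $P(e_2^M({E_1^d}^\perp,\sigma))=-P(e_2^M({E_1^g}^\perp,\sigma))=[\gamma]$, the two classes $\pm[\gamma]$ cancel and we obtain $2[L_{E_1^d=X}]+2[L_{E_1^g=X}]=2\,P(e_2^M(X^\perp,\sigma))$. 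Since $X$ is a torsion combing, $P(e_2^M(X^\perp,\sigma))$ is a torsion class, hence zero in $H_1(M;\b Q)$; as $H_1(M;\b Q)$ is a $\b Q$-vector space this yields $[L_{E_1^d=X}]+[L_{E_1^g=X}]=0$ there, so
$$
[L_{\bar\tau=X}]=\tfrac12\bigl([L_{E_1^d=X}]+[L_{E_1^g=X}]\bigr)=0 \quad\text{in }H_1(M;\b Q),
$$
i.e. $L_{\bar\tau=X}$ is rationally null-homologous.

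The case of $L_{\bar\tau=-X}$ is entirely parallel, using the first identity of Proposition~\ref{prop_linksinhomologyI} instead:
$$
2[L_{E_1^d=-X}]=P(e_2^M({E_1^d}^\perp,\sigma))-P(e_2^M(X^\perp,\sigma)),\qquad 2[L_{E_1^g=-X}]=P(e_2^M({E_1^g}^\perp,\sigma))-P(e_2^M(X^\perp,\sigma)),
$$
whose sum, again by Lemma~\ref{simppara} and the torsion hypothesis, equals $-2\,P(e_2^M(X^\perp,\sigma))$, which vanishes in $H_1(M;\b Q)$; hence $[L_{\bar\tau=-X}]=0$ there as well. I do not expect any serious obstacle here: the whole argument is a short homological computation once the ingredients are in place, and the only point requiring genuine care is verifying that the hypotheses of Proposition~\ref{prop_linksinhomologyI} and Lemma~\ref{simppara} are met — that $(E_1^d,\sigma)$ and $(E_1^g,\sigma)$ are honest combings $\partial$-compatible with $(X,\sigma)$ sharing the same boundary section $\sigma$ — which is precisely what the compatibility of $X$ with $\bar\tau$ encodes.
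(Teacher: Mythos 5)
Your proof is correct and follows essentially the same route as the paper's: apply Proposition~\ref{prop_linksinhomologyI} to the pairs $(E_1^d,\sigma)$, $(X,\sigma)$ and $(E_1^g,\sigma)$, $(X,\sigma)$, sum, cancel $P(e_2^M({E_1^d}^\perp,\sigma))+P(e_2^M({E_1^g}^\perp,\sigma))$ via Lemma~\ref{simppara}, and kill $P(e_2^M(X^\perp,\sigma))$ in $H_1(M;\b Q)$ using the torsion hypothesis. The only cosmetic difference is that the paper writes $L_{X=\pm E_1^d}$ instead of $L_{E_1^d=\pm X}$; these agree up to orientation, which is irrelevant for a vanishing statement.
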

\begin{proof}
Let $E_1^d$ and $E_1^g$ be the Siamese sections of $\bar\tau$. Using Proposition~\ref{prop_linksinhomologyI} and the fact that $X$ is a torsion combing, we get, in $H_1(M;\b Q)$,
$$
\begin{aligned}
 \ 2 \cdot [L_{X=-E_1^d}+L_{X=-E_1^g} ] &=  [-P(e_2^M({E_1^d}^\perp,{E_2^e}_{|\partial M})) -P(e_2^M({E_1^g}^\perp,{E_2^e}_{|\partial M})) ] \\
 \ 2 \cdot [L_{X=E_1^d}+L_{X=E_1^g} ] &= [P(e_2^M({E_1^d}^\perp,{E_2^e}_{|\partial M})) + P(e_2^M({E_1^g}^\perp,{E_2^e}_{|\partial M})) ]
\end{aligned}
$$
where $E_2^e$ is the second vector of $\tau_e$. Conclude with Lemma~\ref{simppara}.
\end{proof}

\begin{definition} \label{def_Ft}
Let $X$ and $Y$ be $\partial$-compatible combings of a compact oriented 3-manifold $M$. For all $t \in [0,1]$, let $\bar F_t(X,Y)$ denote the 4-chain of $[0,1]\times UM$~:
$$
\bar F_t(X,Y) = [0,t] \times X(M) + \lbrace t \rbrace \times \bar F(X,Y) + [t,1] \times Y(M),
$$
where $\bar F(X,Y)$ is a 4-chain of $UM$ as in Lemma~\ref{lem_phomotopy}. Note that :
$$
\partial \bar F_t(X,Y) = \lbrace 1 \rbrace \times Y(M) - \lbrace 0 \rbrace \times X(M) - [0,1] \times X(\partial M) + \lbrace t \rbrace \times UM_{L_{X=-Y}}.
$$
\end{definition}

\begin{lemma} \label{lem_transfert}
Let $\bar \tau=(N(\gamma);\tau_e,\tau_d,\tau_g)$ be a pseudo-parallelization of a compact oriented 3-manifold $M$. If $X$ is a torsion combing of $M$ compatible with $\bar \tau$, then there exist 4-chains of $[0,1]\times UM$, $C_4^\pm(\bar\tau,X)$ and $C_4^\pm(X,\bar\tau)$, with boundaries :
$$
\begin{aligned}
 \partial C_4^\pm(\bar\tau,X) &= \lbrace 1 \rbrace \times (\pm X)(M) - \lbrace 0 \rbrace \times \bar\tau(M\times \lbrace \pm e_1 \rbrace ) - [0,1] \times (\pm X)(\partial M) \\
 \partial C_4^\pm(X,\bar\tau) &= \lbrace 1 \rbrace \times \bar\tau(M\times \lbrace \pm e_1 \rbrace ) - \lbrace 0 \rbrace \times (\pm X)(M) - [0,1] \times (\pm X)(\partial M) \\
\end{aligned}
$$
\end{lemma}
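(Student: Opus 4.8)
The plan is to build just one of the four chains, say $C_4^+(\bar\tau,X)$, from scratch, and to deduce the other three from it by the obvious symmetries of $[0,1]\times UM$. The chain $C_4^+(\bar\tau,X)$ will be assembled by averaging two copies of the partial-homotopy 4-chain $\bar F_t$ of Definition~\ref{def_Ft}, one for each Siamese section of $\bar\tau$, and then correcting a leftover ``vertical'' term by means of the torsion hypothesis.

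First I would fix some $t\in\ ]0,1[$ and recall that, since $(X,\sigma)$ is compatible with $\bar\tau$, it is $\partial$-compatible with both $(E_1^d,{E_2^e}_{|\partial M})$ and $(E_1^g,{E_2^e}_{|\partial M})$; hence Lemma~\ref{lem_phomotopy} and Definition~\ref{def_Ft} provide, for $\bullet\in\lbrace d,g\rbrace$, 4-chains $\bar F_t(E_1^\bullet,X)$ of $[0,1]\times UM$ with
$$
\partial\bar F_t(E_1^\bullet,X)=\lbrace 1\rbrace\times X(M)-\lbrace 0\rbrace\times E_1^\bullet(M)-[0,1]\times X(\partial M)+\lbrace t\rbrace\times UM_{|L_{E_1^\bullet=-X}},
$$
where I used that $E_1^d$, $E_1^g$ and $X$ coincide on $\partial M$. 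Averaging these and recalling that $\bar\tau(M\times\lbrace e_1\rbrace)=\tfrac12\bigl(E_1^d(M)+E_1^g(M)\bigr)$ and $L_{\bar\tau=-X}=\tfrac12\bigl(L_{E_1^d=-X}+L_{E_1^g=-X}\bigr)$, the resulting 4-chain has boundary
$$
\lbrace 1\rbrace\times X(M)-\lbrace 0\rbrace\times\bar\tau(M\times\lbrace e_1\rbrace)-[0,1]\times X(\partial M)+\lbrace t\rbrace\times UM_{|L_{\bar\tau=-X}}.
$$
By Lemma~\ref{nulexcep} the $1$-cycle $L_{\bar\tau=-X}$ is rationally null-homologous in $M$, so it bounds a rational $2$-chain $\Sigma$; then $\lbrace t\rbrace\times UM_{|\Sigma}$ is a 4-chain of $[0,1]\times UM$ whose boundary is $\lbrace t\rbrace\times UM_{|L_{\bar\tau=-X}}$ up to sign, and subtracting it with the right sign from the average produces a chain $C_4^+(\bar\tau,X)$ with exactly the prescribed boundary.

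To obtain the remaining three chains I would push $C_4^+(\bar\tau,X)$ forward by the symmetries $\mathrm{id}_{[0,1]}\times\iota$ and $r\times\mathrm{id}_{UM}$, where $\iota$ is the fiberwise antipodal diffeomorphism of $UM$ and $r$ is the involution $s\mapsto 1-s$ of $[0,1]$. Since $\iota$ sends $E_1^\bullet(M)$ to $(-E_1^\bullet)(M)$ and, because $\b S^1(\pm e_1)$ and hence $C_2(\pm e_1)$ degenerate to a point, sends $\bar\tau(M\times\lbrace e_1\rbrace)$ to $\bar\tau(M\times\lbrace -e_1\rbrace)$, the chain $(\mathrm{id}\times\iota)_\ast C_4^+(\bar\tau,X)$ has precisely the boundary demanded of $C_4^-(\bar\tau,X)$; likewise $-(r\times\mathrm{id})_\ast$ swaps the two ends of the cylinder and turns $C_4^\pm(\bar\tau,X)$ into $C_4^\pm(X,\bar\tau)$ (alternatively one reruns the averaging construction directly with the two entries exchanged, which yields the same answer).

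The only genuine content is the cancellation of the vertical term $\lbrace t\rbrace\times UM_{|L_{\bar\tau=-X}}$, which is exactly where the torsion hypothesis on $X$ is used, through Lemma~\ref{nulexcep}; everything else is orientation bookkeeping. Accordingly, the main nuisance I expect is tracking signs: in the boundary formula for $\bar F_t$, in the orientation of $UM_{|\Sigma}$, and under the symmetries $\iota$ and $r$ — in particular checking carefully that $(\mathrm{id}\times\iota)_\ast$ really sends the pseudo-section at $e_1$ to the pseudo-section at $-e_1$ as oriented chains.
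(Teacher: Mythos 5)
Your construction of $C_4^+(\bar\tau,X)$ — averaging $\bar F_t(E_1^d,X)$ and $\bar F_t(E_1^g,X)$ and killing the leftover $\lbrace t\rbrace\times UM_{|L_{\bar\tau=-X}}$ by a rational $2$-chain furnished by Lemma~\ref{nulexcep} — is exactly the paper's proof, which simply writes down all four chains with this same recipe (using $\Sigma(\pm e_1)$ and $-\Sigma(\pm e_1)$ for the sign variants) instead of deriving the other three from the first by the $\iota$- and time-reversal symmetries. Both packagings are correct and equivalent; the paper's is marginally more explicit, yours saves repeating the orientation bookkeeping at the cost of verifying that $\iota$ carries $\bar\tau(M\times\lbrace e_1\rbrace)$ to $\bar\tau(M\times\lbrace -e_1\rbrace)$ and that $-(r\times\mathrm{id})_*$ produces the right boundary signs, which you do correctly.
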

\begin{proof}
Let $E_1^d$ and $E_1^g$ be the Siamese sections of $\bar\tau$ and just set
$$
\begin{aligned}
C_4^\pm(\bar\tau,X) &= \sfrac{1}{2} \cdot \left( \bar F_t( \pm E_1^d, \pm X) + \bar F_t( \pm E_1^g, \pm X) - \lbrace t \rbrace \times UM_{|\Sigma(\pm e_1)} \right) \\
C_4^\pm(X,\bar\tau) &= \sfrac{1}{2} \cdot \left( \bar F_t( \pm X, \pm E_1^d) + \bar F_t( \pm X, \pm E_1^g) - \lbrace t \rbrace \times UM_{|-\Sigma(\pm e_1)} \right)
\end{aligned}
$$
where the 4-chains $\bar F_t$ are as in Definition~\ref{def_Ft} and where $\Sigma(\pm e_1)$ are rational 2-chains of $M$ bounded by $\pm(L_{E_1^d=-X}+L_{E_1^g=-X})$, which are rationally null-homologous according to Lemma~\ref{nulexcep}.
\end{proof}

\begin{remark} \label{rmkpcase}
Recall that a genuine parallelization $\tau$ of a compact oriented 3-manifold is a pseudo-parallelization whose link is empty. In such a case, $E_1^d$ and $E_1^g$ are the first vector $E_1^\tau$ of the parallelization $\tau$ and the chains $C_4^{\pm}$ can be simply defined as
$$
\begin{aligned}
C_4^\pm(\tau,X) &= \bar F_t( \pm E_1^\tau, \pm X) - \lbrace t \rbrace \times UM_{|\Sigma(\pm e_1)} \\
C_4^\pm(X,\tau) &= \bar F_t( \pm X, \pm E_1^\tau) - \lbrace t \rbrace \times UM_{|-\Sigma(\pm e_1)}
\end{aligned}
$$
where the 4-chains $\bar F_t$ are as in Definition~\ref{def_Ft} and where $\Sigma(\pm e_1)$ are rational 2-chains of $M$ bounded by $\pm L_{E_1^\tau=-X}$.
\end{remark}

\begin{lemma} \label{lem_C4pme1}
Let $\tau$ and $\bar \tau$ be two pseudo-parallelizations of a compact oriented 3-manifold $M$ that coincide on $\partial M$ and whose links are disjoint. For all $v\in \b S^2$, there exists a 4-chain $C_4(M,\tau,\bar\tau ;v)$ of $[0,1] \times UM$ such that
$$
 \partial C_4(M,\tau,\bar\tau ;v) = \lbrace 1 \rbrace \times \bar \tau (M \times \lbrace v \rbrace)- \lbrace 0 \rbrace \times \tau (M \times \lbrace v \rbrace) - [0,1] \times \tau (\partial M\times \lbrace v \rbrace).
$$
\end{lemma}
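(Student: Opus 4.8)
The plan is to reduce to $v=e_1$, where the pseudo-sections are concrete, and then sweep $e_1$ to an arbitrary $v$ along a path in $\b S^2$.

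First I would record that, since $C_2(e_1)$ is degenerate, $\tau(M\times\{e_1\})=\tfrac12\big(E_1^d(M)+E_1^g(M)\big)$ and $\bar\tau(M\times\{e_1\})=\tfrac12\big(\bar E_1^d(M)+\bar E_1^g(M)\big)$, where $E_1^d,E_1^g$ (resp.\ $\bar E_1^d,\bar E_1^g$) are the Siamese sections of $\tau$ (resp.\ $\bar\tau$), and that all four Siamese sections restrict on $\partial M$ to the common section $\tau(\partial M\times\{e_1\})=\bar\tau(\partial M\times\{e_1\})$. Viewing them as combings with boundary section $\sigma$ equal to the second vector of $\tau_{|\partial M}=\bar\tau_{|\partial M}$, and perturbing $\bar E_1^d$ and $\bar E_1^g$ if necessary so that $(E_1^d,\bar E_1^d)$ and $(E_1^g,\bar E_1^g)$ are $\partial$-compatible, I would set, exactly as in the proof of Lemma~\ref{lem_transfert},
$$C_4(M,\tau,\bar\tau;e_1)=\tfrac12\big(\bar F_t(E_1^d,\bar E_1^d)+\bar F_t(E_1^g,\bar E_1^g)\big)-\tfrac12\,\{t\}\times UM_{|\Sigma},$$
where $\bar F_t$ is the $4$-chain of Definition~\ref{def_Ft} and $\Sigma$ is a rational $2$-chain of $M$ bounded by $L_{E_1^d=-\bar E_1^d}+L_{E_1^g=-\bar E_1^g}$. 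Plugging the boundary formula of Definition~\ref{def_Ft} and $E_1^d(\partial M)=E_1^g(\partial M)=\tau(\partial M\times\{e_1\})$ into this, the four section terms recombine to $\{1\}\times\bar\tau(M\times\{e_1\})-\{0\}\times\tau(M\times\{e_1\})-[0,1]\times\tau(\partial M\times\{e_1\})$, and the stray term $\tfrac12\{t\}\times UM_{|L_{E_1^d=-\bar E_1^d}+L_{E_1^g=-\bar E_1^g}}$ is killed by $\partial\big(-\tfrac12\{t\}\times UM_{|\Sigma}\big)$.

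The crux is that such a $\Sigma$ exists, i.e.\ that $L_{E_1^d=-\bar E_1^d}+L_{E_1^g=-\bar E_1^g}$ is rationally null-homologous, and this is precisely where the pseudo-parallelization structure is used. Applying Proposition~\ref{prop_linksinhomologyI} with the boundary section $\sigma$ and Lemma~\ref{simppara} (which gives $P\big(e_2^M((E_1^d)^\perp,\sigma)\big)=-P\big(e_2^M((E_1^g)^\perp,\sigma)\big)=[\gamma]$ for the link $\gamma$ of $\tau$, and likewise $-P\big(e_2^M((\bar E_1^g)^\perp,\sigma)\big)=P\big(e_2^M((\bar E_1^d)^\perp,\sigma)\big)=[\gamma']$ for the link $\gamma'$ of $\bar\tau$), one gets $2[L_{E_1^d=-\bar E_1^d}]=[\gamma]-[\gamma']$ and $2[L_{E_1^g=-\bar E_1^g}]=[\gamma']-[\gamma]$ in $H_1(M;\b Z)$, so their sum vanishes in $H_1(M;\b Q)$ and $\Sigma$ exists. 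With a single genuine parallelization in place of $\bar\tau$ this cancellation between the $d$- and $g$-sides is unavailable, which is exactly the sense in which LP$_\b Q$-surgeries are compatible with pseudo-parallelizations but not with genuine ones.

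To reach an arbitrary $v\in\b S^2$, I would fix a smooth path $p$ in $\b S^2$ from $v$ to $e_1$ and build $4$-chains $\tau(M\times p)$ and $\bar\tau(M\times p)$ of $UM$ by copying the definition of a pseudo-section with $p$ in place of the point $v$ (the genuine parts giving $\tau_e$, resp.\ $\bar\tau_e$, on $\cdot\times p$, and a $3$-chain of $[-1,1]\times\b S^2$ sweeping the chains $C_2$ along $p$ replacing $C_2(v)$), so that $\partial\big(\tau(M\times p)\big)=\tau(M\times\{e_1\})-\tau(M\times\{v\})-\tau(\partial M\times p)$ and similarly for $\bar\tau$, with $\tau(\partial M\times p)=\bar\tau(\partial M\times p)$. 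Then I would set
$$C_4(M,\tau,\bar\tau;v)=\{0\}\times\tau(M\times p)+C_4(M,\tau,\bar\tau;e_1)-\{1\}\times\bar\tau(M\times p)-[0,1]\times\tau(\partial M\times p),$$
whose boundary reduces to the asserted expression by a routine cancellation of the $\{e_1\}$-terms against $\partial C_4(M,\tau,\bar\tau;e_1)$ together with $\partial\big([0,1]\times\tau(\partial M\times p)\big)=\{1\}\times\tau(\partial M\times p)-\{0\}\times\tau(\partial M\times p)-[0,1]\times\big(\tau(\partial M\times\{e_1\})-\tau(\partial M\times\{v\})\big)$. The step I expect to be the real obstacle is the transversality bookkeeping at $v=e_1$: $E_1^d$ and $\bar E_1^d$ (resp.\ $E_1^g$ and $\bar E_1^g$) are in general not $\partial$-compatible, and the generic homotopies making them so alter the section terms, so that the recombination into $\tau(M\times\{e_1\})$ and $\bar\tau(M\times\{e_1\})$ becomes only approximate. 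I would fix this by taking the homotopies small enough that the corresponding links vanish, then concatenating the resulting boundary-clean chains $\bar F_t$; the homological input from Lemma~\ref{simppara} is untouched since the relevant relative Euler classes are homotopy invariants.
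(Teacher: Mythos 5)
Your proof is correct, and it takes a slightly but genuinely different route from the paper. The paper interpolates $\tau\rightsquigarrow X\rightsquigarrow\bar\tau$ through an auxiliary combing $X$ chosen once and for all to be \emph{compatible} with both pseudo-parallelizations, and then caps off the stray chain $UM_{|L_{\tau=-X}\cup(-L_{\bar\tau=-X})}$ after checking via Proposition~\ref{prop_linksinhomologyI} and Lemma~\ref{simppara} that $4[L_{\tau=-X}]=-2[P(e_2^M(X^\perp,\cdot))]=4[L_{\bar\tau=-X}]$ in $H_1(M;\b Q)$. You instead interpolate the Siamese sections directly, $E_1^d\rightsquigarrow\bar E_1^d$ and $E_1^g\rightsquigarrow\bar E_1^g$, and cap off $L_{E_1^d=-\bar E_1^d}+L_{E_1^g=-\bar E_1^g}$, whose null-homology follows from the complementary cancellation $2[L_{E_1^d=-\bar E_1^d}]=[\gamma]-[\bar\gamma]$, $2[L_{E_1^g=-\bar E_1^g}]=[\bar\gamma]-[\gamma]$. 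The homological inputs are the same (Proposition~\ref{prop_linksinhomologyI}, Lemma~\ref{simppara}); your version makes the internal $d$/$g$ cancellation arguably more transparent, while the paper's auxiliary combing $X$ buys automatic $\partial$-compatibility for all the $\bar F$-chains, which is exactly the bookkeeping headache you correctly flag in your final paragraph: $E_1^d$ and $\bar E_1^d$ are not in general $\partial$-compatible, and perturbing $\bar E_1^d$, $\bar E_1^g$ separately breaks the pseudo-section identity $\bar\tau(M\times\{e_1\})=\tfrac12(\bar E_1^d(M)+\bar E_1^g(M))$ unless, as you propose, you restore it by grafting on small homotopies that create no new $L_{\cdot=-\cdot}$. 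That fix works, but the paper's choice of $X$ avoids it altogether. Your explicit sweep of $e_1$ to an arbitrary $v$ along a path in $\b S^2$ is a concrete instance of the paper's brief remark that the boundaries $\partial C_4(\tau,\bar\tau;v)$ are all homologous; both are fine.
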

\begin{proof}
Let us write $C_4(\tau,\bar\tau ;v)$ instead of $C_4(M,\tau,\bar\tau ;v)$ when there is no ambiguity. Since the 3-chains $\partial C_4(\tau,\bar\tau ;v)$, where $v \in \b S^2$, are homologous, it is enough to prove the existence of $C_4(\tau,\bar\tau ;e_1)$. First, let $X$ be a combing of $M$ such that $X$ is compatible with $\tau$ and $\bar \tau$. In general, this combing is not a torsion combing. Second, let $E_1^d$ and $E_1^g$ (resp. $\bar E_1^d$ and $\bar E_1^g$) denote the Siamese section of $\tau$, (resp. $\bar\tau$) and set 
$$
\begin{aligned}
\bar F(\tau,X) &= \sfrac{1}{2} \cdot \left( \bar F( E_1^d,X) + \bar F( E_1^g,X) \right) \\
\bar F(X,\bar\tau) &= \sfrac{1}{2} \cdot \left( \bar F(X, \bar E_1^d) + \bar F(X, \bar E_1^g)\right).
\end{aligned}
$$
These chains have boundaries :
$$
\begin{aligned}
\partial \bar F(\tau,X) &= X(M) - \tau(M\times \lbrace e_1 \rbrace) + UM_{|L_{\tau=-X}} \\
\partial \bar F(X,\bar\tau) &= \bar\tau(M\times \lbrace e_1 \rbrace) - X(M) + UM_{|-L_{\bar\tau=-X}}. 
\end{aligned}
$$
Hence, for all $t \in [0,1]$, the 4-chain of $[0,1]\times UM$
$$
\bar F_t(\tau, \bar\tau; e_1) = [0,t] \times \tau(M\times \lbrace e_1 \rbrace) + \lbrace t \rbrace \times \left(\bar F(\tau,X) + \bar F(X,\bar\tau)\right) + [t,1] \times \bar\tau(M\times \lbrace e_1 \rbrace)
$$
has boundary :
$$
\begin{aligned}
\partial \bar F_t(\tau, \bar\tau; e_1) &= \lbrace 1 \rbrace \times \bar\tau(M\times \lbrace e_1 \rbrace) - \lbrace 0 \rbrace \times \tau(M\times \lbrace e_1 \rbrace) \\
&- [0,1] \times \tau(\partial M\times \lbrace e_1 \rbrace) + \lbrace t \rbrace \times UM_{|L_{\tau=-X}\cup - L_{\bar\tau=-X}}.
\end{aligned}
$$
Thanks to Proposition~\ref{prop_linksinhomologyI} and Lemma~\ref{simppara}, in $H_1(M;\b Q)$ :
$$
\begin{aligned}
 4 \cdot [ L_{\tau=-X} ] &= 2\cdot [L_{E_1^d=-X} + L_{E_1^g=-X}] \\
 &= [-2 \cdot P(e_2^M(X^\perp,{E_2^e}_{|\partial M}))\hspace{-1mm}+\hspace{-1mm}P(e_2^M({E_1^d}^\perp,{E_2^e}_{|\partial M}))\hspace{-1mm}+\hspace{-1mm}P(e_2^M({E_1^g}^\perp,{E_2^e}_{|\partial M}))] \\
 &= -2 \cdot[P(e_2^M(X^\perp,{E_2^e}_{|\partial M}))]
\end{aligned}
$$
where $E_2^e$ is the second vector of $\tau_e$. Similarly, $2\cdot [L_{\bar\tau=-X}]=-[P(e_2^M(X^\perp,{E_2^e}_{|\partial M}))]$ in $H_1(M;\b Q)$. So, the link $L_{\tau=-X}\cup - L_{\bar\tau=-X}$ is rationally null-homologous in $M$, \ie there exists a rational 2-chain $\Sigma(\tau,\bar\tau)$ such that $\partial \Sigma(\tau,\bar\tau) = L_{\tau=-X}\cup - L_{\bar\tau=-X}$. Hence, we get a 4-chain $C_4(\tau,\bar\tau;e_1)$ as desired by setting 
$$
C_4(\tau,\bar\tau;e_1) = \bar F_t(\tau, \bar\tau; e_1) - \lbrace t \rbrace \times UM_{|\Sigma(\tau,\bar\tau)}.
$$
\end{proof}

\begin{lemma} \label{lem_welldefined}
Let $\tau$ and $\bar \tau$ be two pseudo-parallelizations of a compact oriented 3-manifold $M$ that coincide on $\partial M$. If $x$, $y$ and $z$ are points in $\b S^2$ with pairwise different distances to $e_1$,  then there exist pairwise transverse 4-chains $C_4(\tau,\bar\tau ;x)$, $C_4(\tau,\bar\tau ;y)$ and $C_4(\tau,\bar\tau ;z)$ as in Lemma~\ref{lem_C4pme1} and the algebraic intersection $\langle C_4(\tau,\bar\tau ;x), C_4(\tau,\bar\tau ;y), C_4(\tau,\bar\tau ;z) \rangle_{[0,1]\times UM}$ only depends on $\tau$ and~$\bar\tau$.
\end{lemma}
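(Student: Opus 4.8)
The plan is to first produce pairwise transverse representatives and then to show that the resulting triple intersection number is insensitive to every choice made in the construction.

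\emph{Existence of transverse representatives.} I would start from the $4$-chains $C_4(\tau,\bar\tau;v)$ supplied by Lemma~\ref{lem_C4pme1}. The point of requiring that $x$, $y$ and $z$ have pairwise different distances to $e_1$ is that the latitude circles $\b S^1(x)$, $\b S^1(y)$, $\b S^1(z)$ of $\b S^2$ (those orthogonal to $e_1$) are then pairwise disjoint, and consequently the three boundary $3$-cycles $\partial C_4(\tau,\bar\tau;x)$, $\partial C_4(\tau,\bar\tau;y)$, $\partial C_4(\tau,\bar\tau;z)$ have pairwise disjoint supports: over $M\setminus\mathring N(\gamma)$ the pseudo-sections $\bar\tau(M\times\{x\})$, $\bar\tau(M\times\{y\})$, $\bar\tau(M\times\{z\})$ are graphs of three distinct sections, over $N(\gamma)$ their remaining pieces sit over the disjoint circles $\b S^1(x)$, $\b S^1(y)$, $\b S^1(z)$, the same holds for the pieces $\tau(M\times\{\cdot\})$ and $\tau(\partial M\times\{\cdot\})$, and the three boundary cycles live respectively on $\{0\}\times UM$, $\{1\}\times UM$ and $[0,1]\times UM_{|\partial M}$, whose mutual overlaps only involve the $\tau(\partial M\times\{\cdot\})$ pieces, which are disjoint. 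Keeping these disjoint boundaries fixed, a general-position perturbation of the interiors of the three $4$-chains inside $[0,1]\times\mathring{UM}$ makes them pairwise transverse, and a further perturbation makes each pairwise intersection (a $2$-chain) transverse to the remaining $4$-chain and disjoint from $\partial([0,1]\times UM)$. This is possible because $4+4>\dim([0,1]\times UM)=6$ and $4+4+4-2\cdot 6=0$, so the triple intersection is a well-defined compact $0$-chain in the interior and its algebraic count makes sense.

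\emph{Reduction for the independence statement.} Given two admissible triples $(C_4(x),C_4(y),C_4(z))$ and $(C_4'(x),C_4'(y),C_4'(z))$, after one more perturbation rel boundaries I may assume all six chains are in mutual general position. It then suffices to prove that the algebraic count is unchanged when a single chain is replaced by the corresponding primed one, the other two being kept; performing three such replacements gives the result, and by the symmetry of the situation in the three slots I may treat, say, the $z$-slot. Since $\partial C_4(z)=\partial C_4'(z)$, the difference $\zeta:=C_4(z)-C_4'(z)$ is a $4$-cycle of $[0,1]\times UM$, and the variation of the count equals $\pm\langle S,\zeta\rangle$ with $S:=C_4(x)\cap C_4(y)$. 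Because the boundaries of $C_4(x)$ and $C_4(y)$ are disjoint and, after perturbation, each chain meets $\partial([0,1]\times UM)$ only along its own boundary, one has $\partial S=\emptyset$; thus $S$ is a genuine $2$-cycle and $\langle S,\zeta\rangle$ depends only on $[S]\in H_2([0,1]\times UM;\b Q)$ and $[\zeta]\in H_4([0,1]\times UM;\b Q)$.

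\emph{Homological vanishing, and the main obstacle.} Since $[0,1]\times UM$ retracts onto $UM$ and $UM\to M$ is an $\b S^2$-bundle, the Gysin (Serre) spectral sequence yields $H_4([0,1]\times UM;\b Q)\cong H_4(UM;\b Q)\cong H_2(M;\b Q)$, with every class represented by a vertical cycle $\{t_0\}\times P_M^{-1}(\Sigma)$ for $\Sigma$ a $2$-cycle of $M$; so I may take $\zeta=\{t_0\}\times P_M^{-1}(\Sigma)$. Slicing $S$ at a generic level $t_0$ and projecting to $M$ then identifies $\langle S,\zeta\rangle$ with $\pm\langle P_M(s),\Sigma\rangle_M$, where $s$ is the $1$-cycle $P_M\big(S\cap(\{t_0\}\times UM)\big)$ of $M$. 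The crux -- the step I expect to require real work -- is to show that $[P_M(s)]$ is rationally null-homologous, or at least lies in the image of $H_1(\partial M;\b Q)\to H_1(M;\b Q)$, which already forces its pairing with the absolute class $[\Sigma]$ to vanish. This I would establish by following the explicit form of the chains $C_4(\tau,\bar\tau;v)$ in Lemma~\ref{lem_C4pme1}: away from their fiberwise parts, $C_4(x)$ and $C_4(y)$ are built from graphs of sections of $UM$ that do not meet for distinct values of $v$, so $S$ is concentrated near the pieces coming from $\bar F(\tau,X)$, $\bar F(X,\bar\tau)$ and $UM_{|\Sigma(\tau,\bar\tau)}$, and its projection to $M$ is controlled by the links $L_{\tau=-X}$, $L_{\bar\tau=-X}$ together with the maps $\l F$ and $\l G$ of the pseudo-parallelizations; the same Euler-class bookkeeping as in Lemmas~\ref{simppara} and~\ref{nulexcep} then shows $[P_M(s)]$ is carried by $\partial M$. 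Hence $\langle S,\zeta\rangle=0$, each single replacement leaves the count unchanged, and therefore $\langle C_4(\tau,\bar\tau;x),C_4(\tau,\bar\tau;y),C_4(\tau,\bar\tau;z)\rangle_{[0,1]\times UM}$ depends only on $\tau$ and $\bar\tau$.
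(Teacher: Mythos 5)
Your overall strategy — reduce invariance to the vanishing of $\langle \Sigma\times\b S^2, C_4(\tau,\bar\tau;y), C_4(\tau,\bar\tau;z)\rangle$ for every generator $\Sigma\times\b S^2$ of $H_4([0,1]\times UM;\b Q)$ — is exactly the paper's first step, but your argument for that vanishing has two genuine problems.

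First, the pairwise disjointness you invoke for the boundary $3$-cycles is false. Over $N(\gamma)$, the pseudo-section $\bar\tau(M\times\{v\})$ has pieces $\tfrac{1}{2}\tau_d(N(\gamma)\times\{v\})$ and $\tfrac{1}{2}\tau_g(N(\gamma)\times\{v\})$ which are \emph{not} supported over the latitude circle $\b S^1(v)$; only the $\tau_e(\{b\}\times\gamma\times C_2(v))$ piece is. For $x\neq y$ the chains $\tau_d(N(\gamma)\times\{x\})$ and $\tau_g(N(\gamma)\times\{y\})$ generically do intersect, since $\tau_d^{-1}\tau_g$ involves the interpolation $\l F:[a,b]\times[-1,1]\to SO(3)$, which away from the boundary need not be a rotation about $e_1$ and hence does not preserve latitude. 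What the ``pairwise different distances to $e_1$'' hypothesis guarantees is only that the \emph{triple} intersection of pseudo-sections is empty (this is the pigeonhole argument in the proof of Theorem~\ref{prop_varasint}: a triple intersection would force two of $\tau_d(\cdot,x),\tau_d(\cdot,y),\tau_d(\cdot,z)$, or two of the $\tau_g$'s, to meet, which is impossible). So $S=C_4(x)\cap C_4(y)$ is in general a relative $2$-cycle, not an absolute one, and it is \emph{not} concentrated near the $\bar F$- and $\Sigma(\tau,\bar\tau)$-pieces.

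Second, and more seriously, the crux ``$[P_M(s)]$ is carried by $\partial M$'' is asserted, not proven. After pushing $\Sigma\times\b S^2$ to a level near $\{0\}$, one has
$\langle \Sigma\times\b S^2, C_4(\tau,\bar\tau;y), C_4(\tau,\bar\tau;z)\rangle = \langle \Sigma\times\b S^2, \tau(M\times\{y\}), \tau(M\times\{z\})\rangle_{UM}$,
so your claim is \emph{equivalent} to the desired vanishing; the Euler-class lemmas~\ref{simppara} and~\ref{nulexcep} do not give it directly, since $\tau(M\times\{x\})\cap\tau(M\times\{y\})$ is a $1$-chain in $N(\gamma)$ whose rational class is typically a nonzero multiple of $[\gamma]$, which need not lie in the image of $H_1(\partial M;\b Q)$. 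The paper's proof sidesteps this computation entirely: pushing $\Sigma\times\b S^2$ to $\{0\}$ yields the $\tau$-count and pushing to $\{1\}$ yields the $\bar\tau$-count, and since these must agree, the quantity $\langle \Sigma\times\b S^2, \tau(M\times\{y\}), \tau(M\times\{z\})\rangle$ is the same for \emph{all} pseudo-parallelizations with the given boundary restriction. One then uses Lemma~\ref{lem_extendpparallelization} to produce a pseudo-parallelization $\tau'$ whose link is disjoint from $\Sigma$; over $\Sigma$, $\tau'$ is a genuine parallelization, so $\tau'(M\times\{y\})$ and $\tau'(M\times\{z\})$ are disjoint over $\Sigma$ and the pairing vanishes. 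You should replace your ``Euler-class bookkeeping'' paragraph by this change-of-pseudo-parallelization argument.
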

\begin{proof}
Pick any $x$, $y$ and $z$ in $\b S^2$ with pairwise different distances to $e_1$ and consider some 4-chains $C_4(\tau,\bar\tau ;x)$, $C_4(\tau,\bar\tau ;y)$ and $C_4(\tau,\bar\tau ;z)$ such that, for $v \in \lbrace x , y , z \rbrace$,
$$
\partial C_4(\tau,\bar\tau ;v) = \lbrace 1 \rbrace \times \bar\tau(M\times\lbrace v \rbrace) - \lbrace 0 \rbrace \times \tau (M\times \lbrace v \rbrace ) - [0,1] \times \tau (\partial M \times \lbrace v \rbrace ).
$$
The intersection of $C_4(\tau,\bar\tau;x)$, $C_4(\tau,\bar\tau;y)$ and $C_4(\tau,\bar\tau;z)$ is in the interior of $[0,1]\times UM$. The algebraic triple intersection of these three 4-chains only depends on the fixed boundaries and on the homology classes of the 4-chains. The space $H_4([0,1]\times UM ; \b Q)$ is generated by the classes of 4-chains $\Sigma \times \b S^2$ where $\Sigma$ is a surface in $M$. If $\Sigma \times \b S^2$ is such a 4-chain, then
$$
\begin{aligned}
 & \langle C_4(\tau,\bar\tau ;x) + \Sigma\times \b S^2, C_4(\tau,\bar\tau ;y), C_4(\tau,\bar\tau ;z) \rangle - \langle C_4(\tau,\bar\tau ;x) , C_4(\tau,\bar\tau ;y), C_4(\tau,\bar\tau ;z) \rangle \\
 &= \langle \Sigma\times \b S^2, C_4(\tau,\bar\tau ;y), C_4(\tau,\bar\tau ;z) \rangle \\
 &= \left\lbrace
 \begin{aligned}
 & \langle \Sigma\times \b S^2, [0,1] \times \tau(M \times \lbrace y \rbrace ) , [0,1] \times \tau(M \times \lbrace z \rbrace) \rangle  \mbox{ , pushing $\Sigma \times \b S^2$ near $0$,}  \\
 & \langle \Sigma\times \b S^2, [0,1] \times \bar\tau(M \times \lbrace y \rbrace ) , [0,1] \times \bar \tau(M \times \lbrace z \rbrace) \rangle  \mbox{ , pushing $\Sigma \times \b S^2$ near $1$.} \\
 \end{aligned}
 \right.
\end{aligned}
$$
Hence, $\langle \Sigma\times \b S^2, C_4(\tau,\bar\tau ;y), C_4(\tau,\bar\tau ;z) \rangle $ is independent of $\tau$ and $\bar\tau$. So, use Lemma~\ref{lem_extendpparallelization} to extend a trivialization of $TM_{|\Sigma}$ as a pseudo-parallelization $\tau'$ that coincides with $\tau$ and $\bar\tau$ on $\partial M$. Considering this pseudo-parallelization we get
$$
\langle \Sigma\times \b S^2, C_4(\tau,\bar\tau ;y), C_4(\tau,\bar\tau ;z) \rangle \italicegal \langle \Sigma\times \b S^2, [0,1] \times \tau'(M \times \lbrace y \rbrace ) , [0,1] \times \tau'(M \times \lbrace z \rbrace) \rangle \italicegal 0,
$$
so that the algebraic triple intersection of the three chains $C_4(\tau,\bar\tau;x)$, $C_4(\tau,\bar\tau;y)$ and $C_4(\tau,\bar\tau;z)$ only depends on their fixed boundaries.
\end{proof}

\begin{proof}[Proof of Theorem~\ref{prop_varasint}]
Let $\tau$ and $\bar \tau$ be two pseudo-parallelizations of a compact oriented 3-manifold $M$ that coincide on $\partial M$ and whose links are disjoint. To conclude the proof of Theorem~\ref{prop_varasint}, we have to prove that for any $x$, $y$ and $z$ in $\b S^2$ with pairwise different distances to $e_1$ :
$$
p_1(\tau, \bar\tau)= 4 \cdot \langle C_4(M,\tau,\bar\tau ;x),C_4(M,\tau,\bar\tau ;y),C_4(M,\tau,\bar\tau ;z) \rangle_{[0,1]\times UM}.
$$

First, we know from \cite[Lemma 10.9]{lescopcube} that this is true if $M$ is a $\b Q$HH of genus 1. Notice that it is also true if $M$ embeds in such a manifold. Indeed, if $\b H$ is a $\b Q$HH of genus 1 and if $M$ embeds in $\b H$ then, using Lemma~\ref{lem_extendpparallelization} and using that $\tau$ and $\bar \tau$ coincide on $\partial M$, there exists a pseudo-parallelization $\check \tau$ of $\b H \setminus \mathring M$ such that
$$
\bar \tau_{\b H} : \left\lbrace
\begin{aligned}
m \in M &\mapsto \bar\tau(m,.) \\
m \in \b H\setminus \mathring M & \mapsto \check \tau(m,.)
\end{aligned} \right.
\mbox{ \ and \ }
\tau_{\b H} : \left\lbrace
\begin{aligned}
m \in M &\mapsto \tau(m,.) \\
m \in \b H\setminus \mathring M & \mapsto \check \tau(m,.)
\end{aligned} \right.
$$
are pseudo-parallelizations of $\b H$. Furthermore, for any $v \in \b S^2$, let $C_4(\b H,v)$ be the 4-chain of $[0,1]\times U\b H$ :
$$
C_4(\b H,\tau_{\b H},\bar\tau_{\b H} ;v) = C_4(M,\tau,\bar\tau ;v)\cup [0,1] \times \check \tau( (\b H \setminus \mathring M) \times \lbrace v \rbrace )
$$
where $C_4(M,\tau,\bar\tau ;v)$ is as in Lemma~\ref{lem_C4pme1}. The boundary of $C_4(\b H,\tau_{\b H},\bar\tau_{\b H} ;v)$ is~:
$$
\partial C_4(\b H,\tau_{\b H},\bar\tau_{\b H} ;v) = \lbrace 1 \rbrace \times \bar\tau_{\b H}(M\times \lbrace v \rbrace ) -  \lbrace 0 \rbrace \times \tau_{\b H}(M\times \lbrace v \rbrace ) - [0,1] \times \tau_{\b H}(\partial M\times \lbrace v \rbrace ).
$$
Using the definition of Pontrjagin numbers of pseudo-parallelizations and the hypothesis on $\b H$, it follows that if $x,y$ and $z$ are points in $\b S^2$ with pairwise different distances to $e_1$ :
$$
\begin{aligned}
p_1(\tau,\bar\tau) = p_1(\tau_{\b H},\bar\tau_{\b H})
= 4 \cdot \langle C_4(\b H,\tau_{\b H},\bar\tau_{\b H} ;x) , C_4(\b H,\tau_{\b H},\bar\tau_{\b H} ;y) , C_4(\b H,\tau_{\b H},\bar\tau_{\b H} ;z) \rangle.
\end{aligned}
$$
Now note that : 
$$
\langle [0,1] \times \check \tau (\b H\setminus \mathring M \times \lbrace x \rbrace) , [0,1] \times \check \tau (\b H\setminus \mathring M \times \lbrace y \rbrace ), [0,1] \times \check \tau (\b H\setminus \mathring M \times \lbrace z \rbrace) \rangle = 0.
$$
Indeed, if $\check \tau = (N(\check \gamma); \check \tau_e,\check \tau_d,\check \tau_g)$, for all $v\in \b S^2$ the pseudo-section of $\check\tau$ reads
$$
\begin{aligned}
\check \tau (M\times \lbrace v \rbrace ) &= \check\tau_e((M\setminus \mathring N(\check\gamma))\times \lbrace v \rbrace ) \\
&+ \frac{\check \tau_d(N(\check\gamma)\times \lbrace v \rbrace) + \check\tau_g(N(\check\gamma)\times \lbrace v \rbrace) + \check\tau_e( \lbrace b \rbrace \times \check\gamma \times C_2(v)) }{2}.
\end{aligned}
$$
The 3-chains $\check\tau_e((M\setminus \mathring N(\check\gamma))\times \lbrace v \rbrace )$, for $v\in \lbrace x,y,z \rbrace$, are pairwise disjoint since $\check\tau_e$ is a genuine parallelization and since $x,y$ and $z$ are pairwise distinct points in $\b S^2$. Moreover, the 3-chains $\check\tau_e( \lbrace b \rbrace \times \check\gamma \times C_2(v))$, for $v\in \lbrace x,y,z\rbrace$, are also pairwise disjoint since they are subsets of the $\check\tau_e(\lbrace b \rbrace \times \check\gamma \times \b S^1(v))$ , $v \in \lbrace x,y,z \rbrace$, which are pairwise disjoint since $x,y$ and $z$ have pairwise different distances to $e_1$. Finally, we have~:
$$
\begin{aligned}
\langle \check\tau_d(N(\check \gamma) \times \lbrace x\rbrace)+\check\tau_g(N(\check \gamma) \times \lbrace x\rbrace) , \check\tau_d(N(\check \gamma) \times \lbrace y\rbrace)+\check\tau_g(N(\check \gamma) \times \lbrace y\rbrace) , \\ \check\tau_d(N(\check \gamma) \times \lbrace z\rbrace)+\check\tau_g(N(\check \gamma) \times \lbrace z\rbrace) \rangle 
&= 0
\end{aligned}
$$
since a triple intersection between the 3-chains 
$$
\lbrace \check\tau_d(N(\check \gamma) \times \lbrace v\rbrace)+\check\tau_g(N(\check \gamma) \times \lbrace v\rbrace)\rbrace_{v\in \lbrace x,y,z\rbrace}
$$ would be contained in an intersection between two of the $\lbrace \check\tau_d(N(\check \gamma)\times \lbrace v\rbrace)\rbrace_{v\in \lbrace x,y,z \rbrace}$ or between two of the $\lbrace \check\tau_g(N(\check \gamma)\times \lbrace v\rbrace)\rbrace_{v\in \lbrace x,y,z \rbrace}$ which must be empty since $\check\tau_d$ and $\check\tau_g$ are genuine parallelizations. It follows that 
$$
\begin{aligned}
p_1(\tau,\bar\tau) &= 4 \cdot\langle C_4(\b H,\tau_{\b H},\bar\tau_{\b H} ;x) , C_4(\b H,\tau_{\b H},\bar\tau_{\b H} ;y) , C_4(\b H,\tau_{\b H},\bar\tau_{\b H} ;z) \rangle \\
&= 4 \cdot\langle C_4(M,\tau,\bar\tau ;x) , C_4(M,\tau,\bar\tau ;y) , C_4(M,\tau,\bar\tau ;z) \rangle.
\end{aligned}
$$
Using the same construction, note also that it is enough to prove the statement when $M$ is a closed oriented 3-manifold since any oriented 3-manifold embeds into a closed one. \\

Let us finally prove Theorem~\ref{prop_varasint} when $M$ is a closed oriented 3-manifold. Consider a Heegaard splitting $M = H_1 \cup_\Sigma H_2$ such that there is a collar $\Sigma\times[0,1] \subset H_2$ of $\Sigma$ verifying
$$
N(\bar \gamma) \cap \left(\Sigma\times[0,1] \right) = \emptyset \mbox{ \ \ and \ \ } N( \gamma) \cap \left(\Sigma\times[0,1] \right) = \emptyset
$$
where $\gamma$ and $\bar\gamma$ are the links of $\tau$ and $\bar\tau$, respectively, and such that $\Sigma=\Sigma\times \lbrace 0 \rbrace$. Such a splitting can be obtained by considering a triangulation of $M$ containing $\gamma$ and $\bar\gamma$ in its 1-skeleton, and then defining $H_1$ as a tubular neighborhood of this 1-skeleton. 

Using Lemma~\ref{lem_extendpparallelization}, we can construct a pseudo-parallelization $\tau^c$ of $\Sigma\times [0,1]$ such that $\tau^c$ coincides with $\bar\tau$ on $\Sigma \times \lbrace 1 \rbrace$ and with $\tau$ on $\Sigma \times \lbrace 0 \rbrace$. Then, write $H'_1 = H_1 \cup (\Sigma\times[0,1])$ and $H'_2=H_2\setminus( \Sigma\times[0,1[)$ -- see Figure~\ref{figHH} -- and set
$$
\check \tau : \left\lbrace
\begin{aligned}
 (m,v) \in UH_1 &\longmapsto \tau(m,v) \\
 (m,v) \in U(\Sigma\times[0,1]) &\longmapsto \tau^c(m,v) \\
 (m,v) \in UH'_2 &\longmapsto \bar\tau(m,v).
\end{aligned}
\right.
$$
\begin{center}
\definecolor{zzttqq}{rgb}{0.6,0.2,0}
\begin{tikzpicture}[line cap=round,line join=round,>=triangle 45,x=1.0cm,y=0.75cm]
\clip(-3.5,-3.5) rectangle (5.5,3.5);
\fill[line width=0pt,color=zzttqq,fill=zzttqq,fill opacity=0.15] (-3,2) -- (0,2) -- (0,-2) -- (-3,-2) -- cycle;
\fill[line width=0pt,color=zzttqq,fill=zzttqq,fill opacity=0.05] (0,2) -- (2,2) -- (2,-2) -- (0,-2) -- cycle;
\fill[line width=0pt,color=zzttqq,fill=zzttqq,fill opacity=0.15] (2,2) -- (5,2) -- (5,-2) -- (2,-2) -- cycle;
\draw (0,2)-- (0,-2);
\draw (2,2)-- (2,-2);
\draw (0,2.4)-- (0,2.6);
\draw (0,2.6)-- (5,2.6);
\draw (5,2.4)-- (5,2.6);
\draw (2,-2.4)-- (2,-2.6);
\draw (2,-2.6)-- (-3,-2.6);
\draw (-3,-2.6)-- (-3,-2.4);
\begin{scriptsize}
\draw (-1.75,0.25) node[anchor=north west] {$H_1$};
\draw (-1.75,-0.5) node[anchor=north west] {$\tau$};
\draw (3.25,0.25) node[anchor=north west] {$H'_2$};
\draw (3.25,-0.5) node[anchor=north west] {$\bar\tau$};
\draw (0.75,-0.5) node[anchor=north west] {$\tau^c$};
\draw (-0.75,-2.75) node[anchor=north west] {$H'_1$};
\draw (2.44,3.11) node[anchor=north west] {$H_2$};
\draw (-0.5,-2) node[anchor=north west] {$\Sigma\times\lbrace 0\rbrace$};
\draw (1.5,2.5) node[anchor=north west] {$\Sigma\times \lbrace 1 \rbrace$};
\end{scriptsize}
\end{tikzpicture}
\captionof{figure}
{}\label{figHH}
\end{center}
For $v \in \b S^2$, consider some 4-chains $C_4(H_1,\tau,\check\tau ;v)$, $C_4(H_2,\tau,\check\tau ;v)$, $C_4(H'_1,\check\tau,\bar\tau ;v)$ and $C_4(H'_2,\check\tau,\bar\tau ;v)$ of $[0,1]\times UH_1$, $[0,1]\times UH_2$, $[0,1]\times UH'_1$ and $[0,1]\times UH'_2$, respectively, such that :
$$
\begin{aligned}
 \partial C_4(H_1,\tau,\check\tau ;v) &\italicegal \lbrace 1 \rbrace \times \check \tau (H_1 \times \lbrace v \rbrace) - \lbrace 0 \rbrace \times \tau (H_1\times \lbrace v \rbrace ) - [0,1] \times \tau (\partial H_1 \times \lbrace v \rbrace) \\
  \partial C_4(H_2,\tau,\check\tau ;v) &\italicegal \lbrace 1 \rbrace \times \check \tau (H_2 \times \lbrace v \rbrace) - \lbrace 0 \rbrace \times \tau (H_2\times \lbrace v \rbrace ) - [0,1] \times \tau (\partial H_2 \times \lbrace v \rbrace) \\ 
\end{aligned}
$$
and 
$$
\begin{aligned}
 \partial C_4(H'_1,\check\tau,\bar\tau ;v) &\italicegal \lbrace 1 \rbrace \times \bar \tau (H'_1 \times \lbrace v \rbrace) - \lbrace 0 \rbrace \times \check \tau (H'_1\times \lbrace v \rbrace ) - [0,1] \times \check \tau (\partial H'_1 \times \lbrace v \rbrace) \\
  \partial C_4(H'_2,\check\tau,\bar\tau ;v) &\italicegal\lbrace 1 \rbrace \times \bar \tau (H'_2 \times \lbrace v \rbrace) - \lbrace 0 \rbrace \times \check \tau (H'_2\times \lbrace v \rbrace ) - [0,1] \times \check \tau (\partial H'_2 \times \lbrace v \rbrace). \\
\end{aligned}
$$
Since $H_1$ and $H_2$ embed in rational homology balls, for any $x,y$ and $z$ in $\b S^2$ with pairwise different distances to $e_1$ 
$$
\begin{aligned}
 p_1(\tau_{|H_1},\check\tau_{|H_1}) &= 4 \cdot\langle C_4(H_1,\tau,\check\tau ;x) , C_4(H_1,\tau,\check\tau ;y) , C_4(H_1,\tau,\check\tau ;z) \rangle_{[0,1]\times UH_1} \\
 p_1(\tau_{|H_2},\check\tau_{|H_2}) &= 4 \cdot\langle C_4(H_2,\tau,\check\tau ;x)  ,  C_4(H_2,\tau,\check\tau ;y) ,  C_4(H_2,\tau,\check\tau ;z) \rangle_{[0,1]\times UH_2} \\
\end{aligned}
$$
so that, using $C_4(M,\tau,\check\tau;v) = C_4(H_1,\tau,\check\tau;v)+C_4(H_2,\tau,\check\tau;v)$ for $v\in \b S^2$, 
$$
p_1(\tau,\check\tau) = 4 \cdot\langle C_4(M,\tau,\check\tau ;x) , \ C_4(M,\tau,\check\tau ;y) , \ C_4(M,\tau,\check\tau ;z) \rangle_{[0,1]\times UM}.
$$
Similarly, since $H'_1$ and $H'_2$ embed in rational homology balls, for any $x,y$ and $z$ in $\b S^2$ with pairwise different distances to $e_1$ 
$$
\begin{aligned}
 p_1(\check\tau_{|H'_1},\bar\tau_{|H'_1}) &= 4 \cdot\langle C_4(H'_1,\check\tau,\bar\tau ;x) , C_4(H'_1,\check\tau,\bar\tau ;y) , C_4(H'_1,\check\tau,\bar\tau ;z)   \rangle _{[0,1]\times UH'_1} \\
 p_1(\check\tau_{|H'_2},\bar\tau_{|H'_2}) &= 4 \cdot\langle C_4(H'_2,\check\tau,\bar\tau ;x) ,  C_4(H'_2,\check\tau,\bar\tau ;y) ,  C_4(H'_2,\check\tau,\bar\tau ;z)\rangle_{[0,1]\times UH'_2} \\
\end{aligned}
$$
so that, using $C_4(M,\check\tau,\bar\tau;v) = C_4(H'_1,\check\tau,\bar\tau;v)+C_4(H'_2,\check\tau,\bar\tau;v)$ for $v\in \b S^2$,
$$
p_1(\check\tau,\bar\tau) = 4 \cdot\langle C_4(M,\check\tau,\bar\tau;x), C_4(M,\check\tau,\bar\tau;y), C_4(M,\check\tau,\bar\tau;z) \rangle_{[0,1]\times UM}.
$$
Eventually, reparameterizing and stacking $C_4(M,\tau,\check\tau;v)$ and $C_4(M,\check\tau,\bar\tau;v) $, for all $v \in \b S^2$ we get a 4-chain $C_4(M,\tau,\bar\tau; v)$ of $[0,1]\times UM$ such that
$$
\partial C_4(M,\tau,\bar\tau;v) = \lbrace 1 \rbrace \times \bar \tau (M \times \lbrace v \rbrace) - \lbrace 0 \rbrace \times \tau(M\times \lbrace v \rbrace) - [0,1]\times \tau (\partial M \times \lbrace v \rbrace)
$$
and such that for any $x,y$ and $z$ in $\b S^2$ with pairwise different distances to $e_1$ 
$$
p_1(\tau, \bar \tau)= 4 \cdot\langle C_4(M,\tau,\bar\tau;x), C_4(M,\tau,\bar\tau;y), C_4(M,\tau,\bar\tau;z) \rangle_{[0,1]\times UM}.
$$
\end{proof}

\section{From pseudo-parallelizations to torsion combings}
\subsection{Variation of $p_1$ as an intersection of two 4-chains}
\begin{definition}
Let $M$ be a compact oriented 3-manifold. A trivialization $\rho$ of $TM_{|\partial M}$ is \textit{admissible} if there exists a section $X$ of $UM$ such that $(X,\rho)$ is a torsion combing of $M$.
\end{definition}

\begin{lemma} \label{lem_HtMb}
Let $M$ be a compact oriented 3-manifold, let $\rho$ be an admissible trivialization of $TM_{|\partial M}$ and let $S_1, S_2 , \ldots , S_{\beta_1(M)}$ be surfaces in $M$ comprising a basis of $H_2(M;\b Q)$. The subspace $H_T^\rho(M)$ of $H_2(UM;\b Q)$ generated by $\lbrace [X(S_1)], \ldots, [X(S_{\beta_1(M)})] \rbrace$ where $X$ is a section of $UM$ such that $(X,\rho)$ is a torsion combing of $M$, only depends on $\rho$.
\end{lemma}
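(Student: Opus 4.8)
The plan is to reduce the statement to the assertion that any two torsion combings of $M$ with boundary data $\rho$ induce the same map on second rational homology, and then to prove that reduction with the $4$-chain of Lemma~\ref{lem_phomotopy}. First I would record that, $M$ being an orientable $3$-manifold, $TM$ is trivializable, so $P_M\colon UM\to M$ admits a section and $P_{M*}\colon H_2(UM;\b Q)\to H_2(M;\b Q)$ is onto; for every section $X$ of $UM$ the induced map $X_*\colon H_2(M;\b Q)\to H_2(UM;\b Q)$ is then a right inverse of $P_{M*}$. Since $[X(S_i)]=X_*[S_i]$, the subspace generated by the $[X(S_i)]$ for one fixed section $X$ with $(X,\rho)$ a torsion combing equals $X_*(H_2(M;\b Q))$, which does not depend on the chosen basis $(S_i)$ of $H_2(M;\b Q)$. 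As $\rho$ is admissible such an $X$ exists, and $H_T^\rho(M)$ is the sum of the subspaces $X_*(H_2(M;\b Q))$ over all such $X$; so it is enough to prove that $X_*=Y_*$ on $H_2(M;\b Q)$ whenever $(X,\rho)$ and $(Y,\rho)$ are torsion combings of $M$.

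To establish this I would fix two torsion combings $(X,\rho)$ and $(Y,\rho)$, write $\sigma$ for the second vector of $\rho$, and — after perturbing $Y$ in $\mathring M$, which changes neither $Y_*$ nor the torsion condition, the latter depending only on the relative homotopy class — assume that $(X,\sigma)$ and $(Y,\sigma)$ are $\partial$-compatible. Since $H_2(M;\b Q)$ is generated by classes of embedded closed oriented surfaces, it suffices to check $[X(S)]=[Y(S)]$ in $H_2(UM;\b Q)$ for such a surface $S$, which I may take transverse to $L_{X=-Y}$. Lemma~\ref{lem_phomotopy} produces a $4$-chain $\bar F(X,Y)$ of $UM$ with $\partial\bar F(X,Y)=Y(M)-X(M)+UM_{|L_{X=-Y}}$; after making it transverse to $UM_{|S}$, the $3$-chain $\bar F(X,Y)\cap UM_{|S}$ of $UM_{|S}$ has boundary $Y(S)-X(S)+UM_{|L_{X=-Y}\cap S}$ up to sign. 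Since the fibers of $UM$ over a fixed component of $M$ are mutually homologous and the algebraic count of $L_{X=-Y}\cap S$ is $\langle L_{X=-Y},S\rangle_M$, this yields $[X(S)]-[Y(S)]=\pm\langle L_{X=-Y},S\rangle_M\cdot[F]$ in $H_2(UM;\b Q)$, where $[F]$ is the class of a fiber of $UM$.

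To conclude I would invoke Proposition~\ref{prop_linksinhomologyI}, which gives $2\,[L_{X=-Y}]=P(e_2^M(X^\perp,\sigma))-P(e_2^M(Y^\perp,\sigma))$ in $H_1(M;\b Z)$; as $(X,\rho)$ and $(Y,\rho)$ are torsion combings, the right-hand side is a torsion element, so $[L_{X=-Y}]=0$ in $H_1(M;\b Q)$. The intersection pairing $H_1(M;\b Q)\times H_2(M;\b Q)\to\b Q$ being well defined, $\langle L_{X=-Y},S\rangle_M=0$, whence $[X(S)]=[Y(S)]$. Therefore $X_*=Y_*$ on $H_2(M;\b Q)$, and $H_T^\rho(M)=X_*(H_2(M;\b Q))$ depends only on $\rho$.

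The step I expect to require the most care is the second paragraph: making the transversality bookkeeping precise, and above all the conceptual point that $[X(S)]-[Y(S)]$ is automatically a multiple of the fiber class $[F]$ — both sides project to $[S]$ under $P_{M*}$ — whose coefficient is read off as the algebraic intersection number $\langle L_{X=-Y},S\rangle_M$; once this is in place the torsion hypothesis closes the argument at once. Alternatively, one could trivialize $TM$ by some $\tau$, write $X=\tau(\cdot,p_X)$ and $Y=\tau(\cdot,p_Y)$ with $p_X$ and $p_Y$ agreeing on $\partial M$, and identify that coefficient with the difference of the mapping degrees of $p_X|_S$ and $p_Y|_S$, which equals $\frac{1}{2}\langle e_2^M(X^\perp,\sigma)-e_2^M(Y^\perp,\sigma),[S]\rangle$ and hence vanishes over $\b Q$ by the torsion hypothesis.
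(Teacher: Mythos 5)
Your proof is correct and follows essentially the same route as the paper's: both rest on the $4$-chain $\bar F(X,Y)$ from Lemma~\ref{lem_phomotopy} together with Proposition~\ref{prop_linksinhomologyI} and the torsion hypothesis to make $L_{X=-Y}$ rationally null-homologous. The only cosmetic difference is that the paper first corrects $\bar F(X,Y)$ by subtracting $UM_{|\Sigma_{X=-Y}}$ to obtain a $4$-chain with boundary exactly $Y(M)-X(M)$ and then intersects with $UM_{|S_i}$, whereas you intersect first and observe that the leftover fiber term $\langle L_{X=-Y},S\rangle_M\cdot[F]$ vanishes for the same homological reason.
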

\begin{proof}
Let $(Y,\rho)$ be another choice of torsion combing of $M$. Assume, without loss of generality, that $(X,\rho)$ and $(Y,\rho)$ are $\partial$-compatible, and let $C(X,Y)$ be the 4-chain of $UM$ 
$$
C(X,Y) = \bar F(X,Y) - UM_{|\Sigma_{X=-Y}}
$$
constructed using Lemma~\ref{lem_phomotopy} and Proposition~\ref{prop_linksinhomologyI}, which provide $\bar F(X,Y)$ and a 2-chain $\Sigma_{X=-Y}$ of $M$ bounded by $L_{X=-Y}$, respectively. For $i \in \lbrace 1,2,\ldots,\beta_1(M)\rbrace$,
$$
Y(S_i) - X(S_i) = \partial (C(X,Y) \cap UM_{|S_i}).
$$
\end{proof}

\begin{lemma} \label{lem_evaluationII}
Let $M$ be a compact oriented 3-manifold, let $\rho$ be an admissible trivialization of $TM_{|\partial M}$ and let $(X,\rho)$ and $(Y,\rho)$ be $\partial$-compatible torsion combings of $M$. There exists a 4-chain $C_4(X,Y)$ of $[0,1]\times UM$ such that
$$
\partial C_4(X,Y) = \lbrace 1 \rbrace \times Y(M) - \lbrace 0 \rbrace \times X(M) - [0,1] \times X(\partial M).
$$
For any such chain $C_4(X,Y)$, if $C$ is a 2-cycle of $[0,1]\times UM$ then,
 $$
 [C] = \langle C , C_4(X,Y) \rangle_{[0,1] \times UM} [S] \mbox{ \  in $H_2([0,1]\times UM ; \b Q)/H_T^\rho(M)$,}
 $$
 where $[S]$ is the homology class of the fiber of $UM$ in $H_2([0,1]\times UM ; \b Q)$.
\end{lemma}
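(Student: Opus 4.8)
The plan is to build $C_4(X,Y)$ by recycling the recipe of Definition~\ref{def_Ft}, and then to pin down the linear functional $C\mapsto\langle C,C_4(X,Y)\rangle_{[0,1]\times UM}$ by evaluating it on an explicit basis of $H_2([0,1]\times UM;\b Q)$.

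For the existence of $C_4(X,Y)$, I would start from the $4$-chain $\bar F_t(X,Y)$ of Definition~\ref{def_Ft}, whose boundary is $\lbrace 1\rbrace\times Y(M)-\lbrace 0\rbrace\times X(M)-[0,1]\times X(\partial M)+\lbrace t\rbrace\times UM_{|L_{X=-Y}}$. Since $(X,\sigma)$ and $(Y,\sigma)$ are torsion combings, Proposition~\ref{prop_linksinhomologyI} gives $2\cdot[L_{X=-Y}]=P(e_2^M(X^\perp,\sigma))-P(e_2^M(Y^\perp,\sigma))=0$ in $H_1(M;\b Q)$, so $L_{X=-Y}$ bounds a rational $2$-chain $\Sigma_{X=-Y}$ in $\mathring M$, exactly as in the proofs of Lemma~\ref{lem_transfert} and Lemma~\ref{lem_C4pme1}; then $C_4(X,Y)=\bar F_t(X,Y)-\lbrace t\rbrace\times UM_{|\Sigma_{X=-Y}}$ has the required boundary.

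For the displayed identity, I would first observe that $[0,1]\times UM$ is homotopy equivalent to any slice $\lbrace t_0\rbrace\times UM$, so every $2$-homology class is carried by such a slice, and that $\langle C,C_4(X,Y)\rangle$ depends only on $[C]$ and not on the chosen $4$-chain: two $4$-chains with the prescribed boundary differ by an absolute $4$-cycle, $H_4([0,1]\times UM;\b Q)$ is spanned by classes of the form $[\lbrace\tfrac{1}{2}\rbrace\times UM_{|S_i}]$, and these meet no $2$-cycle carried by a slice $\lbrace t_0\rbrace\times UM$ with $t_0\neq\tfrac{1}{2}$. Since $M$ is parallelizable, $UM\cong M\times\b S^2$, so $H_2(UM;\b Q)=H_2(M;\b Q)\oplus\b Q[S]$ with $P_M$ inducing the first projection; combined with Lemma~\ref{lem_HtMb} (which says $H_T^\rho(M)$ is spanned by the $[X(S_i)]$ for the given $X$, and that these project to a basis of $H_2(M;\b Q)$), this gives $H_2(UM;\b Q)=H_T^\rho(M)\oplus\b Q[S]$. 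Both sides of the identity are then linear in $[C]$, so it suffices to check it on the basis $\lbrace[S]\rbrace\cup\lbrace[X(S_i)]\rbrace_i$ of $H_2([0,1]\times UM;\b Q)$, where the $S_i$ are the surfaces of Lemma~\ref{lem_HtMb}, which we may take to be closed surfaces inside $\mathring M$.

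Finally I would carry out the two evaluations, fixing $t_0\in \ ]0,t[$ so that a cycle carried by $\lbrace t_0\rbrace\times UM$ meets $C_4(X,Y)$ only in its piece $[0,t]\times X(M)$. Representing $[S]$ by a fiber of $\lbrace t_0\rbrace\times UM$ over a point $p\in\mathring M$: it meets $[0,t]\times X(M)$ transversally in the single point $(t_0,X(p))$, with intersection sign $\langle(\text{fiber}),X(M)\rangle_{UM}=+1$ by the orientation conventions, so $\langle \lbrace t_0\rbrace\times(\text{fiber}),C_4(X,Y)\rangle=1$, which matches the class $1\cdot[S]$. For $[X(S_i)]$, I would perturb $X$ in a neighborhood of $S_i$ (away from $\partial M$) to a section $X'$ homotopic to $X$ rel $\partial M$ with $X'|_{S_i}$ generic, so that $(X',\sigma)$ is again a torsion combing and $\lbrace t_0\rbrace\times X(S_i)$ is homologous to $\lbrace t_0\rbrace\times X'(S_i)$; then $\langle \lbrace t_0\rbrace\times X(S_i),C_4(X,Y)\rangle=\langle X'(S_i),X(M)\rangle_{UM}=\pm\langle S_i,L_{X=X'}\rangle_M$, which vanishes because Proposition~\ref{prop_linksinhomologyI} gives $2\cdot[L_{X=X'}]=P(e_2^M(X^\perp,\sigma))+P(e_2^M((X')^\perp,\sigma))=0$ in $H_1(M;\b Q)$, so $L_{X=X'}$ is rationally null-homologous and meets the closed surface $S_i$ trivially; this matches $[X(S_i)]\in H_T^\rho(M)$, namely the class $0\cdot[S]$ in the quotient. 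Linearity then concludes. The step I expect to require the most care is this last evaluation — reducing $\langle \lbrace t_0\rbrace\times X(S_i),C_4(X,Y)\rangle$ to $\pm\langle S_i,L_{X=X'}\rangle_M$ with all orientations consistent — while the remainder is bookkeeping around the product structure of $UM$ and the slices of $[0,1]\times UM$.
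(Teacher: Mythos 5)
Your construction of $C_4(X,Y)$ and the overall strategy---reduce to evaluating the pairing on a basis of $H_2([0,1]\times UM;\b Q)$ and show it gives $1$ on $[S]$ and $0$ on $[X(S_i)]$---match the paper's, and so does the evaluation on $[S]$. Where you genuinely diverge is the key computation $\langle [X(S_i)], C_4(X,Y)\rangle = 0$: the paper exploits the freedom in $C_4$, replacing it by $[0,1]\times Z'(M)$ for a torsion combing $Z'$ that equals $-Z$ on the complement of a collar containing the surface, so that the two sections over the surface are antipodal and the intersection is manifestly empty; you instead exploit the freedom in the representative of the $2$-class, replacing $X(S_i)$ by $X'(S_i)$ and reducing to $\pm\langle S_i, L_{X=X'}\rangle_M = 0$ via rational null-homology of $L_{X=X'}$ (Proposition~\ref{prop_linksinhomologyI}) and the closedness of $S_i$. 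Both mechanisms work; yours makes the role of Proposition~\ref{prop_linksinhomologyI} more explicit, while the paper's sidesteps ever having to show a link is null-homologous.

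There is one concrete gap to close. You perturb $X$ to $X'$ ``in a neighborhood of $S_i$''; but if $X'=X$ outside that neighborhood, $X(M)$ and $X'(M)$ coincide on a $3$-dimensional set, so they are not transverse, $L_{X=X'}$ is not a link, and $\langle S_i, L_{X=X'}\rangle_M$ is undefined, even though $\langle X'(S_i),X(M)\rangle_{UM}$ still makes sense. You need $X'$ to be $\partial$-compatible with $X$ in the global sense of the paper's definition (transverse on $\mathring M$, with the intersection locus avoiding $\partial M$), which a generic small perturbation homotopic to $X$ rel $\partial M$ does produce; then $L_{X=X'}$ is a genuine link, $2[L_{X=X'}]=P(e_2^M(X^\perp,\sigma))+P(e_2^M(X'^\perp,\sigma))$ is torsion since both combings are torsion, and the pairing with the closed surface $S_i$ vanishes as you argue. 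With that adjustment the proof goes through.
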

\begin{proof}
Observe that $H_2(UM;\b Q)$ is generated by the family $\lbrace [Z(S_1)], \ldots, [Z(S_{\beta_1(M)})], [S]  \rbrace$ where $S_1, \ldots , S_{\beta_1(M)}$ are surfaces in $M$ comprising a basis of $H_2(M;\b Q)$ and where $Z$ is a torsion combing of $M$ that coincides with $X$ and $Y$ on $\partial M$. Let $C_4(X,Y)$ be the 4-chain
$$
C_4(X,Y) = \bar F_t(X,Y) - \lbrace t \rbrace \times UM_{|\Sigma_{X=-Y}}
$$
where $\bar F_t(X,Y)$ is a 4-chain as in Definition~\ref{def_Ft} and $\Sigma_{X=-Y}$ is a 2-chain of $M$ bounded by $L_{X=-Y}$ provided by Proposition~\ref{prop_linksinhomologyI}. The chain $C_4(X,Y)$ has the desired boundary. Note that $\langle [S], C_4(X,Y) \rangle = 1$. Moreover, $\langle [Z(\Sigma)], C_4(X,Y) \rangle = 0$ for any surface $\Sigma$ in $M$. Indeed, notice that
$$
\langle [Z(\Sigma)], C_4(X,Y) \rangle = \left\lbrace
\begin{aligned}
&\langle [Z(\Sigma)], [0,1]\times X(M) \rangle  \mbox{ , pushing $Z(\Sigma)$ before $t$,}\\
&\langle [Z(\Sigma)], [0,1]\times Y(M) \rangle  \mbox{ , pushing $Z(\Sigma)$ after $t$.}
\end{aligned}
\right.
$$
As a consequence, $\langle [Z(\Sigma)], C_4(X,Y) \rangle$ is independent of $X$ and $Y$. Let us prove that it is possible to construct a torsion combing $Z'$ that coincides with $X$ and $Y$ on $\partial M$ and such that 
$$
\langle [Z(\Sigma)], [0,1]\times Z'(M) \rangle=0.
$$
Using the parallelization $\rho=(E_1^\rho,E_2^\rho,E_3^\rho)$ of $\partial M$ induced by $X$, define a homotopy 
$$
\l Z : [0,1] \times \partial M \rightarrow [0,1]\times UM_{|\partial M}
$$
from $-Z_{|\partial M}$ to $Z_{|\partial M}$ along the unique geodesic arc passing through $E_3^\rho$. Since $\Sigma$ sits in $\mathring{M}$, we can get a collar $\l C\simeq [0,1]\times \partial M$ of $\partial M$ such that $\l C \cap \Sigma = \emptyset$ and $\lbrace 1 \rbrace \times \partial M = \partial M$. Finally set $Z'$ to coincide with $-Z$ on $M\setminus \mathring{\l C}$ and with the homotopy $\l Z$ on the collar. The combing $(Z',\rho)$ is a torsion combing. Indeed, $E_2^\rho$ can be extended as a nonvanishing section of ${Z'}^\perp_{|\l C}$ so that 
$$
e_2^M(Z'^\perp,E_2^\rho)=e_2^{M\setminus \mathring{\l C}}(Z'^\perp,E_2^\rho)=e_2^M(-Z^\perp,E_2^\rho)=-e_2^M(Z^\perp,E_2^\rho).
$$
Finally, using the torsion combing $Z'$, we get $\langle [Z(\Sigma)], [0,1]\times Z'(M) \rangle = 0$. \\

To conclude the proof, assume that $C'_4(X,Y)$ is a 4-chain with same boundary as the chain $C_4(X,Y)$ we constructed, and let $C$ be a 2-cycle of $[0,1]\times UM$. The 2-cycle $C$ is homologous to a 2-cycle in $\lbrace 1 \rbrace \times UM$. Similarly, $(C'_4(X,Y) - C_4(X,Y))$ is homologous to a 4-cycle in $\lbrace 0 \rbrace \times UM$. Hence, $\langle C , \ C'_4(X,Y) - C_4(X,Y) \rangle = 0$.
\end{proof}

\begin{lemma} \label{bord}
Let $\tau$ and $\bar \tau$ be two pseudo-parallelizations of a compact oriented 3-manifold $M$ that coincide on $\partial M$. Let $C_4(\tau,\bar\tau;\pm e_1)$ denote 4-chains of $[0,1]\times UM$ as in Theorem~\ref{prop_varasint} for $v=\pm e_1$. If the 4-chains $C_4(\tau,\bar\tau;\pm e_1)$ are transverse to each other, then
 $$
 \begin{aligned}
  \partial (C_4(\tau,\bar\tau;e_1)\cap C_4(\tau,\bar\tau;-e_1)) &= \sfrac{1}{4} \cdot \lbrace 1 \rbrace \times \left( \bar E_1^d(L_{\bar E_1^d=-\bar E_1^g}) - (-\bar E_1^d)(L_{\bar E_1^d=-\bar E_1^g}) \right) \\ 
  &- \sfrac{1}{4} \cdot \lbrace 0 \rbrace \times \left( E_1^d(L_{E_1^d=-E_1^g}) - (-E_1^d)(L_{E_1^d=-E_1^g}) \right)
 \end{aligned}
 $$
 where $E_1^d$ and $E_1^g$, resp. $\bar E_1^d$ and $\bar E_1^g$, are the Siamese sections of $\tau$, resp. $\bar\tau$.
\end{lemma}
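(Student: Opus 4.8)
The plan is to read $\partial\big(C_4(\tau,\bar\tau;e_1)\cap C_4(\tau,\bar\tau;-e_1)\big)$ off the Leibniz rule for the boundary of a transverse intersection of chains: the boundary must lie in the three boundary strata of $[0,1]\times UM$, namely $\{0\}\times UM$, $\{1\}\times UM$ and $[0,1]\times UM_{|\partial M}$; the lateral stratum will contribute nothing, and the two slices will contribute the self-intersections of the pseudo-sections of $\tau$ and of $\bar\tau$ at $e_1$ and at $-e_1$, which is exactly where the announced formula comes from.

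First I would fix, for $v=\pm e_1$, the explicit $4$-chains $C_4(\tau,\bar\tau;v)$ produced in Lemma~\ref{lem_C4pme1} and in the proof of Theorem~\ref{prop_varasint}, arranged so that near $\{1\}\times UM$ each is the product $[t,1]\times\bar\tau(M\times\{v\})$, near $\{0\}\times UM$ the product $[0,t]\times\tau(M\times\{v\})$, and near $[0,1]\times UM_{|\partial M}$ it equals $[0,1]\times\tau(\partial M\times\{v\})$; then a generic perturbation of their interiors makes them transverse to each other. Changing that perturbation changes the chains by $4$-cycles supported in the interior, which meet none of the boundary $3$-chains, so $\partial\big(C_4(\tau,\bar\tau;e_1)\cap C_4(\tau,\bar\tau;-e_1)\big)$ is unaffected, and it suffices to compute with these explicit representatives. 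Since $\gamma$ and $\bar\gamma$ lie in $\mathring M$, the pseudo-parallelization $\tau$ restricts to a genuine parallelization over $\partial M$, so $\tau(\partial M\times\{e_1\})$ and $\tau(\partial M\times\{-e_1\})$ are disjoint graphs ($e_1\neq-e_1$) and the stratum $[0,1]\times UM_{|\partial M}$ contributes $0$. Near $\{1\}\times UM$ the intersection equals $[t,1]\times\big(\bar\tau(M\times\{e_1\})\cap\bar\tau(M\times\{-e_1\})\big)$, contributing $+\{1\}\times\big(\bar\tau(M\times\{e_1\})\cap\bar\tau(M\times\{-e_1\})\big)$; near $\{0\}\times UM$ it contributes $-\{0\}\times\big(\tau(M\times\{e_1\})\cap\tau(M\times\{-e_1\})\big)$, the sign coming from the outward normal to $[0,1]$ at $0$.

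The crux is then to compute the intersection of two pseudo-sections at $e_1$ and $-e_1$. Because $e_1$ and $-e_1$ lie on the rotation axis, the circles $\b S^1(\pm e_1)$ degenerate, the $C_2$-term of the pseudo-section drops out, and — using the formula recalled in Subsection~\ref{ssec_defppara}, that the Siamese sections agree off $N(\bar\gamma)$, and that $\bar\tau_d,\bar\tau_g$ are odd in the fibre variable —
$$
\bar\tau(M\times\{e_1\})=\tfrac12\,\bar E_1^d(M)+\tfrac12\,\bar E_1^g(M),\qquad \bar\tau(M\times\{-e_1\})=\tfrac12\,(-\bar E_1^d)(M)+\tfrac12\,(-\bar E_1^g)(M),
$$
and likewise for $\tau$. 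Expanding $\bar\tau(M\times\{e_1\})\cap\bar\tau(M\times\{-e_1\})$ bilinearly, the terms $\bar E_1^d(M)\cap(-\bar E_1^d)(M)$ and $\bar E_1^g(M)\cap(-\bar E_1^g)(M)$ are empty, while the two remaining terms are both carried by the link $L_{\bar E_1^d=-\bar E_1^g}$: by the very definition of the orientation of $L_{X=-Y}$ one has $\bar E_1^d(M)\cap(-\bar E_1^g)(M)=\bar E_1^d\big(L_{\bar E_1^d=-\bar E_1^g}\big)$, and running the orientation-reversing antipodal involution of $UM$ exactly as in the proof of Proposition~\ref{prop_linksandsigns} gives $\bar E_1^g(M)\cap(-\bar E_1^d)(M)=-(-\bar E_1^d)\big(L_{\bar E_1^d=-\bar E_1^g}\big)$. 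Hence
$$
\bar\tau(M\times\{e_1\})\cap\bar\tau(M\times\{-e_1\})=\tfrac14\Big(\bar E_1^d\big(L_{\bar E_1^d=-\bar E_1^g}\big)-(-\bar E_1^d)\big(L_{\bar E_1^d=-\bar E_1^g}\big)\Big),
$$
with the analogous identity for $\tau$ and its Siamese sections $E_1^d,E_1^g$; substituting these two expressions into the two slice contributions yields precisely the stated formula. (We may assume the additional inner parallelizations $\tau_g,\bar\tau_g$ chosen generically — they are only defined up to homotopy — so that $L_{E_1^d=-E_1^g}$ and $L_{\bar E_1^d=-\bar E_1^g}$ are honest links and all the intersections above are transverse.)

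The main obstacle is the orientation bookkeeping: confirming that the lateral stratum genuinely contributes $0$ rather than a cancelling pair of nonzero $1$-chains, pinning down the signs of the two slice contributions, and above all establishing the relative minus sign between $\bar E_1^d(M)\cap(-\bar E_1^g)(M)$ and $\bar E_1^g(M)\cap(-\bar E_1^d)(M)$ by tracking how the antipodal map reverses both the orientation of $UM$ and the coorientations of the two section graphs. The remaining transversality and perturbation steps are routine.
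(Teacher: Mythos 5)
Your proof is correct and follows essentially the same route as the paper's: observe that only the slices $\{0\}\times UM$ and $\{1\}\times UM$ contribute because $\tau$ and $\bar\tau$ are genuine parallelizations over $\partial M$, then compute each slice contribution as the self-intersection $\bar\tau(M\times\{e_1\})\cap\bar\tau(M\times\{-e_1\})=\tfrac14\big(\bar E_1^d(L_{\bar E_1^d=-\bar E_1^g})-(-\bar E_1^d)(L_{\bar E_1^d=-\bar E_1^g})\big)$ (and likewise for $\tau$ at $t=0$), exactly as in the paper. Your write-up is somewhat more detailed in making the bilinear expansion and the sign via Proposition~\ref{prop_linksandsigns} explicit, but the argument is the same.
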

\begin{proof}
Since $\tau$ and $\bar \tau$ coincide with a trivialization of $TM_{|\partial M}$ on $\partial M$, we have
$$
\begin{aligned}
\partial (C_4(\tau,\bar\tau;e_1)\cap C_4(\tau,\bar\tau;-e_1)) 
&= \lbrace 1 \rbrace \times \left( \bar\tau(M\times \lbrace e_1 \rbrace)\cap \bar\tau(M\times \lbrace -e_1 \rbrace) \right) \\ 
&- \lbrace 0 \rbrace \times \left( \tau(M\times \lbrace e_1 \rbrace)\cap \tau(M\times \lbrace -e_1 \rbrace) \right) \\
&=\sfrac{1}{4} \cdot \lbrace 1 \rbrace \times \left( \bar E_1^d(L_{\bar E_1^d=-\bar E_1^g}) + \bar E_1^g(L_{\bar E_1^g=-\bar E_1^d}) \right) \\
&- \sfrac{1}{4} \cdot \lbrace 0 \rbrace \times \left( E_1^d(L_{E_1^d=-E_1^g}) + E_1^g(L_{E_1^g=-E_1^d}) \right) .
\end{aligned}
$$
\iffalse
&=  ( \lbrace 1 \rbrace \times UM ) \bigcap \left( C_4(\tau,\bar\tau;e_1)\cap C_4(\tau,\bar\tau;-e_1) \right) \\
&- (\lbrace 0 \rbrace \times UM ) \bigcap \left( C_4(\tau,\bar\tau;e_1)\cap C_4(\tau,\bar\tau;-e_1) \right) \\

&=\sfrac{1}{4} \cdot \lbrace 1 \rbrace \times \left( \bar E_1^d(L_{\bar E_1^d=-\bar E_1^g}) - (-\bar E_1^d)(L_{\bar E_1^d=-\bar E_1^g}) \right) \\
&- \sfrac{1}{4} \cdot \lbrace 0 \rbrace \times \left( E_1^d(L_{E_1^d=-E_1^g}) - (-E_1^d)(L_{E_1^d=-E_1^g}) \right).
\fi
\end{proof}

\begin{definition} \label{omega}
Let $\bar\tau=(N(\gamma); \tau_e, \tau_d, \tau_g)$ be a pseudo-parallelization of a compact oriented 3-manifold $M$, and let $E_1^d$ and $E_1^g$ denote its Siamese sections. Recall from Definition \ref{def_addinner} that the map 
$$
\tau_d^{-1} \circ \tau_g : [a,b] \times \gamma \times [-1,1] \times \b R^3 \rightarrow [a,b] \times \gamma \times [-1,1] \times \b R^3
$$
is such that
$$
\begin{aligned}
&\forall t \in [a,b], \ u \in [-1,1], \ c \in \gamma, \ v \in \b R^3 : \\
&\tau_d^{-1} \circ \tau_g ((t,c,u),v) = \l T_\gamma^{-1} ((t,c,u),\l F(t,u)(v)),
\end{aligned}
$$
Hence, $L_{E_1^d=-E_1^g}$ consists in parallels of $\gamma$ of the form $\lbrace t \rbrace \times \gamma \times \lbrace u \rbrace$. For all component $L$ of $L_{E_1^d=-E_1^g}$, there exists a point $e_2^L$ in $\b S^1(e_2)$ such that $L \times \lbrace e_2^L \rbrace =\tau_d^{-1} \circ \tau_g (L \times \lbrace e_2 \rbrace)$. Choose a point $e_2^\Omega$ in $\b S^1(e_2)$ distinct from $e_2$ and from the points $e_2^L$. Finally, set
$$
\Omega(\bar\tau)=-\tau_d(L_{E_1^d=-E_1^g}\times[-e_1,e_1]_{e_2^\Omega})
$$
where $[-e_1,e_1]_{e_2^\Omega}$ is the geodesic arc from $-e_1$ to $e_1$ passing through $e_2^\Omega$. The 2-chain $\Omega(\bar\tau)$ can be seen as the projection of a homotopy from $-E_1^d$ to $E_1^d$ over $L_{E_1^d=-E_1^g}$. Note that 
$$
\partial \Omega(\bar\tau)= (-E_1^d)(L_{E_1^d=-E_1^g})- E_1^d(L_{E_1^d=-E_1^g}).
$$
The choice of $e_2^\Omega$ ensures that $\Omega(\bar\tau) \cap \bar\tau(M\times \lbrace e_2 \rbrace)=\emptyset$. Note that $\Omega(\tau)=\emptyset$ when $\tau$ is a genuine parallelization. 
\end{definition}

\begin{definition} \label{def_pgo}
Let $M$ be a compact oriented 3-manifold and let $\rho$ be an admissible trivialization of $TM_{|\partial M}$. Let $\tau$ and $\bar\tau$ be pseudo-parallelizations of a compact oriented 3-manifold $M$ which coincide with $\rho$ on $TM_{|\partial M}$ and let $C_4(\tau,\bar\tau;\pm e_1)$ denote 4-chains of $[0,1]\times UM$ as in Theorem~\ref{prop_varasint}. Set
$$
\go P(\tau,\bar\tau) = \lbrace 0 \rbrace \times \Omega(\tau) + 4 \cdot (C_4(\tau,\bar\tau;e_1) \cap C_4(\tau,\bar\tau;-e_1)) - \lbrace 1 \rbrace \times \Omega(\bar\tau).
$$
When $(X,\rho)$ is a torsion combing of $M$, let $C_4^+(X,\bar\tau)$ and $C_4^-(X,\bar\tau)$ be 4-chains of $[0,1]\times UM$ as in Lemma~\ref{lem_transfert} and set
$$
\begin{aligned}
\go P(X,\bar\tau) = 4\cdot(C_4^+(X,\bar\tau)\cap C_4^-(X,\bar\tau)) - \lbrace 1 \rbrace \times \Omega(\bar\tau), \\
\go P(\bar\tau,X) = \lbrace 0 \rbrace \times \Omega(\bar\tau) + 4\cdot(C_4^+(\bar\tau,X)\cap C_4^-(\bar\tau,X)).
\end{aligned}
$$
\end{definition}

According to Lemma~\ref{bord} and Definition~\ref{omega}, the 4-chains $\go P(\lambda,\mu)$ of Definition~\ref{def_pgo} above are cycles. In the remaining of this section, we prove that their classes read $p_1(\lambda,\mu)[S]$ in $H_2([0,1]\times UM;\b Q)/H_T^\rho(M)$.

\begin{proposition} \label{prop_ppara}
Let $M$ be a compact oriented 3-manifold, let $\rho$ be an admissible trivialization of $TM_{|\partial M}$ and let $\tau$ and $\bar \tau$ be two pseudo-parallelizations of $M$ that coincide with $\rho$ on $\partial M$. Under the assumptions of Definition~\ref{def_pgo}, the class of $\go P(\tau,\bar\tau)$ in $H_2([0,1]\times UM ; \b Q)/H_T^\rho(M)$ equals $p_1(\tau,\bar\tau)[S]$ where $[S]$ is the homology class of the fiber of $UM$ in $H_2([0,1]\times UM ; \b Q)$.
\end{proposition}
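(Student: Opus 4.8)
The plan is to read off the class of the $2$-cycle $\go P(\tau,\bar\tau)$ by pairing it against a $4$-chain attached to a torsion combing, and then to recognise the resulting number as $p_1(\tau,\bar\tau)$ through the triple-intersection formula of Theorem~\ref{prop_varasint}. Since $\rho$ is admissible, one can fix a torsion combing $(X,\rho)$ of $M$ and, after a generic interior perturbation that does not change its homotopy class, assume that $X$ is compatible with both $\tau$ and $\bar\tau$ (possible because $\tau$ and $\bar\tau$ coincide with $\rho$ on $\partial M$, so compatibility is an open dense condition on the interior of $X$); let $(Y,\rho)$ be a small $\partial$-compatible interior perturbation of $(X,\rho)$ with the same properties. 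Lemma~\ref{lem_evaluationII} then provides a $4$-chain $C_4(X,Y)$ of $[0,1]\times UM$ with $\partial C_4(X,Y)=\lbrace 1\rbrace\times Y(M)-\lbrace 0\rbrace\times X(M)-[0,1]\times X(\partial M)$, and, because $\go P(\tau,\bar\tau)$ is a $2$-cycle, it yields $[\go P(\tau,\bar\tau)]=\langle\go P(\tau,\bar\tau),C_4(X,Y)\rangle_{[0,1]\times UM}\,[S]$ in $H_2([0,1]\times UM;\b Q)/H_T^\rho(M)$. Since $[S]$ is non-zero in this quotient, it suffices to prove that $\langle\go P(\tau,\bar\tau),C_4(X,Y)\rangle_{[0,1]\times UM}=p_1(\tau,\bar\tau)$.

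Using Definition~\ref{def_pgo}, this intersection number equals
$$
\langle\lbrace 0\rbrace\times\Omega(\tau),C_4(X,Y)\rangle+4\langle C_4(\tau,\bar\tau;e_1),C_4(\tau,\bar\tau;-e_1),C_4(X,Y)\rangle-\langle\lbrace 1\rbrace\times\Omega(\bar\tau),C_4(X,Y)\rangle .
$$
The middle summand is a triple intersection of exactly the type governed by Theorem~\ref{prop_varasint}, except that its third factor is the combing chain $C_4(X,Y)$ in place of a pseudo-section chain $C_4(\tau,\bar\tau;z)$. Choosing $z\in\b S^2$ with $0\neq d_{\b S^2}(z,e_1)\neq\pi$ and applying Theorem~\ref{prop_varasint} to the triple $(e_1,-e_1,z)$ gives $p_1(\tau,\bar\tau)=4\langle C_4(\tau,\bar\tau;e_1),C_4(\tau,\bar\tau;-e_1),C_4(\tau,\bar\tau;z)\rangle$, so the proposition reduces to the identity
$$
4\langle C_4(\tau,\bar\tau;e_1),C_4(\tau,\bar\tau;-e_1),\,C_4(\tau,\bar\tau;z)-C_4(X,Y)\rangle=\langle\lbrace 0\rbrace\times\Omega(\tau),C_4(X,Y)\rangle-\langle\lbrace 1\rbrace\times\Omega(\bar\tau),C_4(X,Y)\rangle .
$$
Since by Lemma~\ref{bord} the $2$-chain $C_4(\tau,\bar\tau;e_1)\cap C_4(\tau,\bar\tau;-e_1)$ has boundary concentrated over the links of $\tau$ and $\bar\tau$ at the two ends, and since away from $N(\gamma)$ and $N(\bar\gamma)$ the pseudo-sections $\tau(M\times\lbrace z\rbrace)$ and $\bar\tau(M\times\lbrace z\rbrace)$ are genuine sections of $UM$, the left-hand intersection number localises near $\gamma$, $\bar\gamma$ and a collar of $\partial M$, where $\tau(M\times\lbrace z\rbrace)$ and $\bar\tau(M\times\lbrace z\rbrace)$ differ from genuine sections only by the explicit $\sfrac{1}{2}$-weighted correction assembled from $\tau_d$, $\tau_g$ and the $2$-chain $C_2(z)$; inserting these descriptions, together with the formulas for $\Omega(\tau)$ and $\Omega(\bar\tau)$ over $L_{E_1^d=-E_1^g}$ from Definition~\ref{omega}, turns the identity into a finite count of signed intersection points.

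When $\tau$ and $\bar\tau$ are genuine parallelizations, $\Omega(\tau)=\Omega(\bar\tau)=\emptyset$ and the chains $C_4(\tau,\bar\tau;z)$ take the simple form recalled in Remark~\ref{rmkpcase}, so that the last identity degenerates to the observation that $C_4(\tau,\bar\tau;z)-C_4(X,Y)$ is homologous rel boundary to a $4$-chain disjoint from $C_4(\tau,\bar\tau;e_1)\cap C_4(\tau,\bar\tau;-e_1)$, whence $\langle\go P(\tau,\bar\tau),C_4(X,Y)\rangle=p_1(\tau,\bar\tau)$ at once; the general case is obtained by grafting this onto the local analysis near the links, much as the proof of Theorem~\ref{prop_varasint} reduces to small pieces, and one is free to enlarge $M$ and extend $\tau$, $\bar\tau$ and the combings via Lemma~\ref{lem_extendpparallelization} whenever that makes the transversality bookkeeping cleaner. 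The main obstacle is precisely this last computation: keeping track, with the correct coorientations and with the fractional weights intrinsic to pseudo-parallelizations, of the intersection numbers localised over $\gamma$ and $\bar\gamma$, and checking that they are cancelled exactly — signs included — by $\lbrace 0\rbrace\times\Omega(\tau)$ and $-\lbrace 1\rbrace\times\Omega(\bar\tau)$. The remaining ingredients — the reduction through Lemma~\ref{lem_evaluationII}, the choice of the triple $(e_1,-e_1,z)$ in Theorem~\ref{prop_varasint}, and the genuine-parallelization sub-case — are routine given what precedes.
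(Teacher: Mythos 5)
Your proposal correctly applies Lemma~\ref{lem_evaluationII} to reduce the statement to showing $\langle \go P(\tau,\bar\tau),\,C_4(X,Y)\rangle_{[0,1]\times UM}=p_1(\tau,\bar\tau)$, and the algebraic rearrangement into the identity
\[
4\,\langle C_4(\tau,\bar\tau;e_1),C_4(\tau,\bar\tau;-e_1),\,C_4(\tau,\bar\tau;z)-C_4(X,Y)\rangle
=\langle\lbrace 0\rbrace\times\Omega(\tau),C_4(X,Y)\rangle-\langle\lbrace 1\rbrace\times\Omega(\bar\tau),C_4(X,Y)\rangle
\]
is sound. But at that point the argument genuinely stops: you announce a localisation near $\gamma$, $\bar\gamma$ and a boundary collar, describe the genuine-parallelization degenerate case heuristically, and then acknowledge that the ``finite count of signed intersection points'' with fractional weights is the ``main obstacle,'' without carrying it out. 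That is exactly the hard part, and the proposal does not supply it.

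The paper sidesteps this computation entirely by two choices you do not exploit. First, Lemma~\ref{lem_evaluationII} gives you freedom in the choice of $C_4(X,Y)$, and the paper uses it: instead of a generic chain, it builds $C_4(X,Y)$ by reparameterizing and stacking $C_4^+(X,\tau)$, $C_4(\tau,\bar\tau;e_2)$, and $C_4^+(\bar\tau,Y)$ (the latter two from Lemma~\ref{lem_transfert} and Theorem~\ref{prop_varasint}), so that $\langle\go P(\tau,\bar\tau),C_4(X,Y)\rangle=\langle\go P(\tau,\bar\tau),C_4(\tau,\bar\tau;e_2)\rangle$. Second, and decisively, the third sphere point is not a generic $z$ but specifically $e_2=(0,1,0)$: Definition~\ref{omega} chose $e_2^\Omega$ precisely so that $\Omega(\tau)\cap\tau(M\times\lbrace e_2\rbrace)=\emptyset$ and $\Omega(\bar\tau)\cap\bar\tau(M\times\lbrace e_2\rbrace)=\emptyset$. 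With $z=e_2$, the $\Omega$-contributions vanish outright, $4\langle C_4(\tau,\bar\tau;e_1)\cap C_4(\tau,\bar\tau;-e_1),C_4(\tau,\bar\tau;e_2)\rangle=p_1(\tau,\bar\tau)$ by Theorem~\ref{prop_varasint}, and there is no local bookkeeping at all. By taking $z$ generic and an unadapted $C_4(X,Y)$, you destroy exactly the cancellations that Definition~\ref{omega} was engineered to produce, and you would essentially have to rederive them by hand near $\gamma$ and $\bar\gamma$ — a nontrivial task that your proposal leaves undone.
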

\begin{proof}
The class of $\go P(\tau,\bar\tau)$ in $H_2([0,1]\times UM ; \b Q)/H_T^\rho(M)$ is
$$
 \left[ \go P(\tau,\bar\tau) \right] = \langle \go P(\tau,\bar\tau) , C_4(X,Y) \rangle \cdot [S]
$$
where $(X,\rho)$ and $(Y,\rho)$ are $\partial$-compatible torsion combings of $M$ and where $C_4(X,Y)$ is any 4-chain of $[0,1]\times UM$ as in Lemma~\ref{lem_evaluationII}. Let us construct a specific $C_4(X,Y)$ as follows. Let $C_4(\tau,\bar\tau;e_2)$ be as in Theorem~\ref{prop_varasint} where $e_2=(0,1,0)$. Since, $\partial C_4(\tau,\bar\tau;e_1)$ and $\partial C_4(\tau,\bar\tau;e_2)$ are homologous, it is possible to reparameterize and to stack the 4-chains $C_4^+(X,\tau)$, $C_4(\tau,\bar\tau;e_2)$ and $C_4^+(\bar\tau,Y)$ where the chains $C_4^+(X,\tau)$ and $C_4^+(\bar\tau,Y)$ are as in Lemma~\ref{lem_transfert}. It follows that, in $H_2([0,1]\times UM ; \b Q)/H_T^\rho(M)$,
$$
\begin{aligned}
\left[ \go P(\tau,\bar\tau) \right] &= \langle \go P(\tau,\bar\tau) , C_4(X,Y) \rangle  [S] \\
 &= \langle \go P(\tau,\bar\tau) , C_4(\tau,\bar\tau;e_2) \rangle  [S]  \\
 &= 4 \cdot \langle C_4(\tau,\bar\tau;e_1) \cap C_4(\tau,\bar\tau;-e_1) ,C_4(\tau,\bar\tau;e_2) \rangle  [S] \\
 &+ \langle \lbrace 0 \rbrace \times \Omega(\tau)-  \lbrace 1 \rbrace \times \Omega(\bar\tau),C_4(\tau,\bar\tau;e_2) \rangle [S].
\end{aligned}
$$
Now, note that $ \langle \lbrace 0 \rbrace \times \Omega(\tau)-  \lbrace 1 \rbrace \times \Omega(\bar\tau),C_4(\tau,\bar\tau;e_2) \rangle=0$ since 
$$
\Omega(\tau) \cap \tau(M\times \lbrace e_2 \rbrace) = \emptyset \mbox{ \ and \ } \Omega(\bar \tau) \cap \bar \tau(M\times \lbrace e_2 \rbrace) = \emptyset.
$$
Hence,
$$
\left[ \go P(\tau,\bar\tau) \right]= 4 \cdot \langle C_4(\tau,\bar\tau;e_1),C_4(\tau,\bar\tau;-e_1),C_4(\tau,\bar\tau;e_2) \rangle [S] = p_1(\tau,\bar\tau) [S].
$$
\end{proof}

\subsection[Pontrjagin numbers for combings of compact 3-manifolds]{Pontrjagin numbers for combings of compact 3-manifolds \\ Proof of Theorem~\ref{thm_defp1Xb}}
\begin{lemma} \label{cor_reformcombingsb}
Let $(X,\rho)$ be a torsion combing of a compact oriented 3-manifold $M$. Let $\bar\tau$ be a pseudo-parallelization of $M$ compatible with $X$. Let $\go P(\bar\tau,X)$ be as in Definition~\ref{def_pgo}. The class $[\go P(\bar\tau,X)]$ in $H_2([0,1]\times UM ; \b Q) / H_T^\rho(M)$ only depends on $\bar\tau$ and on the homotopy class of $X$. It will be denoted by $\tilde p_1(\bar\tau,[X])[S]$.
\end{lemma}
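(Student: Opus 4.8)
The plan is to combine the cycle property of $\go P(\bar\tau,X)$ with the evaluation principle of Lemma~\ref{lem_evaluationII}. First, recall from the remark following Definition~\ref{def_pgo} that $\go P(\bar\tau,X)$ is a $2$-cycle of $[0,1]\times UM$: at $t=1$ the boundary of $4\cdot(C_4^+(\bar\tau,X)\cap C_4^-(\bar\tau,X))$ involves $X(M)\cap(-X)(M)$, which is empty, while at $t=0$ the surviving term, computed through Lemma~\ref{bord}, is exactly cancelled by $\partial(\lbrace 0\rbrace\times\Omega(\bar\tau))$ as in Definition~\ref{omega}. Hence $[\go P(\bar\tau,X)]$ in $H_2([0,1]\times UM;\b Q)/H_T^\rho(M)$ is defined as soon as we check it is insensitive to the auxiliary choices and to the representative of the homotopy class of $X$.

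To reduce to a numerical statement, I would fix once and for all a pair $(X_0,\rho)$, $(Y_0,\rho)$ of $\partial$-compatible torsion combings of $M$ and a $4$-chain $C_4(X_0,Y_0)$ as in Lemma~\ref{lem_evaluationII}; that lemma then gives $[\go P(\bar\tau,X)]=\langle\go P(\bar\tau,X),C_4(X_0,Y_0)\rangle\,[S]$ in the quotient, so everything comes down to showing that the rational number $\langle\go P(\bar\tau,X),C_4(X_0,Y_0)\rangle$, computed with transverse representatives, is unchanged under (a) modification of $\Omega(\bar\tau)$, (b) modification of the $4$-chains $C_4^\pm(\bar\tau,X)$ keeping their boundaries, and (c) replacement of $X$ by a homotopic combing compatible with $\bar\tau$.

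Item (a) is immediate: two choices of $e_2^\Omega$ give arcs from $-e_1$ to $e_1$ whose difference is a rational $1$-cycle of $\b S^2$, hence a boundary, so $\Omega(\bar\tau)$ changes only by a boundary in $UM$. For (b), two admissible choices of $C_4^\pm(\bar\tau,X)$ differ by $4$-cycles $Z^\pm$ of $[0,1]\times UM$, and, as in the proof of Lemma~\ref{lem_welldefined}, each $Z^\pm$ is homologous to a combination of fibre-over-surface chains $\lbrace 0\rbrace\times(S\times\b S^2)$ with $S$ a surface in $\mathring M$; the induced change in $\langle\go P(\bar\tau,X),C_4(X_0,Y_0)\rangle$ is then a sum of triple intersection numbers $\langle\lbrace 0\rbrace\times(S\times\b S^2),\,C_4^\mp(\bar\tau,X),\,C_4(X_0,Y_0)\rangle$. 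For (c), given a combing $X'$ compatible with $\bar\tau$ and homotopic to $X$, one has $e_2^M(X^\perp,\sigma)=e_2^M(X'^\perp,\sigma)$, so $L_{X=-X'}$ is rationally null-homologous by Proposition~\ref{prop_linksinhomologyI} and $\bar F(X,X')-UM_{|\Sigma}$ has boundary $X'(M)-X(M)$; stacking this $4$-chain near $t=1$ onto the $C_4^\pm(\bar\tau,X)$ turns them into admissible $C_4^\pm(\bar\tau,X')$, and the resulting change in $\langle\go P(\bar\tau,X),C_4(X_0,Y_0)\rangle$ is again a sum of triple intersections supported near $t=1$.

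The main obstacle is the vanishing of these correction terms, and this is where I expect the real work to be. In each case one first observes, by pushing the extra chain towards the boundary component of $[0,1]\times UM$ on which $C_4^\mp$ and $C_4(X_0,Y_0)$ restrict to (combinations of) genuine sections, that the triple intersection number is independent of $X$, $X_0$ and $Y_0$; then, exactly as in the proof of Lemma~\ref{lem_evaluationII}, one replaces the relevant torsion combing by the combing $Z'$ constructed there on a collar of $\partial M$ --- which agrees with $-Z$ away from the collar and whose trace can be made disjoint from $UM_{|S}$ --- forcing the number to be zero (for (c) one uses instead $X(M)\cap(-X)(M)=\emptyset$ near $t=1$). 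Once all correction terms are seen to vanish, $\langle\go P(\bar\tau,X),C_4(X_0,Y_0)\rangle$ depends only on $\bar\tau$ and $[X]$, and one sets $\tilde p_1(\bar\tau,[X])=\langle\go P(\bar\tau,X),C_4(X_0,Y_0)\rangle$.
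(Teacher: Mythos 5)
Your strategy is genuinely different from the paper's. You aim to evaluate $[\go P(\bar\tau,X)]$ directly against a fixed chain $C_4(X_0,Y_0)$ via Lemma~\ref{lem_evaluationII} and then check that the resulting number is insensitive to all auxiliary choices. The paper instead introduces a second pseudo-parallelization $\tau$ compatible with $X$, shows by reparametrizing and stacking that $[\go P(\bar\tau,X)]+[\go P(X,\tau)]=[\go P(\bar\tau,\tau)]$, and then concludes: the right-hand side is a fixed quantity once $\tau$ and the chains $C_4^\pm(X,\tau)$ are chosen, because $[\go P(\bar\tau,\tau)]$ is well-defined by Proposition~\ref{prop_ppara}. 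This is much cleaner precisely because it reduces the whole invariance question to the already-proved Proposition~\ref{prop_ppara} (which in turn appeals to Theorem~\ref{prop_varasint} through a carefully chosen evaluating chain $C_4(\tau,\bar\tau;e_2)$), so no ad hoc correction terms ever appear.

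Your approach is not unreasonable in outline, but as written it has a real gap in the vanishing step, and the tool you invoke there does not apply. When, say, $C_4^+(\bar\tau,X)$ is changed by a $4$-cycle $Z^+$ homologous to a sum of chains $S\times\b S^2$, the resulting correction term is a triple intersection of the form $\langle S\times\b S^2, C_4^-(\bar\tau,X), C_4(X_0,Y_0)\rangle$; pushing $S\times\b S^2$ to $t=1$ reduces this to $\langle S\times\b S^2, (-X)(M), Y_0(M)\rangle_{UM}$, which up to sign is $\langle S, L_{X=-Y_0}\rangle_M$. The combing-replacement trick from Lemma~\ref{lem_evaluationII} does not help here: that trick replaces a combing $Z$ inside a $4$-chain so as to miss a \emph{section} $Z(S)$ living over $S$, whereas your extra chain $S\times\b S^2$ carries the whole $\b S^2$-fibre and thus meets \emph{every} section over $S$; replacing $Y_0$ by a combing equal to $-Y_0$ away from a collar only changes $L_{X=-Y_0}$ into $L_{X=Y_0}$, which is not rationally null-homologous. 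What \emph{does} kill the correction term is the simple homological fact that $[L_{X=-Y_0}]=0$ in $H_1(M;\b Q)$ because $X$ and $Y_0$ are torsion combings (Proposition~\ref{prop_linksinhomologyI}), so $\langle S, L_{X=-Y_0}\rangle=0$ for any closed $S$. Similarly for (c), the remark that $X(M)\cap(-X)(M)=\emptyset$ near $t=1$ applies only before stacking; after you graft the homotopy chains onto $C_4^\pm$, their mutual intersection near $t=1$ is no longer empty, and you have not shown that its contribution vanishes. So the plan can be salvaged, but the vanishing argument must be replaced by the direct null-homology computation, and case (c) still needs a real argument rather than the cited observation.
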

\begin{proof}
Let $\tau$ be another pseudo-parallelization of $M$ which is compatible with $X$. Let $C_4^+(X,\tau)$ and $C_4^-(X,\tau)$ be fixed choices of 4-chains of $[0,1]\times UM$ as in Lemma~\ref{lem_transfert}. Using these 4-chains, construct the cycle $\go P(X,\tau)$ as in Definition~\ref{def_pgo}. Then, in the space $H_2([0,1]\times UM ; \b Q) / H_T^\rho(M)$, we have
$$
\begin{aligned}
[\go P(\bar\tau,X)] + [\go P(X,\tau)] &= [\go P(\bar\tau,X) + \go P(X,\tau)] \\
&= [ \lbrace 0 \rbrace \times \Omega(\bar\tau) + 4\cdot(C_4^+(\bar\tau,X)\cap C_4^-(\bar\tau,X)) \\
& \hspace{7mm} + 4\cdot(C_4^+(X,\tau)\cap C_4^-(X,\tau)) - \lbrace 1 \rbrace \times \Omega(\tau) ].
\end{aligned}
$$
By reparameterizing and stacking $C_4^+(\bar\tau,X)$ and $C_4^+(X,\tau)$, resp. $C_4^-(\bar\tau,X)$ and $C_4^-(X,\tau)$, we get a 4-chain $C_4(\bar\tau,\tau,e_1)$, resp. $C_4(\bar\tau,\tau,-e_1)$, as in Lemma~\ref{lem_C4pme1}. It follows that
$$
\begin{aligned}
[\go P(\bar\tau,X)] + [\go P(X,\tau)] & \italicegal [ \lbrace 0 \rbrace \times \Omega(\bar\tau) + 4\cdot(C_4(\bar\tau,\tau,e_1) \cap C_4(\bar\tau,\tau,e_1)) - \lbrace 1 \rbrace \times \Omega(\tau) ] \\
&\italicegal [\go P(\bar\tau, \tau)]
\end{aligned}
$$
or, equivalently, $[\go P(\bar\tau,X)] = [\go P(\bar\tau, \tau)] - [\go P(X,\tau)]$. This proves the statement since $\go P(X,\tau)$ is independent of the choices made for $C_4^+(\bar\tau,X)$ and $C_4^-(\bar\tau,X)$, and since, according to Proposition~\ref{prop_ppara}, the class $[\go P(\bar\tau, \tau)]$ is independent of the choices for $C_4(\bar\tau,\tau,e_1)$ and $C_4(\bar\tau,\tau,-e_1)$.
\end{proof}

\begin{proposition} \label{cor_reformcombings}
If $\bar\tau$ is a pseudo-parallelization of a closed oriented 3-manifold $M$ and if $X$ is a torsion combing of $M$ compatible with $\bar\tau$, then
$$
 \tilde p_1(\bar\tau,[X]) = p_1([X])-p_1(\bar\tau).
$$
\end{proposition}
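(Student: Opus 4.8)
The plan is to deduce the identity from C.~Lescop's Theorem~\ref{thm_defp1X} by an additivity argument, after reducing to the case where $\bar\tau$ is a genuine parallelization whose first vector is transverse to $X$. The underlying observation is that $\tilde p_1(\bar\tau,[X])$ is affine in $\bar\tau$: changing $\bar\tau$ to another pseudo-parallelization $\tau_0$ compatible with $X$ adds a relative Pontrjagin number. So it will be enough to treat one convenient $\bar\tau$, and a genuine parallelization is convenient because then the chains defining $\go P(\bar\tau,X)$ are precisely the chains underlying Lescop's intersection-theoretic proof of her variation formula.

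First I would establish the additivity of $\tilde p_1$. Let $\tau_0$ be any pseudo-parallelization of $M$ compatible with $X$. The computation carried out in the proof of Lemma~\ref{cor_reformcombingsb} gives, in $H_2([0,1]\times UM;\b Q)/H_T^\rho(M)$, the equality $[\go P(\bar\tau,X)] = [\go P(\bar\tau,\tau_0)] - [\go P(X,\tau_0)]$. Running that same identity with $\bar\tau$ replaced by $\tau_0$ yields $[\go P(\tau_0,X)] + [\go P(X,\tau_0)] = [\go P(\tau_0,\tau_0)]$, and $[\go P(\tau_0,\tau_0)] = p_1(\tau_0,\tau_0)[S] = 0$ by Proposition~\ref{prop_ppara} together with the vanishing of the relative Pontrjagin number of a pseudo-parallelization with itself (the product complex trivialization extends over the cylinder cobordism). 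Hence $[\go P(X,\tau_0)] = -\tilde p_1(\tau_0,[X])[S]$, and, using Proposition~\ref{prop_ppara} once more, $\tilde p_1(\bar\tau,[X]) = p_1(\bar\tau,\tau_0) + \tilde p_1(\tau_0,[X])$. Since $p_1(\bar\tau,\tau_0) = p_1(\tau_0) - p_1(\bar\tau)$ (additivity of relative Pontrjagin numbers under stacking cobordisms, valid for pseudo-parallelizations as for genuine ones, by Novikov additivity of the signature), the problem reduces to proving $\tilde p_1(\tau_0,[X]) = p_1([X]) - p_1(\tau_0)$ for one well-chosen $\tau_0$ compatible with $X$.

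Next I would choose $\tau_0$: since $M$ is a closed oriented $3$-manifold it is parallelizable, and a small generic homotopy of the first vector of a parallelization preserves the vanishing of its Euler class, hence keeps it the first vector of a parallelization, so I may pick a genuine parallelization $\tau_0$ whose first vector $E_1^0$ is transverse to $X$. Regarded as a pseudo-parallelization with empty link, $\tau_0$ is then compatible with $X$ and has $\Omega(\tau_0) = \emptyset$, so by Remark~\ref{rmkpcase} one has $\go P(\tau_0,X) = 4\,(C_4^+(\tau_0,X)\cap C_4^-(\tau_0,X))$ with $C_4^\pm(\tau_0,X) = \bar F_t(\pm E_1^0,\pm X) - \{t\}\times UM_{|\Sigma(\pm e_1)}$, where $\Sigma(\pm e_1)$ bounds $\pm L_{E_1^0=-X}$, a rationally null-homologous link because $X$ is a torsion combing (Corollary~\ref{corrplus}).

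Finally, the heart of the matter is to show $\tilde p_1(\tau_0,[X]) = 4\,lk_M(L_{E_1^0=X},L_{E_1^0=-X})$. By Lemma~\ref{lem_evaluationII}, applied to the cycle $\go P(\tau_0,X)$ and to a $4$-chain $C_4(X,E_1^0)$ built from a homotopy between the transverse torsion combings $X$ and $E_1^0$ (with a time parameter distinct from the one used for $C_4^\pm(\tau_0,X)$), this comes down to evaluating $\langle C_4^+(\tau_0,X)\cap C_4^-(\tau_0,X),\, C_4(X,E_1^0)\rangle_{[0,1]\times UM}$. A geodesic arc of $\b S^2$ of length less than $\pi$ is disjoint from its antipode, so this triple intersection receives no contribution over $M\setminus(L_{E_1^0=X}\cup L_{E_1^0=-X})$; localizing near these two links and using the blow-up description of the chains $\bar F$ from Lemma~\ref{lem_phomotopy}, the remaining signed count should reduce to the algebraic intersection in $M$ of a $2$-chain bounded by $L_{E_1^0=-X}$ with $L_{E_1^0=X}$, i.e. to $lk_M(L_{E_1^0=X},L_{E_1^0=-X})$ -- this step is modeled on Lescop's argument in \cite{lescopcombing}, and it is the one I expect to require real care. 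Granting it, Theorem~\ref{thm_defp1X}(i) gives $p_1(\tau_0) = p_1([E_1^0])$, and part (ii) applied to $E_1^0$ and $X$ gives $p_1([X]) - p_1([E_1^0]) = 4\,lk_M(L_{E_1^0=X},L_{E_1^0=-X})$; combining, $\tilde p_1(\tau_0,[X]) = p_1([X]) - p_1(\tau_0)$, and the additivity step then yields the statement for every pseudo-parallelization $\bar\tau$ compatible with $X$. The main obstacle is this last geometric intersection computation; everything else is formal.
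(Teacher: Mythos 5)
Your strategy is essentially the paper's: reduce to a genuine parallelization $\tau_0$ compatible with $X$, compute $\tilde p_1(\tau_0,[X])$ as $4\,lk_M(L_{E_1^0=X},L_{E_1^0=-X})$ via Lemma~\ref{lem_evaluationII}, and close with Theorem~\ref{thm_defp1X}. Your additivity reduction $\tilde p_1(\bar\tau,[X]) = p_1(\bar\tau,\tau_0) + \tilde p_1(\tau_0,[X])$, extracted from the proof of Lemma~\ref{cor_reformcombingsb}, is correct; the paper does not isolate it as a standalone identity but achieves the same effect by constructing $C_4^\pm(\bar\tau,X)$ as the stacking of $C_4(\bar\tau,\tau;\pm e_1)$ with $C_4^\pm(\tau,X)$ and invoking Proposition~\ref{prop_ppara} on the first block. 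The existence of a genuine $\tau_0$ transverse to $X$, the vanishing $p_1(\tau_0,\tau_0)=0$, and the identity $p_1(\bar\tau,\tau_0)=p_1(\tau_0)-p_1(\bar\tau)$ are all fine.

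The gap is precisely the intersection computation you flag and defer. Two points. First, the heuristic you give -- a geodesic arc of length less than $\pi$ on $\b S^2$ is disjoint from its antipode -- is not what controls the triple intersection: the chains $\bar F(E_1^0,X)$ and $\bar F(-E_1^0,-X)$ sit at fixed distinct times $t_1\neq t_2$ and so never meet anyway, and the intersection $C_4^+(\tau_0,X)\cap C_4^-(\tau_0,X)$ is supported in $M$ over $\Sigma(e_1)\cup L_{E_1^0=-X}\cup\Sigma(-e_1)$, not merely over the coincidence links as you assert. Second, the test chain $C_4(X,E_1^0)$ is a needlessly hard choice. Lemma~\ref{lem_evaluationII} lets you evaluate against any $4$-chain with the prescribed boundary, and the paper takes $C_4(E_1^0,E_1^0)=[0,1]\times E_1^0(M)$. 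With that choice, the explicit identity
$$
C_4^+(\tau_0,X)\cap C_4^-(\tau_0,X) = -\lbrace t_1\rbrace\times(-E_1^0)\big(\Sigma_{E_1^0=-X}\big) + [t_1,t_2]\times(-E_1^0)\big(L_{-E_1^0=X}\big) - \lbrace t_2\rbrace\times X\big(\Sigma_{-E_1^0=X}\big)
$$
shows at once that only the last term meets $[0,1]\times E_1^0(M)$ (the first two carry $-E_1^0$, never equal to $E_1^0$), and the signed count yields $lk_M(L_{E_1^0=X},L_{E_1^0=-X})$ on the nose. If you insist on $C_4(X,E_1^0)$, you must at least place its transition time $t_3$ before $t_1$, in which case it agrees with $[t_1,1]\times E_1^0(M)$ over the support of $C_4^+\cap C_4^-$ and the two evaluations coincide; placing $t_3$ after $t_2$ produces a non-transverse intersection with $X\big(\Sigma_{-E_1^0=X}\big)$. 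I would simply adopt the paper's test chain to close the argument.
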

\begin{proof}
According to Lemma~\ref{cor_reformcombingsb}, $\tilde p_1(\bar\tau,[X])$ is independent of the choices for $C_4^+(\bar\tau,X)$ and $C_4^-(\bar\tau,X)$. Let us construct convenient 4-chains $C_4^+(\bar\tau,X)$ and $C_4^-(\bar\tau,X)$. Let $\tau$ be a ge\-nuine parallelization of $M$. Thanks to Theorem~\ref{prop_varasint}, there exist two 4-chains of $[0,1]\times UM$, $C_4(\bar\tau,\tau;e_1)$ and $C_4(\bar\tau,\tau;-e_1)$, such that
$$
\begin{aligned}
 \partial C_4(\bar\tau,\tau;e_1) &= \lbrace 1 \rbrace \times \tau(M\times \lbrace e_1 \rbrace) - \lbrace 0 \rbrace \times \bar \tau(M\times \lbrace e_1 \rbrace), \\
 \partial C_4(\bar\tau,\tau;-e_1) &= \lbrace 1 \rbrace \times \tau(M\times \lbrace -e_1 \rbrace) - \lbrace 0 \rbrace \times \bar \tau(M\times \lbrace -e_1 \rbrace).
\end{aligned}
$$
Furthermore, as in Remark~\ref{rmkpcase}, construct two 4-chains $C_4^+(\tau,X)$ and $C_4^-(\tau,X)$ as
$$
\begin{aligned}
 C_4^+(\tau, X) &= \bar F_{t_1}(E_1^\tau, X) - \lbrace t_1 \rbrace  \times UM_{|\Sigma_{E_1^\tau=-X}}\\
 C_4^-(\tau, X) &= \bar F_{t_2}(-E_1^\tau, -X) - \lbrace t_2 \rbrace  \times UM_{|\Sigma_{-E_1^\tau=X}}
\end{aligned}
$$
where $E_1^\tau$ stands for the first vector of the parallelization $\tau$, where $t_1, t_2\in \ ]0,1[$, and where $\Sigma_{E_1^\tau=-X}$ and $\Sigma_{-E_1^\tau=X}$ are 2-chains with boundaries $L_{E_1^\tau=-X}$ and $L_{-E_1^\tau=X}$, respectively. Eventually, define $C_4^+(\bar\tau,X)$, resp. $C_4^-(\bar\tau,X)$, by reparameterizing and stacking the chains $C_4(\bar\tau,\tau;e_1)$ and $ C_4^+(\tau, X)$, resp. $C_4(\bar\tau,\tau;-e_1)$ and $ C_4^-(\tau, X)$. \\

Let us finally compute $[\lbrace 0 \rbrace \times \Omega(\bar\tau)+4\cdot(C_4^+(\bar\tau,X) \cap C_4^-(\bar\tau,X))]$. By construction, we have :
$$
\begin{aligned}
&[\lbrace 0 \rbrace \times \Omega(\bar\tau)+4 \cdot (C_4^+(\bar\tau,X) \cap C_4^-(\bar\tau,X))] \\
&\italicegal [\lbrace 0 \rbrace \times \Omega(\bar\tau)+4 \cdot (C_4(\bar\tau,\tau;e_1) \cap C_4(\bar\tau,\tau;-e_1))] + 4 [C_4^+(\tau, X) \cap C_4^-(\tau, X)],
\end{aligned}
$$
so that, using Proposition~\ref{prop_ppara},
$$
\begin{aligned}
 &[\lbrace 0 \rbrace \times \Omega(\bar\tau)+4 \cdot (C_4^+(\bar\tau,X) \cap C_4^-(\bar\tau,X))] \\
 &= (p_1(\tau)-p_1(\bar\tau))[S] + 4 \cdot  [C_4^+(\tau, X) \cap C_4^-(\tau, X)].
\end{aligned}
$$
Now, using Definition~\ref{def_Ft},
$$
\begin{aligned}
 C_4^+(\tau, X)   = \ & \left[ 0 , t_1 \right] \times E_1^\tau(M)
 + \lbrace t_1 \rbrace \times \bar F(E_1^\tau,X) \\
 + \ &  \left[ t_1, 1 \right] \times X(M) - \lbrace t_1 \rbrace \times UM_{|\Sigma_{E_1^\tau=-X}}\\
 C_4^-(\tau, X)  = \ & \left[ 0 , t_2 \right] \times (-E_1^\tau)(M)
 + \lbrace t_2 \rbrace \times \bar F(-E_1^\tau,-X) \\
 + \ & \left[ t_2, 1 \right] \times (-X)(M) - \lbrace t_2 \rbrace \times UM_{|\Sigma_{-E_1^\tau=X}}
\end{aligned}
$$
so that, assuming $t_1 < t_2$ without loss of generality,
$$
\begin{aligned}
C_4^+(\tau, X) \cap C_4^-(\tau, X) 
= - \ & \lbrace t_1 \rbrace \times (-E_1^\tau)\left( \Sigma_{E_1^\tau=-X} \right) \\
+ \ & [t_1 , t_2] \times (-E_1^\tau)\left( L_{-E_1^\tau = X} \right) \\
- \ & \lbrace t_2 \rbrace \times X(\Sigma_{-E_1^\tau=X}).
\end{aligned}
$$
It follows that, using Theorem~\ref{thm_defp1X} and Lemma~\ref{lem_evaluationII} with $C_4(E_1^\tau,E_1^\tau)=[0,1] \times E_1^\tau(M)$,
$$
\begin{aligned}
4 \cdot [C_4^+(\tau, X) \cap C_4^-(\tau, X)] & \italicegal 4 \cdot \langle C_4^+(\tau, X) , C_4^-(\tau, X) , [0,1] \times E_1^\tau(M) \rangle [S] \\
&\italicegal4 \cdot lk(L_{E_1^\tau = X},L_{E_1^\tau = - X}) [S] \\
&\italicegal(p_1([X]) - p_1(\tau))[S] \\
\end{aligned}
$$
in $H_2([0,1]\times UM; \b Q)/H_T(M)$, and, eventually,
$$
\begin{aligned}
 [\lbrace 0 \rbrace \times \Omega(\bar\tau)&+4 \cdot (C_4^+(\bar\tau,X) \cap C_4^-(\bar\tau,X))] \\
 &= (p_1(\tau)-p_1(\bar\tau))[S] + (p_1([X]) - p_1(\tau))[S] \\
 &=(p_1([X]) - p_1(\bar\tau))[S].
\end{aligned}
$$
\end{proof}

\begin{lemma} \label{fond}
If $(X,\rho)$ is a torsion combing of a compact oriented 3-manifold $M$ and if \linebreak $\bar\tau=(N(\gamma); \tau_e,\tau_d,\tau_g)$ is a pseudo-parallelization of $M$ compatible with $X$, then
$$
\begin{aligned}
\tilde p_1(\bar\tau, [X]) &= lk_M\left(L_{E_1^d=X} + L_{E_1^g=X} \ , \ L_{E_1^d=-X} + L_{E_1^g=-X} \right)  \\
&- lk_{\b S^2}\left(e_1-(-e_1), \ P_{\b S^2} \circ \tau_d^{-1} \circ X(L_{E_1^d=-E_1^g})\right)
\end{aligned}
$$
where $E_1^d$ and $E_1^g$ denote the Siamese sections of $\bar\tau$.
\end{lemma}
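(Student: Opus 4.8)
The plan is to unwind the definition of $\tilde p_1(\bar\tau,[X])$ supplied by Lemma~\ref{cor_reformcombingsb}: by construction $[\go P(\bar\tau,X)] = \tilde p_1(\bar\tau,[X])\,[S]$ in $H_2([0,1]\times UM;\b Q)/H_T^\rho(M)$, where, by Definition~\ref{def_pgo} and Lemma~\ref{lem_transfert}, $\go P(\bar\tau,X) = \{0\}\times\Omega(\bar\tau) + 4\cdot(C_4^+(\bar\tau,X)\cap C_4^-(\bar\tau,X))$ with $C_4^\pm(\bar\tau,X) = \sfrac{1}{2}\cdot(\bar F_t(\pm E_1^d,\pm X) + \bar F_t(\pm E_1^g,\pm X) - \{t\}\times UM_{|\Sigma(\pm e_1)})$. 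By Lemma~\ref{lem_evaluationII}, the coefficient $\tilde p_1(\bar\tau,[X])$ is the algebraic intersection number $\langle\go P(\bar\tau,X), C_4(X,Y)\rangle_{[0,1]\times UM}$ for any torsion combing $Y$ that is $\partial$-compatible with $X$ and any $4$-chain $C_4(X,Y)$ with the prescribed boundary. Following the precedent of the proof of Proposition~\ref{cor_reformcombings}, where one takes $C_4(E_1^\tau,E_1^\tau) = [0,1]\times E_1^\tau(M)$, I would take $Y = X$ and, after a generic perturbation of the chains being intersected, $C_4(X,Y) = [0,1]\times X(M)$, so that $\tilde p_1(\bar\tau,[X]) = \langle\{0\}\times\Omega(\bar\tau),\, [0,1]\times X(M)\rangle + 4\cdot\langle C_4^+(\bar\tau,X),\, C_4^-(\bar\tau,X),\, [0,1]\times X(M)\rangle$, and it remains to evaluate the two summands.

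For the first summand, note that $\Omega(\bar\tau) = -\tau_d(L_{E_1^d=-E_1^g}\times[-e_1,e_1]_{e_2^\Omega})$ is supported in $UN(\gamma)$ over the link $L_{E_1^d=-E_1^g}$, so the intersection with $[0,1]\times X(M)$ localizes, at time $0$, to $-\langle\tau_d(L_{E_1^d=-E_1^g}\times[-e_1,e_1]_{e_2^\Omega}),\, X(N(\gamma))\rangle_{UM}$. Trivializing $UN(\gamma)$ by $\tau_d$ so that $X$ becomes the graph of $P_{\b S^2}\circ\tau_d^{-1}\circ X$, this number counts, with sign, the points of $L_{E_1^d=-E_1^g}$ whose image under $P_{\b S^2}\circ\tau_d^{-1}\circ X$ lies on the arc $[-e_1,e_1]_{e_2^\Omega}$; since $\partial[-e_1,e_1]_{e_2^\Omega} = e_1-(-e_1)$, this is exactly $-lk_{\b S^2}(e_1-(-e_1),\, P_{\b S^2}\circ\tau_d^{-1}\circ X(L_{E_1^d=-E_1^g}))$. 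Here compatibility of $X$ with $\bar\tau$ is used to guarantee that $\tau_d^{-1}\circ X$ avoids $\pm e_1$ along $L_{E_1^d=-E_1^g}$, so that this $\b S^2$-linking number is defined; disjointness from the chosen $e_2^\Omega$ was built into Definition~\ref{omega}.

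For the second summand I would run the slice-by-slice intersection computation exactly as in the proof of Proposition~\ref{cor_reformcombings}, now for pseudo-parallelizations: expanding $C_4^\pm(\bar\tau,X)$ into their $[0,t_i]$-, $\{t_i\}$- and $[t_i,1]$-pieces with $0<t_1<t_2<1$, one computes $C_4^+(\bar\tau,X)\cap C_4^-(\bar\tau,X)$ as a $2$-chain. Compared with the genuine-parallelization case there is a single new contribution, namely $[0,t_1]\times\left(\bar\tau(M\times\{e_1\})\cap\bar\tau(M\times\{-e_1\})\right) = [0,t_1]\times\sfrac{1}{4}\cdot(E_1^d+E_1^g)(L_{E_1^d=-E_1^g})$, supported over $L_{E_1^d=-E_1^g}\subset N(\gamma)$; the remaining pieces are the direct analogues of those in Proposition~\ref{cor_reformcombings}, with $E_1^\tau$ replaced by the Siamese sections, $L_{E_1^\tau=\pm X}$ by $L_{E_1^d=\pm X}$ and $L_{E_1^g=\pm X}$, and their rational $2$-chain fillings, which exist by Lemma~\ref{nulexcep}. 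Pairing with $[0,1]\times X(M)$, the new piece over $L_{E_1^d=-E_1^g}$ contributes $0$: over a point $m$ of that link one has $E_1^g(m)=-E_1^d(m)$, so $X(m)\in\{E_1^d(m),-E_1^d(m)\}$ would force $m$ into $L_{E_1^d=X}\cap L_{E_1^g=-X}$ or into $L_{E_1^g=X}\cap L_{E_1^d=-X}$, contradicting the compatibility hypotheses; the remaining pieces assemble, through the definition of $lk_M$ as the intersection of a $2$-chain with a $1$-cycle, so that $4\cdot\langle C_4^+(\bar\tau,X),\, C_4^-(\bar\tau,X),\, [0,1]\times X(M)\rangle = lk_M(L_{E_1^d=X}+L_{E_1^g=X},\, L_{E_1^d=-X}+L_{E_1^g=-X})$ (which, since $L_{\bar\tau=\pm X} = \sfrac{1}{2}\cdot(L_{E_1^d=\pm X}+L_{E_1^g=\pm X})$, is $4\cdot lk_M(L_{\bar\tau=X},L_{\bar\tau=-X})$, in accordance with Lemma~\ref{lem1}). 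Adding the two summands yields the stated identity.

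The main obstacle is this last computation: carrying out the slice-by-slice intersection of $C_4^+(\bar\tau,X)$ and $C_4^-(\bar\tau,X)$ with every orientation correct — keeping track of the $\sfrac{1}{2}$ and $\sfrac{1}{4}$ weights coming from the pseudo-sections being half-sums of Siamese data, the blow-up contributions hidden in the $\bar F$'s, and reconciling all signs with the conventions for $\langle\cdot,\cdot\rangle$, $lk_M$ and $lk_{\b S^2}$ fixed at the start — together with the secondary but essential verification that the $N(\gamma)$-supported term drops out of the final pairing, which is precisely where the hypothesis that $X$ is compatible with $\bar\tau$ enters in an irreducible way. Up to this bookkeeping the argument is a direct extension of the parallelization computation already performed for Proposition~\ref{cor_reformcombings}.
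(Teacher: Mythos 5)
Your proposal is correct and follows the paper's own route: one evaluates $[\go P(\bar\tau,X)]$ by pairing against the $4$-chain $[0,1]\times X(M)$ of Lemma~\ref{lem_evaluationII}, computes the $\{0\}\times\Omega(\bar\tau)$ contribution directly as $-lk_{\b S^2}(e_1-(-e_1),\,P_{\b S^2}\circ\tau_d^{-1}\circ X(L_{E_1^d=-E_1^g}))$, and runs a slice-by-slice computation of $C_4^+(\bar\tau,X)\cap C_4^-(\bar\tau,X)$ in which the $L_{E_1^d=-E_1^g}$-supported piece is killed by the compatibility hypothesis and the $\Sigma(\mp e_1)$-pieces produce the $lk_M$ term. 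One point that the phrase ``direct analogues of Proposition~\ref{cor_reformcombings}'' underestimates: on the slice where one of the two chains still consists of the half-sum $\sfrac{1}{2}(\bar F(-E_1^d,-X)+\bar F(-E_1^g,-X))$, the intersection also produces cross-terms $E_1^d(M)\cap\bar F(-E_1^g,-X)$ and $E_1^g(M)\cap\bar F(-E_1^d,-X)$ which have no counterpart in the genuine-parallelization case; the paper disposes of these with a separate geometric argument (a point of $E_1^d(M)\cap\bar F(-E_1^g,-X)\cap X(M)$ would have to lie on a shortest geodesic from $-E_1^g(m)$ to $-X(m)$ while equalling both $E_1^d(m)$ and $X(m)$, which is impossible since $L_{E_1^d=X}\cap L_{E_1^g=-X}=\emptyset$), so the compatibility hypothesis is in fact used a second time, beyond the one occurrence you isolate.
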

\begin{proof}
We just have to evaluate the class of the 4-cycle
$$
\go P(\bar\tau,X) = \lbrace 0 \rbrace \times \Omega(\bar\tau)+4\cdot(C_4^+(\bar\tau,X)\cap C_4^-(\bar\tau,X))
$$
in $H_2([0,1]\times UM ; \b Q) / H_T^\rho(M)$ for convenient 4-chains $C_4^+(\bar\tau,X)$ and $C_4^-(\bar\tau,X)$ with the prescribed boundaries. Let $t_1$ and $t_2$ in $]0,1[$, with $t_1 > t_2$, and set 
$$
\begin{aligned}
 C_4^+(\bar\tau,X) &= \sfrac{1}{2} \cdot \left( \bar F_{t_1}(E_1^d,X)+\bar F_{t_1}(E_1^g,X) - \lbrace t_1 \rbrace \times UM_{|\Sigma(e_1)} \right)\\
 C_4^-(\bar\tau,X) &= \sfrac{1}{2} \cdot \left( \bar F_{t_2}(-E_1^d,-X)+\bar F_{t_2}(-E_1^g,-X) - \lbrace t_2 \rbrace \times UM_{|\Sigma(-e_1)} \right)
\end{aligned}
$$
where the chains $\bar F_t$ are as in Definition~\ref{def_Ft} and where, using Lemma~\ref{nulexcep}, $\Sigma(e_1)$ and $\Sigma(-e_1)$ are 2-chains of $M$ so that
$$
\partial \Sigma(\pm e_1) = \pm(L_{E_1^d=-X} + L_{E_1^g=-X}).
$$
These 4-chains do have the expected boundaries. Let us now describe $C_4^+(\bar\tau,X) \cap C_4^-(\bar\tau,X)$ :
\begin{enumerate}[\textbullet]
\item on $[0,t_2[$ : The intersection between $C_4^+(\bar\tau,X)$ and $C_4^-(\bar\tau,X)$ is
$$
\sfrac{1}{4} \cdot [0,t_2[ \ \times \ E_1^d(L_{E_1^d=-E_1^g}) + \sfrac{1}{4} \cdot  [0,t_2[ \ \times \ E_1^g(L_{E_1^g=-E_1^d}).
$$

\item on $]t_2 , t_1[$ : The intersection between $C_4^+(\bar\tau,X)$ and $C_4^-(\bar\tau,X)$ is 
$$
 \sfrac{1}{2} \ \cdot \ ]t_2,t_1[ \ \times \ (-X)(L_{E_1^d=-X}) + \sfrac{1}{2} \ \cdot \ ]t_2,t_1[ \ \times \ (-X)(L_{E_1^g=-X}).
$$

\item on $]t_1,1]$ : There is no intersection between $C_4^+(\bar\tau,X)$ and $C_4^-(\bar\tau,X)$ since they consist in $]t_1,1]\times X(M)$ and $]t_1,1]\times (-X)(M)$.

\item at $t_2$ : The intersection between $C_4^+(\bar\tau,X)$ and $C_4^-(\bar\tau,X)$ is
$$
  \begin{aligned}
  &\sfrac{1}{2} \cdot \lbrace t_2 \rbrace \times E_1^d(M) \cap \lbrace t_2 \rbrace \times  \bar F(-E_1^g,-X) \\
  + &\sfrac{1}{2} \cdot \lbrace t_2 \rbrace \times E_1^g(M) \cap \lbrace t_2 \rbrace \times  \bar F(-E_1^d,-X) \\
  - & \sfrac{1}{4} \cdot \lbrace t_2 \rbrace \times E_1^d(\Sigma(-e_1)) - \sfrac{1}{4} \cdot \lbrace t_2 \rbrace \times E_1^g(\Sigma(-e_1))
  \end{aligned}
$$
\item at $t_1$ : The intersection between $C_4^+(\bar\tau,X)$ and $C_4^-(\bar\tau,X)$ is
$$
\begin{aligned}
  &\sfrac{1}{2}\cdot \lbrace t_1 \rbrace \times  \bar F(E_1^d,X) \cap \lbrace t_1 \rbrace \times (-X)(M) \\
  + &\sfrac{1}{2}\cdot \lbrace t_1 \rbrace \times \bar F(E_1^g,X) \cap \lbrace t_1 \rbrace \times (-X)(M) \\
  - & \sfrac{1}{2} \cdot \lbrace t_1 \rbrace \times (-X)(\Sigma(e_1)).
  \end{aligned}
$$
\end{enumerate}
It follows that :
$$
\begin{aligned}
&\langle \sfrac{1}{4} \cdot \lbrace 0 \rbrace \times \Omega(\bar\tau)+ C_4^+(\bar\tau,X) \cap C_4^-(\bar\tau,X) , [0,1] \times X(M) \rangle_{[0,1]\times UM}\\
&= \sfrac{1}{4} \cdot  \langle [0,t_2[ \ \times \ E_1^d(L_{E_1^d=-E_1^g}) + [0,t_2[ \ \times \ E_1^g(L_{E_1^g=-E_1^d}) , [0,1]\times X(M) \rangle_{[0,1]\times UM} \\
&+ \sfrac{1}{2} \cdot \langle \lbrace t_2 \rbrace \times E_1^d(M) \cap \lbrace t_2 \rbrace \times  \bar F(-E_1^g,-X) , [0,1] \times X(M) \rangle_{[0,1]\times UM} \\
&+\sfrac{1}{2} \cdot \langle \lbrace t_2 \rbrace \times E_1^g(M) \cap \lbrace t_2 \rbrace \times  \bar F(-E_1^d,-X) , [0,1] \times X(M) \rangle_{[0,1]\times UM} \\
&- \sfrac{1}{4} \cdot  \langle \lbrace t_2 \rbrace \times E_1^d(\Sigma(-e_1)) + \lbrace t_2 \rbrace \times E_1^g(\Sigma(-e_1)) , [0,1] \times X(M) \rangle_{[0,1]\times UM} \\
&+ \sfrac{1}{4} \cdot \langle \Omega(\bar\tau) , X(M) \rangle_{UM}.
\end{aligned}
$$
Since $L_{E_1^g=X} \cap  L_{E_1^d=-X}$ and $L_{E_1^g=-X} \cap  L_{E_1^d=X}$ are empty :
$$
\langle [0,t_2[ \ \times \ E_1^d(L_{E_1^d=-E_1^g}) + [0,t_2[ \ \times \ E_1^g(L_{E_1^g=-E_1^d}) , [0,1]\times X(M) \rangle_{[0,1]\times UM} = 0.
$$
Furthermore, note that if $(m,v)$ is an intersection point of
$$E_1^d(M)\cap  \bar F(-E_1^g,-X) \cap X(M)
$$
then, in particular, $v=E_1^d(m)=X(m)$ so that $-E_1^g(m)$ and $-X(m)$ are not antipodal since $L_{E_1^d=X} \cap L_{E_1^g=-X}=\emptyset$. It follows that $v=E_1^d(m)=X(m)$ should also sit on the shortest geodesic arc from $-E_1^g(m)$ to $-X(m)$. Since such a configuration is impossible, this triple intersection is empty, thus
$$
\langle \lbrace t_2 \rbrace \times E_1^d(M) \cap \lbrace t_2 \rbrace \times \bar F(-E_1^g,-X) , [0,1] \times X(M) \rangle_{[0,1]\times UM}  = 0.
$$
Similarly,
$$
\langle \lbrace t_2 \rbrace \times E_1^g(M) \cap \lbrace t_2 \rbrace \times \bar F(-E_1^d,-X) , [0,1] \times X(M) \rangle_{[0,1]\times UM} = 0.
$$
Now, we have
$$
\begin{aligned}
&\langle \lbrace t_2 \rbrace \times E_1^d(\Sigma(-e_1)) + \lbrace t_2 \rbrace \times E_1^g(\Sigma(-e_1)) , [0,1] \times X(M) \rangle \\
&= - lk_M(L_{E_1^d=X} + L_{E_1^g=X}, \ L_{E_1^d=-X} + L_{E_1^g=-X}).
\end{aligned}
$$
Furthermore, recall Definition~\ref{omega}
$$
\begin{aligned}
\langle \Omega(\bar\tau) , X(M) \rangle_{UM} &= \langle \tau_d^{-1} (\Omega(\bar\tau)) , \tau_d^{-1} \circ X(L_{E_1^d=-E_1^g}) \rangle_{L_{E_1^d=-E_1^g} \times \b S^2} \\
&= - \langle L_{E_1^d=-E_1^g} \hspace{-0.5mm} \times \hspace{-0.5mm} [-e_1,e_1]_{e_2^\Omega} \ , \ \tau_d^{-1} \circ X(L_{E_1^d=-E_1^g}) \rangle_{L_{E_1^d=-E_1^g} \times \b S^2}
\end{aligned}
$$
where $[-e_1,e_1]_{e_2^\Omega}$ is the geodesic arc of $\b S^2$ from $-e_1$ to $e_1$ passing through $e_2^\Omega$. Now, $L_{E_1^d=-E_1^g} \times \b S^2$ is oriented and an intersection 
$$
(m,v)\hspace{-0.5mm}\in\hspace{-0.5mm}L_{E_1^d=-E_1^g} \hspace{-0.5mm} \times \hspace{-0.5mm} [-e_1,e_1]_{e_2^\Omega} \cap \tau_d^{-1} \circ X(L_{E_1^d=-E_1^g})
$$ 
is positive when 
$$
T_{(m,v)}(L_{E_1^d=-E_1^g} \times [-e_1,e_1]_{e_2^\Omega})\oplus T_{(m,v)}(\tau_d^{-1}\circ X(L_{E_1^d=-E_1^g}))=T_{(m,v)}(L_{E_1^d=-E_1^g}\times\b S^2)
$$
as an oriented sum, which is equivalent to 
$$
T_{v}([-e_1,e_1]_{e_2^\Omega})\oplus T_{v}(P_{\b S^2}\circ\tau_d^{-1}\circ X(L_{E_1^d=-E_1^g}))=T_{v}(\b S^2)
$$
as an oriented sum, where $P_{\b S^2}$ is the standard projection from $M\times \b S^2$ to $\b S^2$. See Figure \ref{orientation}.
\begin{center}
\definecolor{ccqqtt}{rgb}{0.8,0,0.2}
\definecolor{zzttqq}{rgb}{0.6,0.2,0}
\definecolor{ccqqqq}{rgb}{0.8,0,0}
\begin{tikzpicture}[line cap=round,line join=round,>=triangle 45,x=1.0cm,y=0.5cm,scale=0.9]
\clip(9,-2.5) rectangle (23,10);
\fill[line width=0pt,color=zzttqq,fill=zzttqq,fill opacity=0.1] (13,4) -- (14.5,4) -- (14.5,-1) -- (13,-1) -- cycle;
\fill[line width=0pt,dotted,color=zzttqq,fill=zzttqq,fill opacity=0.1] (16,4) -- (20,4) -- (20,-1) -- (16,-1) -- cycle;
\fill[line width=0pt,color=zzttqq,fill=zzttqq,fill opacity=0.1] (21.5,4) -- (23,4) -- (23,-1) -- (21.5,-1) -- cycle;
\draw [line width=1.4pt] (16,4)-- (20,4);
\draw [line width=1.4pt] (20,-1)-- (20,8);
\draw [line width=1.4pt] (20,8)-- (22,9);
\draw [->] (16,4) -- (20,4);
\draw [->,line width=1.4pt] (21.5,4) -- (23,4);
\draw [line width=1.4pt] (14.5,-1)-- (14.5,8);
\draw [line width=1.4pt] (14.5,8)-- (16.5,9);
\draw [->,line width=1.4pt,color=ccqqqq] (21.5,6) -- (23,6);
\draw [->,line width=1.4pt,color=ccqqqq] (13,6) -- (14.5,6);
\draw [->,line width=1.4pt] (13,4) -- (14.5,4);
\draw [->] (16,-1) -- (16,4);
\draw [->] (21.5,-1) -- (21.5,4);
\draw[line width=1.4pt,color=ccqqqq] (19.12,4.3) -- (19.11,4.24) -- (19.1,4.18) -- (19.1,4.12) -- (19.09,4.05) -- (19.08,3.99) -- (19.07,3.93) -- (19.07,3.87) -- (19.06,3.81) -- (19.05,3.74) -- (19.05,3.68) -- (19.04,3.62) -- (19.04,3.56) -- (19.03,3.49) -- (19.02,3.43) -- (19.02,3.37) -- (19.01,3.31) -- (19.01,3.25) -- (19,3.18) -- (18.99,3.12) -- (18.99,3.06) -- (18.98,3) -- (18.98,2.94) -- (18.97,2.88) -- (18.96,2.82) -- (18.96,2.76) -- (18.95,2.7) -- (18.95,2.65) -- (18.94,2.59) -- (18.93,2.53) -- (18.93,2.48) -- (18.92,2.42) -- (18.91,2.37) -- (18.9,2.31) -- (18.9,2.26) -- (18.89,2.21) -- (18.88,2.16) -- (18.87,2.11) -- (18.86,2.06) -- (18.85,2.01) -- (18.85,1.97) -- (18.84,1.92) -- (18.83,1.88) -- (18.82,1.83) -- (18.81,1.79) -- (18.79,1.75) -- (18.78,1.71) -- (18.77,1.67) -- (18.76,1.64) -- (18.75,1.6) -- (18.73,1.57) -- (18.72,1.54) -- (18.71,1.51) -- (18.69,1.48) -- (18.68,1.45) -- (18.66,1.42) -- (18.65,1.4) -- (18.63,1.38) -- (18.61,1.36) -- (18.59,1.34) -- (18.58,1.32) -- (18.56,1.31) -- (18.54,1.29) -- (18.52,1.28) -- (18.5,1.27) -- (18.48,1.27) -- (18.45,1.26) -- (18.43,1.26) -- (18.41,1.26) -- (18.38,1.26) -- (18.36,1.27) -- (18.33,1.27) -- (18.31,1.28) -- (18.28,1.29) -- (18.25,1.3) -- (18.22,1.32) -- (18.19,1.34) -- (18.16,1.36) -- (18.13,1.38) -- (18.1,1.41) -- (18.07,1.44) -- (18.03,1.47) -- (18.02,1.48) -- (18.01,1.49) -- (18,1.5) -- (18,1.5) -- (18,1.5) -- (18,1.5) -- (18,1.5) -- (18,1.5) -- (18,1.5) -- (18,1.5) -- (18,1.5)(20,6) -- (20,6) -- (20,6) -- (20,6) -- (20,6) -- (20,6) -- (20,6) -- (20,6) -- (20,6) -- (20,6) -- (20,6) -- (20,6) -- (20,6) -- (20,6) -- (20,6) -- (19.99,6) -- (19.99,6) -- (19.98,6) -- (19.95,6) -- (19.92,5.99) -- (19.89,5.99) -- (19.86,5.98) -- (19.82,5.97) -- (19.79,5.96) -- (19.77,5.94) -- (19.74,5.93) -- (19.71,5.91) -- (19.68,5.89) -- (19.66,5.87) -- (19.63,5.84) -- (19.61,5.82) -- (19.58,5.79) -- (19.56,5.76) -- (19.54,5.73) -- (19.52,5.7) -- (19.49,5.67) -- (19.47,5.64) -- (19.45,5.6) -- (19.44,5.56) -- (19.42,5.52) -- (19.4,5.48) -- (19.38,5.44) -- (19.36,5.4) -- (19.35,5.36) -- (19.33,5.31) -- (19.32,5.26) -- (19.3,5.22) -- (19.29,5.17) -- (19.27,5.12) -- (19.26,5.07) -- (19.25,5.02) -- (19.24,4.97) -- (19.22,4.91) -- (19.21,4.86) -- (19.2,4.8) -- (19.19,4.75) -- (19.18,4.69) -- (19.17,4.63) -- (19.16,4.58) -- (19.15,4.52) -- (19.14,4.46) -- (19.13,4.4) -- (19.12,4.34) -- (19.12,4.3);
\draw[line width=1.4pt,dash pattern=on 2pt off 4pt,color=ccqqtt] (17.44,2.52) -- (17.44,2.54) -- (17.43,2.56) -- (17.42,2.58) -- (17.41,2.6) -- (17.4,2.63) -- (17.4,2.65) -- (17.39,2.67) -- (17.38,2.69) -- (17.37,2.72) -- (17.37,2.74) -- (17.36,2.76) -- (17.35,2.78) -- (17.34,2.81) -- (17.33,2.83) -- (17.33,2.85) -- (17.32,2.88) -- (17.31,2.9) -- (17.3,2.92) -- (17.3,2.95) -- (17.29,2.97) -- (17.28,3) -- (17.27,3.02) -- (17.26,3.05) -- (17.26,3.07) -- (17.25,3.1) -- (17.24,3.12) -- (17.23,3.15) -- (17.23,3.18) -- (17.22,3.2) -- (17.21,3.23) -- (17.2,3.25) -- (17.19,3.28) -- (17.19,3.31) -- (17.18,3.33) -- (17.17,3.36) -- (17.16,3.39) -- (17.15,3.42) -- (17.15,3.44) -- (17.14,3.47) -- (17.13,3.5) -- (17.12,3.53) -- (17.12,3.56) -- (17.11,3.59) -- (17.1,3.61) -- (17.09,3.64) -- (17.08,3.67) -- (17.08,3.7) -- (17.07,3.73) -- (17.06,3.76) -- (17.05,3.79) -- (17.05,3.82) -- (17.04,3.85) -- (17.03,3.88) -- (17.02,3.91) -- (17.01,3.94) -- (17.01,3.97) -- (17,3.99) -- (17,4) -- (17,4) -- (17,4) -- (17,4) -- (17,4) -- (17,4) -- (17,4) -- (17,4) -- (17,4) -- (17,4)(17.49,2.04)(18,1.5) -- (18,1.5) -- (18,1.5) -- (18,1.5) -- (18,1.5) -- (18,1.5) -- (18,1.5) -- (18,1.5) -- (18,1.5) -- (18,1.5) -- (18,1.5) -- (18,1.5) -- (18,1.5) -- (18,1.5) -- (18,1.5) -- (18,1.5) -- (18,1.5) -- (18,1.5) -- (17.99,1.51) -- (17.99,1.51) -- (17.98,1.52) -- (17.97,1.53) -- (17.97,1.54) -- (17.96,1.54) -- (17.95,1.55) -- (17.94,1.56) -- (17.93,1.57) -- (17.93,1.58) -- (17.92,1.59) -- (17.91,1.6) -- (17.9,1.61) -- (17.9,1.62) -- (17.89,1.63) -- (17.88,1.64) -- (17.87,1.65) -- (17.86,1.66) -- (17.86,1.67) -- (17.85,1.69) -- (17.84,1.7) -- (17.83,1.71) -- (17.83,1.72) -- (17.82,1.73) -- (17.81,1.74) -- (17.8,1.76) -- (17.79,1.77) -- (17.79,1.78) -- (17.78,1.79) -- (17.77,1.81) -- (17.76,1.82) -- (17.76,1.83) -- (17.75,1.85) -- (17.74,1.86) -- (17.73,1.88) -- (17.72,1.89) -- (17.72,1.9) -- (17.71,1.92) -- (17.7,1.93) -- (17.69,1.95) -- (17.68,1.96) -- (17.68,1.98) -- (17.67,1.99) -- (17.66,2.01) -- (17.65,2.03) -- (17.65,2.04) -- (17.64,2.06) -- (17.63,2.08) -- (17.62,2.09) -- (17.61,2.11) -- (17.61,2.13) -- (17.6,2.14) -- (17.59,2.16) -- (17.58,2.18) -- (17.58,2.19) -- (17.57,2.21) -- (17.56,2.23) -- (17.55,2.25) -- (17.54,2.27) -- (17.54,2.29) -- (17.53,2.3) -- (17.52,2.32) -- (17.51,2.34) -- (17.51,2.36) -- (17.5,2.38) -- (17.49,2.4) -- (17.48,2.42) -- (17.47,2.44) -- (17.47,2.46) -- (17.46,2.48) -- (17.45,2.5) -- (17.44,2.52) -- (17.44,2.52);
\draw[line width=1.4pt,color=ccqqtt] (16.44,5.61) -- (16.44,5.62) -- (16.43,5.63) -- (16.42,5.65) -- (16.41,5.66) -- (16.4,5.67) -- (16.4,5.68) -- (16.39,5.7) -- (16.38,5.71) -- (16.37,5.72) -- (16.37,5.73) -- (16.36,5.74) -- (16.35,5.75) -- (16.34,5.77) -- (16.33,5.78) -- (16.33,5.79) -- (16.32,5.8) -- (16.31,5.81) -- (16.3,5.82) -- (16.3,5.83) -- (16.29,5.83) -- (16.28,5.84) -- (16.27,5.85) -- (16.26,5.86) -- (16.26,5.87) -- (16.25,5.88) -- (16.24,5.88) -- (16.23,5.89) -- (16.23,5.9) -- (16.22,5.91) -- (16.21,5.91) -- (16.2,5.92) -- (16.19,5.92) -- (16.19,5.93) -- (16.18,5.94) -- (16.17,5.94) -- (16.16,5.95) -- (16.15,5.95) -- (16.15,5.96) -- (16.14,5.96) -- (16.13,5.97) -- (16.12,5.97) -- (16.12,5.97) -- (16.11,5.98) -- (16.1,5.98) -- (16.09,5.98) -- (16.08,5.99) -- (16.08,5.99) -- (16.07,5.99) -- (16.06,5.99) -- (16.05,5.99) -- (16.05,6) -- (16.04,6) -- (16.03,6) -- (16.02,6) -- (16.01,6) -- (16.01,6) -- (16,6) -- (16,6) -- (16,6) -- (16,6) -- (16,6) -- (16,6) -- (16,6) -- (16,6) -- (16,6) -- (16,6) -- (16,6)(17,4) -- (17,4) -- (17,4) -- (17,4) -- (17,4) -- (17,4) -- (17,4) -- (17,4) -- (17,4) -- (17,4) -- (17,4) -- (17,4) -- (17,4) -- (17,4) -- (17,4) -- (17,4) -- (17,4) -- (17,4.01) -- (17,4.01) -- (16.99,4.02) -- (16.99,4.04) -- (16.98,4.07) -- (16.97,4.1) -- (16.97,4.13) -- (16.96,4.16) -- (16.95,4.19) -- (16.94,4.22) -- (16.93,4.25) -- (16.93,4.28) -- (16.92,4.31) -- (16.91,4.34) -- (16.9,4.37) -- (16.9,4.4) -- (16.89,4.42) -- (16.88,4.45) -- (16.87,4.48) -- (16.86,4.51) -- (16.86,4.53) -- (16.85,4.56) -- (16.84,4.59) -- (16.83,4.61) -- (16.83,4.64) -- (16.82,4.66) -- (16.81,4.69) -- (16.8,4.71) -- (16.79,4.74) -- (16.79,4.76) -- (16.78,4.79) -- (16.77,4.81) -- (16.76,4.84) -- (16.76,4.86) -- (16.75,4.88) -- (16.74,4.91) -- (16.73,4.93) -- (16.72,4.95) -- (16.72,4.97) -- (16.71,5) -- (16.7,5.02) -- (16.69,5.04) -- (16.68,5.06) -- (16.68,5.08) -- (16.67,5.1) -- (16.66,5.13) -- (16.65,5.15) -- (16.65,5.17) -- (16.64,5.19) -- (16.63,5.21) -- (16.62,5.23) -- (16.61,5.24) -- (16.61,5.26) -- (16.6,5.28) -- (16.59,5.3) -- (16.58,5.32) -- (16.58,5.34) -- (16.57,5.36) -- (16.56,5.37) -- (16.55,5.39) -- (16.54,5.41) -- (16.54,5.42) -- (16.53,5.44) -- (16.52,5.46) -- (16.51,5.47) -- (16.51,5.49) -- (16.5,5.51) -- (16.49,5.52) -- (16.48,5.54) -- (16.47,5.55) -- (16.47,5.57) -- (16.46,5.58) -- (16.45,5.59) -- (16.44,5.61);
\draw (15,10) node[anchor=north west] {$\b S^2$};
\draw (20.5,10) node[anchor=north west] {$\b S^2$};
\draw (9.25,6.8) node[anchor=north west] {$\tau_d^{-1}\circ X(L_{E_1^d = -E_1^g})$};
\draw (14.8,-1.34) node[anchor=north west] {$(-e_1)$};
\draw (20.3,-1.34) node[anchor=north west] {$(-e_1)$};
\draw (15.3,4.4) node[anchor=north west] {$e_1$};
\draw (20.8,4.4) node[anchor=north west] {$e_1$};
\draw (11,4.64) node[anchor=north west] {$L_{E_1^d=-E_1^g}$};
\draw [color=black, line width=1.4pt] (18,1.5)-- ++(-4.5pt,0 pt) -- ++(9.0pt,0 pt) ++(-4.5pt,-4.5pt) -- ++(0 pt,9.0pt);
\draw[color=black] (17.5,0.5) node {$(m,v)$};
\end{tikzpicture}

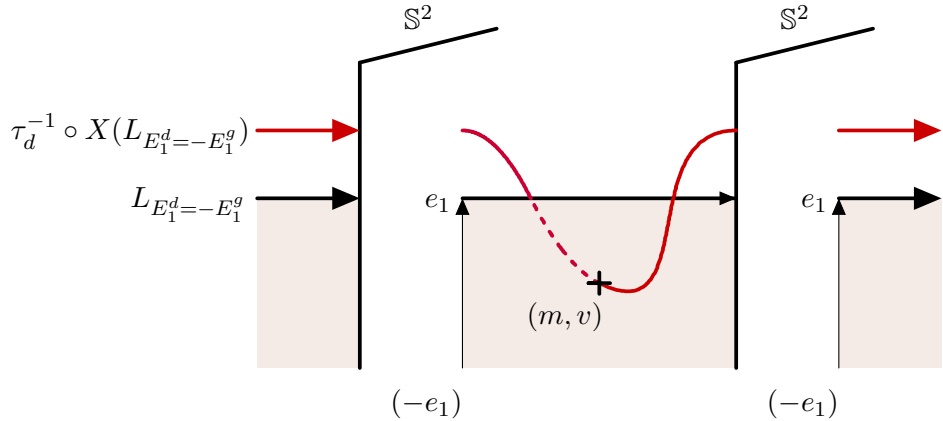
\captionof{figure}{A positive intersection $(m,v)$ between the 2-chain $L_{E_1^d=-E_1^g}\times[-e_1,e_1]_{e_2^\Omega}$ and $\tau_d^{-1} \circ X (L_{E_1^d=-E_1^g})$ in $L_{E_1^d=-E_1^g}\times \b S^2$.\\}\label{orientation}
\end{center}

\noindent
It follows that
$$
\begin{aligned}
\langle \Omega(\bar\tau) , X(M) \rangle_{UM} &= - \langle [-e_1,e_1]_{e_2^\Omega}, \ P_{\b S^2} \circ \tau_d^{-1} \circ X(L_{E_1^d=-E_1^g}) \rangle_{\b S^2} \\
&= - lk_{\b S^2}\left(e_1-(-e_1), \ P_{\b S^2} \circ \tau_d^{-1} \circ X(L_{E_1^d=-E_1^g})\right),
\end{aligned}
$$
\iffalse
and, finally,
$$
\begin{aligned}
\langle \go P(\bar\tau,X), [0,1]\times X(M) \rangle_{[0,1]\times UM} &= lk_M\left(L_{E_1^d=X} + L_{E_1^g=X} \ , \ L_{E_1^d=-X} + L_{E_1^g=-X} \right) \\ 
&- lk_{\b S^2}\left(e_1-(-e_1), \ P_{\b S^2} \circ \tau_d^{-1} \circ X(L_{E_1^d=-E_1^g})\right).
\end{aligned}
$$
\fi
\end{proof}

\begin{proof}[Proof of Lemma~\ref{lem1}]
According to Lemmas~\ref{cor_reformcombingsb}~and~\ref{fond}, Lemma~\ref{lem1} is true for $p_1=\tilde p_1$.
\end{proof}

From now on, if $X$ is a torsion combing of a compact oriented 3-manifold $M$ and if $\bar\tau$ is a pseudo-parallelization of $M$ compatible with $X$, then set 
$$
p_1([X],\bar\tau) = \tilde p_1([X],\bar\tau)  \mbox{ \ and \ } p_1(\bar\tau,[X]) = \tilde p_1(\bar\tau,[X]).
$$
As an obvious consequence, we get the following lemma.
\begin{lemma} \label{lempluplus}
Under the assumptions of Lemma \ref{cor_reformcombingsb}, in $H_2([0,1]\times UM ; \b Q)/H_T^\rho(M)$, the class of $\go P(\bar\tau,X)$ is $p_1(\bar\tau, X)[S]$.
\end{lemma}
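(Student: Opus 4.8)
The plan is to observe that this lemma is a bookkeeping restatement of Lemma~\ref{cor_reformcombingsb} once the notational convention introduced just above the statement is taken into account. First I would recall that the $4$-chain $\go P(\bar\tau,X)$ of Definition~\ref{def_pgo} is a cycle: this was already noted right after Definition~\ref{def_pgo}, using Lemma~\ref{bord} (which computes $\partial(C_4^+(\bar\tau,X)\cap C_4^-(\bar\tau,X))$) and Definition~\ref{omega} (which gives $\partial\Omega(\bar\tau)$), so that the boundary contributions cancel. Then Lemma~\ref{cor_reformcombingsb} asserts exactly that, under the hypothesis that $(X,\rho)$ is a torsion combing of $M$ and $\bar\tau$ is a pseudo-parallelization of $M$ compatible with $X$, the class $[\go P(\bar\tau,X)]$ in $H_2([0,1]\times UM;\b Q)/H_T^\rho(M)$ depends only on $\bar\tau$ and on the homotopy class of $X$, and that it equals $\tilde p_1(\bar\tau,[X])\,[S]$.

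Next I would invoke the definition adopted just before the statement of the lemma, namely $p_1(\bar\tau,[X]) = \tilde p_1(\bar\tau,[X])$ for a torsion combing $X$ compatible with $\bar\tau$ (the $X$ without brackets in the displayed formula being understood as $[X]$). Substituting this renaming into the conclusion of Lemma~\ref{cor_reformcombingsb} yields $[\go P(\bar\tau,X)] = p_1(\bar\tau,[X])\,[S]$ in $H_2([0,1]\times UM;\b Q)/H_T^\rho(M)$, which is precisely the claimed identity.

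There is no genuine obstacle here; the only point worth recording is that "the class of $\go P(\bar\tau,X)$ is a rational multiple of $[S]$" is a meaningful assertion because the quotient $H_2([0,1]\times UM;\b Q)/H_T^\rho(M)$ is spanned by $[S]$ — which is exactly what Lemma~\ref{lem_evaluationII} provides, since it shows every $2$-cycle $C$ of $[0,1]\times UM$ satisfies $[C] = \langle C, C_4(X,Y)\rangle_{[0,1]\times UM}\,[S]$ in that quotient. Hence the proof reduces to a one-line citation of Lemma~\ref{cor_reformcombingsb} together with the definition $p_1(\bar\tau,[X]) = \tilde p_1(\bar\tau,[X])$.
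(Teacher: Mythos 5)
Your proposal is correct and matches the paper's own treatment: the paper states Lemma~\ref{lempluplus} as "an obvious consequence" of the renaming convention $p_1(\bar\tau,[X]) = \tilde p_1(\bar\tau,[X])$ stated immediately before it, combined with Lemma~\ref{cor_reformcombingsb}, which by its very formulation defines $\tilde p_1(\bar\tau,[X])[S]$ to be the class $[\go P(\bar\tau,X)]$. Your observation that the $X$ in $p_1(\bar\tau,X)$ is shorthand for $[X]$, and your citation of Lemma~\ref{lem_evaluationII} to justify that the quotient is spanned by $[S]$, are both the right supporting remarks.
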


\begin{proof}[Proof of Theorem~\ref{thm_defp1Xb}]
Let $X_1$ and $X_2$ be torsion combings of two compact oriented 3-manifolds $M_1$ and $M_2$ with identified boundaries such that $X_1$ and $X_2$ coincide on the boundary. Let also $\bar\tau_1$ and $\bar\tau'_1$ be two pseudo-parallelizations of $M_1$ that extend the trivialization $\rho(X_1)$ and, similarly, let $\bar\tau_2$ be a pseudo-parallelization of $M_2$ that extends the trivialization $\rho(X_2)$. In such a context, let
$$
\begin{aligned}
p_1([X_1],[X_2])(\bar\tau_1,\bar\tau_2) &= p_1([X_1],\bar\tau_1) + p_1(\bar\tau_1,\bar\tau_2) + p_1(\bar\tau_2, [X_2]) \\
p_1([X_1],[X_2])(\bar\tau'_1,\bar\tau_2) &= p_1([X_1],\bar\tau'_1) + p_1(\bar\tau'_1,\bar\tau_2) + p_1(\bar\tau_2, [X_2]) \\
\end{aligned}
$$
and note that
$$
p_1([X_1],[X_2])(\bar\tau_1,\bar\tau_2)  - p_1([X_1],[X_2])(\bar\tau'_1,\bar\tau_2) = p_1(\bar\tau_1,\bar\tau'_1) - \langle \go P(\bar\tau_1,\bar\tau'_1) , [0,1]\times X_1(M) \rangle.
$$
Using Proposition~\ref{prop_ppara}, we get $p_1([X_1],[X_2])(\bar\tau_1,\bar\tau_2)  - p_1([X_1],[X_2])(\bar\tau'_1,\bar\tau_2)=0$. In other words $p_1([X_1],[X_2])(\bar\tau_1,\bar\tau_2)$ is independent of $\bar\tau_1$. Similarly, it is also independent of $\bar\tau_2$ so that we can drop the pseudo-parallelizations from the notation. Eventually, using Lemma~\ref{lem1}, we get the formula of the statement. \\

For the second part of the statement, if $M_1$ and $M_2$ are closed, conclude with Proposition~\ref{cor_reformcombings}, which ensures that $p_1(\bar\tau_i,[X_i]) = p_1([X_i])-p_1(\bar\tau_i)$ for $i\in \lbrace 1,2 \rbrace$.
\end{proof}

Let us now end this section by proving Theorem \ref{formuleplus} and Theorem \ref{GM}, starting with the following.

\begin{lemma} \label{lemmaplus1}
 Let $(X,\rho)$ and $(Y,\rho)$ be $\partial$-compatible torsion combings of a compact oriented 3-manifold $M$. Let $C_4(X,Y)$ and $C_4(-X,-Y)$ be 4-chains of $[0,1]\times UM$ as in Lemma \ref{lem_evaluationII}. The class of $\go P(X,Y) \italicegal 4\big(C_4(X,Y) \cap C_4(-X,-Y)\big)$ in the space $H_2([0,1]\times UM ; \b Q)/H_T^\rho(M)$ reads $p_1([X],[Y]) [S]$ where $[S]$ is the homology class of the fiber of $UM$ in $H_2([0,1]\times UM ; \b Q)$.
\end{lemma}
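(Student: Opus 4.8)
The plan is to compare the cycle $\go P(X,Y)$ with the already-understood cycles $\go P(X,\bar\tau)$ and $\go P(\bar\tau,Y)$ of Definition~\ref{def_pgo}, for a suitably chosen pseudo-parallelization $\bar\tau$, and then to recognise the answer through the definition of the relative Pontrjagin number in Theorem~\ref{thm_defp1Xb}. First I would fix a pseudo-parallelization $\bar\tau$ of $M$ coinciding with $\rho$ on $\partial M$ (Lemma~\ref{lem_extendpparallelization}); after a small interior perturbation $\bar\tau$ may be assumed compatible with both $X$ and $Y$, such compatibility being generic (as used, e.g., in the proof of Lemma~\ref{lem_C4pme1}). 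Using Lemma~\ref{lem_transfert} I would pick transverse $4$-chains $C_4^\pm(X,\bar\tau)$ and $C_4^\pm(\bar\tau,Y)$ with the prescribed boundaries, and, exploiting the shape of the chains $\bar F_t$ of Definition~\ref{def_Ft}, arrange $C_4^\pm(X,\bar\tau)$ to be products near their top $\lbrace 1 \rbrace\times\bar\tau(M\times\lbrace \pm e_1\rbrace)$ and $C_4^\pm(\bar\tau,Y)$ to be products near their bottom. Let $\Phi$ and $\Psi$ be the affine reparametrizations of $[0,1]\times UM$ onto $[0,\frac12]\times UM$ and $[\frac12,1]\times UM$ respectively; both are homotopic to $\mathrm{id}$. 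Then $C_4(X,Y):=\Phi_*C_4^+(X,\bar\tau)+\Psi_*C_4^+(\bar\tau,Y)$ and $C_4(-X,-Y):=\Phi_*C_4^-(X,\bar\tau)+\Psi_*C_4^-(\bar\tau,Y)$ are valid $4$-chains as in Lemma~\ref{lem_evaluationII}, the pieces matching along $\lbrace \frac12 \rbrace\times\bar\tau(M\times\lbrace \pm e_1\rbrace)$ and along $Y(\partial M)=X(\partial M)$.

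The heart of the argument is a chain-level identity for this particular choice. Since pieces over $[0,\frac12]$ meet pieces over $[\frac12,1]$ only along $\lbrace \frac12 \rbrace\times UM$, and the chosen $C_4^\pm$ are products near the relevant ends, one checks directly that $C_4(X,Y)\cap C_4(-X,-Y)=\Phi_*\big(C_4^+(X,\bar\tau)\cap C_4^-(X,\bar\tau)\big)+\Psi_*\big(C_4^+(\bar\tau,Y)\cap C_4^-(\bar\tau,Y)\big)$: the only contribution near the interface is a collar times $\bar\tau(M\times\lbrace e_1\rbrace)\cap\bar\tau(M\times\lbrace -e_1\rbrace)$ (computed in Lemma~\ref{bord}), and it is already the image of the top of the first self-intersection and of the bottom of the second. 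Multiplying by $4$ and using $\go P(X,\bar\tau)=4\big(C_4^+(X,\bar\tau)\cap C_4^-(X,\bar\tau)\big)-\lbrace 1 \rbrace\times\Omega(\bar\tau)$ and $\go P(\bar\tau,Y)=\lbrace 0 \rbrace\times\Omega(\bar\tau)+4\big(C_4^+(\bar\tau,Y)\cap C_4^-(\bar\tau,Y)\big)$, both $\Omega(\bar\tau)$ terms are carried by $\Phi_*$, $\Psi_*$ to $\lbrace \frac12 \rbrace\times\Omega(\bar\tau)$ with opposite signs and cancel; hence $\go P(X,Y)=\Phi_*\go P(X,\bar\tau)+\Psi_*\go P(\bar\tau,Y)$ as chains. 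As $\go P(X,\bar\tau)$ and $\go P(\bar\tau,Y)$ are $2$-cycles and $\Phi,\Psi\simeq\mathrm{id}$, this gives $[\go P(X,Y)]=[\go P(X,\bar\tau)]+[\go P(\bar\tau,Y)]$ in $H_2([0,1]\times UM;\b Q)$, hence in the quotient by $H_T^\rho(M)$.

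It then remains to identify the two summands. Lemma~\ref{lempluplus} gives $[\go P(\bar\tau,Y)]=p_1(\bar\tau,[Y])[S]$. For the other, I would apply the identity established in the proof of Lemma~\ref{cor_reformcombingsb} with the auxiliary pseudo-parallelization taken to be $\bar\tau$ itself, namely $[\go P(\bar\tau,X)]+[\go P(X,\bar\tau)]=[\go P(\bar\tau,\bar\tau)]$; and $[\go P(\bar\tau,\bar\tau)]=p_1(\bar\tau,\bar\tau)[S]=0$ by Proposition~\ref{prop_ppara}, the Pontrjagin obstruction over the cylindrical cobordism being zero. Thus $[\go P(X,\bar\tau)]=-[\go P(\bar\tau,X)]=-p_1(\bar\tau,[X])[S]=p_1([X],\bar\tau)[S]$, using Lemma~\ref{lempluplus} again and the convention $p_1([X],\bar\tau)=-p_1(\bar\tau,[X])$. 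Adding, $[\go P(X,Y)]=\big(p_1([X],\bar\tau)+p_1(\bar\tau,\bar\tau)+p_1(\bar\tau,[Y])\big)[S]=p_1([X],[Y])[S]$ by the definition of the relative Pontrjagin number in Theorem~\ref{thm_defp1Xb} applied with $M_1=M_2=M$ and $\bar\tau_1=\bar\tau_2=\bar\tau$. Finally, that the class of $\go P(X,Y)$ is independent of the admissible choices of $C_4(X,Y)$ and $C_4(-X,-Y)$ follows as in the proof of Lemma~\ref{lem_welldefined}: the difference of two choices is a triple intersection involving a class of $H_4([0,1]\times UM;\b Q)$, which can be pushed to an end and shown to vanish modulo $H_T^\rho(M)$; so the displayed formula holds for every such choice.

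I expect the main obstacle to be the chain-level bookkeeping of the second paragraph: verifying carefully that, with $C_4^\pm$ taken as products near their pseudo-section ends, the intersection $C_4(X,Y)\cap C_4(-X,-Y)$ is exactly the sum of the two pushed-forward self-intersections with the collar contribution near $\lbrace \frac12 \rbrace$ already absorbed (this is precisely the role the term $\Omega(\bar\tau)$ of Definition~\ref{omega} is designed to play, via Lemma~\ref{bord}), and that the two $\Omega(\bar\tau)$ corrections cancel after reparametrization. Once that chain identity is secured, everything else is a matter of quoting Lemmas~\ref{lempluplus} and~\ref{cor_reformcombingsb}, Proposition~\ref{prop_ppara}, and Theorem~\ref{thm_defp1Xb}.
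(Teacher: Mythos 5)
Your proof is essentially the paper's: introduce $\bar\tau$ compatible with $X$ and $Y$, stack $C_4^\pm(X,\bar\tau)$ with $C_4^\pm(\bar\tau,Y)$ to obtain $C_4(X,Y)$ and $C_4(-X,-Y)$, and combine Lemma~\ref{lempluplus}, the identity from the proof of Lemma~\ref{cor_reformcombingsb}, Proposition~\ref{prop_ppara} and Theorem~\ref{thm_defp1Xb} to identify $[\go P(X,Y)]$ with $p_1([X],[Y])[S]$; your explicit chain-level cancellation of the two $\Omega(\bar\tau)$ terms and the $\tau=\bar\tau$ trick yielding $[\go P(X,\bar\tau)]=p_1([X],\bar\tau)[S]$ usefully spell out what the paper keeps terse. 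One small inaccuracy in your final paragraph: the independence of the choices of $C_4(X,Y)$ and $C_4(-X,-Y)$ is not a \emph{triple}-intersection argument as in Lemma~\ref{lem_welldefined}; rather, the difference of two choices of $C_4(X,Y)$ is a $4$-cycle whose (double) intersection with $C_4(-X,-Y)$ must be pushed to one end of $[0,1]\times UM$ and recognized as lying in $H_T^\rho(M)$, which is exactly what the paper does in the last paragraph of its own proof, following the model of Lemma~\ref{lem_evaluationII}.
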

\begin{proof}
Let $\bar \tau$ be a pseudo-parallelization of $M$ compatible with $X$ and $Y$. By Theorem \ref{thm_defp1Xb}, in $H_2([0,1]\times UM ; \b Q)/H_T^\rho(M)$ :
$$
 p_1([X],[Y])[S] = \big(p_1([X],\bar \tau) + p_1(\bar \tau,[Y]) \big)[S] 
$$
Then, using Lemma \ref{lempluplus},
$$
\begin{aligned}
 p_1([X],[Y])[S] &= [\go P(X,\bar \tau) + \go P(\bar \tau, Y)]\\
 &= [4(C_4^+(X,\bar \tau)\cap C_4^-(X,\bar \tau))  + 4(C_4^+(\bar \tau, Y)\cap C_4^-(\bar \tau, Y)) ].
\end{aligned}
$$
Hence, reparameterizing and stacking $C_4^+(X,\bar\tau)$ and $C_4^+(\bar\tau,Y)$, resp. $C_4^-(X,\bar\tau)$ and $C_4^-(\bar\tau,Y)$, we get a 4-chain $C_4(X,Y)$, resp. $C_4(-X,-Y)$, as in Lemma \ref{lem_evaluationII} and
$$
 p_1([X],[Y])[S]= 4 \cdot [C_4(X,Y)\cap C_4(-X,-Y)].
$$

To conclude the proof, see that if $C'_4(X,Y)$ is a 4-chain of $[0,1]\times UM$ with the same boundary as $C_4(X,Y)$, then $C'_4(X,Y)-C_4(X,Y)$ is homologous to a 4-cycle in $\lbrace 0 \rbrace \times UM$ so that 
$$
\begin{aligned}
&[C'_4(X,Y)  \cap C_4(-X,-Y)] - [C_4(X,Y) \cap C_4(-X,-Y)] \\
&= \left[\big(C'_4(X,Y)- C_4(X,Y) \big) \cap C_4(-X,-Y)\right]
\end{aligned}
$$ 
sits in $H_T^\rho(M)$. So, the class $[C_4(X,Y)\cap C_4(-X,-Y)]$ in $H_2([0,1]\times UM ; \b Q)/H_T^\rho(M)$ is independent of the choices for $C_4(X,Y)$. Similarly, it is independent of the choices for $C_4(-X,-Y)$.
\end{proof}

\begin{proof}[Proof of Theorem \ref{formuleplus}]
According to Lemma \ref{lemmaplus1}, it is enough to evaluate the class of the chain $4\big(C_4(X,Y) \cap C_4(-X,-Y)\big)$ in $H_2([0,1]\times UM ; \b Q)/H_T^\rho(M)$ where $\rho = \rho(X)$ and where $C_4(X,Y)$ and $C_4(-X,-Y)$ are 4-chains of $[0,1]\times UM$ as in Lemma~\ref{lem_evaluationII}. Let us consider the 4-chains
$$
\begin{aligned}
C_4(X,Y) &= \bar F_{t_1}(X,Y) - \lbrace t_1 \rbrace \times UM_{|\Sigma_{X=-Y}},\\
C_4(-X,-Y) &= \bar F_{t_2}(-X,-Y) - \lbrace t_2 \rbrace \times UM_{|\Sigma_{-X=Y}} ,
\end{aligned}
$$
where $0<t_1<t_2<1$, and where $\bar F_{t_1}(X,Y)$, resp. $\bar F_{t_2}(-X,-Y)$, is a 4-chain as in Definition~\ref{def_Ft} and $\Sigma_{X=-Y}$, resp. $\Sigma_{-X=Y}$, is a 2-chain of $M$ bounded by $L_{X=-Y}$, resp. $L_{-X=Y}$, provided by Proposition~\ref{prop_linksinhomologyI}. With these chains,
$$
\begin{aligned}
&C_4(X,Y) \cap C_4(-X,-Y) \\ 
&= -\lbrace t_1 \rbrace \times (-X)(\Sigma_{X=-Y})  + [t_1, t_2] \times (-X)(L_{-X=Y}) - \lbrace t_2 \rbrace \times Y(\Sigma_{-X=Y}).
\end{aligned}
$$
Hence, using Lemma \ref{lem_evaluationII} with $[0,1]\times X(M)$, in $H_2([0,1]\times UM ; \b Q)/H_T^\rho(M)$ :
$$
\begin{aligned}
 [C_4(X,Y)  \cap C_4(-X,-Y)] &= \langle C_4(X,Y)\cap C_4(-X,-Y) , [0,1] \times X(M) \rangle_{[0,1]\times UM}  [S] \\
 &= lk(L_{X=Y},L_{X=-Y}) [S].
\end{aligned}
$$
\iffalse
 &=  \langle - \lbrace t_1 \rbrace \times (-X)(\Sigma_{X=-Y}) + [t_1, t_2] \times (-X)(L_{-X=Y})  \\
 & \hspace{6cm} - \lbrace t_2 \rbrace \times Y(\Sigma_{-X=Y}) , [0,1]\times X(M) \rangle_{[0,1]\times UM}  [S] \\
 &= - lk(L_{X=Y},L_{-X=Y}) [S] \\
\fi
\end{proof}

\iffalse
\begin{proof}
Let $(X,\sigma)$ and $(Y, \sigma)$ be $\partial$-compatible torsion combings of a compact oriented 3-manifold $M$ that represent the same $\Spinc$-structure. By definition, $(X,\sigma)$ and $(Y, \sigma)$ are homotopic on $M\setminus \l B$ where $\l B$ is a 3-ball in $\mathring M$. \linebreak So, there exists a combing $Y'$ homotopic to $Y$ (on $M$) such that $L_{X=Y'} \cap \partial \l B = \emptyset$ and $L_{X=-Y'} \subset \l B$. Let $L^{\l B} = L_{X=Y'} \cap \l B$ and $L^{M\setminus \l B} = L_{X=Y'} \cap (M\setminus \l B)$. The link $L_{X=-Y'}$ bounds a compact oriented surface in $\l B$, hence
$$
\begin{aligned}
p_1([X], [Y]) = p_1([X], [Y']) &= 4\cdot lk(L_{X=Y'}, L_{X=-Y'})\\
&=  4\cdot lk(L^{M \setminus \l B} \sqcup L^{\l B}, L_{X=-Y'})\\
&= 4\cdot lk(L^{\l B}, L_{X=-Y'}).
\end{aligned}
$$

Finally, as in the closed case (see \cite[Subsection 2.3 and Corollary 2.22]{lescopcombing}), $X$ can be extended as a parallelization on $\l B$ so that $Y'$ induces a map from $\l B$ to $\b S^2$. Its class in $\pi_3(\b S^2)\simeq \b Z$ reads $lk(L^{\l B}, L_{X=-Y'})$. Hence, $p_1([X], [Y]) = 0$ if and only if $X$ and $Y$ are homotopic on $M$.
\end{proof}
\fi

\begin{proof}[Proof of Theorem \ref{GM}]
If $X$ and $Y$ are homotopic relatively to the boundary, then $p_1([X],[Y])=0$. \linebreak Conversely, consider two combings $X$ and $Y_0$ in the same $\Spinc$-structure and assume that \linebreak $p_1([X],[Y_0])=0$. Since $Y_0$ is in the same $\Spinc$-structure as $X$, there exists a homotopy from $Y_0$ to a combing $Y_1$ that coincides with $X$ outside a ball $\l B$ in $\mathring M$. \\

Let $\sigma$ be a unit section of $X^{\perp}_{|\l B}$, and let $(X, \sigma, X\wedge\sigma)$ denote the corresponding parallelization over $\l B$. Extend the unit section $\sigma$ as a generic section of $X^{\perp}$ such that $\sigma_{|\partial M}=\sigma(X)$, and deform $Y_1$ to $Y$ where 
$$
Y(m)=\frac{Y_1(m) + \chi(m) \sigma(m)}{\parallel Y_1(m) + \chi(m)\sigma(m)\parallel}
$$
for a smooth map $\chi$ from $M$ to $[0,\varepsilon]$, such that $\chi^{-1}(0)=\partial M$ and $\chi$ maps the complement of a neighborhood of $\partial M$ to $\varepsilon$, where $\varepsilon$ is a small positive real number. The link $L_{X=Y}$ is the disjoint union of $L_{X=Y}\cap \l B$ and a link $L_2$ of $M \setminus \l B$, the link $L_{X=-Y}$ sits in $\l B$, and
$$
0=p_1(X,Y_0)=p_1(X,Y)= 4lk(L_{X=Y},L_{X=-Y})
$$
where $lk(L_{X=Y},L_{X=-Y})=lk(L_{X=Y}\cap \l B,L_{X=-Y})=0$.\\

The parallelization $(X, \sigma, X\wedge\sigma)$ turns the restriction $Y_{|\l B}$ into a map from the ball $\l B$ to $\b S^2 = \b S(\b R X \oplus \b R \sigma \oplus \b R X\wedge\sigma)$ constant on $\partial \l B$, thus into a map from $\l B / \partial \l B \simeq \b S^3$ to $\b S^2$, and it suffices to prove that this map is homotopic to the constant map to prove Theorem \ref{GM}. For this it suffices to prove that this map represents $0$ in $\pi_3(\b S^2) \simeq \b Z$. \\

There is a classical isomorphism from $\pi_3(\b S^2)$ to $\b Z$ that maps the class of a map $g$ from $\b S^3$ to $\b S^2$ to the linking number of the preimages of two regular points of $g$ under $g$ (see \cite{hopf} and \cite[Theorem 2]{pontrjagin}). It is easy to check that this map is well-defined, depends only on the homotopy class of $g$, and is a group morphism on $\pi_3(\b S^2)$ that maps the class of the Hopf fibration $\left((z_1,z_2) \in (\b S^3 \subset \b C^2) \mapsto (\sfrac{z_1}{z_2}) \in (\b C P^1=\b S^2) \right)$ to $\pm 1$. Therefore it is an isomorphism from $\pi_3(\b S^2)$ to $\b Z$. Since $Y$ is in the kernel of this isomorphism, it is homotopically trivial so that $Y$ is homotopic to a constant on $B$, relatively to the boundary of $B$, and $Y_0$ is homotopic to $X$ on $M$, relatively to the boundary of $M$. 
\end{proof}

\section[Variation of Pontrjagin numbers under LP$_\b Q$-surgeries]{Variation of Pontrjagin numbers \\ under LP$_\b Q$-surgeries}
\subsection{For pseudo-parallelizations}
In this subsection we recall the variation formula and the finite type property of Pontrjagin numbers of pseudo-parallelizations, which are contained in \cite[Section 11]{lescopcube}.
\def\taubarre{\mbox{$\overline{\tau}$}}
\begin{proposition} \label{prop_FTIppara}
\noindent
For $M$ a compact oriented 3-manifold and $(\sfrac{B}{A})$ an LP$_\b Q$-surgery in $M$, if ${\bar\tau}_{M}$ and ${\bar\tau}_{M(\sfrac{B}{A})}$ are pseudo-parallelizations of $M$ and $M(\sfrac{B}{A})$ which coincide on $M\setminus \mathring A$ and coincide with a genuine parallelization on $\partial A$, then
$$
p_1({\taubarre}_{M(\sfrac{B}{A})},{\taubarre}_{M}) = p_1({{\taubarre}_{M(\sfrac{B}{A})}}_{|B},{{\taubarre}_{M}}_{|A}).
$$
\end{proposition}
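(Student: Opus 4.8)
The plan is to reduce the statement to an additivity property of the Pontrjagin number over a decomposition of the cobordism used to define $p_1$. Choose a connected signature-zero cobordism $W$ from $M(\sfrac{B}{A})$ to $M$, and a connected signature-zero cobordism $W_0$ from $B$ to $A$ (such cobordisms exist; we may correct the signature by connected-summing with copies of $\pm\b{CP}^2$, which does not affect the relevant $SU$-obstruction). Since ${\taubarre}_{M}$ and ${\taubarre}_{M(\sfrac{B}{A})}$ agree on $M\setminus\mathring A$ and restrict to a genuine parallelization on $\partial A\simeq\partial B$, we may glue $W_0$ to the product cobordism $[0,1]\times(M\setminus\mathring A)$ along $[0,1]\times\partial A$ to build $W$ explicitly: $W=\big([0,1]\times(M\setminus\mathring A)\big)\cup_{[0,1]\times\partial A}W_0$. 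The first step is to check this $W$ is indeed a cobordism from $M(\sfrac{B}{A})$ to $M$ with signature zero; signature zero holds because Novikov additivity gives $\operatorname{sign}(W)=\operatorname{sign}\big([0,1]\times(M\setminus\mathring A)\big)+\operatorname{sign}(W_0)=0+0$, the product piece having zero signature.

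Next I would assemble the complex trivializations. On $\partial W$, the complex trivialization $\tau({\taubarre}_{M(\sfrac{B}{A})},{\taubarre}_{M})$ is built from the special complex trivializations $\bar\tau_{\b C}$ of the two boundary pieces (Definition~\ref{def_complextriv}); on the lateral boundary $[0,1]\times\partial A$ it is the stabilization of the common genuine parallelization there. Over the product piece $[0,1]\times(M\setminus\mathring A)$, the trivialization $\vec n\oplus({\taubarre}_M)_{\b C}$ extends by the obvious product formula, because ${\taubarre}_M$ and ${\taubarre}_{M(\sfrac{B}{A})}$ literally coincide on $M\setminus\mathring A$ and hence so do their associated complex trivializations (the complex trivialization is canonical up to homotopy on the part where the pseudo-parallelizations agree, by the uniqueness statements $\pi_1(SU(3))=\pi_2(SU(3))=1$ in Definition~\ref{def_complextriv}). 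Therefore the Pontrjagin obstruction class $p_1(W;\tau(\cdots))\in H^4(W,\partial W;\b Z)$ is supported, after a relative-homology argument, in $W_0$ rel $\partial W_0$: the obstruction cocycle can be chosen to vanish on the product piece. Pairing with the fundamental class and using that $[W,\partial W]$ maps to $[W_0,\partial W_0]$ under the excision/collapse map, this yields
$$
p_1({\taubarre}_{M(\sfrac{B}{A})},{\taubarre}_{M})=p_1(W;\tau(\cdots))[W,\partial W]^{-1}=p_1(W_0;\tau({\taubarre_{M(\sfrac{B}{A})}}_{|B},{\taubarre_M}_{|A}))[W_0,\partial W_0]^{-1},
$$
which is precisely $p_1({{\taubarre}_{M(\sfrac{B}{A})}}_{|B},{{\taubarre}_{M}}_{|A})$ by the definition of the relative Pontrjagin number for pseudo-parallelizations.

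Alternatively — and this is probably the cleaner route to write up — one can avoid building $W$ by hand and instead invoke Theorem~\ref{prop_varasint}: on a manifold $N$ into which both $M$ and $M(\sfrac{B}{A})$ embed with the right boundary data (e.g. a closed manifold, obtained by the same Heegaard-splitting trick used in the proof of Theorem~\ref{prop_varasint}), one writes $p_1$ as an algebraic triple intersection $4\langle C_4(\cdot;x),C_4(\cdot;y),C_4(\cdot;z)\rangle$ of $4$-chains in $[0,1]\times UN$. Because ${\taubarre}_{M(\sfrac{B}{A})}$ and ${\taubarre}_M$ coincide on $M\setminus\mathring A$, one may choose the $4$-chains $C_4(M(\sfrac{B}{A}),{\taubarre}_{M(\sfrac{B}{A})},{\taubarre}_M;v)$ to be of the product form $[0,1]\times\bar\tau(v)$ over $[0,1]\times U(M\setminus\mathring A)$, so that no triple intersection point lies over $M\setminus\mathring A$; all intersection points lie over $\mathring A$, and the triple intersection there computes exactly $\tfrac14 p_1({{\taubarre}_{M(\sfrac{B}{A})}}_{|B},{{\taubarre}_{M}}_{|A})$ via Theorem~\ref{prop_varasint} applied to $A$ and $B$ (using that $A$, $B$ embed in genus-one $\b Q$HHs). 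The main obstacle in either approach is the bookkeeping needed to make ``the obstruction is supported away from $M\setminus\mathring A$'' precise — i.e. checking that the product choices for the complex trivialization, resp. for the $4$-chains, are genuinely compatible across $[0,1]\times\partial A$ with the choices forced on the $B$/$A$ pieces, given that the gluing homeomorphism $h$ is only required to preserve Lagrangians. This is exactly the point where pseudo-parallelizations (rather than genuine parallelizations) are needed, since $h$ need not extend any genuine parallelization across the surgery, but by Lemma~\ref{lem_extendpparallelization} it does extend a pseudo-parallelization.
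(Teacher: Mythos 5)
Your main argument follows the same route as the paper: split $W$ as $\big([0,1]\times(M\setminus\mathring A)\big)\cup_{[0,1]\times\partial A}W_0$ with $W_0$ a signature-zero cobordism from $A$ to $B$, observe that the complex trivialization $\tau(\bar\tau_{M(\sfrac{B}{A})},\bar\tau_M)$ extends over the product piece because the pseudo-parallelizations literally agree there, and conclude that the obstruction is carried by $W_0$. The paper writes the two pieces as $W^+$ and $W^-$ glued along $V=-[0,1]\times\partial A$ and makes exactly this extension argument.

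There is, however, a genuine gap in your signature step. Novikov additivity does \emph{not} apply to the gluing $W=\big([0,1]\times(M\setminus\mathring A)\big)\cup_{[0,1]\times\partial A}W_0$: you are gluing along a $3$-manifold $V=[0,1]\times\partial A$ that has nonempty boundary $\{0,1\}\times\partial A$, which sits inside $\partial W$. This is the setting of Wall's non-additivity theorem, which produces a correction term equal to the Maslov index of a triple of Lagrangian subspaces of $H_1(\partial A;\b R)$; that correction term vanishes precisely because $h_*(\go L_A)=\go L_B$. The fact that your proposal never invokes the LP$_\b Q$ hypothesis is the telltale symptom: without it, $W$ need not have signature zero, and the obstruction integer computed on $W$ would differ from the relative Pontrjagin number $p_1(\bar\tau_{M(\sfrac{B}{A})},\bar\tau_M)$ (which is defined via a \emph{signature-zero} cobordism) by a multiple of $\operatorname{sign}(W)$, so the stated equality would fail. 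The paper handles this by appealing to \cite[Proof of Proposition~5.3, item~2]{lescopEFTI}, which is exactly where the Lagrangian-preserving condition forces the Wall term to vanish. To repair your proof you must replace the Novikov-additivity sentence with the Wall non-additivity computation (or a citation for it) that uses the LP$_\b Q$ hypothesis.
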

\begin{proof}
Let $W^-$ be a signature zero cobordism from $A$ to $B$. By definition, the obstruction $p_1({{\taubarre}_{M(\sfrac{B}{A})}}_{|B},{{\taubarre}_{M}}_{|A})$ is the Pontrjagin obstruction to extending the complex trivialization \linebreak $\tau({{\taubarre}_{M(\sfrac{B}{A})}}_{|B},{{\taubarre}_{M}}_{|A})$ of $TW^-_{\hspace{-1mm}|\partial W^-} \hspace{-1mm} \otimes \b C$ as a trivialization of $TW^- \hspace{-1mm}\otimes \b C$. Let $W^+ \hspace{-1.5mm}=\hspace{-1mm} [0,1]\times (M\setminus \mathring A)$ and let $V=-[0,1]\times\partial A$. As shown in \cite[Proof of Proposition 5.3 item 2]{lescopEFTI}, since $(\sfrac{B}{A})$ is an LP$_\b Q$-surgery in $M$, the manifold $W=W^+\cup_V W^-$ has signature zero. Furthermore, since $W$ has signature zero, $p_1({\taubarre}_{M(\sfrac{B}{A})},{\taubarre}_{M})$ is the Pontrjagin obstruction to extending the triviali\-zation $\tau({\taubarre}_{M(\sfrac{B}{A})},{\taubarre}_{M})$ of $TW_{|\partial W} \otimes \b C$ as a trivialization of $TW\otimes \b C$. Finally, it is clear that $\tau({\taubarre}_{M(\sfrac{B}{A})},{\taubarre}_{M})$ extends as a trivialization of $TW_{|W^+} \otimes \b C$ so that
$$
p_1({\taubarre}_{M(\sfrac{B}{A})},{\taubarre}_{M}) = p_1({{\taubarre}_{M(\sfrac{B}{A})}}_{|B},{{\taubarre}_{M}}_{|A}).
$$

\end{proof}

\begin{corollary} \label{cor_FTppara}
Let $M$ be a compact oriented 3-manifold and let $\lbrace \sfrac{B_i}{A_i} \rbrace_{i\in \lbrace 1, \ldots , k\rbrace}$ be a family of disjoint LP$_\b Q$-surgeries where $k\geqslant2$. For any family $\lbrace \bar \tau_I \rbrace_{I \subset \lbrace 1, \ldots , k \rbrace}$ of pseudo-parallelizations of the $\lbrace M(\lbrace \sfrac{B_i}{A_i}\rbrace_{i\in I}) \rbrace_{I \subset \lbrace 1, \ldots , k \rbrace}$ whose links sit in $M\setminus(\cup_{i=1}^k \partial A_i)$ and such that, for all subsets $I, J\subset \lbrace 1, \ldots , k \rbrace$, $\bar \tau^I$ and $\bar \tau^J$ coincide on $(M\setminus \cup_{I\cup J}A_i)\cup_{I\cap J} B_i$, the following identity holds :
$$
\sum_{I \subset \lbrace 2, \ldots , k \rbrace} (-1)^{\card(I)} p_1 (\bar \tau ^I,\bar \tau ^{I\cup\lbrace 1 \rbrace})=0.
$$
\end{corollary}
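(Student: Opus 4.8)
The plan is to show that, under the stated hypotheses, every summand $p_1(\bar\tau^I,\bar\tau^{I\cup\lbrace 1\rbrace})$ with $I\subset\lbrace 2,\ldots,k\rbrace$ has one and the \emph{same value}; granting this, the left-hand side equals that common value times $\sum_{I\subset\lbrace 2,\ldots,k\rbrace}(-1)^{\card(I)}$, which vanishes because $\lbrace 2,\ldots,k\rbrace$ is nonempty --- this is the only place the hypothesis $k\geqslant 2$ is used.

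First I would localize each term to the surgery labeled $1$. Fixing $I\subset\lbrace 2,\ldots,k\rbrace$, one has $M(\lbrace\sfrac{B_i}{A_i}\rbrace_{i\in I\cup\lbrace 1\rbrace})=\big(M(\lbrace\sfrac{B_i}{A_i}\rbrace_{i\in I})\big)(\sfrac{B_1}{A_1})$; moreover $\bar\tau^I$ and $\bar\tau^{I\cup\lbrace 1\rbrace}$ coincide on $(M\setminus\cup_{i\in I\cup\lbrace 1\rbrace}A_i)\cup_{i\in I}B_i$, which is exactly $M(\lbrace\sfrac{B_i}{A_i}\rbrace_{i\in I})\setminus\mathring A_1$, and, since the links of $\bar\tau^I$ and $\bar\tau^{I\cup\lbrace 1\rbrace}$ avoid $\cup_j\partial A_j$, both restrict to one and the same genuine parallelization on a neighborhood of $\partial A_1$. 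Applying Proposition~\ref{prop_FTIppara} to the LP$_\b Q$-surgery $(\sfrac{B_1}{A_1})$ inside $M(\lbrace\sfrac{B_i}{A_i}\rbrace_{i\in I})$, together with the antisymmetry $p_1(\lambda,\mu)=-p_1(\mu,\lambda)$ of the relative Pontrjagin number (which follows from the cobordism definition by reversing the orientation of the signature-zero cobordism, exactly as $p_1(\tau_1,\tau_2)=p_1(\tau_2)-p_1(\tau_1)$ in the closed case), I would obtain
$$
p_1(\bar\tau^I,\bar\tau^{I\cup\lbrace 1\rbrace})=-\,p_1\big((\bar\tau^{I\cup\lbrace 1\rbrace})_{|B_1},\,(\bar\tau^I)_{|A_1}\big).
$$

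Next I would check that the right-hand side is independent of $I$. Because the surgeries are pairwise disjoint, $A_1$ is disjoint from every $A_i$ with $i\geqslant 2$, so for all $I,J\subset\lbrace 2,\ldots,k\rbrace$ the submanifold $A_1$ lies in $M\setminus\cup_{i\in I\cup J}A_i$; the compatibility assumption on the family then forces $(\bar\tau^I)_{|A_1}=(\bar\tau^J)_{|A_1}=:\tau_{A_1}$. Likewise, $1\in(I\cup\lbrace 1\rbrace)\cap(J\cup\lbrace 1\rbrace)$, so $B_1$ is one of the glued-in pieces of the common region on which $\bar\tau^{I\cup\lbrace 1\rbrace}$ and $\bar\tau^{J\cup\lbrace 1\rbrace}$ agree, whence $(\bar\tau^{I\cup\lbrace 1\rbrace})_{|B_1}=(\bar\tau^{J\cup\lbrace 1\rbrace})_{|B_1}=:\tau_{B_1}$. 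Therefore $p_1(\bar\tau^I,\bar\tau^{I\cup\lbrace 1\rbrace})=-p_1(\tau_{B_1},\tau_{A_1})$ for every $I\subset\lbrace 2,\ldots,k\rbrace$, and the alternating sum collapses to $-p_1(\tau_{B_1},\tau_{A_1})\sum_{I\subset\lbrace 2,\ldots,k\rbrace}(-1)^{\card(I)}=0$.

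The bulk of the work --- and the only real obstacle --- is the careful bookkeeping in the first step: verifying, for each $I$, that the hypotheses of Proposition~\ref{prop_FTIppara} are genuinely met, i.e. that $\bar\tau^I$ and $\bar\tau^{I\cup\lbrace 1\rbrace}$ coincide on all of $M(\lbrace\sfrac{B_i}{A_i}\rbrace_{i\in I})\setminus\mathring A_1$ (in particular along $\partial A_1$) and are link-free near $\partial A_1$. Both follow at once from the stated compatibility of the family $\lbrace\bar\tau^\bullet\rbrace$ and from the assumption that the links lie in $M\setminus(\cup_{i=1}^k\partial A_i)$, so I anticipate no serious difficulty beyond this set-up.
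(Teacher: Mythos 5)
Your proposal is correct, and it is exactly the intended route: the paper states this as a Corollary of Proposition~\ref{prop_FTIppara} and simply refers to Lescop's earlier work for the derivation. The argument is precisely what you describe — localize each difference $p_1(\bar\tau^I,\bar\tau^{I\cup\lbrace 1\rbrace})$ to the pair $(A_1,B_1)$ via Proposition~\ref{prop_FTIppara}, observe using the compatibility hypotheses that $(\bar\tau^I)_{|A_1}$ and $(\bar\tau^{I\cup\lbrace 1\rbrace})_{|B_1}$ are independent of $I\subset\lbrace 2,\ldots,k\rbrace$, and conclude since $\sum_{I\subset\lbrace 2,\ldots,k\rbrace}(-1)^{\card(I)}=0$ when $k\geqslant 2$.
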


\subsection[Lemmas for the proof of Theorem~\ref{thm_D2nd}]{Lemmas for the proof of Theorem~\ref{thm_D2nd}}
\begin{lemma} \label{lem_Xdsection}
If $X$ is a combing of a compact oriented 3-manifold $M$ and if $\partial A$ is the connected boundary of a submanifold of $M$ of dimension 3, then the normal bundle $X^\perp_{|\partial A}$ admits a nonvanishing section.
\end{lemma}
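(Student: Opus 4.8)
The plan is to reduce the statement to the vanishing of an Euler class, then to check that this Euler class vanishes for purely homological reasons. Since $X$ is a section of $UM$, the orthogonal complement $X^\perp$ is a rank $2$ real vector bundle over $M$, and it is oriented: indeed $X^\perp \oplus \b R X = TM$ is oriented and $\b R X$ carries a canonical orientation. Hence $X^\perp_{|\partial A}$ is an oriented rank $2$ bundle over the closed connected orientable surface $\partial A$, and by obstruction theory such a bundle admits a nonvanishing section if and only if its Euler class in $H^2(\partial A;\b Z)$ is zero (over a $2$-complex the Euler class is the only obstruction, and over a closed oriented surface it is moreover a complete invariant). So it is enough to prove that $e\big(X^\perp_{|\partial A}\big)=0$ in $H^2(\partial A;\b Z)$.

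We may assume $A$ connected, replacing it by the component containing $\partial A$ if necessary. The bundle $X^\perp_{|A}$ over $A$ restricts to $X^\perp_{|\partial A}$, so by naturality of the Euler class $e\big(X^\perp_{|\partial A}\big)=j^*\big(e(X^\perp_{|A})\big)$, where $j^*\colon H^2(A;\b Z)\to H^2(\partial A;\b Z)$ is induced by the inclusion $j\colon\partial A\hookrightarrow A$. The key step is that $j^*=0$. To see this I would use the cohomology exact sequence of the pair $(A,\partial A)$,
$$
H^2(A;\b Z)\ \xrightarrow{j^*}\ H^2(\partial A;\b Z)\ \xrightarrow{\delta}\ H^3(A,\partial A;\b Z)\ \longrightarrow\ H^3(A;\b Z).
$$
Since $A$ is a compact connected orientable $3$-manifold with nonempty boundary, it has the homotopy type of a $2$-complex, so $H^3(A;\b Z)=0$; and Lefschetz duality gives $H^3(A,\partial A;\b Z)\cong H_0(A;\b Z)\cong\b Z$, while $H^2(\partial A;\b Z)\cong\b Z$ because $\partial A$ is a closed connected orientable surface. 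Thus $\delta$ is a surjection $\b Z\twoheadrightarrow\b Z$, hence an isomorphism, so $\ker\delta=0$ and $j^*=0$ by exactness. Therefore $e\big(X^\perp_{|\partial A}\big)=j^*\big(e(X^\perp_{|A})\big)=0$, and $X^\perp_{|\partial A}$ admits a nonvanishing section.

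The argument is short, and the only point requiring a little care is the duality bookkeeping in the last paragraph, namely the identification $H^3(A,\partial A;\b Z)\cong\b Z$ and the surjectivity of $\delta$; both rest on the elementary fact that a compact $3$-manifold with nonempty boundary collapses onto a $2$-complex. A geometric alternative, which I would mention but not pursue in detail, is to pick a section $s$ of $X^\perp_{|A}$ transverse to zero: its zero set is a compact oriented $1$-manifold $Z$ properly embedded in $A$ with $\partial Z=Z\cap\partial A$ Poincar\'e dual to $e\big(X^\perp_{|\partial A}\big)$ in $\partial A$, and the signed count of the points of $\partial Z$ vanishes because $\partial Z$ bounds the oriented $1$-manifold $Z$, once one checks that the boundary orientation of $\partial Z$ matches the coorientation-induced signs (this is where the outward-normal-first convention of the paper enters).
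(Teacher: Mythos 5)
Your argument is correct, though it takes a route different from the paper's. The paper first parallelizes $M$ so that $X_{|\partial A}$ becomes a map $\partial A\to\b S^2$, observes that this map has degree zero because it extends over $A$ (the fundamental class $[\partial A]$ dies in $H_2(A;\b Q)$), and then uses the classification of maps from a closed oriented surface to $\b S^2$ by degree to conclude that $X_{|\partial A}$ is null-homotopic, hence $X^\perp_{|\partial A}$ is trivial. You instead work directly with the Euler class of $X^\perp$: you show that $j^*\colon H^2(A;\b Z)\to H^2(\partial A;\b Z)$ vanishes using the long exact sequence of the pair, Lefschetz duality, and the collapse of a compact $3$-manifold with nonempty boundary onto a $2$-complex. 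Your version avoids invoking the parallelizability of $M$ and Hopf's degree theorem, at the cost of a little more homological bookkeeping, and is arguably the more self-contained of the two; both ultimately exploit the same geometric fact that $\partial A$ bounds in $M$, encoded as degree zero in one case and as $j^*=0$ on $H^2$ in the other. One small simplification: $H^3(A;\b Z)\cong H_0(A,\partial A;\b Z)=0$ follows directly from Lefschetz duality once $A$ is connected with nonempty boundary, so the appeal to spines is not needed.
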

\begin{proof}
Parallelize $M$ so that $X$ induces a map $X_{|\partial A} : \partial A \rightarrow \b S^2$. This map must have degree 0 since $X$ extends this map to $A$ (so that $(X_{|\partial A})_* : H_2(\partial A;\b Q)\rightarrow H_2(\b S^2;\b Q)$ factors through the inclusion $H_2(\partial A ; \b Q) \rightarrow H_2(A;\b Q)$, which is zero). It follows that $X_{|\partial A}$ is homotopic to the map $(m \in\partial A \mapsto e_1 \in \b S^2)$ whose normal bundle admits a nonvanishing section.
\end{proof}

\begin{lemma} \label{ind}
Let $(M,X)$ be a compact oriented 3-manifold equipped with a combing, let $(\sfrac{B}{A},X_B)$ be an LP$_\b Q$-surgery in $(M,X)$ and let $\sigma$ be a nonvanishing section of $X^\perp_{|\partial A}$. Let $P$ stand for Poincaré duality isomorphisms and recall the sequence of isomorphisms induced by the inclusions $i^{A_i}$ and $i^{B_i}$
$$
H_1(A;\b Q) \stackrel{i^{A}_*}{\longleftarrow} \frac{H_1(\partial A;\b Q)}{\go L_{A}} = \frac{H_1(\partial B;\b Q)}{\go L_{B}}  \stackrel{i^{B}_*}{\longrightarrow} H_1(B;\b Q).
$$
The class $ \left(i^A_* \circ {(i^B_*)}^{-1}\left(P(e_2^B(X_B^\perp, \sigma))\right) - P(e_2^A(X_{|A}^\perp, \sigma))\right)$ in $H_1( A ; \b Q)$ is independent of the choice of the section $\sigma$. 
\end{lemma}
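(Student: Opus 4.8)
The plan is to reduce the statement to showing that two nonvanishing sections $\sigma,\sigma'$ of $X^\perp_{|\partial A}$ produce the same element of $H_1(A;\mathbb{Q})$, and then to use that a change of boundary section modifies the relative Euler classes of $A$ and of $B$ in the same way, through the common boundary $\partial A\simeq\partial B$.

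First I would recall how relative Euler classes react to a change of boundary section. As $\partial A$ is a surface and $X^\perp_{|\partial A}$ is an oriented rank-two bundle carrying a nonvanishing section, the homotopy classes of such sections form an affine space over $H^1(\partial A;\mathbb{Z})$; let $d\in H^1(\partial A;\mathbb{Z})$ be the difference class of $\sigma$ and $\sigma'$. Since $\pi_i(S^1)=0$ for $i\geqslant 2$, the relative Euler class is the only obstruction to a nonvanishing extension, and primary obstruction theory gives $e_2^A(X^\perp_{|A},\sigma')-e_2^A(X^\perp_{|A},\sigma)=\delta^A(d)$, where $\delta^A\colon H^1(\partial A;\mathbb{Z})\to H^2(A,\partial A;\mathbb{Z})$ is the connecting homomorphism of the pair $(A,\partial A)$. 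Concretely I would fix a generic extension of $\sigma$ over $A$ with zero set representing $P(e_2^A(X^\perp_{|A},\sigma))$ and then interpolate to $\sigma'$ inside a collar of $\partial A$: the new zero set is the old one together with a $1$-cycle lying in the collar and representing $i^A_*(P_{\partial A}(d))$, where $P_{\partial A}$ is Poincaré duality on $\partial A$. This is the Poincaré--Lefschetz commutation $P\circ\delta^A=\pm\, i^A_*\circ P_{\partial A}$, the sign depending only on the dimension, so
$$
P\big(e_2^A(X^\perp_{|A},\sigma')\big)-P\big(e_2^A(X^\perp_{|A},\sigma)\big)=\pm\, i^A_*\big(P_{\partial A}(d)\big)\quad\text{in }H_1(A;\mathbb{Q}).
$$

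Next, since $X_B$ and $X$ coincide on $\partial A\simeq\partial B$, the identification $h$ carries $X^\perp_{|\partial A}$ onto $X_B^\perp_{|\partial B}$ as oriented plane bundles, so $\sigma,\sigma'$ are also sections of $X_B^\perp_{|\partial B}$; write $d'\in H^1(\partial B;\mathbb{Z})$ for their difference class there, so that $d'$ is $h$-transported from $d$. The same argument inside $B$ gives $P(e_2^B(X_B^\perp,\sigma'))-P(e_2^B(X_B^\perp,\sigma))=\pm\, i^B_*(P_{\partial B}(d'))$ with the \emph{same} sign. Because $M(\sfrac{B}{A})$ is an oriented $3$-manifold, $h$ is orientation-preserving on the boundary surfaces, hence it intertwines the Poincaré dualities of $\partial A$ and $\partial B$, and $P_{\partial A}(d)$ and $P_{\partial B}(d')$ represent a single class $\bar d$ in $H_1(\partial A;\mathbb{Q})/\go L_A=H_1(\partial B;\mathbb{Q})/\go L_B$. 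Letting $i^A_*$ and $i^B_*$ now denote the isomorphisms of the sequence in the statement, the last two displays read $\pm\, i^A_*(\bar d)$ and $\pm\, i^B_*(\bar d)$. Subtracting the expression of the statement evaluated at $\sigma$ from the one evaluated at $\sigma'$ and substituting these, the term $\pm\, i^A_*(\bar d)$ issued from $B$ through $i^A_*\circ(i^B_*)^{-1}$ cancels the term $\pm\, i^A_*(\bar d)$ issued from $A$, so the difference is $0$ and the class of the statement is independent of $\sigma$.

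The step I expect to be the main obstacle is the orientation bookkeeping in the previous paragraph: one must check that the two instances of the Poincaré--Lefschetz commutation (for $A$ and for $B$) carry the same sign, and that the Poincaré duality of the common boundary surface is compatible both with $h$ and with the Lagrangian-quotient identification of the statement. This is exactly where the hypothesis that $M(\sfrac{B}{A})$ is an oriented $3$-manifold --- hence that $h$ is orientation-preserving --- is used. The obstruction-theoretic change-of-section formula and the final cancellation are otherwise routine.
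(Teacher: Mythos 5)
Your proof is correct, but it goes by a genuinely different route than the paper's. The paper invokes Proposition~\ref{prop_euler}, which translates $P(e_2^A(X^\perp_{|A},\sigma))\times\b S^2$ into the class $[X(A)-(-X)(A)+\l H_{X,\sigma}^{-X}([0,1]\times\partial A)]$ in $H_3(UA;\b Q)$; changing $\sigma$ to $\sigma'$ leaves the bulk terms $X(A)-(-X)(A)$ untouched, so the variation is the purely boundary term $[\l H_{X,\sigma}^{-X}(\partial A\times[0,1])-\l H_{X,\sigma'}^{-X}(\partial A\times[0,1])]$. Since $X_B$ coincides with $X$ on $\partial A\simeq\partial B$, this boundary term is literally the same when the same computation is done in $B$, and the paper concludes by this identification (which is an equality in $H_3$, not $H_1$ as the paper's typo suggests). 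Your route instead stays with classical obstruction theory and Poincar\'e--Lefschetz duality of the pair: $e_2^A(X^\perp_{|A},\sigma')-e_2^A(X^\perp_{|A},\sigma)=\delta^A(d)$, the commutation $P\circ\delta^A=\pm\,i^A_*\circ P_{\partial A}$, and the matching of the two sides through the (orientation-preserving) identification $h$ and the Lagrangian condition $h_*(\go L_A)=\go L_B$. Your approach is more elementary and does not rely on the paper's combing-specific interpretation of the Euler class in the unit tangent bundle, at the cost of the sign bookkeeping you flag (which is indeed consistent: the sign depends only on dimension and the orientation data of $h$, and this is exactly where orientation-preservation of $h$ is used). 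Both proofs are sound; the paper's has the advantage of reusing Proposition~\ref{prop_euler}, which is needed elsewhere, while yours is self-contained apart from standard obstruction theory.
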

\begin{proof}
Let us drop the inclusions $i^B_*$ and $i^A_*$ from the notation. According to Proposition~\ref{prop_euler}, the class $P(e_2^A(X_{|A}^\perp, \sigma))$ verifies
$$
[ X(A) - (-X)(A) +\l H_{X,\sigma}^{-X}(\partial A \times [0,1]) ] = [ P(e_2^A(X_{|A}^\perp, \sigma)) \times \b S^2 ] \mbox{ \ in \ } H_1(UA ;\b Q).
$$
It follows that, for another choice $\sigma'$ of section of $X^\perp_{|\partial A}$, 
$$
\begin{aligned}
[P(e_2^B(X_B^\perp, \sigma)) \times \b S^2 & - P(e_2^B(X_B^\perp, \sigma')) \times \b S^2 ] \\
&= [\l H_{X,\sigma}^{-X}(\partial A\times [0,1])-\l H_{X,\sigma'}^{-X}(\partial A\times [0,1])] \\
&=[P(e_2^A(X_{|A}^\perp, \sigma)) \times \b S^2 - P(e_2^A(X_{|A}^\perp, \sigma')) \times \b S^2 ].
\end{aligned}
$$
\end{proof}

\begin{lemma} \label{zero}
Let $(M,X)$ be a compact oriented 3-manifold equipped with a combing and let $(\sfrac{B}{A},X_B)$ be an LP$_\b Q$-surgery in $(M,X)$. If $(X,\sigma)$ is a torsion combing then $(X(\sfrac{B}{A}),\sigma)$ is a torsion combing if and only if 
$$
i_*^{A} \circ (i_*^B)^{-1} \big( P(e_2^B({X_{B}}^\perp, \zeta))\big) - P(e_2^A(X_{|A}^\perp, \zeta))  = 0 \mbox{ \ in $H_1(M; \b Q)$}
$$
for some nonvanishing section $\zeta$ of $X^\perp_{|\partial A}$.
\end{lemma}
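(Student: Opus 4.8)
The plan is to recast the torsion condition as a vanishing statement in rational first homology and then to track how the Poincar\'e-dual relative Euler class transforms under the surgery. Recall that a combing of a compact oriented $3$-manifold $M'$ extending a given trivialization of $TM'_{|\partial M'}$ is a torsion combing if and only if the Poincar\'e dual of its relative Euler class vanishes in $H_1(M';\b Q)$, and that this Poincar\'e dual is represented by the zero set of any generic section extending the given boundary section. Since the surgery only affects the connected component of $M$ containing $A$, and since $\partial A$ is connected, I may assume $M$ connected; I write $N=M\setminus\mathring A = M(\sfrac{B}{A})\setminus\mathring B$ for the common piece.

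The first step is to build compatible generic sections from the given nonvanishing section $\zeta$ of $X^\perp_{|\partial A}$: a section $\bar\sigma_{\mathrm{out}}$ of $X^\perp_{|N}$ extending $\sigma$ and restricting to $\zeta$ on $\partial A$, with zero set a link $L_{\mathrm{out}}\subset\mathring N$; a section $\zeta_A$ of $X^\perp_{|A}$ extending $\zeta$, with zero set $L_{\zeta_A=0}$ representing $P(e_2^A(X_{|A}^\perp,\zeta))$; and, after transporting $\zeta$ via $h$, a section $\zeta_B$ of $X_B^\perp$ extending it, with zero set $L_{\zeta_B=0}$ representing $P(e_2^B(X_B^\perp,\zeta))$. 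Gluing $\bar\sigma_{\mathrm{out}}$ to $\zeta_A$ yields a generic section of $X^\perp$ over $M$ extending $\sigma$, and gluing $\bar\sigma_{\mathrm{out}}$ to $\zeta_B$ a generic section of $X(\sfrac{B}{A})^\perp$ over $M(\sfrac{B}{A})$ extending $\sigma$ (recall $X(\sfrac{B}{A})=X$ on $N$ and $X(\sfrac{B}{A})=X_B$ on $B$). Reading off the zero sets and pushing the classes of the two pieces forward by the inclusions, I obtain
$$
P(e_2^M(X^\perp,\sigma)) = [L_{\mathrm{out}}] + [L_{\zeta_A=0}] \text{ in } H_1(M;\b Q), \qquad P(e_2^{M(\sfrac{B}{A})}(X(\sfrac{B}{A})^\perp,\sigma)) = [L_{\mathrm{out}}] + [L_{\zeta_B=0}] \text{ in } H_1(M(\sfrac{B}{A});\b Q).
$$

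The second step is to bring in the canonical isomorphism on rational homology induced by the LP$_\b Q$-surgery. A Mayer--Vietoris computation for $M=A\cup_{\partial A}N$ and $M(\sfrac{B}{A})=B\cup_{\partial A}N$, using that $\partial A$ is connected and that $\ker(i^A_*)=\go L_A$ while $\ker(i^B_*)=\go L_B=h_*(\go L_A)$, shows that $\psi:=i^B_*\circ(i^A_*)^{-1}\colon H_1(A;\b Q)\to H_1(B;\b Q)$ together with $\mathrm{id}_{H_1(N;\b Q)}$ descends to an isomorphism $\bar\psi\colon H_1(M;\b Q)\to H_1(M(\sfrac{B}{A});\b Q)$ that sends the class of a cycle in $N$ to the class of the same cycle, and acts by $\psi$ on classes carried by $A$. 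Since the transfer map $i^A_*\circ(i^B_*)^{-1}$ of the statement is exactly $\psi^{-1}$, applying $\bar\psi$ to the first displayed identity and subtracting the second gives
$$
P(e_2^{M(\sfrac{B}{A})}(X(\sfrac{B}{A})^\perp,\sigma)) - \bar\psi\big(P(e_2^M(X^\perp,\sigma))\big) = \bar\psi\big(\, i^A_*\circ(i^B_*)^{-1}\big(P(e_2^B(X_B^\perp,\zeta))\big) - P(e_2^A(X_{|A}^\perp,\zeta)) \,\big),
$$
where the argument of $\bar\psi$ on the right is the class of Lemma~\ref{ind}, regarded in $H_1(M;\b Q)$ via the inclusion of $A$.

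Finally, since $(X,\sigma)$ is a torsion combing, the term $\bar\psi(P(e_2^M(X^\perp,\sigma)))$ vanishes, so the last identity reads $P(e_2^{M(\sfrac{B}{A})}(X(\sfrac{B}{A})^\perp,\sigma)) = \bar\psi\big(i^A_*\circ(i^B_*)^{-1}(P(e_2^B(X_B^\perp,\zeta))) - P(e_2^A(X_{|A}^\perp,\zeta))\big)$; as $\bar\psi$ is an isomorphism, the left-hand side vanishes --- i.e.\ $(X(\sfrac{B}{A}),\sigma)$ is a torsion combing --- if and only if $i^A_*\circ(i^B_*)^{-1}(P(e_2^B(X_B^\perp,\zeta)))-P(e_2^A(X_{|A}^\perp,\zeta))=0$ in $H_1(M;\b Q)$, which is the claim; Lemma~\ref{ind} moreover shows this condition is independent of the choice of $\zeta$. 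The step I expect to demand the most care is the homological bookkeeping in the first two steps: arranging the three generic sections so that the two zero sets split as a common link in $N$ plus a link in $A$ (resp.\ in $B$), and verifying that the Mayer--Vietoris isomorphism $\bar\psi$ genuinely intertwines the two Euler-class computations through the transfer map appearing in the statement.
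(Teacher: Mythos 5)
Your proof is correct and follows essentially the same route as the paper: both decompose the Poincar\'e dual of the relative Euler class over the splitting $M=A\cup(M\setminus\mathring A)$ and $M(\sfrac{B}{A})=B\cup(M\setminus\mathring A)$, identify the common $M\setminus\mathring A$ contribution, and conclude from the hypothesis that $P(e_2^M(X^\perp,\sigma))$ is rationally null-homologous. The only difference is that where the paper says ``using appropriate identifications,'' you explicitly construct the Mayer--Vietoris isomorphism $\bar\psi$ between the rational first homology groups and check it intertwines the two Euler-class decompositions through the transfer map $\psi=i^B_*\circ(i^A_*)^{-1}$ --- a welcome clarification, but not a different argument.
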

\begin{proof}
By definition, we have
$$
\begin{aligned}
 P(e_2(X^\perp,\sigma)) &= P(e_2^A(X_{|A}^\perp, \zeta)) + P(e_2^{M \setminus \mathring A}(X^\perp,\sigma \cup \zeta)) \\
 P(e_2({X(\sfrac{B}{A})}^\perp,\sigma)) &= P(e_2^B(X_{B}^\perp, \zeta)) + P(e_2^{M \setminus \mathring A}(X'^\perp,\sigma \cup \zeta)) \\
\end{aligned}
$$
where $\zeta$ is any nonvanishing section of $X^\perp_{|\partial A}$. So, it follows that, using appropriate identifications,
$$
P(e_2(X(\sfrac{B}{A})^\perp,\sigma)) - P(e_2({X}^\perp,\sigma)) = P(e_2^B(X_{B}^\perp, \zeta)) - P(e_2^A(X_{|A}^\perp, \zeta)).
$$
If $X$ is a torsion combing, then $P(e_2({X}^\perp,\sigma))$ is rationally null-homologous in $M$. Hence, $X(\sfrac{B}{A})$ is a torsion combing if and only if 
$$
i_*^{A} \circ (i_*^B)^{-1} \big( P(e_2^B({X_{B}}^\perp, \zeta))\big) - P(e_2^A(X_{|A}^\perp, \zeta))  = 0 \mbox{ \ in $H_1(M; \b Q)$}.
$$
\end{proof}

\begin{lemma} \label{norm}
Let $(M,X)$ be a compact oriented 3-manifold equipped with a combing. Let $\lbrace (\sfrac{B_i}{A_i} , X_{B_i}) \rbrace_{i \in \lbrace 1,\ldots,k\rbrace}$ be a family of disjoint LP$_\b Q$-surgeries in $(M,X)$, where $k \in \b N\setminus \lbrace 0 \rbrace$. For all $I\subset \lbrace 1, \ldots, k \rbrace$, let $M_I=M(\lbrace \sfrac{B_i}{A_i} \rbrace_{i \in I})$ and $X^I=X(\lbrace \sfrac{B_i}{A_i}\rbrace_{i \in I})$. There exists a family of pseudo-parallelizations $\lbrace\bar \tau^I\rbrace_{ I \subset \lbrace 1,\ldots,k\rbrace}$ of the $\lbrace M_I \rbrace_{I \subset \lbrace 1, \ldots , k \rbrace}$ such that :
\begin{enumerate}[(i)]
\item the third vector of $\bar\tau=\bar\tau^\emptyset$ coincides with $X$ on $\cup_{i=1}^k  \partial A_i$,
\item for all $I \subset \lbrace 1 , \ldots , k \rbrace$, $\bar\tau^I$ is compatible with $X^I$,
\item for all $I \subset \lbrace 1,\ldots,k\rbrace$, if $\gamma^I$ denotes the link of $\bar \tau_I$, then $N(\gamma^I) \cap \left(\cup_{i=1}^k \partial A_i \right) = \emptyset$,
\item for all $I, J \subset \lbrace 1,\ldots,k\rbrace$, $\bar\tau^I$ and $\bar\tau^J$ coincide on $(M\setminus \cup_{i\in I\cup J} A_i)\cup_{i\in I\cap J}B_i$,
\item there exist links $L^\pm_{A_i}$ in $A_i$, $L^\pm_{B_i}$ in $B_i$ and $L^\pm_{ext}$ in $M \setminus \cup_{i=1}^k \mathring A_i$ such that, for all subset $I \subset \lbrace 1, \ldots , k \rbrace$~:
$$
\begin{aligned}
& 2 \cdot L_{\bar\tau^I=X^I} = L_{{E^d}^I=X^I} + L_{{E^g}^I=X^I} = L^+_{ext} + \sum_{i \in I} L^+_{B_i} + \sum_{i \in \lbrace 1,\ldots , k \rbrace \setminus I} L^+_{A_i} , \\
& 2 \cdot L_{\bar\tau^I=-X^I} = L_{{E^d}^I=-X^I} + L_{{E^g}^I=-X^I} = L^-_{ext} + \sum_{i \in I} L^-_{B_i} + \sum_{i \in \lbrace 1,\ldots , k  \rbrace \setminus I} L^-_{A_i},
\end{aligned}
$$
where ${E^d}^I$ and ${E^g}^I$ are the Siamese sections of $\bar\tau^I$.
\end{enumerate}
\end{lemma}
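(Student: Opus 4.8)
The plan is to build the family $\{\bar\tau^I\}$ in two stages: first produce a single pseudo-parallelization $\bar\tau=\bar\tau^\emptyset$ of $M$ that is well-adapted to all the surgeries simultaneously, and then propagate it through the surgeries to get the $\bar\tau^I$ for $I\neq\emptyset$. For the first stage, I would start from the combing $X$ on $M$. By Lemma~\ref{lem_Xdsection}, on each $\partial A_i$ the bundle $X^\perp_{|\partial A_i}$ has a nonvanishing section $\sigma_i$; together with $X$ this gives a genuine trivialization $\rho_i$ of $TM_{|\partial A_i}$ whose first vector is $X$ and whose third vector is $X$'s orthogonal complement data. Extend these trivializations to a trivialization $\rho$ of $TM$ on a neighborhood $\bigcup_i N(\partial A_i)$ of $\bigcup_i\partial A_i$ (shrinking collars so these neighborhoods are disjoint), then apply Lemma~\ref{lem_extendpparallelization} on $M\setminus \bigcup_i \mathring N(\partial A_i)$ — which amounts to choosing a link $\gamma$ disjoint from all $\partial A_i$ (condition (iii)) representing the obstruction class, and a pseudo-parallelization structure extending $\rho$. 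This gives $\bar\tau=(N(\gamma);\tau_e,\tau_d,\tau_g)$ with third vector equal to $X$ on $\bigcup_i\partial A_i$, which is condition (i). A final generic perturbation of $\bar\tau$ rel $\bigcup_i\partial A_i$ arranges that $\bar\tau$ is compatible with $X$ in the sense of Subsection~\ref{ssec_defppara}, i.e. the Siamese sections $E_1^d,E_1^g$ are $\partial$-compatible with $X$ and the relevant links $L_{E_1^d=\pm X}$, $L_{E_1^g=\pm X}$, $L_{E_1^d=-E_1^g}$ are transverse and disjoint from each $\partial A_i$; this is condition (ii) for $I=\emptyset$.

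For the second stage, I would define $\bar\tau^I$ by the same cut-and-paste that defines the surgered combing $X^I$: on $M\setminus\bigcup_{i\in I}\mathring A_i$ keep $\bar\tau$, and on each $B_i$ with $i\in I$ install a pseudo-parallelization $\bar\tau_{B_i}$ that is compatible with $X_{B_i}$ and coincides with $\rho_i = \bar\tau_{|\partial A_i}$ on $\partial B_i\simeq\partial A_i$ (such a $\bar\tau_{B_i}$ exists by Lemma~\ref{lem_extendpparallelization} applied to $B_i$, followed by a perturbation rel $\partial B_i$ to achieve compatibility with $X_{B_i}$, exactly as in stage one). Since the gluing data on $\partial A_i\simeq\partial B_i$ is a fixed genuine trivialization $\rho_i$ for all $I$, the resulting $\bar\tau^I$ is a genuine pseudo-parallelization of $M_I$, it is compatible with $X^I$ (condition (ii)), its link $\gamma^I=\gamma\sqcup\bigsqcup_{i\in I}\gamma_{B_i}$ avoids all $\partial A_j$ (condition (iii)), and for $I,J$ the parallelizations $\bar\tau^I,\bar\tau^J$ literally agree on $(M\setminus\bigcup_{I\cup J}A_i)\cup_{I\cap J}B_i$ because there they are assembled from the same pieces $\bar\tau$ and $\{\bar\tau_{B_i}\}$ (condition (iv)).

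It remains to produce condition (v), the decomposition of the links $L_{\bar\tau^I=\pm X^I}$ into an "exterior" piece plus per-handlebody pieces. This is immediate once I observe that the links computing $L_{\bar\tau^I=\pm X^I}$ are \emph{local}: $L_{E_1^{d,I}=\pm X^I}$ is, by definition, $P_M(E_1^{d,I}(M_I)\cap(\pm X^I)(M_I))$, and on $M\setminus\bigcup_{i\in I}\mathring A_i$ both sections equal the ones coming from $\bar\tau$ and $X$ (independent of $I$), while on each $B_i$ ($i\in I$) they equal the ones from $\bar\tau_{B_i}$ and $X_{B_i}$, and on each $A_j$ ($j\notin I$) they equal the ones from $\bar\tau_{|A_j}$ and $X_{|A_j}$. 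So setting
$$
L^\pm_{ext}=L_{E_1^d=\pm X}\cap\Big(M\setminus\bigcup_{i=1}^k\mathring A_i\Big),\quad
L^\pm_{A_j}=L_{E_1^d=\pm X}\cap A_j,\quad
L^\pm_{B_i}=L_{E_1^{d,\{i\}}=\pm X^{\{i\}}}\cap B_i,
$$
(and similarly for $E_1^g$, which agrees with $E_1^d$ outside the links $\gamma^I$ that we have arranged to avoid the $\partial A_j$), the claimed equalities hold on the nose, not just in homology. The half-sum $2\cdot L_{\bar\tau^I=\pm X^I}=L_{E_1^{d,I}=\pm X^I}+L_{E_1^{g,I}=\pm X^I}$ is just the definition of $L_{\bar\tau^I=\pm X^I}$ recalled before Lemma~\ref{nulexcep}.

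\medskip
\noindent\textbf{Main obstacle.} The only genuinely delicate point is the \emph{simultaneous} compatibility in condition (ii) together with locality in (v): after perturbing $\bar\tau$ (stage one) and each $\bar\tau_{B_i}$ (stage two) to be compatible with the respective combings, one must check that the perturbations can be chosen supported away from all the $\partial A_j$'s and small enough not to destroy the disjointness conditions $L_{E_1^d=X}\cap L_{E_1^g=-X}=\emptyset$ etc.\ for \emph{every} $I$ at once. This is a transversality-with-parameters argument: the conditions to be achieved are generic and open, there are finitely many subsets $I$, and the pieces being perturbed ($\bar\tau$ on $M\setminus\bigcup\mathring A_i$, each $\bar\tau_{B_i}$ on $B_i$) are glued along fixed genuine trivializations where no perturbation is needed, so a finite sequence of generic perturbations, each rel the gluing loci, suffices. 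I would also need to check (using Lemma~\ref{lem_Xdsection} and that the $\partial A_i$ are connected) that the trivialization $\rho$ can indeed be chosen with first vector $X$ near each $\partial A_i$; this is exactly the content of that lemma. Everything else is bookkeeping about which cut-and-paste piece contributes which sublink.
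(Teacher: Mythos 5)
Your overall strategy matches the paper's: construct $\bar\tau^\emptyset$ by first fixing a genuine trivialization $\tau_e$ on a collar $\l C$ of $\cup_i\partial A_i$ adapted to $X$, extend it as pseudo-parallelizations of $M\setminus\cup_i\mathring A_i$, of each $A_i$ and of each $B_i$ using Lemma~\ref{lem_extendpparallelization}, and assemble the $\bar\tau^I$ by cut-and-paste; conditions (iii)--(v) then follow from the locality of the construction since the gluing loci are governed by the fixed genuine trivialization.

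There is, however, an internal inconsistency that would break condition~(ii) if taken literally. You set $\rho_i=(X,\sigma_i,X\wedge\sigma_i)$ so that the \emph{first} vector is $X$ on $\partial A_i$, but two sentences later assert that the resulting $\bar\tau$ has \emph{third} vector $X$ there, and that a generic perturbation \emph{rel} $\cup_i\partial A_i$ achieves compatibility. These cannot both hold. With first vector $X$, the Siamese sections satisfy $E_1^d=E_1^g=X$ on the collar $\l C\subset\mathring M$, so $E_1^d(\mathring M)\cap X(\mathring M)$ contains the $3$-dimensional chain over $\l C$; transversality of these two $3$-chains in the $5$-manifold $UM$ requires a $1$-dimensional intersection, which no perturbation fixing the sections on $\partial A_i$ can achieve. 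The correct choice — the one used by the paper and required by item~(i) — is to arrange the \emph{third} vector of $\tau_e$ to equal $X$ on the whole collar $\l C$, so that $E_1^e\perp X$ there, the links $L_{E_1^d=\pm X}$ and $L_{E_1^g=\pm X}$ (and their analogues for every $X^I$) automatically avoid $\l C$, and the remaining transversality conditions are open and can be achieved piecewise away from the gluing loci, as you describe. With that correction, your ``main obstacle'' paragraph is an honest fleshing-out of what the paper leaves implicit.
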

\begin{proof}
Let $\l C$ denote a collar of $\cup_{i=1}^k  \partial A_i$. Using Lemma~\ref{lem_Xdsection}, construct a trivialization $\tau_e$ of $\cup_{i=1}^k TM_{|\l C}$ so that its third vector coincides with $X$ on $\l C$. Then use Lemma~\ref{lem_extendpparallelization} to extend $\tau_e$ as pseudo-parallelizations of the $\lbrace A_i \rbrace_{i\in \lbrace 1,\ldots , k \rbrace }$, of the $\lbrace B_i \rbrace_{i\in \lbrace 1,\ldots , k \rbrace}$ and of $M\setminus(\cup_{i=1}^k \mathring A_i)$. Finally, use these pseudo-parallelizations to construct the pseudo-parallelizations of the 3-manifolds $\lbrace M_I \rbrace_{I \subset \lbrace 1,\ldots , k \rbrace}$ as in the statement.
\end{proof}

\begin{lemma} \label{inter}
In the context of Lemma~\ref{norm}, using the sequence of isomorphisms induced by the inclusions $i^{A_i}$ and $i^{B_i}$
$$
H_1(A_i;\b Q) \stackrel{i^{A_i}_*}{\longleftarrow} \frac{H_1(\partial A_i;\b Q)}{\go L_{A_i}} = \frac{H_1(\partial B_i;\b Q)}{\go L_{B_i}}  \stackrel{i^{B_i}_*}{\longrightarrow} H_1(B_i;\b Q),
$$
for all $i \in \lbrace 1 , \ldots , k \rbrace$, we have that, in $H_1(A_i ; \b Q)$,
$$
\left[i_{\ast}^{A_i} \circ \left(i_{\ast}^{B_i}\right)^{-1}\left( L_{B_i}^{\pm}\right)- L_{A_i}^\pm\right] = \pm \left( i^{A_i}_* \circ(i^{B_i}_*)^{-1}\big(P(e_2^{B_i}(X_{B_i}^\perp, \sigma_i))\big) - P(e_2^{A_i}(X_{|A_i}^\perp, \sigma_i)) \right)
$$
where $\sigma_i$ is any nonvanishing section of $X^\perp_{|\partial A_i}$.
\end{lemma}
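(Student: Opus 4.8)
The plan is to evaluate the homology classes $[L^{\pm}_{A_i}]\in H_1(A_i;\b Q)$ and $[L^{\pm}_{B_i}]\in H_1(B_i;\b Q)$ in terms of relative Euler classes, and then to combine these evaluations after transporting everything into $H_1(A_i;\b Q)$ through the isomorphism sequence of the statement. The key simplification is that, by Lemma~\ref{ind}, the right-hand side of the asserted identity does not depend on the auxiliary boundary section, so it is enough to verify the identity for one convenient choice of $\sigma_i$.

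I would work with the parallelization $\tau_e=(E_1^e,E_2^e,E_3^e)$ produced in the proof of Lemma~\ref{norm}: by Lemma~\ref{norm}(i) one has $E_3^e=X$ on a collar $\l C$ of $\cup_j\partial A_j$, so $E_2^e$ is a nonvanishing section of $X^{\perp}$ on $\l C$, and on $\l C$ the Siamese sections $E_1^d=E_1^g$ of $\bar\tau^{\emptyset}$ both equal $E_1^e$. I take $\sigma_i:={E_2^e}_{|\partial A_i}$, which is at once a boundary normal section of $X^{\perp}$, of ${E_1^d}^{\perp}$ and of ${E_1^g}^{\perp}$. Since $E_1^d$ is orthogonal to $X$ along $\l C$, the links $L_{E_1^d=\pm X}$ and $L_{E_1^g=\pm X}$ avoid $\partial A_i$, and Lemma~\ref{norm}(v) identifies $L^{\pm}_{A_i}$ (and, taking $I=\{i\}$, $L^{\pm}_{B_i}$) with the part lying in $\mathring A_i$ (resp. $\mathring B_i$) of $L_{E_1^d=\pm X}+L_{E_1^g=\pm X}$, namely $L_{{E_1^d}_{|A_i}=\pm X_{|A_i}}+L_{{E_1^g}_{|A_i}=\pm X_{|A_i}}$ (resp. $L_{{E_1^d}^{\{i\}}_{|B_i}=\pm X_{B_i}}+L_{{E_1^g}^{\{i\}}_{|B_i}=\pm X_{B_i}}$).

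Next, I would run the computation inside $A_i$: working in $H_3(UA_i;\b Z)$, apply Proposition~\ref{prop_euler} to the combings ${E_1^d}_{|A_i}$, ${E_1^g}_{|A_i}$ and $X_{|A_i}$ with the common boundary section $\sigma_i$, and combine the resulting identities with the blow-up $4$-chains $\bar F$ of Proposition~\ref{prop_links}/Lemma~\ref{nulexcep} realizing the homotopies ${E_1^{d/g}}_{|A_i}\rightsquigarrow\pm X_{|A_i}$; because $E_1^d=E_1^g$ on $\partial A_i$, the homotopy $3$-chains over $\partial A_i$ coming from ${E_1^d}_{|A_i}$ and from ${E_1^g}_{|A_i}$ coincide. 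Projecting back via $P(\cdot)\times\b S^2$ and using Lemma~\ref{simppara} for $\bar\tau^{\emptyset}_{|A_i}$ (which makes the Euler classes of the two Siamese normal bundles cancel), one reaches
$$
[L^{\pm}_{A_i}]=\pm\,P\!\big(e_2^{A_i}(X_{|A_i}^{\perp},\sigma_i)\big)+i^{A_i}_*(c^{\pm}),
$$
and, by the identical computation in $B_i$,
$$
[L^{\pm}_{B_i}]=\pm\,P\!\big(e_2^{B_i}(X_{B_i}^{\perp},\sigma_i)\big)+i^{B_i}_*(c^{\pm}),
$$
where $c^{\pm}\in H_1(\partial A_i;\b Q)/\go L_{A_i}=H_1(\partial B_i;\b Q)/\go L_{B_i}$ is one and the same class, being built out of the boundary data ($\tau_e$, $X$, $\sigma_i$ on $\partial A_i\simeq\partial B_i$), which $h$ identifies. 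Since $i^{A_i}_*\circ(i^{B_i}_*)^{-1}\big(i^{B_i}_*(c^{\pm})\big)=i^{A_i}_*(c^{\pm})$, the correction terms cancel in the difference, leaving
$$
i^{A_i}_*\circ(i^{B_i}_*)^{-1}[L^{\pm}_{B_i}]-[L^{\pm}_{A_i}]=\pm\Big(i^{A_i}_*\circ(i^{B_i}_*)^{-1}\big(P(e_2^{B_i}(X_{B_i}^{\perp},\sigma_i))\big)-P(e_2^{A_i}(X_{|A_i}^{\perp},\sigma_i))\Big).
$$
This proves the statement for $\sigma_i={E_2^e}_{|\partial A_i}$; since its left-hand side involves no auxiliary section and, by Lemma~\ref{ind}, its right-hand side is section-independent, the identity then holds for every nonvanishing section $\sigma_i$ of $X^{\perp}_{|\partial A_i}$.

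I expect the hard part to be the boundary bookkeeping of the previous paragraph. Because Lemma~\ref{norm}(i) forces the Siamese sections to restrict to $E_1^e$ — orthogonal to $X$, not equal to $X$ — on $\partial A_i$, the pairs $({E_1^{d/g}}_{|A_i},X_{|A_i})$ are \emph{not} $\partial$-compatible combings of $A_i$, so Proposition~\ref{prop_linksinhomologyI} cannot be quoted directly; one has to argue with $3$-cycles in $UA_i$, keep precise track of orientations and of the homotopy $3$-chains over $\partial A_i$, isolate the correction class $c^{\pm}$, and check that it really is the same for $A_i$ and $B_i$ under $h$. All of this is local to the collar $\l C$ fixed in the proof of Lemma~\ref{norm}.
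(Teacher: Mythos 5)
Your proposal is correct in outline, but it takes a different route from the paper at the crux of the argument, and it is also more conservative about the strength of the conclusion in each handlebody, which is worth pointing out. Where you keep $\tau_e$ fixed and work with the non-$\partial$-compatible pairs $({E_1^{d}}_{|A_i},X_{|A_i})$, $({E_1^{g}}_{|A_i},X_{|A_i})$ directly, the paper instead modifies the pseudo-parallelization on a collar of $\partial A_i$: it sets $\check\tau_e\bigl((s,b),v\bigr)=\tau_e\bigl((s,b),R_{e_2,-\pi s/2}(v)\bigr)$ there, so that the Siamese sections of the modified $\check\tau$ agree with $X$ on $\partial A_i$ while the links $L^\pm_{A_i}$ are unchanged (they miss the collar, since on the collar $E_1^d=E_1^g\perp X$ everywhere). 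This renders the pairs $\partial$-compatible and lets one quote Proposition~\ref{prop_linksinhomologyI} verbatim, with no residual boundary homotopy chains to track; combined with Lemma~\ref{simppara} one gets cleanly that $[L^\pm_{A_i}]=\pm P\bigl(e_2^{A_i}(X_{|A_i}^{\perp},\sigma_i)\bigr)$ (so in your notation the correction class $c^\pm$ is in fact zero, not merely common to $A_i$ and $B_i$), and symmetrically for $B_i$; Lemma~\ref{ind} then gives section-independence, exactly as you use it. Your plan buys a more direct computation at the cost of having to carry through the boundary bookkeeping you yourself flag as ``the hard part'' and leave unexecuted; the $\check\tau$-trick buys a shorter proof and the stronger per-handlebody identity at the cost of an auxiliary construction. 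Your structure is sound and your tools (Propositions~\ref{prop_euler},~\ref{prop_links}, Lemma~\ref{simppara}, Lemma~\ref{ind}, Lemma~\ref{norm}(v)) are the right ones, but as written the argument is a plan rather than a complete proof: the isolation of $c^\pm$ and the verification that it is a well-defined class in $H_1(\partial A_i;\b Q)/\go L_{A_i}$ depending only on boundary data remain to be carried out, and that is precisely the step the paper engineers away.
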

\begin{proof}
Let us drop the inclusions $i^B_*$ and $i^A_*$ from the notation. Let $i \in \lbrace 1,\ldots,k \rbrace$. According to Lemma~\ref{zero}, it is enough to prove the statement for a particular non vanishing section $\sigma_i$ of $X^\perp_{|\partial A_i}$. Recall that $A_i$ is equipped with a combing $X_{|A_i}$ and a pseudo-parallelization \linebreak $\bar\tau_{|A_i}=(N(\gamma\cap A_i); {\tau_e}_{|A_i}, {\tau_d}_{|A_i}, {\tau_g}_{|A_i})$ such that $X_{|\partial A_i}$ coincides with ${E_3^e}_{|\partial A_i}$ where $\tau_e=(E_1^e,E_2^e,E_3^e)$. Furthermore, 
$$
L^+_{A_i} = 2\cdot L_{\bar\tau_{|A_i}= X_{|A_i}} \mbox{ \ and  \ } L^-_{A_i} = 2\cdot L_{\bar\tau_{|A_i}= -X_{|A_i}}.
$$
Construct a pseudo-parallelization $\check\tau = (N(\check\gamma) ; \check \tau_e ,\check \tau_d ,\check \tau_g)$ of $A_i$ by modifying $\bar\tau$ as follows so that $\check\tau$ and $X$ coincide on $\partial A_i$. Consider a collar $\l C=[0,1]\times \partial A_i$ of $\partial A_i$ such that $\lbrace 1 \rbrace \times \partial A_i = \partial A_i$ and $\l C \cap \gamma = \emptyset$. Without loss of generality, assume that $X_{|A_i}$ coincides with $E_3^e$ on the collar $\l C$. Let $\check\tau$ coincide with $\bar\tau_{|A_i}$ on $\overline{A_i \setminus \l C}$. End the construction of $\check\tau$ by requiring 
$$
\forall (s,b) \in \l C = [0,1] \times \partial A_i, \ \forall v \in \b S^2 \ : \ \check\tau_e( (s,b),v) = \tau_e \left((s,b), R_{e_2, \frac{-\pi s}{2}}(v)\right).
$$
Note that $\check\tau$ and $X_{|A_i}$ are compatible and that
$$
L^+_{A_i} = 2\cdot L_{\check\tau= X_{|A_i}} \mbox{ \ and  \ } L^-_{A_i} = 2\cdot L_{\check\tau= -X_{|A_i}}.
$$
Using $\check\tau$ and Proposition~\ref{prop_linksinhomologyI}, we get 
$$
\begin{aligned}
[L^+_{A_i}] &= P(e_2^{A_i}(X^\perp_{|A_i}, {\check E^e}_{2|\partial A_i})) + \frac{1}{2}  P(e_2^{A_i}({\check{E}}^{d \perp}_{1}, {\check E^e}_{2|\partial A_i})) + \frac{1}{2}  P(e_2^{A_i}({\check{E}}^{g \perp}_{1}, {\check E^e}_{2|\partial A_i})) \\
[L^-_{A_i}] &= - P(e_2^{A_i}(X^\perp_{|A_i}, {\check E^e}_{2|\partial A_i})) + \frac{1}{2}  P(e_2^{A_i}({\check{E}}^{d \perp}_{1}, {\check E^e}_{2|\partial A_i})) + \frac{1}{2}  P(e_2^{A_i}({\check{E}}^{g \perp}_{1}, {\check E^e}_{2|\partial A_i}))
\end{aligned}
$$
where $\check\tau_e=(\check E_1^e,\check E_2^e,\check E_3^e)$ and where ${\check E_1}^d$ and ${\check E_1}^g$ are the Siamese sections of $\check\tau$. By construction, it follows that
$$
\begin{aligned}
[L^+_{A_i}] &= P(e_2^{A_i}(X^\perp_{|A_i}, {E_2^e}_{|\partial A_i})) + \frac{1}{2}  P(e_2^{A_i}({E_1^d}^\perp_{|A_i}, {E_2^e}_{|\partial A_i})) + \frac{1}{2}  P(e_2^{A_i}({E_1^g}^\perp_{|A_i}, {E_2^e}_{|\partial A_i})) \\
[L^-_{A_i}] &= -P(e_2^{A_i}(X^\perp_{|A_i}, {E_2^e}_{|\partial A_i})) + \frac{1}{2}  P(e_2^{A_i}({E_1^d}^\perp_{|A_i}, {E_2^e}_{|\partial A_i})) + \frac{1}{2}  P(e_2^{A_i}({E_1^g}^\perp_{|A_i}, {E_2^e}_{|\partial A_i}))
\end{aligned}
$$
where $E_1^d$ and $E_1^g$ are the Siamese sections of $\bar\tau$. Using the same method, we also get that
$$
\begin{aligned}
[L^+_{B_i}] &= P(e_2^{B_i}(X^\perp_{B_i}, {E_2^e}_{|\partial A_i})) + \frac{1}{2}  P(e_2^{B_i}({E_1^d}^\perp_{|B_i}, {E_2^e}_{|\partial A_i})) + \frac{1}{2}  P(e_2^{B_i}({E_1^g}^\perp_{|B_i}, {E_2^e}_{|\partial A_i})) \\
[L^-_{B_i}] &= -P(e_2^{B_i}(X^\perp_{B_i}, {E_2^e}_{|\partial A_i})) + \frac{1}{2}  P(e_2^{B_i}({E_1^d}^\perp_{|B_i}, {E_2^e}_{|\partial A_i})) + \frac{1}{2}  P(e_2^{B_i}({E_1^g}^\perp_{|B_i}, {E_2^e}_{|\partial A_i})).
\end{aligned}
$$
Conclude with Lemma~\ref{simppara}.
\end{proof}

\subsection[Variation formula for torsion combings]{Variation formula for torsion combings}
\begin{proof}[Proof of Theorem~\ref{thm_D2nd}]
Let $(M,X)$ be a compact oriented 3-manifold equipped with a combing. Let $\lbrace (\sfrac{B_i}{A_i},X_{B_i}) \rbrace_{i\in \lbrace 1, 2\rbrace}$ be two disjoint LP$_\b Q$-surgeries in $(M,X)$ and assume that, for all subset $I\subset\lbrace 1,2 \rbrace$, $X^I=X(\lbrace\sfrac{B_i}{A_i}\rbrace_{i \in I})$ is a torsion combing of the 3-manifold $M_I = M(\lbrace \sfrac{B_i}{A_i} \rbrace_{i \in I})$. Note that, for all $I,J\subset \lbrace 1,2 \rbrace$, $X^I$ and $X^J$ coincide on $(M\setminus \cup_{i\in I\cup J} A_i)\cup_{i\in I\cap J}B_i$. Finally, let $\lbrace \bar\tau^I \rbrace_{I\subset \lbrace 1,2\rbrace}$ be a family of pseudo-parallelizations as in Lemma~\ref{norm}, let ${E_1^d}^I$ and ${E_1^g}^I$ denote the Siamese sections of $\bar\tau^I$ for all $I\subset \lbrace 1,2 \rbrace$ and let $L_I$ stand for $L_{{E_1^d}^I=-{E_1^g}^I}$. Using Corollary~\ref{cor_FTppara}, we have
$$
\begin{aligned}
p_1 \left([X^{\lbrace 2 \rbrace}],[X^{\lbrace 1,2 \rbrace}]\right) - p_1\left([X],[X^{\lbrace 1 \rbrace}]\right)
&=  p_1 \left([X^{\lbrace 2 \rbrace}],[X^{\lbrace 1,2 \rbrace}]\right)- p_1 \left(\bar\tau^{\lbrace 2 \rbrace},\bar\tau^{\lbrace 1,2 \rbrace}\right) \\
&  - p_1\left([X],[X^{\lbrace 1 \rbrace}]\right) + p_1\left(\bar\tau,\bar\tau^{\lbrace 1 \rbrace}\right) \\
\end{aligned}
$$
which, using Lemma~\ref{lem1} and Theorem~\ref{thm_defp1Xb}, reads
$$
\begin{aligned}
& 4 \cdot lk_{M_{\lbrace1,2\rbrace}}(L_{\bar\tau^{\lbrace 1,2 \rbrace}=X^{\lbrace 1,2 \rbrace}} \ , \ L_{\bar\tau^{\lbrace 1,2 \rbrace}=-X^{\lbrace 1,2 \rbrace}}) - 4 \cdot lk_{M_{\lbrace 2 \rbrace}}(L_{\bar\tau^{\lbrace 2 \rbrace}=X^{\lbrace 2 \rbrace}} \ , \ L_{\bar\tau^{\lbrace 2 \rbrace}=-X^{\lbrace 2 \rbrace}}) \\
& - lk_{\b S^2} \left(e_1-(-e_1) \ , \ P_{\b S^2} \circ (\tau^{\lbrace 1,2 \rbrace}_d)^{-1} \circ X^{\lbrace 1,2 \rbrace}(L_{\lbrace 1,2 \rbrace}) - P_{\b S^2} \circ (\tau^{\lbrace 2 \rbrace}_d)^{-1} \circ X^{\lbrace 2 \rbrace}(L_{\lbrace 2 \rbrace})\right)\\
&- 4 \cdot lk_{M_{\lbrace 1 \rbrace}}(L_{\bar\tau^{\lbrace 1 \rbrace}=X^{\lbrace 1 \rbrace}} \ , \ L_{\bar\tau^{\lbrace 1 \rbrace}=-X^{\lbrace 1 \rbrace}}) + 4 \cdot lk_{M}(L_{\bar\tau=X} \ , \ L_{\bar\tau=-X}) \\
& + lk_{\b S^2} \left(e_1-(-e_1) \ , \ P_{\b S^2} \circ (\tau^{\lbrace 1 \rbrace}_d)^{-1} \circ X^{\lbrace 1 \rbrace}(L_{\lbrace 1 \rbrace}) - P_{\b S^2} \circ (\tau_d)^{-1} \circ X(L) \right).
 \end{aligned}
$$
This can further be reduced to the following by using Lemma~\ref{norm},
$$
\begin{aligned}
& lk_M \left( L^+_{ext} + L^+_{A_1} + L^+_{A_2}, \ L^-_{ext} +  L^-_{A_1} +  L^-_{A_2} \right) \\
&- lk_{M_{\lbrace 1 \rbrace}} \left( L^+_{ext} + L^+_{B_1} +  L^+_{A_2}, \ L^-_{ext} + L^-_{B_1} +  L^-_{A_2} \right) \\
&- lk_{M_{\lbrace 2 \rbrace}} \left( L^+_{ext} +  L^+_{A_1} + L^+_{B_2}, \ L^-_{ext} +  L^-_{A_1} +  L^-_{B_2} \right) \\
&+ lk_{M_{\lbrace 1,2 \rbrace}} \left( L^+_{ext} +  L^+_{B_1} +  L^+_{B_2}, \ L^-_{ext} +  L^-_{B_1} + L^-_{B_2} \right).
 \end{aligned}
$$

In order to compute these linking numbers, let us construct specific 2-chains. Let us introduce a more convenient set of notations. For all $i \neq j \in \lbrace 1,2 \rbrace$, let
$$
 L^\pm_{i}=L^\pm_{A_i}, \  L^\pm_{ij}= L^\pm_{A_i} + L^\pm_{A_j}, \  L^\pm_{eij}= L^\pm_{ext} + L^\pm_{A_i} + L^\pm_{A_j}.
$$
Set also similar notations with primed indices where a primed index $i'$, $i \in \lbrace 1,2 \rbrace$, indicates that $L^\pm_{A_i}$ should be replaced by $L^\pm_{B_i}$. For instance, $L^\pm_{i'}=L^\pm_{B_i}$, $L^\pm_{i,j'}= L^\pm_{A_i} + L^\pm_{B_j}$, \textit{etc}. Using these notations, $p_1 \left([X^{\lbrace 2 \rbrace}],[X^{\lbrace 1,2 \rbrace}]\right) - p_1\left([X],[X^{\lbrace 1 \rbrace}]\right)$ reads :
$$
\begin{aligned}
 lk_M(L_{e12}^+,L_{e12}^-)- lk_{M_{\lbrace 1 \rbrace}}(L_{e1'2}^+,L_{e1'2}^-) - lk_{M_{\lbrace 2 \rbrace}}(L_{e12'}^+,L_{e12'}^-) + lk_{M_{\lbrace 1,2 \rbrace}} (L_{e1'2'}^+,L_{e1'2'}^-) .
\end{aligned}
$$
Recall from Lemma~\ref{nulexcep} that there exist rational 2-chains $\Sigma_{e12}^\pm$ of $M$ which are bounded by the links $L^\pm_{e12}$. Similarly, there exist rational two chains $\Sigma^\pm_{e1'2'}$ bounded by $L^\pm_{e1'2'}$ in $M_{\lbrace 1,2\rbrace}$. Note that, for all $i \in \lbrace 1,2 \rbrace$, the 2-chains $\Sigma_{e12}^\pm\cap A_i$ are cobordisms between the $L^\pm_i$ and 1-chains $\ell^\pm_i$ in $\partial A_i$. Similarly, for all $i \in \lbrace 1,2 \rbrace$, the 2-chains $\Sigma_{e1'2'}^\pm\cap B_i$ are cobordisms between the $L^\pm_{i'}$ and 1-chains $\ell^\pm_{i'}$ in $\partial B_i$. Furthermore, according to Lemma~\ref{inter}, for all $i\in \lbrace 1,2 \rbrace$ and for any nonvanishing section $\sigma_i$ of $X^\perp_{|\partial A_i}$, in $H_1(\partial A_i ; \b Q)/\go L_{A_i}$ :
$$
[\ell^\pm_{i'}-\ell^\pm_{i}]= \pm \big((i_*^{B_i})^{-1}(P(e_2^{B_i}(X_{B_i}^\perp, \sigma_i))) - (i_*^{A_i})^{-1}(P(e_2^{A_i}(X_{|A_i}^\perp, \sigma_i)))\big).
$$
So, according to Lemma~\ref{zero}, for all $I \subset \lbrace 1,2 \rbrace$, there exists a 2-chain $S_{\sfrac{B_i}{A_i}}^I$ in $M_I$ which is bounded by $\ell^+_{i'}-\ell^+_i$. Finally, since $(\sfrac{B_1}{A_1})$ and $(\sfrac{B_2}{A_2})$ are LP$_\b Q$-surgeries, we can construct these chains so that
$$
\begin{aligned}
S_{\sfrac{B_1}{A_1}}^{\lbrace 1\rbrace} \cap (M_{\lbrace 1 \rbrace}\minusens \mathring A_2) &= S_{\sfrac{B_1}{A_1}}^{\lbrace 1,2 \rbrace} \cap (M_{\lbrace 1,2 \rbrace}\minusens \mathring B_2), \\
S_{\sfrac{B_2}{A_2}}^{\lbrace 2\rbrace} \cap (M_{\lbrace 2 \rbrace}\minusens \mathring A_1) &= S_{\sfrac{B_2}{A_2}}^{\lbrace 1,2 \rbrace} \cap (M_{\lbrace 1,2 \rbrace})\minusens \mathring B_1). 
\end{aligned}
$$

Let us now return to the computation of $p_1 \left([X^{\lbrace 2 \rbrace}],[X^{\lbrace 1,2 \rbrace}]\right) - p_1\left([X],[X^{\lbrace 1 \rbrace}]\right)$. Using the 2-chains we constructed, we have :
$$
\begin{aligned}
  lk_M(L_{e12}^+,L_{e12}^-) &=  \langle \Sigma^+_{e12}, L_{e12}^- \rangle \\
  lk_{M_{\lbrace 1 \rbrace}}(L_{e1'2}^+,L_{e1'2}^-) &= \langle \Sigma^+_{e12} \cap (M \setminus \mathring A_1) + S_{\sfrac{B_1}{A_1}}^{\lbrace 1 \rbrace} + \Sigma^+_{e1'2'}\cap B_1, L_{e1'2}^- \rangle \\
  lk_{M_{\lbrace 2 \rbrace}}(L_{e12'}^+,L_{e12'}^-) &= \langle \Sigma^+_{e12}\cap (M \setminus \mathring A_2) + S_{\sfrac{B_2}{A_2}}^{\lbrace 2 \rbrace} + \Sigma^+_{e1'2'}\cap B_2 , L_{e12'}^- \rangle \\
  lk_{M_{\lbrace 1,2 \rbrace}} (L_{e1'2'}^+,L_{e1'2'}^-)&=\langle \Sigma^+_{e12}\cap(M\setminus(\mathring A_1\hspace{-1mm}\cup\hspace{-1mm}\mathring A_2)) +S_{\sfrac{B_1}{A_1}}^{\lbrace 1 , 2 \rbrace} \\
  &\hspace{1cm}+S_{\sfrac{B_2}{A_2}}^{\lbrace 1 , 2 \rbrace} + \Sigma^+_{e1'2'}\cap(B_1\cup B_2) , L_{e1'2'}^- \rangle. \\
\end{aligned}
$$
So, the contribution of the intersections in $M \minusens (\mathring A_1\cup \mathring A_2)$ is zero since it reads :
$$
\begin{aligned}
& \langle \Sigma^+_{e12} \cap (M \minusens (\mathring A_1 \cup \mathring A_2)) , L_{e}^- \rangle_{M\setminus (\mathring A_1\cup \mathring A_2)} \\
& -\langle \Sigma^+_{e12} \cap (M \minusens (\mathring A_1 \cup \mathring A_2)) + S_{\sfrac{B_1}{A_1}}^{\lbrace 1 \rbrace}\cap (M \minusens (\mathring A_1 \cup \mathring A_2)) , L_{e}^- \rangle_{M \setminus (\mathring A_1\cup \mathring A_2)} \\
& -\langle \Sigma^+_{e12} \cap (M \minusens (\mathring A_1 \cup \mathring A_2)) + S_{\sfrac{B_2}{A_2}}^{\lbrace 2 \rbrace}\cap (M \minusens (\mathring A_1 \cup \mathring A_2)), L_{e}^- \rangle_{M\setminus (\mathring A_1\cup \mathring A_2)} \\
& +\langle \Sigma^+_{e12} \cap (M \minusens (\mathring A_1 \cup \mathring A_2)) + S_{\sfrac{B_1}{A_1}}^{\lbrace 1 , 2 \rbrace}\cap (M \minusens (\mathring A_1 \cup \mathring A_2)) \\
&\hspace{5cm}+ S_{\sfrac{B_2}{A_2}}^{\lbrace 1 , 2 \rbrace}\cap (M \minusens (\mathring A_1 \cup \mathring A_2)) , L_{e}^- \rangle_{M\setminus (\mathring A_1\cup \mathring A_2)} .
\end{aligned}
$$
The contribution in $A_1$ is
$$
\begin{aligned}
  \langle \Sigma^+_{e12} \cap A_1 , L_{1}^- \rangle_{A_1}  - \langle\Sigma^+_{e12} \cap A_1 + S_{\sfrac{B_2}{A_2}}^{\lbrace 2 \rbrace}\cap A_1 , L_{1}^- \rangle_{A_1}
  = - \langle S_{\sfrac{B_2}{A_2}}^{\lbrace 2 \rbrace}\cap A_1, L_{1}^-  \rangle_{A_1}.
\end{aligned}
$$
In $A_2$, we similarly get $- \langle S_{\sfrac{B_1}{A_1}}^{\lbrace 1 \rbrace}\cap A_2, L_{2}^-  \rangle_{A_2}$. The contribution in $B_1$ is
$$
\begin{aligned}
 &-\hspace{-1mm}\langle S_{\sfrac{B_1}{A_1}}^{\lbrace 1 \rbrace}\cap B_1  \hspace{-1mm}+\hspace{-1mm} \Sigma^+_{e1'2'}\cap B_1 , L_{1'}^- \rangle_{B_1}  \hspace{-1mm}+\hspace{-1mm} \langle S_{\sfrac{B_1}{A_1}}^{\lbrace 1 , 2 \rbrace}\cap B_1  \hspace{-1mm} +\hspace{-1mm}  S_{\sfrac{B_2}{A_2}}^{\lbrace 1 , 2 \rbrace}\cap B_1  \hspace{-1mm} +\hspace{-1mm}  \Sigma^+_{e1'2'}\cap B_1 , L_{1'}^- \rangle_{B_1} \\
 &= \langle S_{\sfrac{B_2}{A_2}}^{\lbrace 1 , 2 \rbrace}\cap B_1  , L_{1'}^- \rangle_{B_1}
\end{aligned}
$$
and, in $B_2$, we get $\langle S_{\sfrac{B_1}{A_1}}^{\lbrace 1 , 2 \rbrace}\cap B_2 , L_{2'}^- \rangle_{B_2}$. Eventually, $L_{\lbrace X^I \rbrace}(\sfrac{B_i}{A_i}) = i_*^{A_i}([\ell_{i'}^+ - \ell_{i}^+])$ for $i \in \lbrace 1,2\rbrace$. Moreover, recall that $[\ell^+_{i'}-\ell^+_{i}]=-[\ell^-_{i'}-\ell^-_{i}]$ in $H_1(\partial A_i ; \b Q)/\go L_{A_i}$, and complete the computations~:
$$
\begin{aligned}
p_1 & \left([X^{\lbrace 2 \rbrace}],[X^{\lbrace 1,2 \rbrace}]\right) - p_1\left([X],[X^{\lbrace 1 \rbrace}]\right) \\
&= \langle S_{\sfrac{B_2}{A_2}}^{\lbrace 1 , 2 \rbrace}\cap B_1 , L_{1'}^- \rangle_{B_1} + \langle S_{\sfrac{B_1}{A_1}}^{\lbrace 1 , 2 \rbrace}\cap B_2 , L_{2'}^- \rangle_{B_2} \\
&- \langle S_{\sfrac{B_2}{A_2}}^{\lbrace 2 \rbrace}\cap A_1, L_{1}^-  \rangle_{A_1} - \langle S_{\sfrac{B_1}{A_1}}^{\lbrace 1 \rbrace} \cap A_2, L_{2}^-  \rangle_{A_2} \\ 
&= \langle S_{\sfrac{B_2}{A_2}}^{\lbrace 1 , 2 \rbrace} , \ell_{1'}^- \rangle_{M_{\lbrace 1,2 \rbrace}} + \langle S_{\sfrac{B_1}{A_1}}^{\lbrace 1 , 2 \rbrace} , \ell_{2'}^- \rangle_{M_{\lbrace 1,2 \rbrace}} \\
&- \langle S_{\sfrac{B_2}{A_2}}^{\lbrace 2 \rbrace}, \ell_{1}^-  \rangle_{M_{\lbrace 2 \rbrace}} - \langle S_{\sfrac{B_1}{A_1}}^{\lbrace 1 \rbrace} , \ell_{2}^-  \rangle_{M_{\lbrace 1 \rbrace}} \\ 
&= \langle S_{\sfrac{B_2}{A_2}}^{\lbrace 1 , 2 \rbrace} , \ell_{1'}^- - \ell_{1}^-  \rangle_{M_{\lbrace 1,2 \rbrace}} + \langle S_{\sfrac{B_1}{A_1}}^{\lbrace 1 , 2 \rbrace} , \ell_{2'}^- - \ell_{2}^- \rangle_{M_{\lbrace 1,2 \rbrace}} \\
&=- 2 \cdot lk_M \left(L_{\lbrace X^I \rbrace}(\sfrac{B_1}{A_1}) \ , \ L_{\lbrace X^I \rbrace}(\sfrac{B_2}{A_2}) \right).
\end{aligned}
$$
\iffalse
&=   lk_{M_{\lbrace 1,2 \rbrace}} \left(L_{\lbrace X^I \rbrace}(\sfrac{B_2}{A_2}) \ , \ - L_{\lbrace X^I \rbrace}(\sfrac{B_1}{A_1})\right) \\
&+  \ lk_{M_{\lbrace 1,2 \rbrace}}\left(L_{\lbrace X^I \rbrace}(\sfrac{B_1}{A_1}) \ , \ - L_{\lbrace X^I \rbrace}(\sfrac{B_2}{A_2})\right) \\
&=- 2 \cdot lk_{M_{\lbrace 1,2 \rbrace}} \left(L_{\lbrace X^I \rbrace}(\sfrac{B_1}{A_1}) \ , \ L_{\lbrace X^I \rbrace}(\sfrac{B_2}{A_2}) \right) \\
\fi
\end{proof}

\newpage

\nocite{hirzebruch,KM,pontrjagin,turaev,lickorish,rolfsen,MR1030042,MR1189008,MR1712769}

\bibliographystyle{alpha}
\bibliography{bib}

\end{document}